\newtheorem{theorem}{Theorem} [section]
\newtheorem{lemma}[theorem]{Lemma}
\newtheorem{proposition}[theorem]{Proposition}
\newtheorem{remark}[theorem]{Remark}
\newtheorem{corollary}[theorem]{Corollary}
\DeclareMathOperator*{\intt}{\int}
\DeclareMathOperator*{\supp}{supp}
\newcommand{\I}{\hspace{0.5mm}\text{I}\hspace{0.5mm}}
\newcommand{\II}{\text{I \hspace{-2.8mm} I} }
\newcommand{\III}{\text{I \hspace{-2.9mm} I \hspace{-2.9mm} I}}
\newcommand{\noi}{\noindent}
\newcommand{\Z}{\mathbb{Z}}
\newcommand{\R}{\mathbb{R}}
\newcommand{\T}{\mathbb{T}}
\newcommand{\N}{\mathcal{N}}
\newcommand{\RR}{\mathcal{R}}
\newcommand{\F}{\mathcal{F}}
\renewcommand{\P}{\mathbf{P}}
\newcommand{\Nf}{\mathfrak{N}}
\newcommand{\al}{\alpha}
\newcommand{\be}{\beta}
\newcommand{\dl}{\delta}
\newcommand{\eps}{\varepsilon}
\newcommand{\g}{\gamma}
\newcommand{\ld}{\lambda}
\newcommand{\Ld}{\Lambda}
\newcommand{\s}{\sigma}
\newcommand{\ft}{\widehat}
\newcommand{\wt}{\widetilde}
\newcommand{\cj}{\overline}
\newcommand{\dx}{\partial_x}
\newcommand{\dt}{\partial_t}
\newcommand{\dd}{\partial}
\newcommand{\invft}[1]{\overset{\vee}{#1}}
\newcommand{\lrarrow}{\leftrightarrow}
\newcommand{\LLRA}{\Longleftrightarrow}
\renewcommand{\l}{\ell}
\newcommand{\les}{\lesssim}
\newcommand{\ges}{\gtrsim}
\newcommand{\jb}[1]
{\langle #1 \rangle}
\newcommand{\ind}{\mathbf 1}
\renewcommand{\S}{\mathcal{S}}
\numberwithin{equation}{section}
\numberwithin{theorem}{section}
 \DeclareMathAlphabet{\mathpzc}{OT1}{pzc}{m}{it}
\begin{document}


\title[Non-existence for  the periodic cubic NLS below $L^2$]
{Non-existence of solutions for the periodic cubic NLS below $L^2$}

\author{Zihua Guo and  Tadahiro Oh}

\address{Zihua Guo\\
School of Mathematical Sciences\\
Monash University\\
VIC 3800, Australia, 
and 
LMAM, School of Mathematical Sciences, Peking University, Beijing 100871, China} 

\email{zihua.guo@monash.edu}

\address{
Tadahiro Oh\\
School of Mathematics\\
The University of Edinburgh, 
and The Maxwell Institute for the Mathematical Sciences\\
James Clerk Maxwell Building\\
The King's Buildings\\
 Peter Guthrie Tait Road\\
Edinburgh\\ 
EH9 3FD\\ 
United Kingdom} 

\email{hiro.oh@ed.ac.uk}

\subjclass[2010]{35Q55}

\keywords{nonlinear Schr\"odinger equation; well-posedness; ill-posedness;
short-time Fourier restriction norm method; 
a priori estimate; normal form reduction}

\begin{abstract}
We prove non-existence of solutions for the cubic nonlinear Schr\"odinger
equation (NLS) on the circle if initial data belong to $H^s(\T) \setminus L^2(\T)$ for some $s \in (-\frac18, 0)$.
The proof is based on establishing an a priori bound
on solutions to a renormalized cubic NLS in negative Sobolev spaces
via the short-time Fourier restriction norm method.

\end{abstract}

\maketitle

\tableofcontents

\section{Introduction}
\label{SEC:1}

\subsection{The cubic nonlinear Schr\"odinger equation on $\T$ and $\R$}
We consider the Cauchy problem for the cubic nonlinear Schr\"odinger equation
(NLS) on the circle $\T = \R/(2\pi \Z)$:
\begin{equation}
    \label{NLS0}
    \begin{cases}
        i  \dt u - \dx^2  u   \pm |u|^2 u = 0 \\
        u|_{t= 0} = u_0,
    \end{cases}
    \qquad (x, t)  \in \T \times \R,
\end{equation}

\noi
where $u$ is a complex-valued function.
The Cauchy problem  \eqref{NLS0} has been studied extensively
from both theoretical and applied points of view.
It is known to be one of the simplest partial differential equations (PDEs)
with complete integrability \cite{AKNS, AM, GK}.
In the following, however, 
we only discuss analytical aspects
of \eqref{NLS0} without using the complete integrable structure of the equation.

It is well known that \eqref{NLS0} enjoys several symmetries.
In the following, we concentrate on the scaling symmetry and the Galilean symmetry.
The scaling symmetry states that if $u(x, t)$ is a solution to \eqref{NLS0}
on $\R$ with initial condition $u_0$, then
$u^\ld(x, t) = \ld^{-1} u (\ld^{-1}x, \ld^{-2}t)$
is also a solution to \eqref{NLS0} with the $\ld$-scaled initial condition 
$u_0^\ld(x) = \ld^{-1} u_0 (\ld^{-1}x)$.
Associated to the scaling symmetry, 
there is a  scaling-critical Sobolev index $s_c$
such that the homogeneous $\dot{H}^{s_c}$-norm is invariant
under the dilation symmetry.
It is commonly conjectured that a PDE is ill-posed in $H^s$ for $s < s_c$.
In the case of the one-dimensional cubic NLS, 
 the scaling-critical Sobolev index is $s_c = -\frac{1}{2}$
 and 
Christ-Colliander-Tao \cite{CCT2b} exhibited
a norm inflation phenomenon in  $H^s(\R)$
for $s< -\frac{1}{2}$.\footnote{This norm inflation also holds
at the scaling critical regularity  $s = - \frac 12$   on both $\R$ and $\T$.
See Kishimoto \cite{Kishimoto} and Oh-Wang \cite{OH}.
See also a recent result by Carles-Kappeler \cite{CK} for 
a norm inflation in the Fourier-Lebesgue space $\F L^{s, p}(\T)$
for  $ s< -\frac 23$ and $p \in [1, \infty]$.
These norm inflation results also hold for the Wick ordered cubic NLS defined in \eqref{NLS1}.} 
While there is no dilation symmetry in the periodic setting, 
the heuristics provided by the scaling argument plays an important role
even in the periodic setting. 
For example, 
a norm inflation result 
for  \eqref{NLS0} on $\T$
analogous to \cite{CCT2b} also holds  
for $s < s_c = - \frac 12$.

The Galilean symmetry states that if $u(x, t)$ is a solution to \eqref{NLS0}
on $\R$ with initial condition $u_0$, then
$u^\beta(x, t) = e^{i\frac{\beta}{2}x}e^{i\frac{\beta^2}{4}t}  u (x+\beta t, t)$
is also a solution to \eqref{NLS0} with the modulated initial condition 
$u_0^\beta(x) = e^{i\frac{\beta}{2}x}  u_0 (x)$.
This symmetry also holds on the circle for $\be \in 2 \Z$.
Note that the $L^2$-norm is invariant under the Galilean symmetry.\footnote{In fact, 
any Fourier-Lebesgue $\F L^{0, p}$-norm defined in \eqref{FL} is invariant under the Galilean symmetry.}
This induces another critical regularity $s_\text{crit}^\infty  =  0$.
Indeed, there is a strong dichotomy in the behavior of solutions for  $s\geq 0$ and $s < 0$.

Bourgain \cite{BO1} introduced the so-called 
Fourier restriction norm method via the $X^{s, b}$-spaces  (see \eqref{Xsb1} below)
and proved local well-posedness
of \eqref{NLS0} in $L^2(\T)$.
Thanks to the $L^2$-conservation, 
this result was immediately extended to global well-posedness in $L^2(\T)$. 
On the other hand, \eqref{NLS0}
is known to be ill-posed below $L^2(\T)$.
Burq-G\'erard-Tzvetkov \cite{BGT} showed that 
the solution map $\Phi_t: u_0\in H^s \mapsto u(t) \in H^s$
is not locally uniformly continuous if $s < 0$.  See also \cite{CCT1}.
Moreover, 
Christ-Colliander-Tao \cite{CCT2} and Molinet \cite{MOLI}
proved that the solution map is indeed discontinuous
if $s < 0$.
Our main result in this paper
states an even stronger form of ill-posedness
holds true for \eqref{NLS0}
in negative Sobolev spaces.

\begin{theorem}[Non-existence of  solutions for the cubic NLS]\label{THM:2}

Let $s \in ( -\frac{1}{8}, 0)$
and $u_0 \in H^s (\T)\setminus L^2(\T)$.
Then, for any $T>0$, 
there exists no distributional  solution $u \in C([-T, T]; H^s(\T))$ to 
the cubic  NLS \eqref{NLS0}
such that

\begin{itemize}
\item[\textup{(i)}] $u|_{t = 0} = u_0$, 

\smallskip

\item[\textup{(ii)}] There exist smooth global solutions $\{u_n\}_{n\in \mathbb{N}}$ to \eqref{NLS0} such that 
$u_n \to u$ in $ C([-T, T]; H^s(\T))$ as $n \to \infty$. 
\end{itemize}

\end{theorem}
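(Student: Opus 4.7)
The plan is to use the gauge transformation linking \eqref{NLS0} to the Wick ordered cubic NLS
\begin{equation*}
i\dt v - \dx^2 v \pm \Big(|v|^2 - 2\fint_\T |v|^2\, dx\Big) v = 0,
\end{equation*}
and to combine an a priori bound for this renormalized equation in $H^s$ with the unbounded oscillation of the gauge factor when the $L^2$-norm of the approximating data blows up. For any smooth solution $u$ of \eqref{NLS0}, conservation of mass gives $\|u(t)\|_{L^2} \equiv \|u_0\|_{L^2}$, and the phase-shifted function $v(x, t) := \exp\!\big(\mp i\|u_0\|_{L^2}^2 t/\pi\big)\, u(x, t)$ is a smooth solution of the renormalized equation with $v(0) = u_0$; this correspondence is bijective on smooth solutions.

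The analytic heart of the argument is an a priori bound for the Wick NLS obtained by the short-time Fourier restriction norm method: for $s \in (-\tfrac18, 0)$ and $R, T_0 > 0$ there exists $C = C(s, R, T_0)$ such that every smooth solution $v$ with $\|v(0)\|_{H^s} \leq R$ satisfies $\sup_{|t| \leq T_0}\|v(t)\|_{H^s} \leq C$, together with a Lipschitz-type difference estimate $\|v_1 - v_2\|_{C([-T_0, T_0]; H^s)} \leq C\,\|v_1(0) - v_2(0)\|_{H^s}$. The key ingredients are $X^{s, b}$-type norms evaluated on short time intervals of frequency-dependent length and a partial normal form reduction that removes the non-resonant cubic interactions, the remaining resonant ones being absorbed into the renormalization; the threshold $s > -\tfrac18$ emerges from balancing the worst near-resonant contributions.

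Granted these tools, assume for contradiction that $u$ and $\{u_n\}$ satisfy (i)--(ii). The convergence $u_n \to u$ in $C([-T, T]; H^s)$, combined with each $u_n$ being smooth while $u_0 \notin L^2$, forces $M_n := \|u_n(0)\|_{L^2}^2 \to \infty$ (otherwise a weak-$L^2$ subsequence of $\{u_n(0)\}$ would exhibit $u_0 \in L^2$). Defining $v_n(x, t) := \exp\!\big(\mp i M_n t/\pi\big)\, u_n(x, t)$, each $v_n$ is a smooth solution of the Wick NLS with $v_n(0) = u_n(0) \to u_0$ in $H^s$, so the Lipschitz estimate above forces $\{v_n\}$ to be Cauchy in $C([-T, T]; H^s)$ with some limit $v$ satisfying $v(0) = u_0 \not\equiv 0$. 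Inverting the gauge yields
\begin{equation*}
u_n(t) = \exp\!\big(\pm i M_n t/\pi\big)\, v_n(t),
\end{equation*}
whose left side converges to $u$ in $C([-T, T]; H^s)$ while the right side is a scalar phase of unbounded frequency multiplying a convergent sequence. Picking a Fourier mode $k$ and an open interval $U \subset (-T, T)$ on which $\widehat v(k, \cdot)$ is nonvanishing (possible since $v \not\equiv 0$), one gets pointwise convergence of $\exp(\pm i M_n t/\pi) = \widehat u_n(k, t)/\widehat v_n(k, t)$ on $U$ to a function of modulus one, whereas the Riemann--Lebesgue lemma forces this sequence to converge weakly to zero in $L^2(U)$, a contradiction. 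The main obstacle is the a priori bound itself: standard $X^{s, b}$ analysis fails below $L^2$ because of the on-diagonal cubic interactions surviving on $\T$, and it is the short-time method coupled with the normal form reduction that restores the control needed to reach the range $s > -\tfrac18$.
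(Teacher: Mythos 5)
Your overall strategy is the paper's: gauge-transform the approximating smooth solutions $u_n$ into solutions $v_n$ of the Wick ordered equation, use the a priori $H^s$ bound to produce a limit $v$ with $v(0)=u_0\neq 0$, and then play the unbounded oscillation of the gauge factor $e^{\mp iM_nt/\pi}$ (with $M_n=\|u_n(0)\|_{L^2}^2\to\infty$, which you correctly justify via weak $L^2$ compactness) against the Riemann--Lebesgue lemma. Your endgame --- comparing the locally uniform limit of the unimodular quotient $\widehat u_n(k,\cdot)/\widehat v_n(k,\cdot)$ on an interval where $\widehat v(k,\cdot)\neq 0$ with its weak $L^2$ limit $0$ --- is a harmless variant of the paper's, which tests $v_n$ against a space--time test function and concludes $v\equiv 0$, contradicting $v(0)=u_0\neq 0$.

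The genuine gap is the ``Lipschitz-type difference estimate'' $\|v_1-v_2\|_{C([-T_0,T_0];H^s)}\le C\|v_1(0)-v_2(0)\|_{H^s}$, which you invoke to make $\{v_n\}$ Cauchy. No such estimate is available, and it is in fact false: the solution map of the Wick ordered cubic NLS is known \emph{not} to be locally uniformly continuous below $L^2(\T)$ (the ``mild ill-posedness'' of \cite{CO} recalled in the introduction), and a uniform Lipschitz bound on smooth solutions with data in a fixed ball would contradict that. This is exactly the obstruction the paper flags when explaining why uniqueness in Theorem \ref{THM:1} remains open: the short-time energy method gives an energy estimate for a single solution via symmetrization over the frequencies (which exploits the conserved $L^2$-norm), but the symmetrization breaks down for the difference of two solutions. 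The correct substitute is the compactness argument of Section \ref{SEC:existence}: the a priori bound in $F^{s,\al}(T)$, the uniform high-frequency tail estimate coming from the smoothing in the energy estimates (Remark \ref{REM:smoothing}), and equicontinuity yield precompactness of $\{v_n\}$ in $C([-T,T];H^s(\T))$ by Ascoli--Arzel\`a, so a \emph{subsequence} converges to some $v$ with $v(0)=u_0$; your contradiction then runs unchanged along that subsequence.
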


Note that the condition (ii) is a natural condition to impose
since it would follow from  the continuity of the solution map\,$: u_0 \in H^s(\T) \mapsto u \in C([-T, T];H^s(\T))$,
which is one of the essential components in 
the well-posedness theory of evolution equations.
On the one hand,  the previous ill-posedness results
\cite{BGT, CCT1, CCT2, CCT2b, MOLI}
treated 
 smooth functions, at least in $L^2(\T)$, 
to construct examples.
On the other hand, 
the proof of Theorem \ref{THM:2} is based on 
establishing a priori bound on solutions to a renormalized cubic NLS (see \eqref{NLS1} below)
in negative Sobolev spaces.
See Theorem \ref{THM:1} in Subsection \ref{SUBSEC:1.2}.
 
In making sense of \eqref{NLS0} on $\T \times [-T, T]$ in the distributional sense, 
we need to assume that a solution $u(t)$ is a priori in $L^3(\T)$
for almost every $t \in [-T, T]$.
Thus, the usual construction of solutions
in low regularities
employs
an auxiliary space-time function space
$X_T$.\footnote{
There is a more relaxed notion of weak solutions 
in the extended sense \cite{CH1, CH2}.
In \cite{GKO}, we constructed
weak solutions in the extended sense  in $L^2(\T)$
without any auxiliary function space.
Moreover, the result in \cite{GKO} yields unconditional uniqueness in $H^s(\T)$
for $s \geq \frac 16$.
}
For example, the construction in \cite{BO1}
assumes a priori that solutions are in 
 $L^4_{x, t}(\T\times[-T, T]) \subset X^{0, \frac{1}{2}+}([-T, T])$.
Theorem \ref{THM:2}
asserts non-existence of solutions below $L^2(\T)$
{\it even if} we assume a priori that 
$u$ belongs to some auxiliary function space $X_T$.

Recently, there has been a significant development
in probabilistic construction of local-in-time and global-in-time solutions to 
dispersive and hyperbolic PDEs. 
See, for example, \cite{BTI, CO}. 
Such probabilistic arguments
often allowed us to go below
certain regularity thresholds such as a scaling critical regularity, 
below which the equations are known to be ill-posed deterministically.
We would like to point out that 
the ill-posedness result stated in  
Theorem \ref{THM:2}
is much stronger than ordinary ill-posedness
results such as the failure of (local uniform) continuity
of a solution map.
In particular, 
Theorem \ref{THM:2}
states that
 it is {\it not} possible
 to construct solutions
to 
the cubic NLS \eqref{NLS0}
for initial data below $L^2(\T)$
{\it even probabilistically}.

Lastly, let us compare the situation with the non-periodic case. 
Tsutsumi \cite{Tsutsumi} proved global well-posedness
of \eqref{NLS0} in $L^2(\R)$.
There are several results \cite{KPV, CCT1}
showing that 
the cubic NLS on $\R$ is `mildly  ill-posed' below $L^2$
in the sense that the solution map is not locally uniformly continuous if $s < 0$.
There is, however,  no ill-posedness result below $L^2$
for the cubic NLS on $\R$, contradicting either existence, uniqueness, or continuous dependence.
For example, 
while  Molinet's ill-posedness result \cite{MOLI} for the  cubic NLS on $\T$
is based on showing 
 discontinuity of the solution map
 on  $L^2(\T)$ endowed with weak topology, 
Goubet-Molinet
\cite{GM}
showed that 
the solution map for 
the cubic NLS on $\R$ is  weakly continuous on $L^2(\R)$.
Moreover, 
Christ-Colliander-Tao \cite{CCT3}
and Koch-Tataru \cite{KT1, KT2}
proved 
existence (without uniqueness)
of solutions to  the cubic NLS on $\R$ in negative Sobolev spaces.
Note that 
solutions constructed in \cite{CCT3, KT1, KT2}
satisfy the conditions (i) and (ii) in Theorem \ref{THM:2}, 
showing 
a sharp contrast between the periodic and non-periodic cases.

\subsection{The Wick ordered cubic NLS on $\T$}
\label{SUBSEC:1.2}
Given a  global solution $u \in C(\R; L^2(\T))$ to \eqref{NLS0}, 
we define the following invertible gauge transformation:
\begin{equation}
u(t) \mapsto
\mathcal{G}(u)(t) : = e^{\mp 2 i t \mu(u)} u(t),
\label{gauge}
\end{equation}

\noi
where $\mu(u)$ is defined by
\[ \mu (u) = \fint |u(x, t)|^2 dx := \frac{1}{2\pi} \int_\T |u(x, t)|^2 dx. \]

\noi
Thanks to the $L^2$-conservation, 
$\mu(u)$ is defined, independently of $t \in \R$,
as long as $u_0 \in L^2(\T)$.
A direct computation shows that the gauged function,  which we still denote by $u$, 
solves the following
 {\it Wick ordered cubic NLS}:
\begin{equation}
	\label{NLS1} 
	\begin{cases}
		i \dt u - \dx^2 u \pm ( |u|^2 -2  \fint |u|^2 dx) u = 0 \\
		u|_{t= 0} = u_0,
	\end{cases}
\quad 	(x, t)  \in \T \times \R.
\end{equation}

\noi
Conversely, given a global solution $u \in C(\R; L^2(\T))$
to \eqref{NLS1}, 
we see that $\mathcal{G}^{-1}(u)$ solves the original cubic NLS \eqref{NLS0}.
Such a gauge transformation, however, 
does not make sense below $L^2(\T)$
and thus
we cannot freely convert solutions of \eqref{NLS1} into solutions of \eqref{NLS0}.
In other words, 
the Wick ordered  cubic NLS \eqref{NLS1} arises from 
 the standard  cubic NLS \eqref{NLS0}
by choosing another gauge on the phase space
such that they are equivalent (only) in $L^2(\T)$. 
Note that this renormalization of the nonlinearity in \eqref{NLS1}
is canonical, appearing 
in Euclidean quantum field theory, 
and 
\eqref{NLS1}
first appeared in the work of Bourgain \cite{BO96}
for studying  the invariant Gibbs measure for the defocusing cubic NLS on $\T^2$.

This specific choice of  gauge for \eqref{NLS1}
 removes a certain singular component from the cubic nonlinearity.
 As a result, 
the Wick ordered cubic NLS \eqref{NLS1}  
is known to behave better than  the cubic NLS \eqref{NLS0} outside $L^2(\T)$.
In fact, the standard cubic NLS on $\R$
and the Wick ordered cubic NLS \eqref{NLS1} on $\T$ share many common features.
For example, just like the cubic NLS on $\R$ \cite{KPV, CCT1, GM}, 
the solution map 
for the Wick ordered cubic NLS \eqref{NLS1} on $\T$
is known
to be weakly continuous in $L^2$ \cite{OS}, 
while it is `mildly ill-posed' 
in the sense that the solution map is not
locally uniformly continuous below $L^2$ \cite{CO}.
See \cite{OS} for more discussion on this issue.
In particular, we proposed in \cite{OS} that 
the Wick ordered cubic NLS \eqref{NLS1}
is the right model to study outside $L^2(\T)$.
As with any renormalization procedure or gauge choice, we stress that this is a matter of choice, 
since \eqref{NLS0} and \eqref{NLS1}
are not equivalent outside $L^2$.
The examples in \cite{OS} and Theorem \ref{THM:2} above provide
supporting evidences for our choice.

Our main goal below is to establish
an a priori estimate on solutions to \eqref{NLS1}
and prove existence of solutions to \eqref{NLS1}
in negatives Sobolev spaces.

\begin{theorem}\label{THM:1}
Let $s \in (-\frac{1}{8}, 0)$.
Given $u_0\in H^s(\T)$, there exist
$T = T(\|u_0\|_{H^s})>0$
and a solution $u \in C([-T, T];  H^s(\T))$ to
the Wick ordered cubic NLS \eqref{NLS1}. 
\end{theorem}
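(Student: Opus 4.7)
The plan is to construct the solution as a strong $H^s$-limit of smooth global solutions. Fix $u_0\in H^s(\T)$ and pick a sequence of smooth functions $u_{0,n}\to u_0$ in $H^s(\T)$. By Bourgain's $L^2$ theory, the Wick ordered cubic NLS \eqref{NLS1} admits a unique smooth global solution $u_n\in C(\R;H^\infty(\T))$ with $u_n|_{t=0}=u_{0,n}$. The entire construction then reduces to proving two ingredients uniform in $n$: (i) an a priori bound $\|u_n\|_{L^\infty_TH^s}\le C(\|u_{0,n}\|_{H^s})$ on some time interval $[-T,T]$ with $T=T(\|u_0\|_{H^s})>0$; and (ii) a difference estimate showing $\{u_n\}$ is Cauchy in $C([-T,T];H^\sigma(\T))$ for some $\sigma<s$, together with the uniform bound allowing us to upgrade weak-$\ast$ convergence to strong convergence in $C([-T,T];H^s(\T))$ and verify that the limit satisfies \eqref{NLS1} in the distributional sense.

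To obtain the a priori bound, I would combine normal form reduction with the short-time Fourier restriction norm method. The short-time approach uses function spaces $F^s(T)$ and $N^s(T)$ built from $X^{s,b}$-type pieces localized to time windows of length $\sim \jb{N}^{-\theta}$ for frequency $N$, together with an energy space $E^s(T)$; the scheme yields an estimate of the form
\begin{equation*}
\|u_n\|_{F^s(T)}^2\lesssim \|u_n\|_{E^s(T)}^2+\|\mathcal{N}(u_n)\|_{N^s(T)}^2,\qquad
\|u_n\|_{E^s(T)}^2\lesssim \|u_{0,n}\|_{H^s}^2+\text{(energy increment)},
\end{equation*}
where $\mathcal{N}(u)=\pm(|u|^2-2\fint|u|^2)u$ is the Wick ordered nonlinearity. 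Because the renormalization in \eqref{NLS1} kills the fully resonant interactions with one factor coinciding, the remaining trilinear interactions carry a nontrivial phase $\Phi(n_1,n_2,n_3,n)=n^2-n_1^2+n_2^2-n_3^2=2(n-n_1)(n-n_3)$. I would exploit this by performing a normal form reduction (integration by parts in time on the Duhamel formula via the identity $e^{it\Phi}=\frac{1}{i\Phi}\partial_t e^{it\Phi}$) to trade one power of time derivative for the factor $\Phi^{-1}$ in the nearly-resonant regime, producing a quintilinear remainder together with boundary terms that are controlled directly in $H^s$.

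The key multilinear step is to prove the trilinear and quintilinear estimates in the short-time spaces for $s>-\frac18$. I would split the frequency interactions into high-low and high-high cases and, within each, apply orthogonality together with Strichartz and bilinear $L^2$ estimates on unit-length time intervals, dualized through the short-time norm. The divisor $\Phi^{-1}=(2(n-n_1)(n-n_3))^{-1}$ gained from normal form reduction yields roughly $\jb{N}^{-2}$ smoothing in the non-resonant region, which is precisely what compensates the two derivative-losses incurred by placing the output in $H^s$ with negative $s$; balancing these factors against the time-localization scale $\jb{N}^{-\theta}$ forces the regularity threshold $s>-\frac18$.

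The main obstacle will be the trilinear and quintilinear estimates in the short-time norms: we are well below the $L^2$-safe regime, so there is no room to waste derivatives, and the interplay among the short-time localization parameter $\theta$, the power of $\Phi^{-1}$ extracted by normal form, and the case analysis in the frequency geometry must be tuned very carefully to reach $s>-\frac18$. Once these multilinear bounds are established, the energy inequality produces a closed differential inequality for $\|u_n\|_{F^s(T)}$ which gives the uniform a priori bound on a short time interval; the analogous estimate for $u_n-u_m$ in a space of one lower derivative yields the Cauchy property, and the limit $u=\lim_n u_n$ is the required $C([-T,T];H^s(\T))$ solution to \eqref{NLS1}.
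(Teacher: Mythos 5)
Your plan for the a priori bound is essentially the paper's: short-time $X^{s,b}$-type spaces with frequency-dependent time localization, trilinear estimates via bilinear/Strichartz bounds dualized through the short-time norms, and a normal form step (integrating by parts in time to produce a boundary correction $\Ld_4^M$ and a sextilinear remainder, in the spirit of adding a correction term in the $I$-method) so that the divisor $\Phi(\bar n)^{-1}=(2(n-n_1)(n-n_3))^{-1}$ tames the near-resonant interactions in the energy increment. This does close at $s>-\frac18$ with the right choice $\al=-4s+$.

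The gap is in your step (ii). You propose to obtain the Cauchy property of $\{u_n\}$ in $C([-T,T];H^\sigma)$ for some $\sigma<s$ by running ``the analogous estimate for $u_n-u_m$'' one derivative lower. This is precisely what fails, and it is why the theorem asserts existence without uniqueness (see the remark immediately following the statement). The energy estimate for a single solution symmetrizes the quadrilinear sum over the four frequency slots, producing the symbol $\Psi_s(\bar n)=\jb{n_1}^{2s}-\jb{n_2}^{2s}+\jb{n_3}^{2s}-\jb{n_4}^{2s}$; this symbol vanishes on the resonant set and the (double) mean value theorem converts that vanishing into the crucial extra decay in the nearly resonant regime $|\Phi(\bar n)|\ll n_1^*$. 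For a difference $w=u_n-u_m$, expanding $\mathfrak N(u_n)-\mathfrak N(u_m)$ trilinearly in $w,u_n,u_m$ and pairing with $\cj w$ yields a quadrilinear form that is no longer symmetric in all four frequencies, so no analogue of $\Psi_s$ appears and the near-resonant terms are out of control below $L^2$. Lowering to $\sigma<s<0$ only moves further from $L^2$ and does not rescue this.

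The correct way to finish is by compactness, not Cauchy. The energy estimate in fact gains a small amount of smoothing (the multilinear bounds in the energy increment hold with $F^{s-\delta,\alpha}$ on the right-hand side for some small $\delta>0$), which yields a uniform-in-$n$ high-frequency tail estimate $\|\P_{>N}u_n\|_{C([-T,T];H^s)}\to 0$ as $N\to\infty$. Combined with equicontinuity in $t$ of $\P_{\leq N}u_n$ (Duhamel plus the trilinear estimate), Ascoli--Arzel\`a gives precompactness of $\{u_n\}$ in $C([-T,T];H^s)$; one then extracts a convergent subsequence and passes to the limit in the Duhamel formulation of \eqref{NLS1}. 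Uniqueness of the limit is genuinely left open. Replace step (ii) with this compactness argument.
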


Previously, 
Christ \cite{CH2} and Gr\"unrock-Herr \cite{GH}
proved local well-posedness of \eqref{NLS1}
in $\F L^{s, p}(\T)$ for $s = 0$ and $ p < \infty$,
where the Fourier-Lebesgue space  $\F L^p(\T)$ 
is defined by the norm 
\begin{align}
\| f\|_{\F L^{s, p}(\T)} = \| \jb{n}^s \ft f(n)\|_{\l^p(\Z)}.
\label{FL}
\end{align}

\noi
Note that $\F L^{0, p}(\T) \supsetneq L^2(\T)$ for $p > 2$.
In the context of negative  Sobolev spaces $H^s(\T)$, $s < 0$, 
Colliander-Oh \cite{CO} 
proved almost sure local well-posedness
for $s > -\frac 13$
and almost sure global well-posedness
for $s > -\frac {1}{12}$
with respect to the canonical Gaussian
measures on $H^s(\T)$. 
In Theorem \ref{THM:1}, 
we claim only existence of solutions 
to \eqref{NLS1}
in negatives Sobolev spaces, i.e.~the uniqueness
issue remains open.
This is analogous to the situation for 
the standard cubic NLS on $\R$
\cite{CCT3, KT1, KT2}.
The proof of Theorem \ref{THM:1}
occupies most of the remaining part of this paper:
Sections \ref{SEC:notations}-\ref{SEC:existence}.
Once we prove Theorem \ref{THM:1}, 
the proof of Theorem \ref{THM:2} 
immediately follows
from the a priori bound on solutions to \eqref{NLS1}
in negative Sobolev spaces
and inverting the gauge transformation \eqref{gauge}.
See Section \ref{SEC:non-existence}.

The proof of Theorem \ref{THM:1} is based
on establishing an a priori bound
on (smooth) solutions to \eqref{NLS1}
in negative Sobolev spaces
via the short-time Fourier restriction norm method.
Then, we use a compactness argument to construct 
solutions to \eqref{NLS1} (without uniqueness).
Here,  the short-time Fourier restriction norm method
simply means that we use 
dyadically defined  $X^{s, b}$-type spaces
with suitable localization in time,
depending on the dyadic size of spatial frequencies.
A precursor of this method
appears in the work of Koch-Tzvetkov \cite{KTz}, 
where 
localization in time was combined with the Strichartz norms.
This method has been very effective 
in establishing a priori bounds on solutions
in low regularities (yielding even uniqueness in some cases), 
in particular, where a solution map
is known to fail to be locally uniformly continuous.
See \cite{CCT3, IKT, KT1, Guo1, KT2, GKK, KP}. 

Let us briefly discuss the difference between the usual Fourier restriction norm method
and the short-time Fourier restriction norm method.
Consider the following model evolution equation: $u_t - L u = \N(u)$,
where $\N(u)$ is a homogenous nonlinearity of degree $p$.
Then, 
the usual Fourier restriction norm method
requires the following two estimates:
\begin{align*}
   \text{Linear:}& 
&    \| u \|_{X^s(T)}  &  \les \|u_0\|_{H^s} +  \| \N(u)\|_{N^s(T)},  & \\
   \text{Nonlinear:}& 
&  \| \N(u) \|_{N^s(T)}  & \les \|u\|_{X^s(T)}^p,& 
\end{align*}

\noi
where  $X^s(T) = X^{s, b}([-T, T])$
and $N^s(T)= X^{s, b-1}([-T, T])$,  $b > \frac 12$, 
in the simplest setting.
Then, a fixed point argument yields well-posedness in $H^s$.\footnote{Strictly speaking, 
one needs to gain a smallness factor $T^\theta$, $\theta > 0$, 
or assume small data to close the argument.
For simplicity, however, we present only the essential part of the estimates.}
With the short-time Fourier restriction norm method, 
one now needs to establish the following three estimates:
\begin{align}
   \text{Linear:}& 
&    \| u \|_{F^s(T)}  &  \les \|u\|_{E^s(T)} +  \| \N(u)\|_{N^s(T)},  \label{Alinear}
 & \\
   \text{Nonlinear:}& 
&  \| \N(u) \|_{N^s(T)}  & \les \|u\|_{F^s(T)}^p,  \label{Anonlinear}
& \\
   \text{Energy:}& 
&    \| u \|_{E^s(T)}^2  &  \les \|u_0\|_{H^s}^2 + \|u\|_{F^s(T)}^{p+1}, \label{Aenergy}  
&
\end{align}

\noi
where $E^s(T) \thickapprox L^\infty([-T, T]; H^s(\T))$.
Then, these three estimates yield an a priori bound
on solutions in $H^s$, which allows us to prove existence of solutions 
by 
a compactness argument.
In order to improve the regularity $s$, 
one may apply a normal form reduction
and establish a better energy estimate 
at the expense of introducing higher order nonlinear terms. 
This is analogous to 
considering higher ordered modified energies
in the $I$-method developed by Colliander-Keel-Staffilani-Takaoka-Tao
\cite{CKSTT2, CKSTT1}.
See Section \ref{SEC:energy}.
In the following, we use the formulation introduced by
Ionescu-Kenig-Tataru \cite{IKT}.
In Section \ref{SEC:spaces}, 
we provide
 the precise definitions of the function spaces
 $F^s(T), N^s(T)$, and $E^s(T)$.

\begin{remark}\rm 
In order to establish uniqueness of the solutions constructed in Theorem \ref{THM:1}, 
one needs to establish an energy estimate $\| u - v\|_{E^s(T)}$
on the difference of two solutions $u$ and $v$.
On the one hand, 
the conservation of the $L^2$-norm
hints an almost conservation of the energy $\|u\|_{E^s(T)}$ for $s < 0$, 
analogously to 
 the $I$-method.
On the other hand, the $L^2$-norm of 
the difference of two solutions $u$ and $v$
is not conserved under \eqref{NLS1}.
This provides an essential difficulty in establishing 
an energy estimate on the difference of two solutions for $s < 0.$

\end{remark}

\begin{remark}\rm
For $u_0 \notin L^2(\T)$, 
the renormalized nonlinearity in \eqref{NLS1}
can be formally written as
\[(|u|^2 -2 \cdot \infty ) u = 0 .\]

\noi
What is important in this renormalization procedure
is to {\it subtract off the  right amount of infinity}.
For example, a gauge transformation:
$\mathcal{G}(u)(t) : = e^{\mp  i\g t  \mu(u)} u(t)$
with a parameter $\g\in \R$
gives rise to the following renormalized cubic NLS:
\begin{align}
\textstyle i \dt u - \dx^2 u \pm ( |u|^2 -\g  \fint |u|^2 dx) u = 0. 
\label{IntroWNLS}
\end{align}

\noi
For $u_0\in L^2(\T)$, 
\eqref{IntroWNLS} is equivalent to \eqref{NLS0} and \eqref{NLS1}.
If $u_0 \notin L^2(\T)$, however, 
they are not equivalent.
In particular, 
if $u_0 \in H^s(\T)\setminus L^2(\T)$ for some $s \in (-\frac 18, 0)$, 
then 
a slight modification of the proof of Theorem \ref{THM:2}
yields non-existence of solutions 
for \eqref{IntroWNLS} unless $\g = 2$.
This shows that ``$2\cdot \infty$'' is the right amount to subtract
in the renormalization procedure.	
See also \cite{OS} 
for the quantum field theoretic derivation of \eqref{NLS1}.

\end{remark}

\begin{remark}\label{REM:4NLS}
\rm

One can easily modify 
the argument in this paper
and show that Theorem \ref{THM:2} also holds for 
 the following fourth order dispersive analogue
of the cubic NLS:
\begin{equation}
    \label{4NLS1}
        i  \dt u +  \dx^{4} u   \pm |u|^2 u = 0, 
    \qquad (x, t)  \in \T \times \R.
\end{equation}

\noi
On the one hand, by establishing a relevant $L^4$-Strichartz estimate
and combining with the $L^2$-conservation, 
one can show that \eqref{4NLS1} is 
globally well-posed in $L^2(\T)$.
On the other hand, 
the example in \cite{BGT, CCT1}
basically yields the failure of local uniform continuity of the solution map in $H^s(\T)$
to \eqref{4NLS1}
if $s < 0$.
By considering the  Wick ordered version of the fourth order cubic NLS:
\begin{equation}
    \label{4NLS2}
\textstyle        i  \dt u +  \dx^{4} u 
        \pm ( |u|^2 -2  \fint |u|^2 dx) u = 0, 
\end{equation}

\noi
one can show that 
Theorem \ref{THM:1} also holds for \eqref{4NLS2} (even for lower values of $s<0$).
This essentially  follows from the following algebraic observation:
under $n = n_1 - n_2 + n_3$, we have 
\begin{align}
\Phi_4(\bar{n}):&  = - n^{4} + n_1^{4} - n_2^{4} + n_3^{4} \notag \\
&  = -  (n - n_1)(n-n_3) 
\big( n_1^2 +n_2^2 +n_3^2 +n^2 + 2(n_1 +n_3)^2\big).
\label{PhiX}
\end{align}

\noi
Compare this with the phase function $\Phi(\bar n)$  for the cubic NLS defined in \eqref{Phi} below.
In particular, 
the only resonant contribution, i.e. $\Phi_4(\bar{n}) = 0$, 
comes from $n = n_1$ or $n = n_3$, just like the cubic NLS.
Moreover, 
we have  $|\Phi_4(\bar{n})| \ges |\Phi(\bar{n})|$
for any $(n_1, n_2, n_3, n) \in \Z^4 $
with $n = n_1 - n_2 + n_3$.
As a corollary to Theorem \ref{THM:1} for \eqref{4NLS2}, 
we see that 
Theorem \ref{THM:2} also holds for the fourth order cubic NLS \eqref{4NLS1}.
See \cite{OT, OW} for more discussions on \eqref{4NLS1}
and \eqref{4NLS2}.
Lastly, consider the higher order dispersive analogue of \eqref{NLS0}:
\begin{equation*}
        i  \dt u +  (i \dx)^{k} u   \pm |u|^2 u = 0. 
\end{equation*}

\noi
When $k$ is an odd integer, 
there is (at least) another resonant contribution
 coming from $n = -n_2$
 and thus Theorems \ref{THM:2} and \ref{THM:1}
are not directly applicable in this case.
When $k$ is an even integer, 
it may be of interest to investigate 
if  the phase function 
$\Phi_k(\bar{n}):  = (-1)^\frac{k}{2}\big(-  n^{k} + n_1^{k} - n_2^{k} + n_3^{k}\big)$
admits a factorization similar to \eqref{PhiX} and \eqref{Phi}
such that 
results analogous to 
Theorems \ref{THM:2} and \ref{THM:1} may hold.
We, however, do not pursue  this issue further  here.

\end{remark}

This paper is organized as follows.
In 
Sections \ref{SEC:notations} and \ref{SEC:spaces}, 
we introduce
notations and the function spaces
along with their basic properties.
We prove the linear estimate \eqref{Alinear} in 
Section
\ref{SEC:linear}
and the key trilinear estimate \eqref{Anonlinear}, with $p = 3$, in 
Sections
\ref{SEC:Strichartz}
and 
\ref{SEC:trilinear}.
The energy estimate \eqref{Aenergy}
with a higher order correction term is established in 
Section
\ref{SEC:energy}.
In Section
\ref{SEC:existence}, 
we combine the estimates \eqref{Alinear}, \eqref{Anonlinear},
and \eqref{Aenergy} and prove Theorem \ref{THM:1}.
Finally, we use the a priori estimate on solutions from Theorem \ref{THM:1}
and 
prove Theorem \ref{THM:2}
in 
Section
\ref{SEC:non-existence}.

Some of the estimates presented in this paper,
in particular those in  
 Sections \ref{SEC:spaces} and \ref{SEC:linear}, 
are by now standard.
For the sake of completeness of the paper, we present the proofs in details.
Since our argument is of local-in-time nature, 
there is no difference between
the focusing and defocusing cases.
Hence, 
we assume that \eqref{NLS1} is defocusing (with the $+$ sign in \eqref{NLS0} and \eqref{NLS1})
in the following.

\smallskip

\noi
{\bf Acknowledgments:} 
The authors would like to thank Carlos Kenig 
and Didier Pilod
for helpful discussions, in particular on 
Lemma \ref{LEM:Kconti}.
T.O.~would like to thank Nikolay Tzvetkov
for our discussion on the fourth order NLS, which motivated the addition of Remark \ref{REM:4NLS}. 
The authors are grateful to the anonymous referees 
for their helpful comments.
Z.G.~was supported in part by NNSF  of China (No.11371037, No.11271023) and Beijing Higher Education Young Elite Teacher Project.
T.O.~was supported by the European Research Council (grant no.~637995 ``ProbDynDispEq'').

\section{Notations}
\label{SEC:notations}

For $a, b > 0$, we use $a\lesssim b$ to mean that
there exists $C>0$ such that $a \leq Cb$.
By $a\sim b$, we mean that $a\lesssim b$ and $b \lesssim a$.
We also use $a+$ (and $a-$) to denote $a + \eps$ (and $a - \eps$), respectively, for arbitrarily small $\eps \ll 1$.

Given $u \in \mathcal{S}'$, we
use $\ft{u}$ and $\F (u)$
to denote the space-time Fourier transform of $u$ given by
\[ \ft{u}(n, \tau) = \int_{\T\times\R} e^{- inx} e^{-it\tau} u(x, t) dx dt.\]

\noi
Moreover, we use $\F_x$ and $\F_t$
to denote the Fourier transforms with respect
to the spatial and temporal variables, respectively.
When there is no confusion,
we may simply use $\ft{u}$ or $\F(u)$
to denote
the spatial, temporal, or space-time Fourier transform
of $u$, depending on the context.
In dealing with the spatial Fourier transform,
we often denote $\ft{u}(n, t)$
by $\ft{u}_n(t)$.

For $k \in \Z_+: = \Z\cap [0, \infty)$,
let $I_0 = \{\xi: |\xi| < 1\}$
and
$I_k = \{ \xi : 2^{k-1} \leq |\xi|  < 2^k\}$ if $k \geq 1$.
For $k \in \Z_+$ and $j \geq 0$, let
\[ D_{k, j} = \big\{ (n, \tau) \in \Z\times \R: \ n \in I_k, \ \tau - n^2 \in I_j \big\} \]

\noi
and $D_{k, \leq j} = \bigcup_{j' \leq j} D_{k, j'}$.
Lastly, we define $D_{\leq j}$ by
\begin{equation*}
D_{\leq j} =  \bigcup_{k = 0}^\infty D_{k, \leq j}.
\end{equation*}

We use $\P_k$ to
denote the projection operator on $L^2(\T)$
defined by
$\ft{\P_k u}(n) = \ind_{I_k}(n) \ft{u}(n)$.
With a slight abuse of notation, we also use
 $\P_k$ to denote the projection operator on $L^2(\T \times \R)$
given by $\F(\P_k u)(n, \tau) = \ind_{I_k}(n) \F(u)(n, \tau)$.
For $\ell \in \Z$, let
\[ \P_{\leq \ell } = \sum_{0\leq k \leq \ell}\P_k
\qquad \text{and}\qquad \P_{\geq \ell } = \sum_{ k \geq \ell}\P_k.\]

Let $\eta_0: \R \to [0, 1]$ be an even smooth cutoff function
supported on $[-\frac{8}{5}, \frac{8}{5}]$
such that $\eta_0 \equiv 1$ on $[-\frac{5}{4}, \frac{5}{4}]$.
We define $\eta$ by $\eta(\xi) = \eta_0( \xi) - \eta_0(2\xi)$,
and set  $\eta_k(\xi) = \eta(2^{-k}\xi)$ for $k \in \Z$.
Namely, $\eta_k$ is supported on
$\{ \frac{5}{4} \cdot 2^{k-1}  \leq |\xi|\leq \frac{8}{5} \cdot 2^k \}$.
As before, we define
$\eta_{\leq \ell} = \sum_{k \leq \ell} \eta_k$, etc.

Given a set of indices such as
$j_i$ and $k_i$, $i = 1, \dots, 4$,
we use
 $j^*_i$ and $k^*_i$ to denote the decreasing rearrangements
of these indices.
Also, given a set of frequencies
$n_i$, $i = 1, \dots, 4$,
we use
 $n^*_i$ to  denote the decreasing rearrangements
of $|n_i|$, $i = 1, \dots, 4$.

In the following, we use $S(t)= e^{-i t \dx^2}$
to denote the solution operator to
the linear Schr\"odinger equation: $i \dt u - \dx^2 u = 0$.
Namely, for $\phi \in L^2(\T)$, we have
\[ S(t) \phi = \sum_{n\in \Z} e^{inx +in^2 t} \ft{\phi}(n).\]

Lastly, let us discuss the renormalized nonlinearity in \eqref{NLS1}.
The nonlinearity on the right-hand side of \eqref{NLS1} can be written as
\begin{align*}  
\textstyle  \mathfrak N(u)= \mathfrak N(u, u, u)
 :&=  \big( |u|^2 - 2 \fint \ |u|^2 dx\big)u\notag\\
 &=:  \N (u, u, u) - \RR (u, u, u),
\end{align*}

\noi
where the non-resonant part $\N$ and the resonant part $\RR$ are defined by
\begin{align}
\label{NN1}
& \N(u_1, u_2, u_3) (x, t) = \sum_{n_2 \ne n_1, n_3} \ft{u}_1(n_1, t)\cj{\ft{u}_2(n_2, t)}\ft{u}_3(n_3, t) e^{i(n_1-n_2+n_3)x}, \\
\label{NN2}
& \RR(u_1, u_2, u_3) (x, t) = \sum_n \ft{u}_1(n, t)\cj{\ft{u}_2(n, t)}\ft{u}_3(n, t) e^{inx}.
\end{align}

\noi
Here, the condition $n_2 \ne n_1, n_3$ in the sum for $\mathcal{N}(u)$
is a shorthand notation for $n_2 \ne n_1$ and $n_2 \ne n_3$.

In establishing an energy estimate in Section \ref{SEC:energy}, we  
use the following interaction representation $a$ (of $u$) on $\T\times \R$:
\begin{align}
a(t) := S(-t) u(t) = e^{it\dx^2} u(t).
\label{interaction}
\end{align}

\noi
On the Fourier side, we have  $\ft{a}_n(t) = e^{-it n^2} \ft u_n(t)$, $n \in \mathbb{Z}$.
For simplicity of notations, we use $a_n(t)$ to denote $\ft a_n(t)$ in the following.
The use of the interaction representation 
allows us to illustrate
the connection between the Poincar\'e-Dulac normal form
reduction discussed in \cite{GKO}
and the method of adding correction terms in the $I$-method \cite{CKSTT2, CKSTT1}.
With this notation, 
\eqref{NLS1} can be written as
\begin{align} \label{NLS2}
\dt a_n & = i
\sum_{\substack{n = n_1 - n_2 + n_3\\ n_2\ne n_1, n_3} }
e^{- i \Phi(\bar{n})t }
a_{n_1} \cj{a}_{n_2}a_{n_3}
- i|a_n|^2 a_n
 \notag \\
& =: i\,  \textsf{N}(a)(n, t) -i\,  \textsf{R}(a)(n, t),
 \end{align}

\noi
where the phase function $\Phi(\bar{n})$ is defined by
\begin{align}\label{Phi}
\Phi(\bar{n}):& = \Phi(n_1, n_2, n_3, n) = n^2 - n_1^2 + n_2^2- n_3^2 \notag \\
& = 2(n_2 - n_1) (n_2 - n_3)
= 2(n - n_1) (n - n_3).
\end{align}

\noi
Here,  the last two equalities hold under $n = n_1 - n_2 + n_3$.
Noting that 
\[\textsf{N}(a)(n, t) = e^{-in^2 t} \F(\N(u))(n, t)
\qquad 
\text{and}
\qquad 
\textsf{R}(a)(n, t) = e^{-in^2 t} \F(\RR(u))(n, t),\]

\noi
it follows from \eqref{Phi} that
$\N$ defined in \eqref{NN1} indeed corresponds to the non-resonant part (i.e. $\Phi(\bar{n})\ne 0$) of the nonlinearity
$\mathfrak{N}(u)$
and $\RR$ defined in \eqref{NN2} corresponds to the resonant part.

\section{Function spaces and their basic properties}
\label{SEC:spaces}

In \cite{BO1}, Bourgain introduced the weighted space-time Sobolev spaces
called the $X^{s, b}$-spaces via the norm:
\begin{align}\label{Xsb1}
\| u\|_{X^{s, b}(\T\times \R)} = 
\|\jb{n}^s \jb{\tau - n^2}^b \ft {u}(n, \tau)\|_{\l^2_n L^2_\tau(\Z\times\R)}.
\end{align}

\noi
In terms of the interaction representation defined in \eqref{interaction}, 
we simply have 
$\| u\|_{X^{s, b}(\T\times \R)} =
\| \jb{\dx}^s \jb{\dt}^b a\|_{L^2(\T\times \R)}$.
The $X^{s, b}$-spaces and their variants have been very effective in 
studying nonlinear evolution equations in low regularity settings.
In the following, we define the $X^{s, b}$-spaces
adapted to short time scales.
These spaces were first introduced by
Ionescu-Kenig-Tataru \cite{IKT}
in the context of the KP-I equation.
Also, see Christ-Colliander-Tao
\cite{CCT3} for similar definitions.

For $k \in \Z_+$, we define the dyadic $X^{s, b}$-type spaces $X_k(\Z\times \R)$:
\begin{align} \label{Xk}
X_k = \Big\{
& f_k \in L^2(\Z \times \R): f_k(n,\tau) \text{ is supported on } I_k \times \R, \notag \\
& \text{ and }
\|f_k \|_{X_k}:=\sum_{j=0}^\infty
2^\frac{j}{2} \|\eta_j(\tau-n^2) f_k(n ,\tau)\|_{\ell^2_n L^2_{\tau}}<\infty
\Big\}.
\end{align}

\noi
We list some properties of the space $X_k$.
It follows easily from the definition
that if  $f_k\in X_k$ for some $k \in \Z_+$, then
\begin{align}\label{Xk1}
\bigg\| \int_{\R} |f_k(n,\tau )| d\tau \bigg\|_{\ell^2_n}
\les
\|f_k\|_{X_k}.
\end{align}

\noi
Letting $g_k(n, \tau) = f_k(n, \tau + n^2)$, we have
\begin{equation}
\int_\R \|g_k(n, \tau)\|_{\l^2_n} d\tau \les\|f_k\|_{X_k}.
\label{Xk1a}
\end{equation}

\noi
Moreover, for  $k,\ell \in \Z_+$ and $f_k \in X_k$, we have
then
\begin{align}\label{Xk2}
\sum_{j=\ell+1}^\infty 2^\frac{j}{2}
\bigg\| \eta_j(\tau-n^2) \int_{\R} |f_k(n,\tau')|
\, 2^{-\l}(1+2^{-\l}|\tau-\tau'|)^{-4}d\tau'\bigg\|_{\l^2_n L^2_\tau } &\notag \\
+2^\frac{\ell}{2} \bigg\| \eta_{\leq \l}(\tau-n^2) \int_{\R} |f_k(n,\tau')|
\, 2^{-\l}(1+2^{-\l}|\tau-\tau'|)^{-4}d\tau'\bigg\|_{\l^2_n L^2_\tau }&\les
\|f_k\|_{X_k},
\end{align}

\noi
where the implicit constant is independent of $k$ and $\l$.
See \cite{Guo1} for the proof of \eqref{Xk2}.
In particular,
for $k,\ \l\in \Z_+$, $t_0\in \R$, $f_k \in X_k$ and
$\g\in \mathcal{S}(\R)$,
we have
\begin{align}\label{Xk3}
\big\| \F[\gamma(2^\l(t-t_0))\cdot \F^{-1}(f_k)]\big\|_{X_k}
\les \|f_k\|_{X_k}.
\end{align}

\noi
Note that the implicit constant in 
\eqref{Xk3}
is also independent of $k, \l$, and $t_0$.

At spatial frequencies $|n|\sim 2^k$,
we will use the $X^{s, b}$-structure given by the
$X_k$-norm, localized on the  time scale $\sim 2^{-[\alpha k]}$, where
$\alpha>0$ is to be determined later.
Here, $[x]$ denotes the integer part of $x$.
For $k\in \Z_+$ we define the
spaces $F_k^\al$ and $N^\al_k$ by
\begin{align*}
F_k^\al=\Big\{
& u \in L^2(\T\times\R): \ft{u }(n,\tau) \text{ is supported in }{I}_k\times\R, \\
& \text{ and }\|u\|_{F_k^\al}=\sup_{t_k\in \R}
\big\|\F[  \eta_0(2^{[\alpha k]}(t-t_k))\cdot u ]\big\|_{X_k}<\infty
\Big\},
\\
N_k^\al=\Big\{
& u \in L^2(\T\times\R): \ft{u}(n,\tau) \text{ is supported in }{I}_k\times\R, \\
& \text{ and }
\|u\|_{N_k^\al}=\sup_{t_k\in\R}
\big\|(\tau-n^2+i2^{[\alpha k]})^{-1}\F[ \eta_0(2^{[\alpha k]}(t-t_k)) \cdot u ]\big\|_{X_k}<\infty
\Big\}.
\end{align*}

\noi
Next, we  define local-in-time versions of these spaces
in the usual way.
 For $T\in (0,1]$,  we define the local-in-time spaces
 $F_k^\alpha(T)$ and $N_k^\alpha(T)$ by
\begin{align*}
F_k^\alpha(T)
& =\big \{ u \in C([-T,T];L^2(\T)):
\|u\|_{F_k^\al(T)}=\inf_{\wt{u}=u \text{ on } \T\times [-T,T]}\|\wt u \|_{F_k^\alpha}\big\}, \\
N_k^\alpha(T)
& =\big\{ u \in C([-T,T];L^2(\T)):
\|u\|_{N_k^\alpha(T)}=\inf_{\wt{u}= u \text{ on } \T\times [-T,T]}\|\wt u\|_{N_k^\alpha}\big\}.
\end{align*}

\noi
Here, the infimum is taken over all extensions $\wt u \in C_0(\R; L^2(\T))$.
So far, we have defined the dyadic function spaces.
We now assemble these dyadic spaces in a straight forward manner
using the Littlewood-Paley decomposition.
For $s\in \R$ and $T\in (0,1]$, we define the spaces
$F^{s,\alpha}(T)$ and $N^{s,\alpha}(T)$ by
\begin{align*}
F^{s,\alpha}(T)
& = \Big\{ u:\
\|u\|_{F^{s,\alpha}(T)}^2=\sum_{k=0}^{\infty}2^{2sk}\|\P_k u\|_{F_k^\alpha(T)}^2<\infty \Big\}, \\
N^{s,\alpha}(T)
& =\Big \{ u:\
\|u\|_{N^{s,\alpha}(T)}^2=\sum_{k=0}^{\infty}2^{2sk}\|\P_k u\|_{N_k^\alpha(T)}^2<\infty \Big\}.
\end{align*}

\noi
In order to deal with  these short-time spaces
$F^{s,\alpha}(T)$ and $N^{s,\alpha}(T)$, we need to
define the corresponding energy space.
For $s\in \R$ and $u\in
C([-T,T];H^{\infty}(\T))$, let
\begin{align*}
\|u\|_{E^{s}(T)}^2=\|\P_{0}u(0)\|_{L^2(\T)}^2
+\sum_{k\geq1}\sup_{t_k\in [-T,T]}2^{2sk}\|\P_k u(t_k)\|_{L^2(\T)}^2.
\end{align*}

\noi
Note that the energy space $E^s(T)$ is independent of the parameter $\alpha>0$.
This space is essentially the usual energy space $C([-T,T];H^s(\T))$ but
with a logarithmic difference.

\medskip

We conclude this section by stating basic embeddings involving the $F^{s, \al}$-spaces.
It follows immediately from \eqref{Xk3} that if $\alpha_1\geq \alpha_2$,
then we have $F^{s,\alpha_2}(T)\subset F^{s,\alpha_1}(T)$.

The following lemma shows that 
a smooth time cutoff supported on a interval of size $\sim 2^{-[\al k]}$
acts boundedly on $N^\al_k$.

\begin{lemma}\label{LEM:embed3}
Let $\al \geq 0$, $k\in \Z_+$, $t_k\in \R$, and $\g \in \mathcal{S}(\R)$.
Then, we have 
\begin{align}
\big\| (\tau - n^2 + i 2^{[\al k]})^{-1}\F [
\g(2^{[\al k]}(t-t_k))\cdot \F^{-1}(f_k)]\big\|_{X_k} 
\les \big\|(\tau - n^2 + i 2^{[\al k]})^{-1} f_k\big\|_{X_k}
\label{embed3}
\end{align}

\noi
for $f_k$ supported on $I_k \times \R$.
Here, the implicit constant is independent
of $\al, k,$ and $t_k$.
\end{lemma}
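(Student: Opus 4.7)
My plan is to exploit a commutator identity to reduce the desired estimate to two invocations of the earlier time-cutoff bound \eqref{Xk3}. Write $M := [\al k]$, $h(t) := \gamma(2^M(t - t_k))$, and let $L := -i\dt + \dx^2 + i 2^M$ denote the differential operator whose space-time Fourier symbol is precisely $\tau - n^2 + i 2^M$. Since $h$ depends only on $t$ and $\dx^2$ commutes with multiplication by $h$, a direct computation yields
\[ L(h u) = h\, L u - i h'\, u, \qquad \text{equivalently},\qquad h\, L u = L(h u) + i h' u, \]
valid for any tempered distribution $u$.

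I apply this with $v_k := \F^{-1}\bigl[(\tau - n^2 + i 2^M)^{-1} f_k\bigr]$, so that $L v_k = \F^{-1}(f_k)$. Taking the space-time Fourier transform of both sides of $h\, L v_k = L(h v_k) + i h' v_k$, I obtain
\[ \F\bigl[ h \, \F^{-1}(f_k) \bigr] = (\tau - n^2 + i 2^M)\, \F[h v_k] + i\, \F[h' v_k], \]
and hence, after dividing by $(\tau - n^2 + i 2^M)$,
\[ (\tau - n^2 + i 2^M)^{-1} \F\bigl[ h\, \F^{-1}(f_k)\bigr] = \F[h v_k] + i\, (\tau - n^2 + i 2^M)^{-1} \F[h' v_k]. \]
It therefore suffices to bound each of the two terms on the right in $X_k$ by $\bigl\|(\tau - n^2 + i 2^M)^{-1} f_k\bigr\|_{X_k}$.

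The first term is handled directly by \eqref{Xk3}, applied with $\l = M$, $t_0 = t_k$, and input $\F(v_k) = (\tau - n^2 + i 2^M)^{-1} f_k$, giving $\|\F[h v_k]\|_{X_k} \les \|(\tau - n^2 + i 2^M)^{-1} f_k\|_{X_k}$. For the second term I use that $h'(t) = 2^M \wt\gamma(2^M(t - t_k))$ with $\wt\gamma := \gamma' \in \S(\R)$, and invoke \eqref{Xk3} once more (now with $\wt\gamma$ in place of $\gamma$) to get
\[ \| \F[h' v_k]\|_{X_k} \les 2^M \bigl\|(\tau - n^2 + i 2^M)^{-1} f_k\bigr\|_{X_k}. \]
The trivial pointwise estimate $|(\tau - n^2 + i 2^M)^{-1}| \leq 2^{-M}$, applied termwise in the dyadic modulation decomposition $\{\eta_j(\tau - n^2)\}$ defining $\|\cdot\|_{X_k}$, yields $\|(\tau - n^2 + i 2^M)^{-1} F\|_{X_k} \leq 2^{-M}\|F\|_{X_k}$ for any $F$ supported on $I_k \times \R$. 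Choosing $F = \F[h' v_k]$ cancels the two powers of $2^{\pm M}$, and summing the two contributions establishes \eqref{embed3} with implicit constant depending only on the Schwartz seminorms of $\gamma$ and $\gamma'$, hence independent of $\al$, $k$, and $t_k$.

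The only step that looks potentially dangerous is the second one, where differentiating the cutoff $h$ naively costs a factor of $2^M$; this is exactly balanced by the $2^{-M}$ decay of the resolvent-type weight, which is the key dimensional reason the estimate holds uniformly in $\al$ and $k$. The remaining bookkeeping, namely that $v_k$, $h v_k$, and $h' v_k$ all retain spatial Fourier support in $I_k$ so that \eqref{Xk3} applies, is immediate since the multiplier $(\tau - n^2 + i 2^M)^{-1}$ and the time-function $h$ preserve spatial frequencies.
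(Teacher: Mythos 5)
Your argument is correct, and it takes a genuinely different route from the paper's. The paper's proof is a weight-transference argument at the level of the $\tau$-convolution: multiplication by $\g(2^{[\al k]}(t-t_k))$ becomes convolution with a kernel dominated by $2^{-[\al k]}(1+2^{-[\al k]}|\tau-\tau'|)^{-4}$, and the pointwise inequality \eqref{embed3a} moves the weight $(\tau-n^2+i2^{[\al k]})^{-1}$ from $\tau$ to $\tau'$ at the cost of one power of $(1+2^{-[\al k]}|\tau-\tau'|)$, which the kernel decay absorbs; one then concludes by \eqref{Xk2}. You instead make the near-commutation exact via the Leibniz identity $L(hu)=hLu-ih'u$ for the operator with symbol $\tau-n^2+i2^{[\al k]}$, which splits the left-hand side into a main term handled by \eqref{Xk3} and a commutator term in which the derivative's cost $2^{[\al k]}$ is exactly cancelled by the trivial resolvent bound $|(\tau-n^2+i2^{[\al k]})^{-1}|\le 2^{-[\al k]}$ (which, as you note, passes through the $\l^1_j\l^2_nL^2_\tau$ structure of the $X_k$-norm since it is a pointwise multiplier). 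Since \eqref{Xk3} is itself a consequence of \eqref{Xk2}, both proofs ultimately rest on the same kernel estimate, but your decomposition avoids \eqref{embed3a} entirely and makes the dimensional balance $2^{[\al k]}\cdot 2^{-[\al k]}$ explicit; the price is an extra application of \eqref{Xk3} with $\g'$ in place of $\g$, which is harmless because the implicit constants there depend only on finitely many Schwartz seminorms of the fixed cutoff and not on $\al$, $k$, or $t_k$.
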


\begin{proof}
First, note that 
\begin{equation}\label{embed3a}
|\tau - n^2 + i 2^{[\al k]}|^{-1}
(1+2^{-[\al k]}|\tau - \tau'|)^{-1}
\les |\tau' - n^2 + i 2^{[\al k]}|^{-1}.
\end{equation}

\noi
Then, \eqref{embed3}
follows from \eqref{Xk2} and \eqref{embed3a}.
\end{proof}

The next lemma shows that the 
$F^{\al}_k$-norm controls
the $L^\infty_t L^2_x$-norm
of a dyadic piece.

\begin{lemma}\label{LEM:infty}
Let $u$ be a function on $\T\times \R$ such that
$\supp \ft u  \subset I_k \times \R$.
Then, we have
\begin{align}
\| u \|_{L^\infty_t L^2_x} \les \|u\|_{F^\al_k}
\label{infty1}
\end{align}

\noi
for any $\al > 0$.
Similarly, we have
\begin{align}
\big\| \F^{-1}[\eta_{\leq j}(\tau - n^2) \ft{u}] \big\|_{L^\infty_t L^2_x}
\les \|u\|_{F^\al_k}
\label{infty2}
\end{align}

\noi
for any $j \in \Z_+$.
Here, \eqref{infty2} also holds
when we replace $\eta_{\leq j}$ by $\eta_j$ or $\eta_{\geq j}$.
\end{lemma}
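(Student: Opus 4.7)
The plan is to prove both bounds by combining short-time localization at scale $2^{-[\al k]}$ with the embedding $X_k \hookrightarrow \ell^2_n L^1_\tau$ implicit in \eqref{Xk1}. For \eqref{infty1}, I would fix $t_0 \in \R$ and set $\psi(t) = \eta_0(2^{[\al k]}(t-t_0))$, so that $\psi(t_0)=1$ and hence $u(t_0) = (\psi u)(t_0)$. Fourier inversion in $\tau$ on each spatial mode, together with Plancherel and the triangle inequality in $\ell^2_n$, yields
\begin{equation*}
\|u(t_0)\|_{L^2_x} = \Big\| \int_\R e^{it_0\tau}\, \F(\psi u)(n,\tau)\, d\tau \Big\|_{\ell^2_n}
\leq \Big\| \int_\R |\F(\psi u)(n,\tau)|\, d\tau \Big\|_{\ell^2_n}.
\end{equation*}
Applying \eqref{Xk1} to $f_k = \F(\psi u) \in X_k$ bounds the right-hand side by $\|\F(\psi u)\|_{X_k}$, which is $\leq \|u\|_{F^\al_k}$ by the definition of the $F^\al_k$-norm (with $t_k = t_0$). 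Taking the supremum over $t_0$ gives \eqref{infty1}.

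For \eqref{infty2}, set $w := \F^{-1}[\eta_{\leq j}(\tau - n^2)\ft u]$. The key observation is the telescoping identity $\eta_{\leq j}(\xi) = \eta_0(2^{-j}\xi)$, from which $\F^{-1}[\eta_{\leq j}](s) = 2^j\,\F^{-1}[\eta_0](2^j s)$ is a Schwartz function whose $L^1_s$-norm is uniformly bounded in $j$. Inverse Fourier transforming in $\tau$ mode by mode yields $\ft w_n(t_0) = \int_\R e^{isn^2}\, \F^{-1}[\eta_{\leq j}](s)\, \ft u_n(t_0 - s)\, ds$, so Minkowski in $\ell^2_n$ combined with \eqref{infty1} applied at the shifted time $t_0 - s$ gives
\begin{equation*}
\|w(t_0)\|_{L^2_x} \leq \int_\R \bigl|\F^{-1}[\eta_{\leq j}](s)\bigr|\, \|u(t_0 - s)\|_{L^2_x}\, ds \les \bigl\|\F^{-1}[\eta_{\leq j}]\bigr\|_{L^1}\, \|u\|_{F^\al_k} \les \|u\|_{F^\al_k}.
\end{equation*}
The $\eta_j$ case is identical since $\F^{-1}[\eta_j]$ enjoys the same uniform $L^1$-bound; the $\eta_{\geq j}$ case follows by writing $\eta_{\geq j} = 1 - \eta_{\leq j-1}$ and reducing via the triangle inequality to \eqref{infty1} and the $\eta_{\leq j-1}$ case.

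The main subtle point is that $\|u\|_{F^\al_k} < \infty$ does \emph{not} imply $\ft u \in X_k$ (no global-in-time integrability hypothesis is available), so the time cutoff $\psi$ is essential in \eqref{infty1}. For \eqref{infty2}, a more naive strategy would first time-localize $w$ and then try to unwind the Fourier multiplier $\eta_{\leq j}(\tau - n^2)$, producing an awkward commutator between time-multiplication and modulation truncation; the route above sidesteps this entirely by realizing that truncation as an $L^1$-normalized convolution in time on each spatial mode, so that \eqref{infty2} falls out of \eqref{infty1} essentially for free.
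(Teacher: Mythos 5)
Your proof is correct and follows essentially the same route as the paper: for \eqref{infty1} you insert the time cutoff $\eta_0(2^{[\al k]}(t-t_0))$, invert in $\tau$, and apply \eqref{Xk1}; for \eqref{infty2} you reduce to \eqref{infty1} by writing $\F^{-1}[\eta_{\leq j}(\tau-n^2)](t)=2^j\invft{\eta}_0(2^jt)e^{itn^2}$ and using the uniform $L^1$ bound of this kernel via Young's inequality, exactly as in the paper.
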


\begin{proof}
Let $t \in \R$. Then, by \eqref{Xk1},  
we have
\begin{align*}
\|u(x, t)\|_{L^2_x}
& = \sup_{t_k}\big\|\eta_0(2^{[\al k]} ( t - t_k))\cdot u(x, t)\big\|_{L^2_x}\\
& \les \sup_{t_k} \bigg\| \int \big|\F[\eta_0(2^{[\al k]} ( t - t_k))\cdot u](n, \tau)\big|d\tau\bigg\|_{\l^2_n}
\leq \|u\|_{F^\al_k}.
\end{align*}

\noi
The second estimate \eqref{infty2} follows from \eqref{infty1}
once we note that
\begin{equation*} 
\big\| \F^{-1}[\eta_{\leq j}(\tau - n^2) \ft{u}] \big\|_{L^\infty_t L^2_x}
\les \|u \|_{L^\infty_t L^2_x},
\end{equation*}

\noi
which follows from
$\F^{-1} [\eta_{\leq j} (\tau - n^2) ](t)
= 2^j \invft{\eta}_0(2^j t) e^{itn^2}$
and Young's inequality.
\end{proof}

As a corollary to Lemma \ref{LEM:infty}, we 
have the following control
of the 
$C([-T, T]; H^s)$-norm
by the $F^{s, \al}(T)$-norm.

\begin{lemma} \label{LEM:embed1}
Let $s\in \R$, $T\in (0,1]$, and $\alpha> 0$.
Then, we have
\begin{equation} \label{embed1}
\sup_{t\in [-T,T]}\|u(t)\|_{H^s} \les \|u\|_{F^{s,\alpha}(T)}.
\end{equation}

\end{lemma}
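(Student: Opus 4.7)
The plan is to combine Littlewood--Paley theory with the pointwise control already provided by Lemma \ref{LEM:infty}. First I would use the standard characterization
\[
\|v(t)\|_{H^s}^2 \sim \sum_{k=0}^{\infty} 2^{2sk} \|\P_k v(t)\|_{L^2_x}^2
\]
and then invoke the elementary inequality $\sup_t \sum_k \leq \sum_k \sup_t$ to obtain
\[
\sup_{t \in [-T,T]} \|u(t)\|_{H^s}^2 \les \sum_{k=0}^{\infty} 2^{2sk} \sup_{t \in [-T,T]} \|\P_k u(t)\|_{L^2_x}^2.
\]

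Next, I would estimate each dyadic piece separately. Fix $k \in \Z_+$ and let $\wt{u} \in C_0(\R; L^2(\T))$ be any extension of $u$ with $\wt{u} = u$ on $\T \times [-T,T]$. Since Fourier multipliers commute with restriction in time, $\P_k \wt{u}$ is an extension of $\P_k u$, and for $t \in [-T,T]$ we have the pointwise identity $\P_k u(t) = \P_k \wt{u}(t)$. Applying \eqref{infty1} of Lemma \ref{LEM:infty} to $\P_k \wt{u}$ (whose spatial Fourier support lies in $I_k \times \R$) gives
\[
\sup_{t \in [-T,T]} \|\P_k u(t)\|_{L^2_x} \leq \|\P_k \wt{u}\|_{L^\infty_t L^2_x} \les \|\P_k \wt{u}\|_{F_k^\alpha}.
\]
Taking the infimum over all such extensions $\wt{u}$, the right-hand side becomes $\|\P_k u\|_{F_k^\alpha(T)}$ by the definition of the local-in-time norm.

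Finally, I would combine these two steps and recognize the resulting sum as $\|u\|_{F^{s,\alpha}(T)}^2$:
\[
\sup_{t \in [-T,T]} \|u(t)\|_{H^s}^2 \les \sum_{k=0}^{\infty} 2^{2sk} \|\P_k u\|_{F_k^\alpha(T)}^2 = \|u\|_{F^{s,\alpha}(T)}^2,
\]
yielding \eqref{embed1}. There is no real obstacle here: the only minor point to verify is the legitimacy of applying Lemma \ref{LEM:infty} on the extension and then passing to the infimum, which is justified because $\P_k$ preserves the extension property and the bound in Lemma \ref{LEM:infty} is independent of the choice of $t_k$ used in the definition of $F_k^\alpha$. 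The implicit constant is independent of $T$ and $\alpha$ (beyond $\alpha > 0$), as inherited from Lemma \ref{LEM:infty}.
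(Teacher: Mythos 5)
Your argument is correct and follows essentially the same route as the paper: apply \eqref{infty1} of Lemma \ref{LEM:infty} to an extension on each dyadic block, pass to the infimum, and square-sum over $k$. The only cosmetic difference is that the paper takes an arbitrary extension $\wt u_k$ of $\P_k u$ directly (which is exactly what the infimum in the definition of $\|\P_k u\|_{F_k^\alpha(T)}$ ranges over), whereas you extend $u$ and then project; this is harmless since every admissible extension of $\P_k u$ arises as $\P_k$ of some extension of $u$, but quoting the definition directly avoids even having to check that.
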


\begin{proof}
For $k \in \Z_+$, let 
$\wt{u}_k$  be an extension of $\P_ku$.
Then, 
by Lemma \ref{LEM:infty}, we have 
\begin{equation} \label{embed11}
\| \P_k u(t)\|_{L^2_x}
= \| \wt u_k(t)\|_{L^2_x}
\les \|\wt u_k\|_{F^\al_k}
\end{equation}

\noi
for $t \in [-T, T]$.
Then, \eqref{embed1} follows from \eqref{embed11}
by taking an infimum 
over  extensions
$\wt{u}_k$  of $\P_k(u)$, 
summing over dyadic blocks, 
and taking a supremum in $t \in [-T,T]$.
\end{proof}

So far, we defined the function spaces with
the modulation regularity $\frac{1}{2}$.
In the following, we define the corresponding function spaces
with the modulation regularity $b$.
For $k \in \Z_+$ and $b \in \R$,
let $X_k^b$ denote the dyadic $X^{s, b}$-type space
analogous to $X_k$,
whose norm is given by
\begin{align*}
 \|f_k \|_{X_k^b}:=\sum_{j=0}^\infty
2^{jb} \|\eta_j(\tau-n^2) f_k(n ,\tau)\|_{\ell^2_n L^2_{\tau}}
\end{align*}

\noi
for $f_k $ supported on $I_k \times \R$.
By definition, we have $X_k = X^\frac{1}{2}_k$.
Then, we define the spaces $F^{ b, \al}_k$ and $F^{s, b,  \al}(T)$
with $X_k^b$,
just as we defined $F^\al_k$ and $F^{s,  \al}(T)$ with $X_k$.

The following lemma shows that
we obtain a small power of time localization
at a slight expense of the regularity in modulation.
\begin{lemma} \label{LEM:timedecay}
Let $\al, T > 0$ and $b < \frac{1}{2}$.
Then, we have \begin{align*}
\|\P_k u \|_{F^{b, \al}_k}
\les T^{\frac{1}{2} - b -} \|\P_k u \|_{F^{\al}_k}
\end{align*}

\noi
for any function $u$ supported on $\T\times [-T, T].$
\end{lemma}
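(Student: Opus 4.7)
The plan is to fix the translation parameter $t_k$ in the definition of $F_k^\al$, exploit the $[-T,T]$-support of $u$ by multiplying by a smooth cutoff $\psi_T$, and convert the resulting modulation-versus-time trade-off into a factor of $T^{1/2-b-}$ via the kernel estimate \eqref{Xk2}. The underlying principle is the standard $X^{s,b}$-type fact that multiplication by a $T$-scale cutoff takes the $X_k$-structure to the $X_k^b$-structure with a gain $T^{1/2-b-}$.

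First I would fix $k \in \Z_+$ and $t_k \in \R$, and set
\[ v_{t_k}(x,t) := \eta_0(2^{[\al k]}(t-t_k))\, \P_k u(x,t). \]
Choose $\psi \in \mathcal{S}(\R)$ with $\psi \equiv 1$ on $[-1,1]$, and set $\psi_T(t) := \psi(t/T)$. Since $u$ (and hence $v_{t_k}$) is supported in $\T \times [-T,T]$, we have $v_{t_k} = \psi_T\, v_{t_k}$. On the Fourier side this is convolution in $\tau$ with $\hat\psi_T$, and using $|\hat\psi_T(\tau)| \les 2^{-\ell}(1+2^{-\ell}|\tau|)^{-4}$ with $\ell := [\log_2(1/T)]$ we get
\[ |\F v_{t_k}(n,\tau)| \les \int 2^{-\ell}(1 + 2^{-\ell}|\tau - \tau'|)^{-4}\, |\F v_{t_k}(n,\tau')|\, d\tau', \]
which is exactly the form required by \eqref{Xk2}.

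Applying \eqref{Xk2} with this $\ell$ and with $f_k = |\F v_{t_k}|$ yields, for $j > \ell$,
\[ \| \eta_j(\tau - n^2) \F v_{t_k}\|_{\l^2_n L^2_\tau} \les 2^{-j/2}\|\F v_{t_k}\|_{X_k}, \qquad \|\eta_{\leq \ell}(\tau - n^2)\F v_{t_k}\|_{\l^2_n L^2_\tau} \les 2^{-\ell/2}\|\F v_{t_k}\|_{X_k}. \]
Decomposing the $X_k^b$-norm at $j = \ell$ and inserting these bounds,
\begin{align*}
\|\F v_{t_k}\|_{X_k^b} & \les \bigg(\sum_{j > \ell}2^{j(b-1/2)} + 2^{-\ell/2}\sum_{0 \leq j \leq \ell}2^{jb}\bigg)\|\F v_{t_k}\|_{X_k}.
\end{align*}
Since $b < 1/2$, the first sum is geometric and is $\sim 2^{\ell(b-1/2)} \sim T^{1/2-b}$. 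The second sum is comparable to $2^{\ell(b-1/2)}$ when $b > 0$, equals $(\ell+1)\, 2^{-\ell/2}$ at $b = 0$ (the log loss is absorbed into the $T^{0-}$ slack), and is $\les 2^{-\ell/2} \les T^{1/2-b}$ when $b \leq 0$. Taking $\sup_{t_k \in \R}$ on both sides produces the claimed estimate on $\|\P_k u\|_{F^{b,\al}_k}$.

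The main obstacle is the careful bookkeeping of the low-modulation sum so as to obtain the bound $T^{1/2-b-}$ uniformly for all $b < 1/2$, including the logarithmic endpoint at $b = 0$. Beyond this, the argument is a routine application of \eqref{Xk2}, with the support hypothesis on $u$ serving precisely to license the insertion of $\psi_T$ that drives the trade-off.
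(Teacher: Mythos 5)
You take a genuinely different route from the paper. The paper multiplies by the \emph{sharp} cutoff $\chi_{_T}$, decomposes $v=\sum_{j'}v_{j'}$ dyadically in modulation, and treats $j'\gtrsim j$ and $j'\ll j$ by two separate H\"older/Young arguments, using $\|\widehat{\chi_{_T}}\|_{L^q}\sim T^{1-1/q}$ from \eqref{time0}. You instead multiply by a \emph{smooth} Schwartz cutoff $\psi_T$ and invoke the pre-established kernel estimate \eqref{Xk2}: the bound $|\widehat{\psi_T}(\tau)|\les 2^{-\ell}(1+2^{-\ell}|\tau|)^{-4}$ with $2^\ell\sim 1/T$ places you exactly in the hypothesis of \eqref{Xk2}, and the single low-modulation estimate $\|\eta_{\leq\ell}\F v_{t_k}\|_{\ell^2_nL^2_\tau}\les 2^{-\ell/2}\|\F v_{t_k}\|_{X_k}$ is then the only nontrivial input (the bound you cite for $j>\ell$ is already immediate from the $\ell^1_j$-definition of the $X_k$-norm and needs no support information). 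Your version is cleaner in that it re-uses \eqref{Xk2} rather than recomputing the $\tau$-side interactions from scratch; the paper's version is more self-contained. Both arguments give the stated result on the range $b\in[0,\tfrac12)$, which covers every invocation of the lemma in the paper (where $b=\tfrac12-$).

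One genuine caveat: your treatment of $b\leq 0$ has the inequality backwards. You write ``$2^{-\ell/2}\les T^{1/2-b}$ when $b\leq 0$,'' but $2^{-\ell/2}\sim T^{1/2}$ while $T^{1/2-b}\leq T^{1/2}$ for $b\leq 0$ and $T\leq 1$, so in fact $2^{-\ell/2}\gtrsim T^{1/2-b}$. Consequently for $b<0$ your low-modulation sum is bounded only by $T^{1/2}$, not $T^{1/2-b-}$, and the single bound $\|\eta_j\F v_{t_k}\|\les 2^{-\ell/2}\|\F v_{t_k}\|_{X_k}$ for all $j\leq\ell$ is simply too coarse for negative $b$ (one would need the sharper $j$-dependent decay $\sim T2^{j/2}\|\F v_{t_k}\|_{X_k}$ there). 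This is not a problem in context --- the lemma is only used with $b$ near $\tfrac12$, and the paper's own H\"older step likewise requires $b+\eps>0$ so that $L^{2/(1-2\theta)}$ is a genuine Lebesgue space --- but the claimed step as written is false, so you should either restrict to $b\geq 0$ or sharpen the low-modulation estimate.
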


\begin{proof}
Let $\chi(t)$ be the  characteristic function on $[-1, 1]$
and $\chi_{_T}(t)  = \chi(T^{-1} t)$, i.e. $\chi_{_T}$ is the characteristic function of $[-T, T]$.
Note that 
\begin{align}
\|\ft \chi_{_T}\|_{L^q} \sim T^{1-\frac{1}{q}}
\label{time0}
\end{align}

\noi
for $ q > 1$.
For fixed $t_k\in \R$, let $v_k = \eta_0(2^{[\al k]} (t -t_k)) \cdot \P_k(u)$.
Then, we have
$v_k = \chi_{T}\cdot v_k$
and it suffices to show
\begin{align} \label{time1}
\big\|\F[\chi_{_T}\cdot  v] \big\|_{X^b_k} 
\les T^{\frac{1}{2} - b -} \|\F(v_k)\|_{X_k}.
\end{align}

In the following, we simply use $v$ to denote $v_k$.
Write $v = \sum_{j'\in\Z_+} v_{j'}$, where 
$v_{j'} = \F^{-1}[\eta_{j'} (\tau' - n^2) \ft v]$.
For any $\eps > 0$, we have 
\begin{align} 
\big\|\F[\chi_{_T}\cdot  v] \big\|_{X^b_k} 
\les \sup_{j\in\Z_+} 2^{(b+\eps)j}
\big\|\eta_j (\tau - n^2)\F[\chi_{_T}\cdot  v]  \big\|_{\l^2_n L^2_\tau}.  \label{time2}
\end{align}

\noi
Fix $j \in \Z_+$.
First, we consider the contribution from $j' \geq j - 5$.
By H\"older's  inequality (with $\theta = \frac{1}{2} - b-\eps>0$)
and Young's inequality with \eqref{time0}, we have 
\begin{align*}
\eqref{time2} 
& \les \sup_{j\in\Z_+} 2^{\frac{j}{2}}
\sum_{j' }
\big\|\eta_j (\tau - n^2)\F[\chi_{_T}\cdot  v_{j'}]  \big\|_{\l^2_n L^\frac{2}{1-2\theta}_\tau}
 \les  \sup_{j\in\Z_+}\sum_{j' \geq j - 5 } 2^{\frac{j'}{2}}
\big\|\F[\chi_{_T}\cdot  v_{j'}]  \big\|_{\l^2_n L^\frac{2}{1-2\theta}_\tau}  \notag \\
& \les  T^\theta
\sup_{j\in\Z_+}\sum_{j' \geq j - 5 } 
2^{\frac{j'}{2}}
\|\ft v_{j'}  \|_{\l^2_n L^2_\tau}  
\leq T^{\frac{1}{2}-b-\eps} \|\F(v)\|_{X_k}.
\end{align*}

\noi
Next, we consider the contribution from $j' < j - 5$.
With $\F(\chi_{_T}\cdot  v)(n, \tau) =
\int \ft \chi_{_T}(\tau - \tau') \ft v (n, \tau')d\tau'$,
we have $|\tau - \tau'|\sim 2^j$ in this case.
Then, by Young's inequality with \eqref{time0}, we have 
\begin{align*} 
\eqref{time2}
& \les \sup_{j\in\Z_+} 
\bigg\| \eta_j(\tau - n^2) \int_{|\tau - \tau'|\sim 2^j}
| \tau - \tau'|^{b + \eps}  |\ft \chi_{_T}(\tau - \tau')| |\ft v (n, \tau')|d\tau' \bigg\|_{\l^2_n L^2_\tau} \notag   \\
& \les \sup_{j\in\Z_+} 
\big\||\tau |^{b + \eps}  \ft \chi_{_T}\big\|_{L^2_{|\tau|\sim 2^j} }
\sum_{j'  < j - 5}
\|\ft v_{j'}  \|_{\l^2_n L^1_\tau}  
\les T^{\frac{1}{2}-b-\eps} \|\F(v)\|_{X_k}.
\end{align*}

\noi
This proves \eqref{time1}.
\end{proof}

As a corollary to the proof of Lemma \ref{LEM:timedecay}, 
we obtain the following lemma.
\begin{lemma} \label{LEM:sup}
Let $k \in \Z_+$.
Then, for any interval $I = [t_1, t_2] \subset \R$, we have 
\begin{align*}
 \sup_{j\in\Z_+} 2^\frac{j}{2}
\big\|\eta_j (\tau - n^2)\F[\ind_{I}(t) \cdot  u ]  \big\|_{\l^2_n L^2_\tau}
\les \| \F(u)\|_{X_k}, 
\end{align*} 

\noi
where the implicit constant is independent of $k$ and $I$.
\end{lemma}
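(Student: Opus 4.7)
My plan is to follow the structure of the proof of Lemma \ref{LEM:timedecay} essentially verbatim, but with two simplifications: we do not have to chase a $T^\theta$-gain (the bound is supposed to be uniform in $I$), and the exponent on $2^j$ we are aiming for is already $1/2$ rather than $b < 1/2$, so Hölder in $\tau$ is not needed. Consequently, in the "high modulation" regime we can afford to lose $2^{(j-j')/2}$, and we just have to insert the trivial $L^2$-bound instead of Hölder's inequality.

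The plan is as follows. First, write $\ft u = \sum_{j' \in \Z_+} \ft u_{j'}$ with $\ft u_{j'} := \eta_{j'}(\tau - n^2) \ft u$, so that
\[
\F[\ind_I(t)\cdot u](n,\tau) = \sum_{j'\in \Z_+} \big(\widehat{\ind_I} *_\tau \ft u_{j'}\big)(n,\tau).
\]
Fix $j \in \Z_+$ and split the sum over $j'$ into $j' \ge j-5$ and $j' < j-5$. In the first (high-modulation) regime, since $\ind_I$ has $L^\infty$ norm $1$, Plancherel gives
\[
\|\eta_j(\tau-n^2)\F[\ind_I\cdot u_{j'}]\|_{\ell^2_n L^2_\tau}
\le \|u_{j'}\|_{L^2_{x,t}} = \|\ft u_{j'}\|_{\ell^2_n L^2_\tau},
\]
and multiplying by $2^{j/2} \les 2^{j'/2}$ yields $\les 2^{j'/2}\|\eta_{j'}(\tau-n^2)\ft u\|_{\ell^2_n L^2_\tau}$.

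In the second (low-modulation) regime $j' < j-5$, I would exploit the pointwise estimate $|\widehat{\ind_I}(\tau-\tau')| \le 2/|\tau-\tau'|$, noting that on the support of the integrand we have $|\tau - n^2|\sim 2^j$ and $|\tau' - n^2| \les 2^{j'} \ll 2^j$, hence $|\tau - \tau'|\sim 2^j$. This bounds the convolution pointwise by $2^{-j}\|\ft u_{j'}(n,\cdot)\|_{L^1_\tau}$ on this set; since $\|\eta_j(\cdot - n^2)\|_{L^2_\tau}\les 2^{j/2}$ and Cauchy–Schwarz on the set $\{|\tau'-n^2|\les 2^{j'}\}$ gives $\|\ft u_{j'}(n,\cdot)\|_{L^1_\tau}\les 2^{j'/2}\|\ft u_{j'}(n,\cdot)\|_{L^2_\tau}$, we obtain
\[
2^{j/2}\|\eta_j(\tau-n^2) \F[\ind_I\cdot u_{j'}]\|_{\ell^2_n L^2_\tau}
\les 2^{j'/2}\|\eta_{j'}(\tau-n^2)\ft u\|_{\ell^2_n L^2_\tau}.
\]
Summing both cases over $j'\in \Z_+$ and then taking the supremum over $j$ produces the desired inequality $\les \|\F(u)\|_{X_k}$, with the implicit constant manifestly independent of $k$ (no $k$ ever appears) and of $I$ (the only estimates on $\ind_I$ used are $\|\ind_I\|_{L^\infty}\le 1$ and $|\widehat{\ind_I}(\tau)|\le 2/|\tau|$, both universal).

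I do not anticipate a real obstacle: the proof is essentially a rerun of the two-case decomposition inside Lemma \ref{LEM:timedecay}, and is in fact strictly easier because no Hölder step is needed. The only minor subtlety is ensuring that $\widehat{\ind_I}$'s pointwise decay (rather than its $L^{1/(1-\theta)}$-norm) suffices in the low-modulation regime; this is built into the high/low-modulation split via the $j-5$ threshold, which localizes $|\tau-\tau'|$ to a single dyadic shell.
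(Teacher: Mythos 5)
Your proof is correct and is essentially the paper's own argument: the paper proves Lemma \ref{LEM:sup} by setting $b+\eps=\frac12$ in the two-case estimate \eqref{time2} from the proof of Lemma \ref{LEM:timedecay}, which is exactly the rerun you carry out (with the sensible observation that at $\theta=0$ the Hölder/Young step in the high-modulation case degenerates into the trivial Plancherel bound $\|\ind_I\cdot u_{j'}\|_{L^2}\le\|u_{j'}\|_{L^2}$, and that in the low-modulation case only the universal decay $|\widehat{\ind_I}(\tau)|\le 2/|\tau|$ on the shell $|\tau-\tau'|\sim 2^j$ is needed, so the bound is uniform in $I$).
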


\begin{proof}
Set $b + \eps = \frac{1}{2}$ in \eqref{time2}.
\end{proof}

As in \cite{IKT}, for any $k\in \Z_+$ we define the set $S^\al_k$ of
$k$-acceptable time multiplication factors:
\begin{align}
S_k^\alpha =\big\{m_k:\R \rightarrow \R:
\|m_k\|_{S_k^\alpha}= \sum_{j=0}^{10} 2^{-j[\al k]}
\|\dd^j m_k\|_{L^\infty}< \infty\big\} \label{Sk}.
\end{align}

\begin{lemma} \label{LEM:Sk}
Let $\al > 0$ and $m_k$ be a $k$-acceptable time multiplication factor.
Then, we have
\begin{align}
\| m_k(t) u_k \|_{F^\al_k} & \les \|m_k\|_{S^\al_k} \| u_k \|_{F^\al_k}, \label{Sk1}\\
\| m_k(t) u_k \|_{N^\al_k} & \les \|m_k\|_{S^\al_k} \| u_k \|_{N^\al_k} \label{Sk2}
\end{align}
\end{lemma}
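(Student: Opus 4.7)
The strategy is to factor out the effect of $m_k$ from the time cutoff in the definition of the $F^\al_k$- and $N^\al_k$-norms, convert that factor into a convolution on the Fourier side, and absorb it using the off-diagonal Schur-type bound \eqref{Xk2}. Fix $t_k\in\R$ and set $\s = 2^{[\al k]}$. Let $\wt\eta_0$ be a smooth cutoff equal to $1$ on $\supp\eta_0$ and supported in $\{|\cdot|\leq 2\}$, and define
$$\psi_{t_k}(t) := \wt\eta_0(\s(t-t_k))\, m_k(t).$$
Since $\wt\eta_0 \equiv 1$ on $\supp \eta_0$, we have the trivial identity
$$\eta_0(\s(t-t_k))\, m_k(t)\, u_k(t) = \big[\eta_0(\s(t-t_k))\, u_k(t)\big]\cdot \psi_{t_k}(t).$$

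I would first establish the kernel bound
$$|\F_t\psi_{t_k}(\tau)| \les \|m_k\|_{S^\al_k}\, \s^{-1}(1+\s^{-1}|\tau|)^{-4}$$
by combining the trivial bound $|\F_t\psi_{t_k}(\tau)|\leq \|\psi_{t_k}\|_{L^1}\les \s^{-1}\|m_k\|_{S^\al_k}$ with the bound $|\F_t\psi_{t_k}(\tau)|\leq |\tau|^{-N}\|\dd^N\psi_{t_k}\|_{L^1}$ obtained from $N\leq 10$ integrations by parts, using Leibniz to distribute derivatives between $\wt\eta_0(\s(\cdot-t_k))$ (each derivative costing a factor of $\s$ while shortening the support, so the $L^1$-norm of the $j$-th derivative is $\les \s^{j-1}$) and $m_k$ (controlled by $\|m_k\|_{S^\al_k}$). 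The cutoff at $10$ derivatives in the definition \eqref{Sk} of $\|m_k\|_{S^\al_k}$ is precisely tuned to give decay beyond what is needed below. Setting $f_k(n,\tau) := \F[\eta_0(\s(\cdot-t_k))\, u_k](n,\tau)$, the product identity translates on the space-time Fourier side into
$$\F\big[\eta_0(\s(\cdot-t_k))\, m_k\, u_k\big](n,\tau) = \int \F_t\psi_{t_k}(\tau-\tau')\, f_k(n,\tau')\, d\tau'.$$
Plugging in the kernel bound and invoking \eqref{Xk2} with $\ell = [\al k]$ (so that $2^\ell = \s$) yields $\|\F[\eta_0(\s(\cdot-t_k))\, m_k\, u_k]\|_{X_k} \les \|m_k\|_{S^\al_k}\|f_k\|_{X_k}$; taking the supremum over $t_k\in\R$ proves \eqref{Sk1}.

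For \eqref{Sk2}, the same decomposition applies, but one has to carry the extra weight $(\tau-n^2+i\s)^{-1}$ through the convolution. This is exactly the content of \eqref{embed3a}: it transfers the weight from $\tau$ to $\tau'$ at the cost of an additional factor $(1+\s^{-1}|\tau-\tau'|)^{-1}$, which is easily absorbed in the spare decay of $\F_t\psi_{t_k}$ (we have used only four of the ten available powers). One then applies \eqref{Xk2} to the weighted integrand and concludes $\|(\tau-n^2+i\s)^{-1}\F[\eta_0(\s(\cdot-t_k))\, m_k\, u_k]\|_{X_k} \les \|m_k\|_{S^\al_k}\|(\tau-n^2+i\s)^{-1}f_k\|_{X_k}$, from which \eqref{Sk2} follows. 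No serious obstacle is expected; the only subtlety is the bookkeeping that matches the decay produced by the ten derivatives in $\|m_k\|_{S^\al_k}$ to the weight $(1+\s^{-1}|\tau|)^{-4}$ used by \eqref{Xk2} and, in the $N^\al_k$ case, also to the additional factor coming from \eqref{embed3a}.
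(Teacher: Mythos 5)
Your proposal is correct and follows essentially the same route as the paper's proof: the same factorization $\eta_0(2^{[\al k]}(t-t_k))\,m_k u_k = \psi_{t_k}\cdot\big[\eta_0(2^{[\al k]}(t-t_k))\,u_k\big]$ with an enlarged cutoff, the same kernel bound $|\F_t\psi_{t_k}(\tau)|\les 2^{-[\al k]}(1+2^{-[\al k]}|\tau|)^{-4}\|m_k\|_{S^\al_k}$ obtained by integration by parts, and the same appeal to \eqref{Xk2} for \eqref{Sk1} and to \eqref{embed3a} for \eqref{Sk2}. The additional bookkeeping you supply for the Leibniz/decay trade-off is accurate.
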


\noi
for any function
 $u_k$ on $\T\times \R$ such that
$\supp \ft u_k \subset I_k \times \R$.

\begin{proof}
Fix $t_k \in \R$.
Let $\g: \R \to [0, 1] $ be a smooth cutoff function supported on $[-2, 2]$
such that $\g \equiv 1$ on $[-\frac{8}{5}, \frac{8}{5}]$.
Then, we have
$\eta_0(2^{[\al k]}(t - t_k))\cdot m_k u_k
= \g(2^{[\al k]}(t - t_k))\cdot m_k v_k$,
where $v_k(t) = \eta_0(2^{[\al k]}(t - t_k))\cdot u_k(t)$.
Integrating by parts several times, we have
\begin{align*}
\big|\F [\g (2^{[\al k]}(\cdot - t_k))\cdot m_k] (\tau)\big|
\les 2^{-[\al k]} (1 + 2^{-[\al k]}|\tau|)^{-4} \|m_k\|_{S^\al_k}.
\end{align*}

\noi
Then,  \eqref{Sk1} follows from  \eqref{Xk2}.
The second estimate \eqref{Sk2} follows
from a similar argument 
with \eqref{embed3a}.
This completes the proof of Lemma \ref{LEM:Sk}.
\end{proof}

The following lemma shows a kind of almost  orthogonality property of $X_k$.
Due to the $\l^1$-based Besov nature of the temporal regularity in $X_k$,
we  have the second terms on the right-hand sides in \eqref{a.o.} and \eqref{aa.o.}.
See Lemma 6.4 in \cite{CCT3}
for a related result.

\begin{lemma}[Almost orthogonality] \label{LEM:a.o.}
Let $\g:\R\rightarrow [0,1]$ be  a smooth function  supported on
$[-1,1]$ such that
\[\sum_{m \in \Z}\gamma(t-m)\equiv 1\] 

\noi
for all $ t\in \R$.

\smallskip

\noi
\textup{(a)} There exists $C> 0$ such that
\begin{align}
\|\F(u)\|_{X_k}
\leq & \  C \bigg(\sum_{m \in \Z}\big\|\F[\g(\ld t-m) \cdot u ]\big\|_{X_k}^2\bigg)^\frac{1}{2} \notag \\
& + C
\sum_{j=0}^\infty
\bigg( \sum_{m\in \Z} 2^j \big \| \eta_j(\tau - n^2)
\F[\g(\ld t-m) \cdot u ](n, \tau)
\big\|_{\l^2_n L^2_\tau}^2\bigg)^\frac{1}{2}
\label{a.o.}
\end{align}

\noi
for all $\ld \geq 1$.

\medskip

\noi
\textup{(b)} There exists $C> 0$ such that
\begin{align}
\big\| \eta_{\geq \l } & (\tau -   n^2) (\tau - n^2 +  i A)^{-1}\F(u)\big\|_{X_k}
\leq
C \bigg(\sum_{m \in \Z}\big\| (\tau - n^2 +i  A)^{-1}\F[\g(\ld t-m) \cdot u ]
\big\|_{X_k}^2\bigg)^\frac{1}{2} \notag \\
& + C
\sum_{j=0}^\infty
\bigg( \sum_{m\in \Z} 2^j \big \| \eta_j(\tau - n^2)
(\tau - n^2 + i A)^{-1}
\F[\g(\ld t-m) \cdot u ](n, \tau)
\big\|_{\l^2_n L^2_\tau}^2\bigg)^\frac{1}{2}
\label{aa.o.}
\end{align}

\noi
for all $A, \ld \geq 1$,
where $\l$ is the greatest integer such that $2^\l \leq \ld$.

\end{lemma}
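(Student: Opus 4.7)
I will focus on part (a); part (b) will follow the same template with the weight $(\tau - n^2 + iA)^{-1}$ absorbed into the kernel analysis. Write $u = \sum_{m\in\Z} u_m$ with $u_m := \gamma(\lambda t - m)\, u$, and set $A_j := \|\eta_j(\tau - n^2)\F(u)\|_{\ell^2_n L^2_\tau}$ and $B_{j, m} := \|\eta_j(\tau - n^2)\F(u_m)\|_{\ell^2_n L^2_\tau}$; the goal is to control $\|\F(u)\|_{X_k} = \sum_{j\geq 0} 2^{j/2} A_j$ by the two right-hand side expressions. The plan is to first establish a per-$j$ almost-orthogonality bound via Plancherel and a Schur-test argument, and then combine it with the $\ell^1$-in-$j$ Besov structure of the $X_k$-norm.

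The technical heart is a Plancherel plus Schur computation. Apply Plancherel in $\tau$ and let $K_j := \F^{-1}_\tau \eta_j$, a Schwartz-type kernel concentrated at scale $2^{-j}$ with $|K_j(t)| \lesssim 2^j(1 + 2^j|t|)^{-N}$. With $h(n,s) := e^{-in^2 s}\widehat{u}(n,s)$, one identifies
\[
\F^{-1}_\tau\bigl[\eta_j(\tau - n^2)\F(u_m)\bigr](n, t) = e^{in^2 t}\bigl(K_j \ast_t (\gamma(\lambda \cdot - m)\, h(n,\cdot))\bigr)(t).
\]
The functions appearing in the $m$-sum have essential $t$-supports on intervals of length $\sim \max(2^{-j}, \lambda^{-1})$ centered at $m/\lambda$, with rapidly decaying tails, so their pairwise inner products enjoy off-diagonal decay of the form $(1 + 2^{j-\ell}|m - m'|)^{-N}$, where $\ell$ is the greatest integer with $2^\ell \leq \lambda$. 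Schur's test then yields the almost-orthogonality estimate
\[
A_j^2 \leq C\,(1 + 2^{\ell - j})_+ \sum_{m\in\Z} B_{j, m}^2.
\]

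Summation splits into two regimes. For $j \geq \ell$, the overlap factor is $O(1)$, so the $j$-th piece contributes $\sum_{j \geq \ell} 2^{j/2}(\sum_m B_{j,m}^2)^{1/2}$, which is controlled by the second right-hand side term directly. For $j < \ell$, the Schur bound loses a factor $2^{(\ell-j)/2}$ which must be reabsorbed via the $\ell^1$-in-$j$ Besov structure and the modulation-spreading property of the cutoffs: multiplication by $\gamma(\lambda\cdot - m)$ is a convolution in $\tau$ by a profile of width $\sim 2^\ell$, which redistributes modulation mass up to that scale and prevents $(\sum_m B_{j,m}^2)^{1/2}$ from being concentrated at $j < \ell$ alone. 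Using Minkowski's inequality in $\ell^2_m \ell^1_j$ together with this structural observation, one absorbs the low-$j$ contribution into the first right-hand side term $(\sum_m \|\F(u_m)\|_{X_k}^2)^{1/2}$, which carries the full $X_k$-norm (including the $\ell^1$-in-$j$ weighting) for each piece.

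Part (b) proceeds identically, with the pointwise estimate \eqref{embed3a} used to commute the weight $(\tau - n^2 + iA)^{-1}$ past the convolutions and absorbed into the kernel; the low-modulation cutoff $\eta_{\geq \ell}(\tau - n^2)$ restricts the analysis to the regime $j \geq \ell$, where the Schur bound is clean and no overlap loss occurs, making (b) slightly simpler than (a). I expect the main obstacle to be the low-modulation regime $j < \ell$ of part (a): the $\ell^1$-in-$j$ Besov nature of the $X_k$-norm means direct $\ell^2_m$-orthogonality fails there, and correctly extracting both right-hand side terms requires the delicate interplay between the per-$j$ Schur estimate (which accounts for the second term) and the combined $X_k$-weighting of each $u_m$ (which accounts for the first term).
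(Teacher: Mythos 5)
Your decomposition $u=\sum_m u_m$, the kernel analysis after Plancherel, and the split at $j=\ell$ all match the skeleton of the paper's argument, but the central per-$j$ claim $A_j^2\lesssim (1+2^{\ell-j})\sum_m B_{j,m}^2$ is not established by the Schur test you sketch, and it is not the right intermediate estimate. When you pair the $m$- and $m'$-pieces, the cross term is controlled by the size of the (auto-correlated) kernel on the gap between the time supports multiplied by $L^1_t\ell^2_n$-type norms of $\gamma(\lambda\cdot-m)h$ and $\gamma(\lambda\cdot-m')h$, i.e.\ by quantities carrying the \emph{full} modulation content of $u_m$, not its modulation-$j$ piece $B_{j,m}$. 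There is no general way to dominate these by $B_{j,m}B_{j,m'}$ (consider $\F(u)$ concentrated at a single modulation $2^{j'}\neq 2^j$), so the normalized off-diagonal decay needed for Schur's test is unavailable. This is precisely why the paper's proof splits the $(s,s',\tau)$-integration according to whether $|s|\sim|s'|\sim|\tau|\sim 2^j$ or not: only the comparable regime produces the quantities $\|\eta_j(s)v_m(n,s)\|_{\ell^2_nL^2_s}$ (your $B_{j,m}$), and those are exactly what the second right-hand side term is there to absorb; all other contributions are bounded through the embedding $\int\|v_m(\cdot,s)\|_{\ell^2_n}\,ds\lesssim\|\hat u_m\|_{X_k}$ and land in the first term with the full $X_k$-norms.

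The second gap is the regime $j<\ell$, which you rightly flag as the main obstacle but resolve only heuristically. Even granting your Schur bound, it yields $\sum_{j<\ell}2^{j/2}\,2^{(\ell-j)/2}\big(\sum_m B_{j,m}^2\big)^{1/2}=2^{\ell/2}\sum_{j<\ell}\big(\sum_m B_{j,m}^2\big)^{1/2}$, and neither Minkowski's inequality (which goes the wrong way here: it shows the first right-hand side term is dominated by the second, as noted in the remark following the lemma) nor the unquantified ``modulation-spreading'' observation converts this into either term on the right. What closes this case in the paper is a genuine gain of $\lambda^{-1}$, not merely the overlap count $2^{\ell-j}$: for $|m-m'|\lesssim\lambda 2^{-j}$ the kernel satisfies $|K_\lambda|\lesssim 2^j\lambda^{-2}$, and summing over the $\sim\lambda 2^{-j}$ relevant $m'$ (with integration by parts handling the far terms) gives $A_j\lesssim\lambda^{-1/2}\big(\sum_m\|\hat u_m\|_{X_k}^2\big)^{1/2}$; the factor $\sum_{2^j\lesssim\lambda}2^{j/2}\sim\lambda^{1/2}$ then exactly cancels the gain and produces the first right-hand side term. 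To repair your plan you need to abandon the single per-$j$ Schur estimate in favour of this two-regime kernel analysis.
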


\begin{remark}\rm

In view of Minkowski's integral inequality: $\l^1_j \l^2_m \subset \l^2_m \l^1_j$, 
we see that the first terms on the right-hand sides
of \eqref{a.o.} and \eqref{aa.o.} are controlled by the second terms
and thus are not needed.
We, however, keep the first terms 
on the right-hand sides
of \eqref{a.o.} and \eqref{aa.o.} 
as we can estimate many cases 
simply by them.
Namely, the second terms are the correction terms needed
to handle a few special cases.

\end{remark}

We present the proof of Lemma \ref{LEM:a.o.}
at the end of this section.
In the following, we first discuss corollaries to Lemma \ref{LEM:a.o.}.

\begin{corollary} \label{COR:a.o.}
Let $\g$ be as in Lemma \ref{LEM:a.o.}.
Then, for $b < \frac{1}{2}$,
there exists $C = C(b) > 0$ such that
for all functions $u$  
with $\supp \ft{u}\subset I_k\times \R$, we have 
\begin{align*}
\|\F(u)\|_{X_k^b}
\leq & \  C \bigg(\sum_{m \in \Z}\big\|\F[\g(\ld t-m) \cdot u ]\big\|_{X_k}^2\bigg)^\frac{1}{2}
\end{align*}

\noi
for all $\ld \geq 1$.

\end{corollary}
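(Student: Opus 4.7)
The plan is to adapt the proof of Lemma~\ref{LEM:a.o.}(a) so that the $X_k^b$-norm appears on the left, and then to use the extra room afforded by $b < 1/2$ to absorb the correction (second) term in \eqref{a.o.} into the main (first) term. Write $v_m := \g(\ld t - m) u$, so that $u = \sum_m v_m$. Inspection of the proof of Lemma~\ref{LEM:a.o.}(a) shows that, upon replacing the temporal-modulation weight $2^{j/2}$ (which defines $X_k = X_k^{1/2}$) by $2^{jb}$ throughout, the same argument delivers the analog
\begin{align*}
\|\F(u)\|_{X_k^b}
&\leq C\bigg(\sum_{m\in\Z}\|\F(v_m)\|_{X_k^b}^2\bigg)^{\!1/2}
+ C\sum_{j=0}^\infty 2^{jb}\bigg(\sum_{m\in\Z}\|\eta_j(\tau-n^2)\F(v_m)\|_{\l^2_n L^2_\tau}^2\bigg)^{\!1/2}.
\end{align*}
Since $b < 1/2$ gives $2^{jb}\leq 2^{j/2}$ and hence $\|\cdot\|_{X_k^b}\leq\|\cdot\|_{X_k}$ termwise, the first term is immediately controlled by $C\big(\sum_m \|\F(v_m)\|_{X_k}^2\big)^{1/2}$, which is already of the desired form.

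For the correction term, I would apply Cauchy--Schwarz in $j$ via the factorization $2^{jb} = 2^{j(b-1/2)}\cdot 2^{j/2}$ to obtain
\[
\sum_{j\geq 0} 2^{jb}\bigg(\sum_m \|\eta_j(\tau-n^2)\F(v_m)\|_{\l^2_n L^2_\tau}^2\bigg)^{\!1/2}
\leq \bigg(\sum_j 2^{2j(b-1/2)}\bigg)^{\!1/2}\bigg(\sum_m\sum_j 2^j\|\eta_j(\tau-n^2)\F(v_m)\|_{\l^2_n L^2_\tau}^2\bigg)^{\!1/2}.
\]
Since $b<1/2$, the first factor equals a finite constant $C(b)$. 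For each fixed $m$, applying the pointwise inequality $\big(\sum_j c_j^2\big)^{1/2}\leq \sum_j c_j$ valid for nonnegative $\{c_j\}$, with $c_j = 2^{j/2}\|\eta_j(\tau-n^2)\F(v_m)\|_{\l^2_n L^2_\tau}$, yields $\big(\sum_j 2^j \|\eta_j(\tau-n^2)\F(v_m)\|_{\l^2_n L^2_\tau}^2\big)^{1/2}\leq \|\F(v_m)\|_{X_k}$. Taking the $\l^2_m$-sum then bounds the correction term by $C(b)\big(\sum_m \|\F(v_m)\|_{X_k}^2\big)^{1/2}$, and combining with the bound for the main term completes the proof.

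The only nontrivial ingredient is the $X_k^b$-version of \eqref{a.o.}, which requires revisiting the proof of Lemma~\ref{LEM:a.o.}(a). That argument is at its core an orthogonality statement about the temporal partition of unity $\{\g(\ld t - m)\}_m$ combined with the $\ell^1$-Besov structure of the modulation weight, and it is robust under the change of weight $2^{j/2}\mapsto 2^{jb}$. The hypothesis $b<1/2$ is used only in the Cauchy--Schwarz step above, where it guarantees the geometric convergence of $\sum_j 2^{2j(b-1/2)}$; this is exactly the gain one expects when weakening an $\ell^1$-Besov-type norm from exponent $1/2$ to a smaller exponent $b$, and it is what allows us to eliminate the correction term altogether.
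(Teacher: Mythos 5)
Your proof is correct and follows essentially the same route as the paper, whose entire argument is the one-line remark that the corollary is ``immediate from \eqref{a.o.} and Cauchy--Schwarz'': the key step in both cases is Cauchy--Schwarz in $j$, using $b<\frac12$ to make $\sum_j 2^{2j(b-1/2)}$ converge and thereby absorb the $\l^1_j\l^2_m$-type correction term into the $\l^2_m\l^1_j$-type main term via $\l^1_j\subset\l^2_j$. Your intermediate $2^{jb}$-weighted analogue of \eqref{a.o.} does hold by the inspection you describe (the per-$j$ bounds in the proof of Lemma \ref{LEM:a.o.}(a) are insensitive to the change of weight), so there is no gap.
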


\noi
The proof of Corollary \ref{COR:a.o.} is immediate from \eqref{a.o.} and Cauchy-Schwarz inequality.
The next lemma shows a relation
between  the $F^{s, \al}$-spaces at different time scales.
Once again, a slight loss of the regularity in modulation
is necessary due to the $\l^1$-based Besov nature of the $X_k$-norm.

\begin{lemma}
\label{LEM:embed2}
Suppose that  $\alpha\geq \beta\geq 0$, $s,r\in \R$ with
$r\leq \frac{\beta-\alpha}{2}+s$.
Then, for $b<\frac{1}{2}$,
there exists $C = C(b) > 0$ such that
\begin{equation}\label{embed2}
\|u\|_{F^{r, b, \beta}(T)}\leq C \|u\|_{F^{s,\alpha}(T)}
\end{equation}

\noi
for all functions $u$ on $\T\times \R$ and $T \in (0, 1]$.

\end{lemma}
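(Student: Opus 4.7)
The plan is to reduce the estimate to a dyadic one and use the almost orthogonality property of Corollary \ref{COR:a.o.} to pay the price of shrinking the time scale from $\alpha$ to $\beta$. The hypothesis $r \leq \frac{\beta-\alpha}{2} + s$ is precisely the book-keeping needed to absorb the resulting factor via the weighted $\ell^2$-sum in $k$.

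Unwinding the definitions,
\[
\|u\|_{F^{r,b,\beta}(T)}^2 = \sum_{k \geq 0} 2^{2rk}\|\P_k u\|_{F^{b,\beta}_k(T)}^2, \qquad
\|u\|_{F^{s,\alpha}(T)}^2 = \sum_{k \geq 0} 2^{2sk}\|\P_k u\|_{F^{\alpha}_k(T)}^2,
\]
so since $2r + (\alpha - \beta) \leq 2s$ by hypothesis, it suffices to prove the dyadic estimate
\begin{align*}
\|\P_k u\|_{F^{b,\beta}_k(T)} \les 2^{\frac{(\alpha-\beta)k}{2}} \|\P_k u\|_{F^{\alpha}_k(T)}
\end{align*}
uniformly in $k \in \Z_+$, and then sum. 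Passing to extensions, it is enough to show $\|\P_k \wt u\|_{F^{b,\beta}_k} \les 2^{\frac{(\alpha-\beta)k}{2}} \|\P_k \wt u\|_{F^\alpha_k}$ for any $\wt u$, after which we take an infimum over extensions of $u$.

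Fix $t_k \in \R$ and set $v = \eta_0(2^{[\beta k]}(t - t_k)) \cdot \P_k \wt u$, so that $v$ has time support of length $\sim 2^{-[\beta k]}$ and spatial Fourier support in $I_k$. Since $b < \tfrac12$, Corollary \ref{COR:a.o.} applied with $\ld = 2^{[\al k]} \geq 1$ gives
\[
\|\F v\|_{X_k^b} \les \bigg(\sum_{m \in \Z} \big\|\F\big[\g(2^{[\alpha k]} t - m) \cdot v\big]\big\|_{X_k}^2\bigg)^{1/2}.
\]
Only $O(2^{[\alpha k] - [\beta k]} + 1)$ values of $m$ produce a nonzero summand, because each $\g(2^{[\alpha k]}t - m)$ lives on an interval of length $\sim 2^{-[\alpha k]}$ and $v$ is supported on an interval of length $\sim 2^{-[\beta k]}$. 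For each such $m$, the product $\g(2^{[\alpha k]}\,\cdot\, - m)\cdot \eta_0(2^{[\beta k]}(\cdot - t_k))$ has all its derivatives concentrated at the finer scale $2^{[\alpha k]}$ (since $\alpha \geq \beta$), hence defines a $k$-acceptable time multiplication factor with $\|\cdot\|_{S^\alpha_k} \les 1$ uniformly in $m$. Choosing $t_m = m\,2^{-[\alpha k]}$ so that the support of $\g(2^{[\alpha k]}\,\cdot\, - m)$ is contained in $\{\eta_0(2^{[\alpha k]}(t - t_m)) \equiv 1\}$, Lemma \ref{LEM:Sk} (or directly \eqref{Xk3}) gives
\[
\big\|\F[\g(2^{[\alpha k]} t - m)\cdot v]\big\|_{X_k}
\les \big\|\F[\eta_0(2^{[\alpha k]}(t - t_m))\cdot \P_k \wt u]\big\|_{X_k}
\leq \|\P_k \wt u\|_{F^\alpha_k}.
\]
Inserting this bound into the almost-orthogonal sum, taking the square root of the count of nonzero terms, and then passing to the supremum over $t_k \in \R$ yields the desired dyadic inequality. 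Weighting by $2^{2rk}$ and summing, the $2^{(\alpha-\beta)k}$ loss is absorbed by $2^{2(s-r)k}$ thanks to the hypothesis on $r,s$.

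The only real subtlety is the bookkeeping at the dyadic level: one must verify that the cutoff at the coarser $\beta$-scale acts as a bounded $S^\alpha_k$ multiplier (immediate) and that the overlap in the decomposition is truly finite (also immediate from the supports). The loss from $X_k$ to $X_k^b$ with $b < \tfrac12$, stemming from the $\ell^1$-based Besov temporal regularity in $X_k$, has already been handled inside Corollary \ref{COR:a.o.}, so no further work is needed on that front.
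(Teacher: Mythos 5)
Your proposal is correct and follows essentially the same route as the paper: reduce to the dyadic bound $\|\P_k u\|_{F^{b,\beta}_k}\les 2^{\frac{(\alpha-\beta)k}{2}}\|\P_k u\|_{F^{\alpha}_k}$, decompose the $\beta$-scale cutoff into $O(2^{k(\alpha-\beta)})$ pieces at the finer $\alpha$-scale, apply Corollary \ref{COR:a.o.} (valid since $b<\frac12$), and control each piece via \eqref{Xk3}, with the hypothesis $r\leq\frac{\beta-\alpha}{2}+s$ absorbing the resulting loss in the weighted sum over $k$. The only differences are cosmetic (your explicit choice of shifted centers $t_m$ and the appeal to $S^\alpha_k$-acceptability of the product cutoff, which the paper handles implicitly through \eqref{Xk3}).
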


\begin{proof} 

From the definition of $F^{s,\alpha}$, it suffices to prove
\begin{align*} 
2^{kr}\|u\|_{F_{k}^{b, \beta}}\les  2^{ks}\|u\|_{F_k^\alpha}
\end{align*}

\noi
for $k \in \Z_+$.
From the support conditions for $\eta_0$ and $\g$, we have
\begin{align*}
 \eta_0(2^{[\beta k]}(t-t_k))\cdot u (t)
 =
 \sum_{|m|\leq C2^{k(\alpha-\beta)}}
\g(2^{[\alpha k]}(t-t_k)-m)\cdot \eta_0(2^{[\beta k]}(t-t_k))\cdot u(t)
 \end{align*}

\noi
for some $C>0$.
Then,
by Corollary \ref{COR:a.o.} and \eqref{Xk3}, we have
\begin{align*}
\|u\|_{F_k^{b, \beta}}
& =\sup_{t_k\in \R}\big\|\F[ \eta_0(2^{[\beta k]}(t-t_k))\cdot u]\big\|_{X_k^b}\\
& \leq C(b) \sup_{t_k\in \R}
\bigg(\sum_{|m|\leq C2^{k(\alpha-\beta)}}
\big\|
\F[ \g(2^{[\alpha k]}(t-t_k)-m)\cdot \eta_0(2^{[\beta k]}(t-t_k))\cdot u ]\big\|_{X_k}\bigg)^\frac{1}{2}\\
& \les  2^{\frac{k(\alpha-\beta)}{2}}\|u\|_{F_k^\alpha}.
\end{align*}

\noi
Hence, \eqref{embed2} follows as long as
$r\leq \frac{\beta-\alpha}{2}+s$.
\end{proof}

We conclude this section by presenting the proof of Lemma \ref{LEM:a.o.}.

\begin{proof} [Proof of Lemma \ref{LEM:a.o.}]
(a)
Let $u_m  =\g(\ld t-m  )\cdot u$. Then, we have
\begin{align}\label{a.o.1}
\|\F(u)\|_{X_k}
=\sum_{j =0}^\infty 2^\frac{j}{2}\Big\|\eta_j (\tau)\sum_{m \in\Z}
\ft{u}_m (n,\tau+n^2)\Big\|_{\l^2_n L^2_{\tau}}.
\end{align}

\noi
Let $v_m  (n,\tau)=\ft{u}_m  (n,\tau+ n^2)$.
Since $\eta_0 \equiv 1$ on the support of $\g$, we have
$u_m   = \eta_0 (\lambda t-m )\cdot u_m   $
and thus
 \[v_m =\int
v_m (n, s)e^{ - i\frac{(\tau-s)m }{\lambda} }
\ld^{-1}\ft{\eta}_0\Big(\frac{\tau-s}{\lambda}\Big)d s.\]

\noi
By Cauchy-Schwarz inequality, we have
\begin{align}\label{a.o.2}
\Big\|\eta_j (\tau)  \sum_{m \in\Z} & \ft{u}_m(n,\tau  +n^2)\Big\|_{\l^2_nL^2_{\tau}}^2 \notag \\
& =\sum_{m,m'\in \Z}
\sum_{n\in \Z} \int
\eta_j^2(\tau)\, v_m(n,s)e^{- i\frac{(\tau-s)m}{\lambda}}
\lambda^{-1}\ft{\eta}_0\Big(\frac{\tau-s}{\lambda}\Big)\notag\\
&\hphantom{XXXXXXXll|} \times \cj{v_{m'}}(n,s')
e^{i\frac{(\tau-s')m'}\lambda}\lambda^{-1}\cj{\ft{\eta}_0}\Big(\frac{\tau-s'}{\ld}\Big)
ds ds'  d\tau\notag\\
& \les \sum_{m,m'\in \Z} \int
\|v_m(\cdot,s)\|_{\l^2_n}\|v_{m'}(\cdot,s')\|_{\l^2_n}|K_\lambda(s,s',m-m')|dsds',
\end{align}

\noi
where
\begin{align}
K_\ld(s,s',m-m')
& =\ld^{-2}\int\eta_j^2(\tau)
e^{- i\frac{\tau(m-m')}{\ld}}
\ft{\eta}_0\Big(\frac{\tau-s}{\lambda}\Big)
\cj{\ft{\eta}_0}\Big(\frac{\tau-s'}{\ld}\Big)d\tau \label{a.o.30}\\
& =\ld^{-1}\int\eta_j^2(\lambda\zeta)
e^{- i\zeta(m-m')}\ft{\eta}_0({\zeta-\lambda^{-1}s})\cj{\ft{\eta}_0}({\zeta-\lambda^{-1}s'})d\zeta.
\label{a.o.3}
\end{align}

\medskip
\noi
$\bullet$ {\bf Case (i):}
First,  we consider the case
$\lambda\ges  2^j $.

Suppose that  $|m - m'| \les \frac{\ld}{2^j}$.
From \eqref{a.o.3}, we have $|K_\ld| \lesssim \frac{ 2^j }{\ld^2}$.
Then, using \eqref{Xk1a}, we estimate \eqref{a.o.2}  by
\begin{align*}
\sum_m  \sum_{m': |m - m'| \les \frac{\ld}{2^j}}
\frac{2^j}{\ld^2}
\bigg(\int \|v_m(\cdot,s)\|_{\l^2_n} ds \bigg)
\bigg(\int \|v_{m'}(\cdot,s')\|_{\l^2_n}ds'\bigg)
\les \ld^{-1} \sum_m \|\ft{u}_m\|_{X_k}^2.
\end{align*}

\noi
Hence, we obtain
\begin{align*}
\eqref{a.o.1}
\les \sum_{ 2^j \les \ld } 2^\frac{j}{2}
\bigg(\ld^{-1} \sum_m \|\ft{u}_m\|_{X_k}^2\bigg)^\frac{1}{2}
\sim \bigg(\sum_{m \in \Z}\big\|\F[\g(\ld t-m) \cdot u ]\big\|_{X_k}^2\bigg)^\frac{1}{2}.
\end{align*}

When $|m - m'| \gg \frac{\ld}{2^j}$,
we  integrate \eqref{a.o.3} by parts twice and obtain
$|K_\ld| \les 2^{-j}|m-m'|^{-2}$.
In this case, \eqref{a.o.2} is estimated by
\begin{align*}
2^{-j } \sum_m \sum_{m': |m - m'| \gg \frac{\ld}{2^j}}
\frac{1}{|m-m'|^2}  \|\ft{u}_m\|_{X_k}\|\ft{u}_{m'}\|_{X_k}
\les \ld^{-1} \sum_m \|\ft{u}_m\|_{X_k}^2.
\end{align*}

\noi
Hence, the same conclusion holds as before.

\medskip
\noi
$\bullet$ {\bf Case (ii):}
Next,  we consider the case $\ld\ll 2^j$.

First, we consider the contribution from $m = m'$.
In this case, the contribution to \eqref{a.o.1}
is bounded by
\begin{align}\label{a.o.4}
\sum_{2^j \gg \ld }2^\frac{j}{2}
\bigg( \sum_{m}
\bigg\| \eta_j(\tau)
\int \| v_m(n,s)\|_{\l^2_n}
\lambda^{-1}\ft{\eta}_0\Big(\frac{\tau-s}{\lambda}\Big)
ds \bigg\|_{L^2_\tau}^2 \bigg)^\frac{1}{2}.
\end{align}

\noi
When $|s| \ll |\tau|$ or $|s|\gg |\tau|$,  we have $|\tau - s| \ges 2^j$.
In this case, we have
$|\eta_0(\ld^{-1}(\tau - s))|
\les \ld^2 2^{-2j} \phi(\ld^{-1}(\tau - s))$
for some $\phi \in \S$,
and thus we have
\begin{align*}
\eqref{a.o.4}
\les \sum_{2^j \gg \ld }2^\frac{j}{2}
\frac{\ld}{2^{\frac{3}{2}j}}
\bigg\{ \sum_{m}
\bigg(
\int \| v_m(n,s)\|_{\l^2_n} ds \bigg)^2 \bigg\}^\frac{1}{2}
\les
\bigg( \sum_m \|\ft{u}_m\|_{X_k}^2\bigg)^\frac{1}{2}.
\end{align*}

\noi
When $|s|\sim |\tau|\sim 2^j$, it follows from  Young's inequality that
\begin{align*}
\eqref{a.o.4}
& \les \sum_{2^j \gg \ld }2^\frac{j}{2}
\bigg( \sum_{m}
\|\eta_j (s) v_m (n, s) \|_{\l^2_n L^2_s}^2 \bigg)^\frac{1}{2}\\
& \les
\sum_j
\bigg( \sum_m 2^j \| \eta_j(\tau - n^2) \ft{u}_m (n, \tau)
\|_{\l^2_n L^2_\tau}^2\bigg)^\frac{1}{2}.
\end{align*}

Next, we consider the contribution from $m \ne m'$.

\medskip

\noi
$\circ$ Subcase (ii.1): $|m -m'|\gg \frac{2^j}{\ld}$.
In this case, integrating by parts three times, we have
$|K_\ld| \les \ld^{-1} |m -m'|^{-3}$.
The contribution to  \eqref{a.o.2} is estimated by
\begin{align*}
\sum_m \sum_{m': |m - m'| \gg \frac{2^j}{\ld}}
\frac{1}{\ld |m - m'|^3}
\bigg(\int \|v_m(n,s)\|_{\l^2_n} ds \bigg)
\bigg(\int \|v_{m'}(n,s')\|_{\l^2_n}ds'\bigg)
\les \frac{\ld}{2^{2j}}  \sum_m \|\ft{u}_m\|_{X_k}^2.
\end{align*}

\noi
Hence, we obtain
\begin{align*}
\eqref{a.o.1}
\les \sum_{ 2^j \gg \ld } 2^\frac{j}{2}
\bigg(\frac{\ld}{2^{2j}} \sum_m \|\ft{u}_m\|_{X_k}^2\bigg)^\frac{1}{2}
\sim \bigg(\sum_{m \in \Z}\big\|\F[\g(\ld t-m) \cdot u ]\big\|_{X_k}^2\bigg)^\frac{1}{2}.
\end{align*}

\medskip
\noi
$\circ$ Subcase (ii.2): $|m - m'|\les \frac{2^j}{\ld}$.

When $|s| \ll |\tau|$ or $|s|\gg |\tau|$,  we have $|\tau - s| \ges 2^j$.
In this case, we have
$|K_\ld| \les \frac{\ld^2}{ 2^{3j}}$ from \eqref{a.o.30}.
The contribution to  \eqref{a.o.2} is estimated by
\begin{align*}
\sum_m \sum_{m': 0< |m - m'| \les \frac{2^j}{\ld}}
\frac{\ld^2}{2^{3j}}
\bigg(\int \|v_m(n,s)\|_{\l^2_n} ds \bigg)
\bigg(\int \|v_{m'}(n,s')\|_{\l^2_n}ds'\bigg)
\les \frac{\ld}{2^{2j}}  \sum_m \|\ft{u}_m\|_{X_k}^2.
\end{align*}

\noi
Then, the result follows as in Subcase (ii.1).
A similar argument holds when 
$|s'| \ll |\tau|$ or $|s'|\gg |\tau|$.

Lastly, when $|s|\sim |s'| \sim |\tau|\sim2^j$,
we integrate \eqref{a.o.3} by parts four  times
and obtain
\[|K_\ld| \les   |m- m'|^{-4}  \ld^{-1}(1+ \ld^{-1}|s - s'|)^{-4}\]

\noi
By Young's inequality, the contribution to  \eqref{a.o.2} is estimated by
\begin{align*}
\sum_m & \sum_{m': 0< |m - m'| \les \frac{2^j}{\ld}}
\frac{1}{ |m - m'|^4}
 \|\eta_j(s) v_m(n,s)\|_{\l^2_nL^2_s} \\
& \hphantom{XXXX}  \times
\bigg\|\int \eta_j(s')\|v_{m'}(n,s')\|_{\l^2_n}
\ld^{-1}(1+ \ld^{-1}|s - s'|)^{-4}ds'\bigg\|_{L^2_s}\\
& \les
\sum_m
 \|\eta_j(s) v_m(n,\tau)\|_{\l^2_nL^2_\tau}^2 .
\end{align*}

\noi
Hence, we obtain
\begin{align*}
\eqref{a.o.1}  \les
\sum_j
\bigg( \sum_m 2^j \| \eta_j(\tau - n^2) \ft{u}_m (n, \tau)
\|_{\l^2_n L^2_\tau}^2\bigg)^\frac{1}{2}.
\end{align*}

\medskip

\noi
(b) Proceeding as in Part (a), we have
\begin{align*}
\big\|\eta_{\geq \l}(\tau-n^2)(\tau - n^2 + iA)^{-1} \F(u)\big\|_{X_k}
=\sum_{j=\l}^\infty 2^\frac{j}{2}\bigg\|\eta_j(\tau)\sum_{m \in\Z}
\frac{\ft{u}_m (n,\tau+n^2)}{|\tau| + A} \bigg\|_{\l^2_n L^2_{\tau}}
\end{align*}

\noi
and
\begin{align*}
\bigg\|\eta_j(\tau)  \sum_{m\in\Z}
& \frac{\ft{u}_m (n,\tau+n^2)}{|\tau| + A} \bigg\|_{\l^2_nL^2_{\tau}}^2 \notag \\
& \les \sum_{m,m'\in \Z} \int
\|v_m(\cdot,s)\|_{\l^2_n}\|v_{m'}(\cdot,s')\|_{\l^2_n}|\wt{K}_\lambda(s,s',m-m')|dsds',
\end{align*}

\noi
where
\begin{align*}
\wt{K}_\ld(s,s',m-m')
& =\ld^{-2}\int \frac{\eta_j^2(\tau)}{(|\tau| + A)^2}
e^{- i\frac{\tau(m-m')}{\ld}}
\ft{\eta}_0\Big(\frac{\tau-s}{\lambda}\Big)
\cj{\ft{\eta}_0}\Big(\frac{\tau-s'}{\ld}\Big)d\tau .
\end{align*}

When $|\tau| + A \ges \max (|s|, |s'|)$,
we have
\[ \frac{1}{(|\tau| + A)^2 } \les  \frac{1}{|s|+A} \frac{1}{|s'|+A}, \]

\noi
and thus we can proceed as in Part (a).
Next, we consider the case  $|\tau| + A \ll \max (|s|, |s'|)$.
Without loss of generality, assume $|s| \geq |s'|$.
In this case, we have $|\tau - s| \sim |s| \sim |s| + A$.
Thus,
\begin{equation*} 
 \bigg|\ft{\eta}_0\Big(\frac{\tau-s}{\lambda}\Big) \bigg|
\les \frac{\ld}{|s|+A} \, \phi\Big(\frac{\tau-s}{\lambda}\Big)
\end{equation*}

\noi
for some $\phi \in \S$.
In particular,
since we have $\ld  \sim 2^\l \leq  2^j $, we have
\begin{equation} \label{aa.o.4}
\frac{1}{|\tau| + A} \frac{\ld} {|s|+A}
\les \frac{1}{|s|+A}.
\end{equation}

\noi
If $|s'| \gg |\tau|+A$,
then we use \eqref{aa.o.4} with  $|s'|$ in place of $|s|$.
If $|s'| \les |\tau|+A$,
we use
$ (|\tau| + A)^{-1}  \les  (|s'|+A)^{-1}$.
Then, we can proceed as in Case (ii) of Part (a) in this case.
This completes the proof of Lemma \ref{LEM:a.o.}.
\end{proof}

\section{Linear estimate}
\label{SEC:linear}

In this section, we present a linear estimate
associated to the Schr\"odinger equation.
In particular, we estimate solutions
to the nonhomogeneous linear Schr\"odinger equation.

\begin{proposition} \label{PROP:linear}
Let  $T\in (0,1]$.
Suppose that  $u \in C([-T,T];H^\infty(\T))$
is a solution to the following nonhomogeneous linear Schr\"odinger equation:
\begin{align*}
i\dt u-\dx^2 u=v \qquad \text{on } \T\times (-T,T),
\end{align*}

\noi
where $v \in C([-T,T];H^\infty(\T))$.
Then, for any $s\in \R$  and $\alpha\geq 0$,
we have
\begin{align}\label{linear}
\|u\|_{F^{s,\alpha}(T)}\les \
\|u\|_{E^{s}(T)}+\|v\|_{N^{s,\alpha}(T)}.
\end{align}
\end{proposition}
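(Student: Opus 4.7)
The plan is to reduce \eqref{linear} to a bound on each Littlewood--Paley block, construct an extension of $\P_k u$ to $\R$ by Duhamel, and then control its short-time $X_k$-norm by direct Fourier analysis. By the definitions of $F^{s,\alpha}(T)$, $N^{s,\alpha}(T)$, and $E^s(T)$, it suffices to prove, uniformly in $k \in \Z_+$,
$$\|\P_k u\|_{F_k^\alpha(T)} \lesssim \sup_{t \in [-T,T]} \|\P_k u(t)\|_{L^2_x} + \|\P_k v\|_{N_k^\alpha(T)}.$$
Fix $k$, pick $t_0 \in [-T, T]$ with $\|\P_k u(t_0)\|_{L^2}$ comparable to the $L^\infty_{t \in [-T,T]} L^2_x$-norm of $\P_k u$ (and $t_0 = 0$ for $k = 0$), and an extension $\tilde v_k$ of $\P_k v$ with $\|\tilde v_k\|_{N_k^\alpha} \leq 2\|\P_k v\|_{N_k^\alpha(T)}$. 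With $\chi \in C_c^\infty(\R)$ a cutoff equal to $1$ on $[-T, T]$ and supported on $[-T-1, T+1]$, define
$$\tilde u_k(t) := \chi(t)\Big[S(t-t_0)\P_k u(t_0) - i \int_{t_0}^t S(t-s)\tilde v_k(s)\,ds\Big].$$
By Duhamel applied to $i\dt \P_k u - \dx^2 \P_k u = \P_k v$, one has $\tilde u_k = \P_k u$ on $[-T,T]$, so we must control $\sup_{t_k \in \R}\|\F[\eta_0(2^{[\alpha k]}(t-t_k))\tilde u_k]\|_{X_k}$. When $t_k$ is far from $[-T-1, T+1]$ the product vanishes; otherwise, pick $t_k^\ast \in [-T, T]$ closest to $t_k$ and use that $\tilde v_k = \P_k v$ on $[t_0, t_k^\ast] \subset [-T, T]$ to rewrite
$$\tilde u_k(t) = \chi(t)\Big[S(t - t_k^\ast)\P_k u(t_k^\ast) - i\int_{t_k^\ast}^t S(t-s)\tilde v_k(s)\,ds\Big].$$

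The homogeneous part $\eta_0(2^{[\alpha k]}(t-t_k))\chi(t) S(t - t_k^\ast)\P_k u(t_k^\ast)$ is handled by direct Fourier computation: its space-time Fourier transform factors as $\Psi(\tau - n^2) \cdot \widehat{\P_k u(t_k^\ast)}(n)$ up to a phase, where $\Psi$ is the Fourier transform in $t$ of the cutoff product $\eta_0(2^{[\alpha k]}(\cdot-t_k))\chi(\cdot)$. Since $\Psi$ is Schwartz at scale $2^{[\alpha k]}$ with $\|\Psi\|_{L^\infty} \lesssim 2^{-[\alpha k]}$, a geometric summation $\sum_j 2^{j/2}\|\eta_j(\sigma)\Psi(\sigma)\|_{L^2_\sigma}$ (with $\sigma = \tau - n^2$) yields $\lesssim \|\P_k u(t_k^\ast)\|_{L^2}$. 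For the Duhamel part, observe that the cutoffs jointly restrict $|t - t_k^\ast|, |s - t_k^\ast| \lesssim 2^{-[\alpha k]}$, so by Lemma \ref{LEM:Sk} we may replace $\tilde v_k(s)$ with $\gamma(2^{[\alpha k]-2}(s-t_k^\ast))\tilde v_k(s)$ for a suitable $\gamma \in C_c^\infty(\R)$ without changing the integral or inflating the $N_k^\alpha$-norm. We then decompose the modified $\F\tilde v_k$ into modulation layers $\eta_j(\tau - n^2)\F\tilde v_k$ and compute the Duhamel Fourier transform layer-by-layer, which produces on each layer (up to phase) a non-resonant piece $\sim \eta_j(\tau-n^2)(\tau - n^2)^{-1}\F\tilde v_k$ matching the $N_k^\alpha$-weight $(\tau - n^2 + i2^{[\alpha k]})^{-1}$ on $|\tau - n^2| \gtrsim 2^{[\alpha k]}$, plus a resonant $\delta$-type correction concentrated at $\tau = n^2$.

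The main obstacle is the low-modulation regime $|\tau - n^2| \lesssim 2^{[\alpha k]}$, where $(\tau - n^2)^{-1}$ is singular while the $N_k^\alpha$-weight behaves only like $2^{-[\alpha k]}$, and the $\l^1$-Besov structure of $X_k$ in $j$ forbids any lossy summation. We absorb this contribution by viewing the $\delta$-type correction as a short-time-localized linear solution whose $L^2$-data is bounded by $\|\tilde v_k\|_{N_k^\alpha}$, and estimating its $X_k$-norm via the homogeneous bound together with the almost-orthogonality of Lemma \ref{LEM:a.o.}(b); the imaginary-shift form of that lemma, with weight $(\tau - n^2 + iA)^{-1}$, is precisely what accommodates this low-modulation piece. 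Collecting the homogeneous and Duhamel estimates, taking the supremum in $t_k$, and summing over $k \in \Z_+$ then yield \eqref{linear}.
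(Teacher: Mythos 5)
Your overall plan --- reduce to dyadic blocks, extend $\P_k u$ off $[-T,T]$ by a cut-off Duhamel formula from a reference time inside $[-T,T]$, and then estimate the $X_k$-norm of each short-time window by direct Fourier analysis of the Duhamel kernel --- is the same strategy as the paper's. But there is a concrete gap in the localization step. You take $\chi$ supported on the \emph{unit} neighborhood $[-T-1, T+1]$ and then assert that the cutoffs $\eta_0(2^{[\alpha k]}(\cdot-t_k))\chi(\cdot)$ jointly restrict $|t - t_k^*|$ and $|s - t_k^*|$ to $\lesssim 2^{-[\alpha k]}$. This holds when $t_k\in[-T,T]$ (then $t_k^*=t_k$), but fails whenever $t_k$ lies between, say, $T + 10\cdot 2^{-[\alpha k]}$ and $T+1$: there the $t$-support of the product is still nonempty, yet $t_k^*=T$ and $|t - t_k^*|\approx |t_k - T|$ can be of order $1$. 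The Duhamel integral $\int_{t_k^*}^{t}$ then runs over an $O(1)$-length interval on which $\tilde v_k$ is an arbitrary extension, so multiplying $\tilde v_k$ by a $\gamma$ supported at scale $2^{-[\alpha k]}$ near $t_k^*$ \emph{does} change the integral; the claim that it does not is unjustified. The paper sidesteps this: it first invokes Lemma~\ref{LEM:Sk} to replace $\tilde v_k$ by a function supported in a $2^{-[\alpha k]}$-neighborhood of $[-T,T]$, defines $\tilde u_k$ with the cutoffs $\eta_0(2^{[\alpha k]+5}(t\mp T))$ at that same scale so that $\tilde u_k$ is supported within $O(2^{-[\alpha k]})$ of $[-T,T]$, and then proves separately (\eqref{linear4}, via \eqref{Xk3}) that the supremum over $t_k\in\R$ may be restricted to $t_k\in[-T,T]$. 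If you shrink the support of your $\chi$ to a $2^{-[\alpha k]}$-neighborhood and cut off $\tilde v_k$ correspondingly \emph{before} writing the Duhamel formula (not afterward), your argument closes; with a unit-width $\chi$ it does not.

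A secondary point: the low-modulation ``$\delta$-type'' correction does not call for Lemma~\ref{LEM:a.o.}\,(b); that lemma is an almost-orthogonality statement for time decompositions and is not designed for this purpose. In the paper's Duhamel computation the corresponding kernel piece is bounded pointwise by $2^{-[\alpha k]}(1+2^{-[\alpha k]}|\tau-n^2|)^{-4}$ after dividing by $(\tau'-n^2+i2^{[\alpha k]})$, and is then closed by the elementary bound \eqref{Xk1} combined with the geometric sum $\sum_j 2^{j/2}\|\eta_j(\sigma)\,2^{-[\alpha k]}(1+2^{-[\alpha k]}|\sigma|)^{-4}\|_{L^2_\sigma}\lesssim 1$, which is exactly the ``homogeneous bound'' you already wrote down. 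Invoking almost-orthogonality here would not produce the needed estimate.
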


\begin{proof}
We follow the argument in \cite{IKT, Guo1}.
We first make some preliminary computations. Given $\phi_k \in L^2(\T)$
with $\supp \ft{\phi}_k \subset I_k$, we have
\begin{align}\label{linear1}
\big\|\F \big[ \eta_0(2^{[\al k] }(t-t_k) ) \cdot S(t - t^*) \phi_k\big] \big\|_{X_k}\les \|\phi_k\|_{L^2}
\end{align}

\noi
for any $\al \geq 0$ and  $t_k, t^* \in \R$.
This easily follows from
\[\F\big[\eta_0(2^{[\al k] }(t-t_k)) \cdot S(t - t^*) \phi_k\big ](n, \tau)
= e^{-it_k (\tau - n^2) } 2^{-[\al k]}\ft{\eta}_0(2^{-[\al k]}(\tau-n^2)) e^{-it^*n^2}\ft{\phi}_k(n).\]

\noi
Moreover,
a straightforward computation shows
\begin{align*}
\F\Big[ \eta_0  & (2^{[\alpha k]}t)  \int_0^t S(t - t')v_k(t') dt' \Big](n ,\tau)\\
& =
C\int \ft{v}_k(n ,\tau')\cdot 
2^{-[\alpha k]}\frac{\ft{\eta}_0(2^{-[\alpha k]}(\tau-\tau'))-
\ft{\eta}_0(2^{-[\al k]}(\tau-n^2))}{\tau'-n^2}d\tau'
\end{align*}

\noi
and
\begin{align*}
& \bigg| 2^{-[\alpha k]}\frac{    \ft{\eta}_0(2^{-[\al k]}(\tau-\tau'))-
\ft{\eta}_0(2^{-[\al k]}(\tau-n^2))}{\tau'-n^2}\cdot
(\tau'-n ^2+i2^{[\alpha k]})\bigg|\\
&\hphantom{XXXXXXXX}
\les \ 2^{-[\alpha k]}(1+2^{-[\al k]}|\tau-\tau'|)^{-4}+2^{-[\alpha k]}(1+2^{-[\al k]}|\tau-n ^2|)^{-4}.
\end{align*}

\noi
Hence, from \eqref{Xk1} and \eqref{Xk2}, we have
\begin{align} \label{linear2}
\bigg\|\F\Big[ \eta_0(2^{[\alpha k]} t) & \int_0^t S(t - t')v_k(t') dt' \Big]\bigg\|_{X_k}
\lesssim \big\|(\tau - n^2 + i 2^{[\al k]})^{-1} \cdot \F(v_k)\big\|_{X_k}
\end{align}

\noi
for $v_k$ on $\T\times \R$ with $\supp \ft{v}_k \subset I_k \times \R$.

\medskip
In view of the definitions, the square of the right-hand side of \eqref{linear} is equivalent to
\begin{align*}
\|\P_0u(0)\|_{L^2}^2+\|\P_0v\|_{N_0^\alpha(T)}^2
+\sum_{k\geq 1}\Big(\sup_{t_k\in [-T,T]}2^{2sk}\|\P_ku(t_k)\|_{L^2}^2
+2^{2sk}\|\P_kv\|_{N_k^\alpha(T)}^2\Big).
\end{align*}

\noi
Thus, it suffices to prove
\begin{align}\label{linear3}
\|\P_k u\|_{F_k^\alpha(T)} \les
\begin{cases}
\vphantom{\bigg|}
\|\P_0u(0)\|_{L^2}+\|\P_0(v)\|_{N_0^\alpha(T)}, & k = 0,\\
\displaystyle \sup_{t_k\in [-T,T]}\|\P_ku(t_k)\|_{L^2}
+\|\P_k v\|_{N_k^\alpha(T)}, & k \geq 1.
\end{cases}
\end{align}

For $k \in \Z_+$, let  $\wt v_k$ denote an extension of $\P_kv$ such that
\begin{align}
\|\wt v_k\|_{N_k^\alpha}\leq C\|\P_kv\|_{N_k^\alpha(T)}.
\label{linear3a}
\end{align}

\noi
By Lemma \ref{LEM:Sk},
we may assume that $\wt v_k$ is supported on
$\T\times [-T-2^{-[\al k]-10},T+2^{-[\alpha k]-10}]$.
Indeed, let $m(t)$ be a smooth cutoff function such that
\[m(t)=\begin{cases}
1, & \text{for }t\geq 1,\\
0, & \text{for } t\leq 0.
\end{cases}
\]

\noi
Then,  defining $m_{k, -} $ and $m_{k, +}$ by  
\begin{align*}
m_{k, -} (t) & = m(2^{[\alpha k]+10}(t+T+2^{-[\alpha k]-10})),\\
m_{k, +} (t) & = m(-2^{[\alpha k]+10}(t-T-2^{-[\alpha k]-10})),
\end{align*}

\noi
we have
$m_{k, -}, m_{k, +} \in S_k^\alpha$, where $S_k^\al$ is defined in \eqref{Sk}.
Note that 
$m_k(t) := m_{k, -}(t) \cdot m_{k, +}(t) $
is supported on $[-T-2^{-[\alpha k]-10},T+2^{-[\alpha k]-10}]$, and
is equal to $1$ on $[-T,T]$.
Since $\wt{v}_k$ is an extension of $\P_k v$,
$m_k \cdot \wt{v}_k$ is also an extension of $\P_k v$.
Moreover, from \eqref{Sk2}, we have
$\| m_k\cdot \wt v_k\|_{N_k^\alpha}
\les  \|\wt v_k\|_{N_k^\alpha}\leq C\|\P_k v\|_{N_k^\alpha(T)}$.
Hence, we assume that
$\wt v_k$ is supported on
$\T\times [-T-2^{-[\al k]-10},T+2^{-[\alpha k]-10}]$
in the following.

Next, we define an extension $\wt u_k$ of $\P_ku$.
For $k \geq 1$,
we define $\wt u_k$  by setting
\[\wt u_k (t)
= \begin{cases}
\displaystyle  \eta_0(2^{[\alpha k]+5}(t-T))
\Big[S(t-T)\P_ku(T)-i\int_T^tS(t-t') \wt v_k(t') dt'\Big],
 &
t \geq T, \\
\displaystyle
\eta_0(2^{[\alpha k]+5}(t+T))
\Big[S(t+T)\P_ku(-T) - i \int_{-T}^tS(t-t')\wt v_k (t')dt'\Big],
&  t \leq -T,
\end{cases}\]

\noi
and $\wt u_k (t)  = \P_ku (t)$ for $t \in [-T, T]$.
When $k = 0$, define $\wt u_0$ by
\[\wt u_0 (t)
= \begin{cases}
\displaystyle  \eta_0(2^{5}(t-T))
\Big[S(t)\P_0u(0)-i\int_0^tS(t-t') \wt v_0(t') dt'\Big],
 &
t \geq T, \\
\displaystyle
\eta_0(2^{5}(t+T))
\Big[S(t)\P_0u(0) - i \int_{0}^tS(t-t')\wt v_0 (t')dt'\Big],
&  t \leq -T,
\end{cases}\]

\noi
and $\wt u_0 (t)  = \P_0u (t)$ for $t \in [-T, T]$.

Using  \eqref{Xk3}, we can show that
\begin{align}\label{linear4}
\|\P_ku \|_{F^\al_k(T)}\les \sup_{t_k\in [-T,T]}\big\|\F[\eta_0(2^{[\alpha k]}(t-t_k)) \cdot \wt u_k ]\big\|_{X_k}.
\end{align}

\noi
Namely, with this choice of $\wt u_k$,
we can restrict the domain of the supremum
from $t_k \in \R$ to $t_k \in [-T, T]$.
In the following, we first prove \eqref{linear3}, assuming \eqref{linear4}.
We then present the proof of \eqref{linear4} at the end.

First, we consider the case $k = 0$.
Let $\g: \R \to [0,1]$ be a smooth cutoff function supported on $[-1, 1]$
such that
\[\sum_{m\in \Z} \g(t - m) \equiv 1, \qquad t \in \R, \]

\noi
as in Lemma \ref{LEM:a.o.}.
Note that  there exists $C>0$ such that
\[\sum_{|m| \leq C} \g (t - t_0 -m) = 1, \quad
\text{on } \big[-\tfrac{8}{5}-T, \tfrac{8}{5} + T\big]\]

\noi
for $t_0 \in [-T, T]$.
Then, by \eqref{linear4},  \eqref{linear1},  \eqref{linear2}, and \eqref{linear3a},
we have
\begin{align*}
\|\P_0u \|_{F^\al_0(T)}
& \leq \sup_{t_0 \in [-T, T]}
\bigg\|  \F \Big[\eta_0(t - t_0)\cdot \Big(
S(t) \P_0u(0) - i \int_0^t S(t - t') \wt v_0 (t') dt' \Big) \Big]\bigg\|_{X_0} \\
& \leq \|\P_0u(0)\|_{L^2} \\
& \hphantom{XXXXX}
+ \sup_{t_0 \in [-T, T]}
\sum_{|m| \leq C}
\big\|(\tau - n^2 + i )^{-1} \cdot \F[ \g(t - t_0 -m) \cdot \wt v_0]\big\|_{X_0}\\
& \les \|\P_0u(0)\|_{L^2}+\|\P_0 v\|_{N_0^\alpha(T)},
\end{align*}

\noi
yielding \eqref{linear3}.
Note that we used  $\g(t - t_0 -m) = \g(t - t_0 -m)\eta_0(t - t_0 -m)$
and \eqref{Xk3}
in the last step.

Next, we consider the case $k \geq 1$.
Given $t_k \in [-T, T]$, we write
\begin{align*}
\wt u_k(t) = \eta_{_T, k} (t)\big[S(t - t_k) \P_ku(t_k) - i \int_{t_k}^t S(t-t')\wt v_k(t') dt'\big], 
\end{align*}

\noi
where $\eta_{_T, k} (t)$ is defined by 
\[\eta_{_T, k} (t) = 
\eta_0(2^{[\alpha k]+5}(t-T)) \ind_{(T, \infty)}
+ \ind_{[-T, T]}
+ \eta_0(2^{[\alpha k]+5}(t+T)) \ind_{(-\infty, -T)}.\]

\noi
Then, noting that 
\[\eta_0(2^{[\alpha k]}(t-t_k))
= \eta_0(2^{[\alpha k]}(t-t_k))
\sum_{|m| \leq C} \g (2^{[\alpha k]}(t - t_k) -m), 
\]

\noi
we proceed with \eqref{linear1} and \eqref{linear2}
and obtain
\begin{align*}
\|\P_k u\|_{F^\al_k(T)}
& \les  \sup_{t_k\in [-T,T]}\|\P_ku(t_k)\|_{L^2}\\
& \hphantom{XXX}+ \sup_{t_k \in [-T, T]}
\sum_{|m| \leq C}
\big\|(\tau - n^2 + i 2^{[\al k]})^{-1} \cdot \F[ \g(2^{[\alpha k]}(t - t_k) -m) \cdot \wt v_0]\big\|_{X_0}\\
& \les  \sup_{t_k \in [-T, T]}
\|\P_ku(t_k)\|_{L^2}+  \|\P_k v\|_{N_k^\alpha(T)},
\end{align*}

\noi
where 
we used  $\g(2^{[\alpha k]}(t - t_k) -m) = \g(2^{[\alpha k]}(t - t_k) -m)\eta_0(2^{[\alpha k]}(t - t_k) -m)$
and \eqref{Xk3}
as before.

It remains to prove \eqref{linear4}.
It suffices to prove
\[
\sup_{t_k\in \R}\big\|\F[ \eta_0(2^{[\alpha k]}(t-t_k)) \cdot \wt u ]\big\|_{X_k}
\les \sup_{t_k\in [-T,T]}\big\|\F[ \eta_0(2^{[\alpha k]}(t-t_k)) \cdot \wt u ]\big\|_{X_k}.\]

\noi
For $t_k>T$, since $\wt{u}$ is supported in 
$[-T-\frac 85\cdot 2^{-[\alpha k]-5},T+\frac 85\cdot2^{-[\alpha k]-5}]$, it is easy to see that
\[\eta_0(2^{[\alpha k]}(t-t_k))\cdot \wt u(t)
=\eta_0(2^{[\alpha k]}(t-T))\eta_0(2^{[\alpha k]}(t-t_k))\cdot \wt u (t).\]

\noi
Therefore, from \eqref{Xk3}, we obtain
\[\sup_{t_k>T}\big\|\F[\eta_0(2^{[\alpha k]}(t-t_k)) \cdot \wt u ]\big\|_{X_k}
\les \big\|\F[ \eta_0(2^{[\alpha k]}(t-T)) \cdot \wt u ]\big\|_{X_k}.\]

\noi
A similar argument holds for  $t_k<-T$,
and hence we obtain
\eqref{linear4} as desired.
\end{proof}

\section{Strichartz and related multilinear estimates}
\label{SEC:Strichartz}

Recall  the following periodic $L^4$- and $L^6$-Strichartz estimates
due to Bourgain \cite{BO1}:
\begin{equation}\label{Stri1}
\|u\|_{L^4_{x, t}(\T\times \R)} \lesssim \|u\|_{X^{0, \frac{3}{8}}}
\quad \text{and} \quad
\|S(t) \phi \|_{L^6_{x, t}(\T\times \R)} \leq C_\eps  |I|^\eps \|\phi\|_{L^2}
\end{equation}

\noi
for any $\eps > 0$,
where $\phi$ is a function on $\T$ such that $\supp \ft{\phi}$
is contained in an interval $I$ of length $|I|$.

\begin{lemma}\label{LEM:L46}
Let $k,j\in \Z_+$.
Suppose that  $ u \in L^2(\T\times \R)$ with $\supp \ft{u} \subset
D_{\leq j}$. Then
\begin{align}
\|u \|_{L_{x,t}^4}\les 2^{\frac{3}{8}j } \|u \|_{L^2_{x, t}}.
\label{Stri2}
\end{align}

\noi
Moreover, if, in addition,  we assume that
$\supp \ft u \subset I\times \R $ for some interval $I$,  
then we have
\begin{align}
 \quad \|u \|_{L_{x,t}^6}\leq C_\eps |I|^\eps 2^{\frac{j}{2} }\|u \|_{L^2_{x, t}}
\label{Stri3}
\end{align}

\noi
for any $\eps > 0$.
\end{lemma}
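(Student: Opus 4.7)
The plan is to derive \eqref{Stri2} directly from the $X^{0,3/8}$ embedding, and to derive \eqref{Stri3} via a standard transference-principle argument, writing a function with bounded modulation as a superposition of modulated free Schr\"odinger solutions.

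For \eqref{Stri2}, since $\supp \ft{u} \subset D_{\leq j}$, we have $|\tau - n^2| \lesssim 2^j$ on the support of $\ft{u}$. The first Strichartz embedding in \eqref{Stri1} gives
\[
\|u\|_{L^4_{x,t}} \lesssim \|u\|_{X^{0,3/8}} = \bigl\|\jb{\tau-n^2}^{3/8}\ft{u}(n,\tau)\bigr\|_{\l^2_n L^2_\tau} \lesssim 2^{3j/8}\|u\|_{L^2_{x,t}},
\]
which is \eqref{Stri2}. Observe that a naive transference using $L^4$ would only produce $2^{j/2}$; the gain to $2^{3j/8}$ is precisely what the $X^{0,3/8}$ embedding delivers.

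For \eqref{Stri3}, I would perform the change of variables $\tau = n^2 + \lambda$ in the Fourier inversion formula for $u$. Setting
\[
\ft{f}_\lambda(n) := \ind_{I}(n)\cdot \ft{u}(n, n^2+\lambda),
\]
we obtain
\[
u(x,t) = \int_{|\lambda|\lesssim 2^j} e^{it\lambda}\,S(t)f_\lambda(x)\, d\lambda.
\]
Since $\ft{f}_\lambda$ is supported in $I$, the second estimate in \eqref{Stri1} gives $\|S(t)f_\lambda\|_{L^6_{x,t}} \le C_\eps |I|^\eps \|f_\lambda\|_{L^2}$. Then by Minkowski's integral inequality followed by Cauchy--Schwarz in $\lambda$ (using that the $\lambda$-integral is over an interval of length $\lesssim 2^j$), together with Plancherel's identity $\int \|f_\lambda\|_{L^2}^2\, d\lambda = \|u\|_{L^2_{x,t}}^2$, we get
\[
\|u\|_{L^6_{x,t}} \le C_\eps |I|^\eps \int_{|\lambda|\lesssim 2^j} \|f_\lambda\|_{L^2}\, d\lambda \lesssim C_\eps |I|^\eps\, 2^{j/2}\,\|u\|_{L^2_{x,t}},
\]
which is \eqref{Stri3}.

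The only minor subtlety is the $\R$ versus unit-time-interval issue for the $L^6$ linear Strichartz estimate, which is handled by applying the estimate on unit time intervals and summing (or by inserting a Schwartz time cutoff before transferring); no substantive difficulty arises because the frequency localization in $I$ makes the constant independent of the time window. The main structural idea throughout is simply: modulation localization at scale $2^j$ costs at most $2^{j/2}$ in $L^2_\lambda \to L^1_\lambda$, while the $X^{0,b}$-type embedding with $b = 3/8$ is what improves this loss in the $L^4$ case.
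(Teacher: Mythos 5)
Your proof is correct and follows essentially the same route as the paper's: the $L^4$ bound is read off directly from the $X^{0,\frac38}$ embedding in \eqref{Stri1}, and the $L^6$ bound is obtained by the identical foliation $u(x,t)=\int e^{it\lambda}S(t)f_\lambda\,d\lambda$ over modulations $|\lambda|\lesssim 2^j$, followed by Minkowski's inequality, the frequency-localized $L^6$ Strichartz estimate, and Cauchy--Schwarz in $\lambda$ with Plancherel.
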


\begin{proof}
The $L^4$-estimate \eqref{Stri2} is a direct consequence of \eqref{Stri1}.
Writing
\begin{align*}
u(x, t)  = c \sum_n \int \ft u(n, \tau) e^{inx} e^{it\tau} d\tau
  = c \sum_n \int \ft{u}(n, \tau+ n^2 ) e^{i(nx+ n^2t)} e^{it\tau} d\tau, 
\end{align*}

\noi
it follows from \eqref{Stri1} that 
\begin{align*}
\|u\|_{L^6_{x, t}}
& \les \int \eta_{\leq j} (\tau) \Big\| \sum_n  \ft u(n, \tau+ n^2 ) e^{i(nx+ n^2t)} \Big\|_{L^6_{x, t}} d\tau \\
& \les |I|^\eps
\int \eta_{\leq j} (\tau) \Big( \sum_n | \ft u(n, \tau+ n^2 )|^2  \Big)^\frac{1}{2} d\tau
\les |I|^\eps 2^\frac{j}{2} \|u\|_{L^2_{x, t}}.
\end{align*}

\noi
This completes the proof of the $L^6$-estimate \eqref{Stri3}.
\end{proof}

As an immediate consequence of Lemma \ref{LEM:L46}, we have the following
multilinear estimates.

\begin{lemma}\label{LEM:L4L6}
Let $u_{k_i, j_i}$ be a function on $\T \times \R$
such that $\supp \ft{u}_{k_i, j_i} \subset D_{k_i, \leq j_i}$, $i = 1, \dots, 4$.
Then, we have 
\begin{align}
& \bigg|\int_{\T\times \R }  u_{k_1, j_1} \cj u_{k_2, j_2}u_{k_3, j_3}\cj u_{k_4, j_4} dx dt\bigg|
\lesssim \prod_{i = 1}^4 2^\frac{3 j_i}{8}
\|\F(u_{k_i, j_i})\|_{\l^2_n L^2_\tau},  \label{L4}\\
& \bigg|\int_{\T\times \R}  u_{k_1, j_1} \cj u_{k_2, j_2}u_{k_3, j_3}\cj u_{k_4, j_4} dx dt\bigg|
\lesssim  2^{-\frac{ j_1^*}{2}}2^{\eps k_3^*}\prod_{i = 1}^4 2^\frac{ j_i}{2} \|\F(u_{k_i, j_i})\|_{\l^2_n L^2_\tau},
\label{L6}
\end{align}

\noi
for any  $\eps > 0$.
Here, $j^*_i$ and $k^*_i$ denote the decreasing rearrangements
of $j_i$ and $k_i$.

\end{lemma}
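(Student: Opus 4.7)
Both estimates follow from H\"older's inequality in space-time together with the Strichartz bounds of Lemma~\ref{LEM:L46}; only \eqref{L6} requires nontrivial $\eps$-bookkeeping.

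The plan for \eqref{L4} is straightforward: apply H\"older in $L^4_{x,t}$ to all four factors (using $4\cdot\tfrac14=1$), invoke the $L^4$-Strichartz \eqref{Stri2} on each factor, and then pass from $L^2_{x,t}$ to $\|\F(\cdot)\|_{\l^2_n L^2_\tau}$ via Plancherel. This is essentially mechanical.

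For \eqref{L6}, I would first select $i_0\in\{1,\dots,4\}$ with $j_{i_0}=j_1^*$ (the factor carrying the largest modulation) and apply H\"older with this factor in $L^2_{x,t}$ and the other three in $L^6_{x,t}$ (since $\tfrac12+3\cdot\tfrac16=1$). Plancherel on the $L^2$-factor and \eqref{Stri3} on the three $L^6$-factors then yield
\begin{equation*}
  \bigg|\int u_{k_1,j_1}\cj u_{k_2,j_2}u_{k_3,j_3}\cj u_{k_4,j_4}\, dxdt\bigg|
   \les 2^{-j_1^*/2}\bigg(\prod_{i\ne i_0} 2^{\eps k_i}\bigg)\prod_{i=1}^{4}2^{j_i/2}\|\F(u_{k_i,j_i})\|_{\l^2_n L^2_\tau}.
\end{equation*}
This already looks like \eqref{L6}, except that the naive frequency factor is only $2^{3\eps k_2^*}$ rather than the claimed $2^{\eps k_3^*}$.

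I expect the main obstacle to be precisely this sharper frequency factor. The plan is to exploit the constraint $n_1-n_2+n_3-n_4=0$ imposed by the spatial integral. Assume without loss of generality $k_1\ge k_2\ge k_3\ge k_4$, so $k_i=k_i^*$; this constraint forces $k_1\sim k_2$ and requires $n_1-n_2=n_4-n_3$ to lie in an interval of length $\les 2^{k_3}$. Accordingly, decompose $u_{k_1,j_1}$ and $u_{k_2,j_2}$ almost orthogonally at scale $2^{k_3}$,
\begin{equation*}
  u_{k_i,j_i}=\sum_{\ell\in\Z} u_{k_i,j_i}^{(\ell)},\qquad i=1,2,
\end{equation*}
where each piece has spatial Fourier support in a sub-interval of $I_{k_i}$ of length $\sim 2^{k_3}$. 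For each piece $u_{k_2,j_2}^{(\ell)}$ only $O(1)$ pieces $u_{k_1,j_1}^{(\ell')}$ (with $|\ell-\ell'|=O(1)$) can contribute nontrivially to the integral. Re-running the H\"older/Strichartz argument on each such contributing pair now replaces the loss $|I_{k_1}|^\eps|I_{k_2}|^\eps$ by $(2^{\eps k_3})^2$, so that the cumulative frequency loss becomes $2^{\eps(k_3+k_3+k_4)}\les 2^{3\eps k_3^*}$, which can be absorbed into $2^{\eps k_3^*}$ after redefining $\eps$. Cauchy--Schwarz in $\ell$ combined with the orthogonality of the dyadic pieces then reassembles the sum into the factor $\|\F(u_{k_1,j_1})\|_{\l^2_n L^2_\tau}\cdot\|\F(u_{k_2,j_2})\|_{\l^2_n L^2_\tau}$, completing the proof of \eqref{L6}.
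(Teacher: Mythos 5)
Your proposal is correct and matches the paper's own proof in both structure and essential content: \eqref{L4} via $L^4\times L^4\times L^4\times L^4$ H\"older with \eqref{Stri2}, and \eqref{L6} via decomposing the two high-frequency factors into spatial intervals of length $\sim 2^{k_3^*}$ (exploiting $n_1-n_2=n_4-n_3$), then $L^2\times L^6\times L^6\times L^6$ H\"older with the highest-modulation factor in $L^2$, \eqref{Stri3} on the localized $L^6$-factors, and Cauchy--Schwarz in the decomposition index. The only superficial difference is that the paper pairs pieces by fixing $\ell_1$ and allowing $O(1)$ choices of $\ell_2$, whereas you phrase it with the roles reversed; this is immaterial.
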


\begin{proof}
The first estimate \eqref{L4} follows from $L^4_{x, t}, L^4_{x, t}, L^4_{x, t}, L^4_{x, t}$-H\"older inequality
and \eqref{Stri2}.

Next, we prove the second estimate \eqref{L6}.
Without loss of generality, assume $k_1 \geq k_2 \geq k_3 \geq k_4$.
By writing  $I_{k_1} = \bigcup_{\l_1} J_{1\l_1}$
and $I_{k_2} = \bigcup_{\l_1} J_{2\l_2}$,
where $J_{1\l_1}$  and $J_{2\l_2}$
are intervals of length $\sim |I_{k_3}|$, 
we can decompose 
$\ft u_{k_i, j_i} $, $i = 1, 2$, as 
\[\ft u_{k_i, j_i} = \sum_{\l_i} \ft u_{k_i, j_i, \l_i},\]

\noi
where $\ft u_{k_i, j_i, \l_i} (n_i, \tau_i) = \ind_{J_{i\l_i}}(n_i) \ft u_{k_i, j_i}(n_i, \tau_i)$.
Given $n_1 \in J_{1\l_1}$ for some $\l_1$,
there exist $O(1)$ many possible values for $\l_2 = \l_2(\l_1)$ such that $n_2 \in J_{2\l_2}$
under $n_1 - n_2 + n_3 - n_4=0$.
Note that the number of possible values of $\l'$ is independent of $\l$.

By $L^2_{x, t}, L^6_{x, t}, L^6_{x, t}, L^6_{x, t}$-H\"older inequality
(placing the term with the highest modulation in $L^2_{x, t}$)
and \eqref{Stri3}, we obtain
\begin{align*}
\text{LHS of } \eqref{L6}
& =
\sum_{\l_1} \sum_{\l_2}
\sum_{n_1 - n_2 + n_3 -n_4 = 0}
\intt_{\tau_1 - \tau_2 + \tau_3 - \tau_4 = 0}\ft u_{k_1, j_1, \l_1}(n_1, \tau_1) \\
&\hphantom{XXXXXX}
\times \cj{ \ft u}_{k_2, j_2, \l_2}(n_2, \tau_2)
\ft u_{k_3, j_3}(n_3, \tau_3)\cj{ \ft u}_{k_4, j_4} (n_4, \tau_4) d\tau_1d\tau_2d\tau_3\\
& \lesssim
\sum_{\l_1} \sum_{\l_2}
2^{-\frac{ j_1^*}{2}}2^{\eps k_3^*}\bigg( \prod_{i = 1}^4 2^\frac{ j_i}{2}\bigg)
\|\ft u_{k_1, j_1, \l_1}\|_{\l^2_nL^2_\tau}
\\
&\hphantom{XXXXXXXX}
\times \|{ \ft u}_{k_2, j_2, \l2}\|_{\l^2_nL^2_\tau}
\|\ft u_{k_3, j_3}\|_{\l^2_nL^2_\tau}\| \ft u_{k_4, j_4} \|_{\l^2_nL^2_\tau}.
\end{align*}

\noi
Then, \eqref{L6} follows from Cauchy-Schwarz inequality in $\l_1$.
\end{proof}

The proof of \eqref{L6}
was based on the $L^6$-Strichartz estimate \eqref{Stri3}
after applying H\"older inequality.
By refining the analysis, we can obtain the following lemmata,
which can be viewed as an refinement of the $L^6$-Strichartz estimate
(in the multilinear setting) in certain  cases.

\begin{lemma}\label{LEM:L62}
Let $\al \in [0, 1]$.
Let $u_i$ be a function on $\T \times \R$
such that $\supp \ft{u}_{i} \subset D_{k_i, j_i}$.
Assume that $k_1,  k_2,  k_3 \geq k_1^* - 5 \geq k_4 + 5 $ and  $j_1, j_2, j_3 \geq [\al k_1^*]$.
Then, we have
\begin{align}
& \bigg|\int_{\T\times \R}  u_{1} \cj u_{2}u_{3}\cj u_{4} dx dt\bigg|
\lesssim  M\prod_{i = 1}^4 2^\frac{ j_i}{2} \|\F(u_i)\|_{\l^2_n L^2_\tau}, 
\label{L62}
\end{align}

\noi
where $M$ is given by 
\[
M = 
\begin{cases}
2^{-\frac{ j_1^*}{2}- \frac{1}{2} \al k_1^* + \frac{1}{2}k_4 },
& \text{if } j_1^* \ne j_2, \\
2^{-\frac {j_1^*}2- \frac{1}{4} \al k_1^* + \frac{1}{2}k_4 },
& \text{if } j_1^* = j_2.
\end{cases}
\]
\end{lemma}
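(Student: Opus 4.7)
My plan is to combine Hölder's inequality in $(x,t)$ with a Bernstein-type embedding of $u_4$ (extracting the low-frequency gain $2^{k_4/2}$) and the $L^p$-Strichartz estimates of Lemma \ref{LEM:L46} (extracting a modulation gain from $j_1, j_2, j_3 \ge [\alpha k_1^*]$). The first ingredient is the pointwise bound
\[
\|u_4\|_{L^\infty_{x,t}} \lesssim 2^{k_4/2}\,2^{j_4/2}\,\|\F(u_4)\|_{\l^2_n L^2_\tau},
\]
obtained by applying Bernstein in $x$ on $\supp \widehat u_4 \subset I_{k_4}$ and then Bernstein in $t$ on each Fourier coefficient $\widehat u_4(n,\cdot)$, whose $\tau$-support has length $\sim 2^{j_4}$. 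This accounts for the $2^{k_4/2}$ factor in $M$.

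Let $i^*$ be the index achieving $j_{i^*} = j_1^*$. The first attempt is to apply $L^2_{x,t}\cdot L^4_{x,t}\cdot L^4_{x,t}\cdot L^\infty_{x,t}$ Hölder with $u_{i^*}$ in $L^2$, $u_4$ in $L^\infty$, and the two remaining functions $u_a, u_b$ in $L^4$, using the bound $\|u\|_{L^4} \lesssim 2^{3j/8}\|\F(u)\|_{\l^2 L^2}$ from Lemma \ref{LEM:L46}. This yields
\[
\bigg|\int_{\T\times\R} u_1\bar u_2 u_3\bar u_4\, dx\, dt\bigg|\lesssim 2^{k_4/2 - j_1^*/2 - (j_a+j_b)/8}\prod_{i=1}^{4} 2^{j_i/2}\|\F(u_i)\|_{\l^2_n L^2_\tau}.
\]
In the case $j_1^* = j_2$, the remaining indices are $\{a,b\}=\{1,3\}$ with both modulations $\ge [\alpha k_1^*]$, so $(j_a+j_b)/8 \ge \alpha k_1^*/4$, delivering exactly the stated bound $M = 2^{k_4/2 - j_1^*/2 - \alpha k_1^*/4}$.

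When $j_1^* \ne j_2$ the same scheme only produces $2^{-\alpha k_1^*/4}$ and must be sharpened to $2^{-\alpha k_1^*/2}$. If $j_1^* = j_4$, this is immediate: the Hölder combination $L^2_{x,t}\cdot L^6_{x,t}\cdot L^6_{x,t}\cdot L^6_{x,t}$ with $u_4 \in L^2$ and the $L^6$-Strichartz $\|u\|_{L^6}\lesssim |I|^\eps 2^{j/2}\|\F(u)\|_{\l^2 L^2}$ from Lemma \ref{LEM:L46} gives $M \lesssim 2^{-j_4/2 + \eps k_1^*}$, which is stronger than the target when $\eps$ is small enough (using $k_4 \le k_1^* - 5$ and $\alpha \le 1$). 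The genuinely delicate case is $j_1^* \in \{j_1, j_3\}$. Here I plan to split $|\int u_1 \bar u_2 u_3 \bar u_4| \le \|u_{i^*} u_4\|_{L^2_{x,t}}\|u_a u_b\|_{L^2_{x,t}}$ via Cauchy--Schwarz, bounding $\|u_a u_b\|_{L^2}$ by $L^4 \cdot L^4$ (supplying a factor $2^{-(j_a+j_b)/8}$) and $\|u_{i^*} u_4\|_{L^2}$ either by the naive bound $\|u_{i^*}\|_{L^2}\|u_4\|_{L^\infty} \lesssim 2^{k_4/2+j_4/2}\|\F(u_{i^*})\|\|\F(u_4)\|$ or by a refined bilinear counting estimate that exploits the non-degeneracy of the Jacobian $n - 2n_{i^*}$ when $|n_4| \le 2^{k_4} \ll 2^{k_{i^*}}$.

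The core technical obstacle is exactly this last step: obtaining an additional $2^{-\alpha k_1^*/4}$ beyond what the $L^4 \cdot L^4$ Hölder supplies, in order to upgrade the gain from $2^{-\alpha k_1^*/4}$ to $2^{-\alpha k_1^*/2}$ when $j_1^* \in \{j_1, j_3\}$. I anticipate this will require either a dyadic block decomposition of $I_{k_1}, I_{k_2}, I_{k_3}$ into intervals of size $\sim 2^{k_4}$ combined with a counting argument for the convolution constraint $n_1 - n_2 + n_3 = n_4$ (using $|n_4|\le 2^{k_4}$ to restrict the admissible block triples), or a direct bilinear $L^2_{x,t}$-estimate leveraging the transversality of the parabolas $\tau = n^2$ at separated frequencies together with the modulation lower bounds $j_a, j_b \ge [\alpha k_1^*]$ to convert extra modulation into additional frequency gain.
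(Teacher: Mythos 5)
Your treatment of the case $j_1^* = j_2$ is correct and is in fact the paper's own argument (its Case (c)): $L^4_{x,t}, L^2_{x,t}, L^4_{x,t}, L^\infty_{x,t}$-H\"older with \eqref{Stri2} and Bernstein on $u_4$ gives precisely $M = 2^{-j_1^*/2 - \alpha k_1^*/4 + k_4/2}$. The other cases, however, contain genuine gaps. The case $j_1^* = j_4$ is not ``immediate'': the $L^2\cdot L^6\cdot L^6\cdot L^6$ scheme yields $M \lesssim 2^{-j_1^*/2 + 3\eps k_1^*}$, with no factor $2^{-\frac{1}{2}\alpha k_1^* + \frac{1}{2}k_4}$ whatsoever, whereas the claimed constant can be as small as $2^{-j_1^*/2 - k_1^*/2}$ (take $\alpha = 1$, $k_4 = 0$). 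The hypothesis $k_4 \le k_1^* - 10$ works against you here: the smaller $k_4$ is, the smaller the target becomes, while the $L^6$ bound does not improve. So the $L^6$ route fails whenever $k_4 \ll \alpha k_1^*$, and this case needs the same machinery as the case $j_1^*\in\{j_1,j_3\}$.

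For $j_1^* \in \{j_1, j_3\}$ you correctly identify that an extra $2^{-\alpha k_1^*/4}$ beyond the $L^4\cdot L^4$ pairing is required, but you do not supply it; you only list candidate strategies, and neither of them as described is the right one. The missing ingredient is a level-set count on the \emph{modulation} constraint, not on the convolution constraint $n_1 - n_2 + n_3 = n_4$. Passing to $g_i(n,\tau) = f_i(n,\tau+n^2)$ and fixing all variables except one high frequency (say $n_2$ or $n_3$), the resonance function $h$ is linear in that frequency with slope $|n_1 - n_4| \sim 2^{k_1^*}$ --- this is exactly where the separation $k_4 \le k_1^*-10$ enters --- so the set of admissible values under $h = O(2^{j_i})$ has cardinality $\lesssim 1 + 2^{j_i - k_1^*}$. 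Cauchy--Schwarz in that variable (and in $n_4$, which produces the $2^{k_4/2}$) then converts one full factor $2^{-j_i/2}$ with $j_i \ge [\alpha k_1^*]$ into $(1+2^{j_i-k_1^*})^{1/2}2^{-j_i/2} \lesssim 2^{-\min(j_i,k_1^*)/2} \le 2^{-\alpha k_1^*/2}$, using $\alpha\le 1$. A frequency-block decomposition of $I_{k_1},I_{k_2},I_{k_3}$ into blocks of size $2^{k_4}$ only recovers the $2^{k_4/2}$, and a generic bilinear transversality estimate does not by itself exploit the lower bounds $j_i \ge [\alpha k_1^*]$; the full gain $2^{-\alpha k_1^*/2}$ comes specifically from trading one entire modulation weight against the count $1 + 2^{j_i-k_1^*}$.
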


\begin{proof}
Letting $f_i = \ft{u}_i $ for $i = 1, 3$
and $f_i = \cj{\ft{u}}_i $ for $i = 2, 4$, we have
\begin{align}
\text{LHS of } \eqref{L62}
= c \intt_{\tau_1 - \tau_2 + \tau_3 - \tau_4 = 0}\sum_{n_1 - n_2 + n_3 -n_4 = 0}
\prod_{i = 1}^4 f_i(n_i, \tau_i) d\tau_1d\tau_2d\tau_3.
\label{L621}
\end{align}

\noi
$\bullet$ {\bf Case (a):} $j_4 = j_1^*$.

By symmetry, assume $k_1 \geq k_3$.
Then, we can assume $k_1 \geq k_2^* \geq k_1^* - 3$
under $n_1 - n_2 + n_3 -n_4 = 0$.
With $g_i(n, \tau) = f_i (n, \tau + n^2)$, we have
\begin{align*}
\eqref{L621}
& \les \int
\sum_{n_4} |g_4(n_4, \tau_4)|
\sum_{n_1, n_2} |g_1(n_1, \tau_1)|
|g_2(n_2, \tau_2)| \notag \\
& \hphantom{XXXXXX} \times
  \big|g_3\big(-n_1 + n_2 + n_4,
 h_3(n_1, n_2, n_4, \tau_1, \tau_2, \tau_4) \big)\big| d\tau_1d\tau_2d\tau_4,
\end{align*}

\noi
where $h_3(n_1, n_2, n_4, \tau_1, \tau_2, \tau_4)$ is defined by
\begin{equation*}
 h_3(n_1, n_2, n_4, \tau_1, \tau_2, \tau_4)
  = -\tau_1+\tau_2 + \tau_4 -  n_1^2 + n_2^2 + n_4^2- (-n_1 + n_2 + n_4)^2.
\end{equation*}

\noi
For fixed $n_1, n_4, \tau_1, \tau_2$, and $\tau_4$,
define the set $E_{32}= E_{32}(n_1, n_4, \tau_1, \tau_2, \tau_4)$ by
\begin{align*}
E_{32} = \{ n_2 \in \Z: h_3(n_1, n_2, n_4, \tau_1, \tau_2, \tau_4) = O(2^{j_3})\}.
\end{align*}

\noi
Noting that $|n_1 - n_4| \sim 2^{k_1^*}$ and that
\begin{align*}
-  n_1^2 + n_2^2 + n_4^2- (-n_1 + n_2 + n_4)^2
= 2n_2 (n_1 - n_4) - 2n_1^2 + 2n_1n_4,
\end{align*}

\noi
we conclude that
\begin{align}
|E_{32}| \lesssim 1+ 2^{j_3 - k_1^*}.
\label{L62b}
\end{align}

\noi
Then, by
Cauchy-Schwarz inequality in $n_2, n_1, n_4$, 
we obtain
\begin{align*}
\eqref{L621}
& \les 
(1+ 2^{j_3 - k_1^*})^\frac{1}{2}
\int \sum_{n_4} |g_4(n_4, \tau_4)|
\sum_{n_1} |g_1(n_1, \tau_1)|
\bigg(\sum_{n_2} |g_2(n_2, \tau_2)|^2 \notag \\
& \hphantom{XXXXXX} \times
 \big| g_3\big(-n_1 + n_2 + n_4,
 h_3(n_1, n_2, n_4, \tau_1, \tau_2, \tau_4) \big)\big|^2\bigg)^\frac{1}{2} d\tau_1d\tau_2d\tau_4\\
& \les
(1+ 2^{j_3 - k_1^*})^\frac{1}{2} 2^{\frac {k_4}2}
 \|g_4(n_4, \tau_4)\|_{\l^2_{n_4}L^2_{\tau_4}}
\sup_{n_4} \int \|g_1(n_1, \tau_1)\|_{\l^2_{n_1}}
\bigg(\sum_{n_2} |g_2(n_2, \tau_2)|^2 \notag \\
& \hphantom{XXXXXX} \times
\sum_{n_1} \big\|  g_3\big(-n_1 + n_2 + n_4,
 h_3(n_1, n_2, n_4, \tau_1, \tau_2, \tau_4) \big)\big\|^2_{L^2_{\tau_4}}
 \bigg)^\frac{1}{2} d\tau_1d\tau_2\\
\intertext{Noting that $h_3$ is linear in $\tau_4$
and applying Cauchy-Schwarz inequality in $\tau_1$ and $\tau_2$,} 
& \leq
(1+ 2^{j_3 - k_1^*})^\frac{1}{2} 2^{\frac {k_4}2}
\bigg(\prod_{i_1 = 1}^2  \|g_{i_1}\|_{L^1_{\tau_{i_1}} \l^2_{n_{i_1}}}\bigg)
\bigg(\prod_{i_2 = 3}^4
 \|g_{i_2}\|_{\l^2_{n_{i_2}}L^2_{\tau_{i_2}}}\bigg)\\
& \les  2^{-\frac{ j_1^*}{2}}
(1+ 2^{j_3 - k_1^*})^\frac{1}{2}2^{-\frac{j_3}{2}}\cdot 2^\frac{k_4}{2}
\prod_{i = 1}^4 2^\frac{ j_i}{2} \|\F(u_i)\|_{\l^2_n L^2_\tau},
\end{align*}

\noi
 yielding \eqref{L62}.

\medskip

\noi
$\bullet$ {\bf Case (b):} $j_1 = j_1^*$.
(The case $j_3 = j_1^*$ follows in the same manner by symmetry.)

By Cauchy-Schwarz inequality, we have
\begin{align*}
\eqref{L621}
& \les \int \sum_{n_4}
|g_4(n_4, \tau_4)|
\sum_{n_1, n_3}\int
|g_1(n_1, \tau_1)|| g_3(n_3, \tau_3)| \\
& \hphantom{XXXXXXXXXXX}
\times
\big|g_2\big(n_1+n_3-n_4, h_2(n_1, n_3, n_4, \tau_1, \tau_3, \tau_4)\big) \big| d \tau_1d\tau_3d\tau_4
\end{align*}

\noi
where $h_2(n_1, n_3, n_4, \tau_1, \tau_3, \tau_4)$ is defined by
\begin{equation*}
 h_2(n_1, n_3, n_4, \tau_1, \tau_3, \tau_4)
  =  \tau_1 + \tau_3-\tau_4 +n_1^2 + n_3^2 - n_4^2 - (n_1 +n_3 - n_4)^2.
\end{equation*}

\noi
For fixed $n_1, n_4, \tau_1, \tau_3$, and $\tau_4$,
define the set $E_{23}= E_{23}(n_1, n_4, \tau_1, \tau_3, \tau_4)$ by
\begin{align*}
E_{23} = \{ n_3 \in \Z: h_2(n_1, n_3, n_4, \tau_1, \tau_3, \tau_4) = O(2^{j_2})\}.
\end{align*}

\noi
Then, from
$ n_1^2 + n_3^2 -  n_4^2 - (n_1 + n_3 - n_4)^2
= - 2n_3 (n_1 - n_4)  + 2n_1 n_4 - 2n_4^2$
and $|n_1 - n_4| \sim 2^{k_1^*}$,
we conclude that
$|E_{23}| \lesssim 1+ 2^{j_2 - k_1^*}$.
Then, proceeding as before with 
Cauchy-Schwarz inequality
and noting that $h_2$ is linear in $\tau_1$, we have
\begin{align*}
\eqref{L621}
& \les
(1+ 2^{j_2 - k_1^*})^\frac{1}{2} 2^{\frac {k_4}2}
\int  \|g_4(n_4, \tau_4)\|_{\l^2_{n_4}}
\|g_1(n_1, \tau_1)\|_{\l^2_{n_1}L^2_{\tau_1}}
\sup_{n_4} \bigg(\sum_{n_3} |g_3(n_3, \tau_3)|^2
 \notag \\
& \hphantom{XXXXXX} \times
\sum_{n_1} \big\|
g_2\big(n_1+n_3-n_4, h_2(n_1, n_3, n_4, \tau_1, \tau_3, \tau_4)\big)
  \big\|^2_{L^2_{\tau_1}}
 \bigg)^\frac{1}{2} d\tau_3d\tau_4 \notag \\
& \les  2^{-\frac{ j_1^*}{2}}
(1+ 2^{j_2 - k_1^*})^\frac{1}{2}2^{-\frac{j_2}{2}}\cdot 2^\frac{k_4}{2}
\prod_{i = 1}^4 2^\frac{ j_i}{2} \|\F(u_i)\|_{\l^2_n L^2_\tau}.
\end{align*}

\medskip

\noi
$\bullet$ {\bf Case (c):} $j_2 = j_1^*$.

In this case, we do not use the multilinear argument.
The desired estimate 
follows from $L^4_{x, t}, L^2_{x, t}, L^4_{x, t}, L^\infty_{x, t}$-H\"older inequality
and the $L^4$-Strichartz estimate \eqref{Stri2}.
 \end{proof}

\begin{remark} \rm
In the periodic case, we have  a lower bound $|E| \geq 1$
unless $|E| = 0$.
See \eqref{L62b}.
Hence, our estimates are worse than those in 
the non-periodic setting.
Compare this with Propositions 4.1 and 4.2 in \cite{CCT3}.
\end{remark}

\begin{lemma}\label{LEM:L63}
Let $\al \in [0, 1]$.
Let $u_i$ be a function on $\T \times \R$
such that $\supp \ft{u}_{i} \subset D_{k_i,  j_i}$.
Assume that $k_3, k_4 \leq k_2^* - 10$, $j_1, j_2, j_3 \geq [\al k_1^*]$,
and $j_1^* \geq |\Phi (\bar n)|$,
where $\Phi(\bar n )$ is defined in \eqref{Phi}.
Then, we have
\begin{align}
& \bigg|\int_{\T\times \R}  u_{1} \cj u_{2}u_{3}\cj u_{4} dx dt\bigg|
\lesssim
M \prod_{i = 1}^4 2^\frac{ j_i}{2} \|\F(u_i)\|_{\l^2_n L^2_\tau},
\label{L63}
\end{align}

\noi
where $M$ is given by 
\[
M = \begin{cases}
2^{-\frac{ 1}{2}(1+\al) k_1^*}, & \text{if }|k_3-k_4|\geq  2,\\
 2^{-\frac{ 1}{2}(1+\al) k_1^* + \frac{k_3^*}{4}}, &
\text{otherwise}.
\end{cases}
\]

Moreover, 
when $j_3 = j_1^*$ and $k_3 \geq k_4 + 2$,
 \eqref{L63} holds with
\begin{equation}
 M = 2^{-\frac{k_1^*}{2}-\frac{k_3}{2}+ \frac{k_4}{2} -\frac{\be}{2} },
 \quad
\text{ where }\be = \min(\al k_1^*, k_3).
\label{L63a}
\end{equation}

\noi

\end{lemma}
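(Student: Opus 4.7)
The plan is to proceed analogously to Lemma \ref{LEM:L62}, exploiting the facts that $k_3, k_4 \leq k_2^* - 10$ forces $k_1 \sim k_2 \sim k_1^*$ and hence $|n_1 - n_4| \sim |n_2 - n_3| \sim 2^{k_1^*}$, together with the identity $\Phi(\bar n) = 2(n_2 - n_1)(n_1 - n_4)$ and the relation $n_1 - n_2 = n_4 - n_3$, which give $|\Phi(\bar n)| \sim 2^{k_1^*} |n_3 - n_4|$. In particular, when $|k_3 - k_4| \geq 2$ we have $|n_3 - n_4| \sim 2^{k_3^*}$ and thus $|\Phi(\bar n)| \sim 2^{k_1^* + k_3^*}$, whereas in the coincident case $|n_3 - n_4|$ can be arbitrarily small within $[0, 2^{k_3^* + 1}]$.

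I would split into cases according to which index $i \in \{1,2,3,4\}$ realizes $j_i = j_1^*$. In each case one applies Cauchy--Schwarz in a spatial variable (for instance $n_2$ when $j_4 = j_1^*$), counts admissible integer solutions of the linear modulation constraint $h = O(2^{j_i})$ to obtain a bound $1 + 2^{j_i}/|n_1 - n_4|$, and absorbs an $L^1_\tau$-norm into $L^2_\tau$ via the linearity of $h$ in one of the remaining $\tau$-variables. Because $|n_1 - n_4| \sim 2^{k_1^*}$, the counting yields the key factor $(1 + 2^{j_i - k_1^*})^{1/2}$; combined with the hypothesis $j_i \geq [\alpha k_1^*]$ this produces $M$ of order $2^{-(1+\alpha)k_1^*/2}$. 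The remaining spatial sums over $n_3$ and $n_4$ are handled by Cauchy--Schwarz over the intervals $I_{k_3}, I_{k_4}$ and contribute at worst $2^{k_3^*/2} 2^{k_4^*/2}$, which is absorbed into the explicit form of $M$.

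The delicate case is $j_2 = j_1^*$, where Cauchy--Schwarz directly in $n_2$ is unavailable. Here, following Case (c) of Lemma \ref{LEM:L62}, I would apply H\"older with the $L^4$-Strichartz estimate of Lemma \ref{LEM:L46} on $u_1$ and $u_3$, a trivial $L^2$ bound on $u_2$, and an $L^\infty$ bound on $u_4$ via Bernstein. The extra saving $2^{-k_1^*/2}$ then comes from a bilinear Strichartz-type refinement exploiting the narrow Fourier support of $u_1 \bar u_4$ (which sits in an interval of width $\sim 2^{k_4}$ at distance $\sim 2^{k_1^*}$ from the origin); the separation $|k_3 - k_4| \geq 2$ is precisely what makes this refinement effective, and its failure when $|k_3 - k_4| \leq 1$ is exactly what forces the loss $2^{k_3^*/4}$. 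This case is the principal obstacle, since it requires careful tracking of the bilinear frequency geometry together with the resonance hypothesis $j_1^* \ges |\Phi(\bar n)|$.

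Finally, for the refined bound under $j_3 = j_1^*$ and $k_3 \geq k_4 + 2$, one additionally leverages $|\Phi(\bar n)| \sim 2^{k_1^* + k_3}$, which via the hypothesis $j_1^* \ges |\Phi(\bar n)|$ gives $j_3 \ges k_1^* + k_3$. Cauchy--Schwarz in $n_4$ then produces a factor $2^{k_4/2}$; the counting in $n_1$ or $n_2$ contributes $(1 + 2^{\min(j_1, j_2) - k_1^*})^{1/2}$; and the improved bound $2^{-j_3/2} \les 2^{-(k_1^* + k_3)/2}$, combined with $j_i \ges [\alpha k_1^*]$ for $i \in \{1,2\}$, yields the stated constant $M = 2^{-k_1^*/2 - k_3/2 + k_4/2 - \beta/2}$ with $\beta = \min(\alpha k_1^*, k_3)$.
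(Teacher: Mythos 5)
Your outline of Cases (a), (b), and $j_3 = j_1^*$ via Cauchy--Schwarz and counting at the scale $|n_1 - n_4| \sim 2^{k_1^*}$ is sound, and the observation that $|\Phi(\bar n)| \sim 2^{k_1^*}|n_3 - n_4|$ so that $j_1^* \ges k_1^* + k_3^*$ when $|k_3 - k_4|\geq 2$ is exactly the driver of the estimate. However, there are two genuine gaps.

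First, for the case $j_2 = j_1^*$ you abandon the counting argument and revert to the $L^4_{x,t}, L^2_{x,t}, L^4_{x,t}, L^\infty_{x,t}$-H\"older route from Case (c) of Lemma \ref{LEM:L62}, plus an unspecified ``bilinear Strichartz-type refinement.'' That route was needed in Lemma \ref{LEM:L62} only because there $k_1, k_2, k_3$ are all comparable to $k_1^*$, so the reparametrized phase $h_1$ can degenerate in $n_3$. Here the hypothesis $k_3, k_4 \leq k_2^* - 10$ changes the geometry entirely: with $|n_2| \gg |n_3|, |n_4|$ one has $|\dd_{n_3} h_1| = |2(n_2 - 2n_3 + n_4)| \sim 2^{k_1^*}$, so Cauchy--Schwarz in $n_3$ and counting $|E_{13}| \les 1 + 2^{j_1 - k_1^*}$ works directly, and the $j_2 = j_1^*$ case is treated exactly as the others. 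Your proposed Strichartz route, if one carries the powers through, gives a constant of the form $2^{-j_1/8 - j_2/2 - j_3/8 + k_4/2}$, which does not reduce to $2^{-\frac{1}{2}(1+\al)k_1^*}$ for general $k_3, k_4$ (e.g.\ $k_4 \geq k_3 + 2$ already fails); the ``bilinear refinement'' carrying the missing saving is asserted but not shown, and is not in fact needed.

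Second, in the refined case $j_3 = j_1^*$ with $k_3 \geq k_4 + 2$, you claim the counting ``in $n_1$ or $n_2$'' contributes $(1 + 2^{\min(j_1, j_2) - k_1^*})^{1/2}$. This is the wrong scale: the relevant derivative of the phase in $n_2$ is $2(n_3 - n_4) \sim 2^{k_3}$, not $2^{k_1^*}$, so the correct count is $|E_{12}| \les 1 + 2^{j_1 - k_3}$. That is precisely what produces $\beta = \min(\al k_1^*, k_3)$ in \eqref{L63a} via $(1 + 2^{j_1 - k_3})^{1/2}2^{-j_1/2} \les 2^{-\min(j_1, k_3)/2} \les 2^{-\min(\al k_1^*, k_3)/2}$. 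With your version of the count one would instead always obtain $\beta = \al k_1^*$, which overstates the gain when $k_3 < \al k_1^*$ and does not match the statement.
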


\begin{proof}
First, we consider the case $|k_3 - k_4 | \geq 2$.
Then, we have $j_1^* \geq k_1^* + k_3^* - 5$
since $|\Phi(\bar n)| \sim |(n_2 - n_3)(n_3 - n_4)|
\sim n_1^*|n_3 - n_4|
\sim n_1^* n_3^*$.
When (a): $j_4 = j_1^*$
or (b): $j_1 = j_1^*$, 
the desired estimate follows from 
the corresponding cases in 
the proof of Lemma \ref{LEM:L62}.

Next, consider the case  (c)  $j_2 = j_1^*$.
With the notations from the proof of Lemma \ref{LEM:L62}, 
we have
\begin{align}
\text{LHS of } \eqref{L63}
& \les \int \sum_{n_4} |g_4(n_4, \tau_4)|
\sum_{n_2, n_3}
|g_2(n_2, \tau_2)| |g_3(n_3, \tau_3)| \notag\\
& \hphantom{XXXXX}
\times
\big|g_1\big(n_2-n_3+n_4, h_1(n_2, n_3, n_4, \tau_2, \tau_3, \tau_4)\big)\big|d\tau_2d\tau_3d\tau_4,
\label{L63a1}
\end{align}

\noi
where $h_1(n_2, n_3, n_4, \tau_2, \tau_3, \tau_4)$ is defined by
\begin{equation}
 h_1(n_2, n_3, n_4, \tau_2, \tau_3, \tau_4)
  =  \tau_2 - \tau_3+\tau_4 +n_2^2 - n_3^2  + n_4^2 -  (n_2 -n_3 +n_4 )^2.
\label{L63a2}
\end{equation}

\noi
For fixed $n_2, n_4, \tau_2, \tau_3$, and $\tau_4$,
define the set $E_{13}= E_{13}(n_2, n_3, \tau_2, \tau_3, \tau_4)$ by
\begin{align*}
E_{13} = \{ n_3 \in \Z: h_1(n_2, n_3, n_4, \tau_2, \tau_3, \tau_4) = O(2^{j_1})\}.
\end{align*}

\noi
Then, by writing $ n_2^2 - n_3^2  + n_4^2 -  (n_2 -n_3 +n_4 )^2
=  - 2n_3^2 +  2 (n_2 + n_4)n_3 - 2n_2n_4$, 
we have 
$|\dd_{n_3} h_1(n_3)| = |2(n_2 - 2n_3 + n_4)| \sim 2^{k_1^*}$
since $|n_2| \gg |n_3|, |n_4|$.
Hence, 
we conclude that
$|E_{13}| \lesssim 1+ 2^{j_1 - k_1^*}$.
Proceeding as in the proof of Lemma \ref{LEM:L62},
we  obtain
\begin{align*}
\eqref{L63a1}
& \les  2^{-\frac{ j_1^*}{2}}
(1+ 2^{j_1 - k_1^*})^\frac{1}{2}2^{-\frac{j_1}{2}}\cdot 2^\frac{k_4}{2}
\prod_{i = 1}^4 2^\frac{ j_i}{2} \|\F(u_i)\|_{\l^2_n L^2_\tau}.
\end{align*}

Hence, it remains to 
 consider the case $j_3 = j_1^*$.
With $g_i(n, \tau) = f_i(n, \tau+n^2)$, we have
\begin{align}
\text{LHS of } \eqref{L63}
& \les \int \sum_{n_3} |g_3(n_3, \tau_3)|
\sum_{n_2, n_4}
|g_2(n_2, \tau_2)| |g_4(n_4, \tau_4)| \notag\\
& \hphantom{XXXXX}
\times
\big|g_1\big(n_2-n_3+n_4, h_1(n_2, n_3, n_4, \tau_2, \tau_3, \tau_4)\big)\big|d\tau_2d\tau_3d\tau_4,
\label{L630}
\end{align}

\noi
where $h_1(n_2, n_3, n_4, \tau_2, \tau_3, \tau_4)$ is as in \eqref{L63a2}.
For fixed $n_2, n_3, \tau_2, \tau_3$, and $\tau_4$,
define the set $E_{14}= E_{14}(n_2, n_3, \tau_2, \tau_3, \tau_4)$ by
\begin{align*}
E_{14} = \{ n_4 \in \Z: h_1(n_2, n_3, n_4, \tau_2, \tau_3, \tau_4) = O(2^{j_1})\}.
\end{align*}

\noi
Then, from
$ n_2^2 - n_3^2  + n_4^2 -  (n_2 -n_3 +n_4 )^2
=  - 2n_4 (n_2 - n_3) - 2n_3^2 +2n_2 n_3$
and $|n_2 - n_3| \sim 2^{k_1^*}$,
we conclude that
$|E_{14}| \lesssim 1+ 2^{j_1 - k_1^*}$.
Proceeding as in the proof of Lemma \ref{LEM:L62},
we have
\begin{align*}
 \eqref{L630}
 & \les  2^{-\frac{ j_1^*}{2}}
(1+ 2^{j_1 - k_1^*})^\frac{1}{2}2^{-\frac{j_1}{2}} \cdot 2^\frac{k_3}{2}
\prod_{i = 1}^4 2^\frac{ j_i}{2} \|\F(u_i)\|_{\l^2_n L^2_\tau}\\
 & \les  2^{-\frac{ 1}{2}(1+\al) k_1^*}
\prod_{i = 1}^4 2^\frac{ j_i}{2} \|\F(u_i)\|_{\l^2_n L^2_\tau}.
\end{align*}

Next, we consider the case $|k_3 - k_4 |\leq 1$.
We separate the argument into two cases:
(i) $|n_3 - n_4| \ges 2^\frac{k_3^*}{2}$
and (ii) $|n_3 - n_4| \ll 2^\frac{k_3^*}{2}$.
In Case (i), we have  $j_1^* \geq k_1^* + \frac{k_3^*}{2} - 5$.
Hence, by repeating the previous argument, we obtain
\begin{align}
\text{LHS of } \eqref{L63}
 & \les  2^{-\frac{ 1}{2}(1+\al) k_1^*}
2^\frac{k_3^*}{4}
\prod_{i = 1}^4 2^\frac{ j_i}{2} \|\F(u_i)\|_{\l^2_n L^2_\tau}.
\label{L631}
\end{align}

In Case (ii), we write $I_{k_i} = \bigcup_{\l_i} J_{i\l_i}$, $i = 3, 4$, 
where $|J_{i\l_i}| = 2^\frac{k_3^*}{2}$.
As in the proof of Lemma \ref{LEM:L4L6}, 
it follows that, if $n_3 \in J_{3 \l_3}$ for some $\l_{3}$,
there are $O(1)$ many possible values for $\l_4 = \l_4(\l_3)$
such that  $n_4\in J_{4\l_4}$.
Then, by writing
\[\sum_{n_3} \sum_{n_4} = \sum_{\l_3}  \sum_{\l_4 = \l_4(\l_3)} \sum_{n_3 \in J_{3\l_3} }
\sum_{n_4 \in J_{4\l_4}}\]

\noi
and repeating the previous argument for each $\l_3$,
we only lose $|J_{i \l_i}| =  2^\frac{k_3^*}{4}$ by applying Cauchy-Schwarz inequality
in $n_3$ or $n_4$ at the end.
Finally, applying Cauchy-Schwarz inequality in $\l_3$, we obtain
\eqref{L631}.

The second claim with \eqref{L63a}
follows from Case (a) in the proof of Lemma \ref{LEM:L62}
by switching the indices $1\lrarrow 3$.
In this case, we have $|E_{12}| \les 1 + 2^{j_1 - k_3}$
and it suffices to note that $j_3 \geq k_1^* + k_3 - 5$
and $ (1 + 2^{j_1 - k_3})^\frac{1}{2}2^{-\frac{j_1}{2}}
\les 2^{-\frac{1}{2} \min(j_1, k_3)}
\les 2^{-\frac{1}{2} \min(\al k_1^*, k_3)}$.
\end{proof}

\begin{remark} \label{REM:L66}
\rm
It follows from the proof of Lemma \ref{LEM:L63}
that we lose $2^{\frac{k_3}{2}}$
in the case $j_3 = j_1^*$,
while we lose $2^\frac{k_4}{2}$ in other cases.
\end{remark}

\section{Trilinear estimates}
\label{SEC:trilinear}

In this section, we prove the main trilinear estimates
on the cubic nonlinear terms  $\N$ and $\RR$ in \eqref{NN1} and \eqref{NN2}
of the Wick ordered cubic NLS \eqref{NLS1}.

\begin{proposition} \label{PROP:3lin}
Let $s \in (-\frac{1}{8}, 0)$ and $T \in (0, 1]$.
Then, 
with $\alpha =  -4s+$,
there exists $\theta > 0$ such that
\begin{align}
\|\N(u_1,u_2,u_3)\|_{N^{s,\alpha}(T)}
& \les  T^\theta \prod_{i = 1}^3 \|u_i\|_{F^{s,\alpha}(T)},\label{3lin1}\\
\|\RR(u_1,u_2,u_3)\|_{N^{s,\alpha}(T)}
& \les  T^\theta  \prod_{i = 1}^3 \|u_i\|_{F^{s,\alpha}(T)},\label{3lin2}
\end{align}

\noi
where $\N(u_1, u_2, u_3)$ and
$\RR(u_1, u_2, u_3)$ are defined in \eqref{NN1} and \eqref{NN2}.

\end{proposition}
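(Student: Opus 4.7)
The strategy is the standard one for the short-time Fourier restriction norm method: perform a Littlewood--Paley decomposition in both frequency and modulation, reduce each dyadic estimate to a multilinear space-time integral by duality, and then apply the refined Strichartz-type estimates of Section~\ref{SEC:Strichartz} case by case. The small-time factor $T^\theta$ is recovered at the end via Lemma~\ref{LEM:timedecay}, which allows us to exchange a small amount of modulation regularity for a power of $T$. After expanding $u_i = \sum_{k_i} \P_{k_i} u_i$, it suffices, by Cauchy--Schwarz in the dyadic parameters, to establish for each output block $\P_k$ and each triple $(k_1, k_2, k_3)$ a bound of the form
\[
\|\P_k \N(\P_{k_1}u_1, \P_{k_2}u_2, \P_{k_3}u_3)\|_{N_k^\alpha(T)} \lesssim T^\theta\, 2^{-\delta k_1^*} \prod_{i=1}^3 2^{-sk_i} \|\P_{k_i}u_i\|_{F_{k_i}^\alpha(T)}
\]
for some $\delta>0$, and similarly for $\RR$. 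By the definition of $N_k^\alpha$, passing to suitable extensions and inserting smooth time cutoffs at scale $2^{-[\alpha k_1^*]}$ on the three inputs, while using the almost-orthogonality Lemma~\ref{LEM:a.o.} to accommodate the shorter output scale $2^{-[\alpha k]}$, we reduce to estimating a space-time integral $\int v\cdot\bar u_1 u_2 \bar u_3\,dxdt$ with each factor dyadically localized in both frequency and modulation.

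The resonant piece $\RR$ is diagonal on the Fourier side, so only the configuration $k_1 = k_2 = k_3 = k$ contributes; the desired bound then follows by combining H\"older's inequality in $x$ with the $F_k^\alpha \hookrightarrow L^\infty_t L^2_x$ embedding of Lemma~\ref{LEM:infty} on two factors together with a routine short-time gain on the third. For the non-resonant piece $\N$, we denote by $k_1^*\geq\cdots\geq k_4^*$ and $j_1^*\geq\cdots\geq j_4^*$ the decreasing rearrangements of the four spatial frequencies and modulations. The key algebraic identity $\Phi(\bar n) = 2(n_2-n_1)(n_2-n_3)$ from \eqref{Phi} implies $j_1^* \gtrsim |\Phi(\bar n)|$ on the non-resonant set. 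We split the analysis into three regimes based on the frequency geometry: all four frequencies comparable, handled by the $L^4$ bound of Lemma~\ref{LEM:L4L6} (the negative Sobolev weight $s<0$ on the output absorbs any mild loss); three comparable and one much smaller, handled by Lemma~\ref{LEM:L62}; and two high and two low, handled by Lemma~\ref{LEM:L63}. In the latter two regimes, the short-time localization at scale $2^{-[\alpha k_1^*]}$ enforces $j_i \geq [\alpha k_1^*]$ on the three inputs, matching the hypotheses of Lemmata~\ref{LEM:L62} and~\ref{LEM:L63} and delivering an off-diagonal gain of order $2^{-\frac{1+\alpha}{2} k_1^*}$, up to logarithmic corrections.

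The main technical obstacle lies in the third regime when all modulations saturate at the threshold $j_i \sim [\alpha k_1^*]$ and in addition $k_3 \sim k_4$, so that $j_1^*$ only barely dominates $|\Phi|$. Here we must use the sharper second case~\eqref{L63a} of Lemma~\ref{LEM:L63}, which applies precisely when $j_3 = j_1^*$ and leverages the extra small factor $2^{-\beta/2}$ with $\beta = \min(\alpha k_1^*, k_3)$. The balance between the Sobolev loss $2^{-3sk_1^* + sk}$ at the output and the multilinear gain $2^{-\frac{1+\alpha}{2}k_1^*}$ forces $\alpha \geq -4s$; together with the implicit upper bound $\alpha < \tfrac{1}{2}$ imposed by Lemma~\ref{LEM:timedecay}, this yields exactly the range $s > -\tfrac{1}{8}$ and the choice $\alpha = -4s+$. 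Once the dyadic estimate has been established with the full modulation regularity $b=\tfrac{1}{2}$, the small-time factor $T^\theta$ is produced by invoking Lemma~\ref{LEM:timedecay} with $b$ slightly less than $\tfrac{1}{2}$ on each extension; summation over the dyadic blocks then yields \eqref{3lin1} and, analogously, \eqref{3lin2}.
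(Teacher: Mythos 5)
Your overall architecture --- dyadic decomposition, extensions with short-time cutoffs at the scale of the highest frequency, reduction by duality to the multilinear estimates of Section~\ref{SEC:Strichartz}, and recovery of $T^\theta$ via Lemma~\ref{LEM:timedecay} --- coincides with the paper's, and the easy regimes (including the resonant part $\RR$) are handled correctly. However, your treatment of the decisive case, the high $\times$ high $\times$ low $\to$ low interaction with all input modulations at the threshold $[\al k_1^*]$, has a genuine gap. First, it is internally inconsistent: you locate the difficulty at ``$k_3 \sim k_4$'' and propose to resolve it with the refined bound \eqref{L63a} of Lemma~\ref{LEM:L63}, but that bound is only available when $k_3 \geq k_4 + 2$. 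When $k_3 \sim k_4$ the paper instead uses the ``otherwise'' branch of $M$ in Lemma~\ref{LEM:L63} (carrying the $2^{k_3^*/4}$ correction obtained from an orthogonality decomposition into frequency intervals of length $2^{k_3^*/2}$), and it is this branch, via \eqref{S3}, that produces one instance of the constraint $s>-\frac18$. Second, and more seriously, in the hard subcase $j_3 = j_1^*$, $k_3 \geq k_4+2$, the bound \eqref{L63a} alone closes the estimate only when $k_3$ is a definite fraction of $k_1^*$ (the paper uses it for $k_3 > \frac23 k_1$, leading to \eqref{S6}); when $k_3$ is small, the route you describe only reaches $s > -\frac1{10}$. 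The paper needs an additional ingredient there, namely \eqref{HHLL15}: re-localize the low-frequency input in time at the scale $2^{-[\al k_3]}$ rather than $2^{-[\al k_1^*]}$, split it into high- and low-modulation pieces, and run an $L^\infty_tL^2_x\times L^\infty_tL^2_x\times L^2_tL^\infty_x$ H\"older estimate to obtain the first bound in \eqref{HHLL3} and hence \eqref{S5}. Your plan contains no substitute for this step.

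A smaller but telling error: you derive the range $s>-\frac18$ from a purported upper bound $\al<\frac12$ ``imposed by Lemma~\ref{LEM:timedecay}.'' That lemma restricts the modulation exponent $b$, not the time-scale parameter $\al$, and no such upper bound on $\al$ is used anywhere. The threshold $-\frac18$ actually arises from balancing the multilinear gain $2^{-\frac12(1+\al)k_1^*}$ of Lemma~\ref{LEM:L63} against the time-localization loss $2^{\al(k_1^*-k_4)}$ (incurred because the output time scale is \emph{coarser}, not shorter, when $k_4<k_1^*$) and the Sobolev weight $2^{-2sk_1^*}$ carried by the two high-frequency inputs, as in \eqref{S3}, \eqref{S5}, and \eqref{S6}. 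That your numerology lands on the correct value is a coincidence rather than a derivation.
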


In the following, we first prove the preliminary lemmata
and then present the proof of Proposition \ref{PROP:3lin}
at the end of this section.
Let $\Phi(\bar n) = 2(n - n_1)(n - n_3)$ be the phase function as in \eqref{Phi}.
Then, under $\{n_1, n_3\} \ne \{n, n_2\}$, we have the following.

\medskip
\noi
{\bf Support conditions:}
\begin{itemize}
\item[(i)] If $|n|\sim |n_3| \gg |n_1|, |n_2|$, then
\begin{equation}\label{SC1}
|\Phi(\bar n)| \sim n_1^* |n_2 - n_1|.
\end{equation}

\item[(ii)] If $|n|\sim |n_2| \gg |n_1|, |n_3|$, then
\begin{equation}\label{SC2}
|\Phi(\bar n)| \sim (n_1^*)^2.
\end{equation}

\item[(iii)] If $|n|\sim |n_2|\sim |n_3| \gg |n_1|$, then
\begin{equation}\label{SC3}
|\Phi(\bar n)| \sim (n_1^*)^2.
\end{equation}

\item[(iv)] If $|\Phi(\bar n)|  \ll n_1^*$,  then
\begin{equation*}
|n| \sim|n_1|\sim|n_2|\sim|n_3|.
\end{equation*}

\end{itemize}

\noi
The conditions (i)--(iii) hold under the symmetries $n_1 \leftrightarrow n_3$
and $n \leftrightarrow n_2$,
and $\{n_1, n_3\} \leftrightarrow \{n, n_2\}$, respectively.
In the following, we assume that
 $u_i$ is a function on $\T\times \R$ such that $\supp\ft{u}_i \subset I_{k_i}\times \R$,
 $i = 1, 2, 3$.

\begin{lemma}[high $\times$ low $\times$ low $\to$ high]\label{LEM:HLL}
Let $\alpha\geq 0$.
If $k_4\geq 20$, $|k_1-k_4|\leq 5$, and $k_2,k_3\leq
k_1-10$, then we have
\begin{align}
\|\P_{k_4}\N(u_{1}, u_{2}, u_{3})\|_{N_{k_4}^\alpha}
& \les \min(2^{- \frac{1}{2}(\al - \eps) k_1^*},2^{-\frac{1}{2}(1-\eps)k_1^*})
\prod_{i = 1}^3\|u_{i}\|_{F_{k_i}^\al}
\label{HLL1}
\end{align}

\noi
for any $\eps >0$.
The estimate \eqref {HLL1} holds under permutation of indices $k_1, k_2$, and $k_3$.
\end{lemma}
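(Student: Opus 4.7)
The plan is to dualize the $N^\al_{k_4}$-norm against a test function and reduce to a four-linear integral, which I will control by the modulation decomposition together with the multilinear Strichartz estimates of Section \ref{SEC:Strichartz}. Unfolding the definition and fixing $t_4 \in \R$, the task is to bound
\[
\sup_{t_4}\, \big\|(\tau - n^2 + i 2^{[\al k_4]})^{-1}\, \F[\eta_0(2^{[\al k_4]}(t-t_4)) \cdot \P_{k_4}\N(u_1,u_2,u_3)]\big\|_{X_{k_4}}.
\]
By $L^2$-duality it suffices to estimate
\[
\mathcal{J} = \bigg| \iint_{\T\times\R} \tilde u_1\, \cj{\tilde u_2}\, \tilde u_3\, \cj v_4\, dx\, dt \bigg|
\]
uniformly over test functions $v_4$ supported in $I_{k_4}\times \R$ with $\|v_4\|_{X_{k_4}} \leq 1$ and localized at modulation $\ges 2^{[\al k_4]}$, where each $\tilde u_i$ is an extension of $u_i$ realizing its $F^\al_{k_i}$-norm, further truncated in time to the window $[t_4 - C 2^{-[\al k_1]}, t_4 + C 2^{-[\al k_1]}]$ via an acceptable multiplier from Lemma \ref{LEM:Sk}; this is admissible for $u_1$ since $|k_1 - k_4|\leq 5$ puts the cutoff at its natural scale.

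Next I would decompose each factor dyadically in modulation, writing $\tilde u_i = \sum_{j_i} u_{i,j_i}$, and exploit the identity $-\Phi(\bar n) = \sum_{i=1}^{4} \pm (\tau_i - n_i^2)$, which forces $2^{j_1^*} \ges |\Phi(\bar n)|$. In the present high$\times$low$\times$low regime $|n|\sim |n_1| \gg |n_2|,|n_3|$, so $|\Phi(\bar n)| = 2|n - n_1|\,|n - n_3| \sim 2^{k_1^*}\,|n_2 - n_3|$; since $\N$ enforces $n_2 \neq n_1, n_3$, one has $|n_2 - n_3| \geq 1$ and therefore $j_1^* \geq k_1^* - C$. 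Plugging into the $L^6$-refined estimate \eqref{L6} of Lemma \ref{LEM:L4L6} gives the factor $2^{-j_1^*/2}\, 2^{\eps k_3^*}$, which after summation over $j_1,\dots,j_4$ via Lemma \ref{LEM:a.o.} to absorb the $\l^1$-Besov loss yields the second alternative $2^{-(1-\eps)k_1^*/2}$. For the first alternative $2^{-(\al - \eps)k_1^*/2}$, I would instead exploit the weight $(\tau - n^2 + i 2^{[\al k_4]})^{-1}$ built into the $N^\al_{k_4}$-norm: this forces an effective output modulation $j_4 \ges [\al k_4]$ and contributes $2^{-\al k_1^*/2}$ directly, independent of any phase gain; combined with the same $L^6$ bound (now without using $j_1^*\ges k_1^*$) this produces the first bound.

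The main obstacle is the time-scale mismatch: the relevant window has length $\sim 2^{-[\al k_1]}$, whereas the $F^\al_{k_i}$-norms of $u_2, u_3$ are defined on the longer scales $2^{-[\al k_{2,3}]}$. Since the $\l^1$-Besov modulation structure of $X_k$ interacts awkwardly with such finer cutoffs, restricting $\tilde u_2, \tilde u_3$ to the short interval cannot be treated as a clean truncation; rather, the low-frequency factors are best absorbed crudely through the $L^\infty_t L^2_x$ control provided by Lemma \ref{LEM:infty}, while $\tilde u_1$ and $v_4$, whose scales match the window, are treated directly in the $X_k$-framework. The remaining bookkeeping is to verify that the two mechanisms of gain -- namely $2^{-j_1^*/2}$ from the phase--enforced modulation and $2^{-[\al k_4]/2}$ from the weight in the $N^\al_{k_4}$-norm -- combine correctly against the modulation-weighted $F^\al_{k_i}$-norms so that taking the better of the two yields \eqref{HLL1}; by symmetry of $\N$ in the indices $1$ and $3$, the permuted cases require no separate treatment.
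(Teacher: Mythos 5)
Your roadmap (dualize, localize in time, decompose in modulation, apply the multilinear $L^6$ Strichartz estimate \eqref{L6} together with the phase lower bound $|\Phi(\bar n)|\ges 2^{k_1^*}$ from \eqref{SC1}) is the right one and matches the paper, but you have misdiagnosed the time-scale issue, and the workaround you propose would undermine the estimate.

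Since $k_2,k_3\leq k_1-10\leq k_4-5$, the window of length $\sim 2^{-[\al k_4]}$ appearing in the $N^\al_{k_4}$-norm is \emph{shorter} than the native windows $\sim 2^{-[\al k_i]}$, $i=2,3$, of the $F^\al_{k_i}$-norms, not longer as your paragraph on the ``main obstacle'' suggests. Restriction to a shorter window is benign: $\g(2^{[\al k_4]+c}t)$ is supported where $\eta_0(2^{[\al k_i]}t)\equiv 1$, so $\eta_0(2^{[\al k_i]}t)\cdot\g(2^{[\al k_4]+c}t)=\g(2^{[\al k_4]+c}t)$ (this is exactly \eqref{HLL1b}), and \eqref{Xk2}--\eqref{Xk3} then show that multiplying $u_2,u_3$ by this short-scale cutoff is a norm-bounded operation on $X_{k_i}$. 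You have the direction of the mismatch backwards; the genuinely awkward short-to-long transfer, which interacts with the $\ell^1$-Besov structure and forces a sum over $\sim 2^{\al(k_1-k_4)}$ subintervals, arises in Lemma \ref{LEM:HHHL} (high $\times$ high $\times$ high $\to$ low), not here.

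As a consequence, absorbing $u_2,u_3$ through $L^\infty_t L^2_x$ via Lemma \ref{LEM:infty} is both unnecessary and destructive. The estimate \eqref{L6} is a genuinely four-linear bound that requires all four factors to be modulation-localized in $D_{k_i,\leq j_i}$; its gain $2^{-j_1^*/2}\,2^{\eps k_3^*}$ comes from decomposing the two high-frequency factors into arcs of length $\sim 2^{k_3^*}$ and placing the highest-modulation factor in $L^2_{x,t}$, the remaining three in $L^6_{x,t}$ via \eqref{Stri3}. If $u_2,u_3$ are removed from the modulation framework and handled by $L^\infty_t L^2_x$, you are left with a bilinear quantity $u_1\cj v_4$ at comparable high frequencies, for which there is no bilinear gain, and any recourse to Bernstein on $u_2,u_3$ loses powers of $2^{k_2},2^{k_3}$. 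The correct move is the paper's: localize all three inputs to the window $\sim 2^{-[\al k_4]}$ (free by \eqref{HLL1b} and \eqref{Xk2}), so that after the modulation decomposition every $f_{k_i,j_i}$ has $j_i\geq[\al k_4]$, then apply \eqref{L6} and take the minimum of the gain $2^{-j_1^*/2}\les 2^{-[\al k_4]/2}$ (always available) and $2^{-j_1^*/2}\les 2^{-k_1^*/2}$ (from the phase).
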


\begin{proof}
Let $\g:\R \to [0, 1]$ be a smooth cutoff function supported on $[-1, 1]$
with $\g \equiv 1$ on $[-\frac{1}{4}, \frac{1}{4}]$
such that
\[ \sum_{m} \g^3(t-  m ) \equiv 1, \qquad t \in \R.\]

\noi
Then, there exist $c, C> 0$ such that
\begin{align*}
 \eta_0(2^{[\al k_4]} (t - t_{k_4}))
= \eta_0(2^{[\al k_4]} (t - t_{k_4})) \sum_{|m|\leq C} \g^3(2^{[\al k_4]+c} (t - t_{k_4})- m)
\end{align*}

\noi
and
\begin{align}
 \eta_0(2^{[\al k_i]}t) \cdot \g(2^{[\al k_4]+c}t)
= \g(2^{[\al k_4]+c}t)
\label{HLL1b}
\end{align}

\noi
 for $i = 1,2,  3$.
Then,  from the  definition and Lemma \ref{LEM:embed3},
 the left-hand side of \eqref{HLL1}
is estimated  by
\begin{align}\label{HLL11}
C \sup_{t_{k_4} \in \R}  &  \big\|(\tau-n^2+    i2^{[\alpha k_4]})^{-1}
\ind_{I_{k_4}}(n)\F[ \g(2^{[\alpha k_4]+c}(t-t_{k_4}) - m )\cdot u_1] \notag\\
& *\F[ 
\g(2^{[\al k_4]+c}(t-t_{k_4})- m )\cdot \cj{u}_{2}] 
*\F[ \g(2^{[\al k_4]+c}(t-t_{k_4}) -  m )\cdot u_3]\big\|_{X_{k_4}}.
\end{align}

\noi
Let $f_{k_i}=\F[ \g(2^{[\al k_4]+c}(t-t_{k_4}))\cdot u_i ]$, $i = 1, 3$,
and
$ \cj {f_{k_2}}=\F[ 
\g(2^{[\al k_4]+c}(t-t_{k_4}))\cdot  u_2 ]$.
Then,  from the definition \eqref{Xk} of $X_k$,
we have
\begin{align}
\eqref{HLL11}
\les  \sup_{t_{k_4}\in \R} \sum_{j_4=0}^{\infty}2^\frac{j_4}{2}
\sum_{j_1,j_2,j_3\geq [\al k_4]}
\big\|(2^{j_4}& +2^{[\alpha k_4]})^{-1} \notag \\
& \times \ind_{D_{k_4, j_4}}\cdot
(f_{k_1,j_1}*\wt{ f}_{k_2,j_2}*f_{k_3,j_3})\big\|_{\l^2_nL^2_\tau},
\label{HLL12}
\end{align}

\noi
where $\wt{f}(n,\tau)=f(-n,-\tau)$ and $f_{k_i, j_i}$, $i = 1, 2, 3$, is defined by
\[ f_{k_i, j_i}(n, \tau) =
\begin{cases}
f_{k_i}(n,\tau)\eta_{j_i}(\tau-n^2), &
\text{for }j_i>[\al k_4], \\
f_{k_i}(n,\tau)\eta_{\leq [\al k_4]}(\tau-n^2), & \text{for } j_i = [\al k_4].
\end{cases}
\]

Using the fact $\ind_{D_{k_4,j_4}}\leq \ind_{D_{k_4,\leq
j_4}}$, we have
\begin{align}
\eqref{HLL12}
& \les\sup_{t_{k_4} \in \R}\Big(\sum_{j_4<[\al k_4]}+\sum_{j_4\geq[\al k_4]}\Big)
2^{\frac{j_4}{2}}\sum_{j_1,j_2,j_3\geq [\alpha k_4]}
\big\|(2^{j_4}+2^{[\alpha k_4]})^{-1}
\ind_{D_{k_4, j_4}}\notag \\
& \hphantom{XXXXXXX} \times
f_{k_1,j_1}*\wt{f}_{k_2,j_2}*f_{k_3,j_3}
\big\|_{\l^2_n L^2_\tau }\notag\\
& \les \sup_{t_{k_4}\in \R}
\sum_{j_1,j_2,j_3,j_4\geq [\al k_4]}2^{-\frac{j_4}{2}}\big\|\ind_{D_{k_4,\leq j_4}}\cdot
(f_{k_1,j_1}*\wt{f}_{k_2,j_2}*f_{k_3,j_3})\big\|_{\l^2_n L^2_\tau }\notag \\
& \les \sup_{t_{k_4}\in \R}
\sup_{j_4 \geq [\al k_4]}
\sum_{j_1,j_2,j_3\geq [\al k_4]}2^{-(\frac{1}{2} - ) j_4}\big \|\ind_{D_{k_4,\leq j_4}}\cdot
(f_{k_1,j_1}*\wt{f}_{k_2,j_2}*f_{k_3,j_3})\big\|_{\l^2_n L^2_\tau }.
\label{HLL13}
\end{align}

\noi
From duality and Lemma \ref{LEM:L4L6}
along with
the support conditions \eqref{SC1} and \eqref{SC2}, 
we have
\begin{align*}
\eqref {HLL13}
& \les \sup_{t_{k_4}\in \R}
\min(2^{- \frac{1}{2}(\al- \eps)  k_1^*},2^{-\frac{1}{2}(1-\eps)k_1^*})
\sum_{j_1,j_2,j_3 \geq [\al k_4]}
\prod_{i = 1}^3 2^\frac{j_i}{2}\|f_{k_i,j_i}\|_{\l^2_n L^2_\tau }.
\end{align*}

\noi
Finally, by applying \eqref{Xk2} with \eqref{HLL1b}, we obtain \eqref{HLL1}.
When $k_2 \geq k_2^*$,  \eqref{SC2} actually provides
a better estimate with $2^{-(1-\eps) k_1^*}$.
\end{proof}

In the proof of Lemma \ref{LEM:HLL}, we carried the supremum over $t_{k_4}$
along the argument.
For simplicity, we may drop the supremum over $t_{k_4}$
in the following proofs 
without explicitly stating so.

\begin{lemma}[high $\times$ high $\times$ low $\to$ high] \label{LEM:HHLH}
Let $\alpha\geq 0$.
If $k_4\geq 20$, $|k_i-k_4|\leq 5$, $i = 1, 2$, and 
$k_3\leq k_1-10$, then we have
\begin{align}
\|\P_{k_4}\N(u_{1}, {u}_{2}, u_{3})\|_{N_{k_4}^\alpha}
\les 2^{-(1-\eps)k_1^*}
\|u_{1}\|_{F_{k_1}^\alpha}\|u_{2}\|_{F_{k_2}^\alpha}\|u_3\|_{F_{k_3}^\alpha}
 \label{HHLH1}
\end{align}

\noi
for any $\eps > 0$.
The estimate \eqref {HHLH1} holds under permutation of indices $k_1, k_2$, and $k_3$.

\end{lemma}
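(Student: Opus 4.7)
The plan is to mirror the proof of Lemma~\ref{LEM:HLL} while exploiting the stronger phase estimate available in the HHLH geometry. The constraint $n = n_1 - n_2 + n_3$ together with $|n|\sim |n_1|\sim |n_2| \sim 2^{k_1^*}$ and $|n_3|\ll 2^{k_1^*}$ gives $|n-n_1|=|n_3-n_2|\sim 2^{k_1^*}$ and $|n-n_3|=|n_1-n_2|\sim 2^{k_1^*}$, hence $|\Phi(\bar n)|=2|n-n_1||n-n_3|\sim 2^{2k_1^*}$; this is the instance of \eqref{SC3} obtained under the symmetry $n_1\lrarrow n_3$. Combined with the identity $\tau - n^2 = (\tau_1 - n_1^2) - (\tau_2 - n_2^2) + (\tau_3 - n_3^2) - \Phi(\bar n)$, this forces $j_1^* := \max(j_1, j_2, j_3, j_4) \geq 2k_1^* - 5$ on the support of the convolution. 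A key companion fact specific to HHLH is that $k_3^* \sim k_1^*$.

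I would first run the same almost orthogonality and time-localization reduction as in the proof of Lemma~\ref{LEM:HLL}: decompose $\eta_0(2^{[\al k_4]}(t-t_{k_4})) = \sum_{|m|\leq C}\g^3(2^{[\al k_4]+c}(t-t_{k_4}) - m)$, apply Lemma~\ref{LEM:embed3}, and dyadically decompose each $f_{k_i}$ in modulation. The hypotheses $|k_i - k_4|\leq 5$ for $i=1,2$ and $k_3\leq k_1 - 10$ ensure $[\al k_i]\leq [\al k_4] + O(1)$, so the identity $\eta_0(2^{[\al k_i]}t)\cdot \g(2^{[\al k_4]+c}t) = \g(2^{[\al k_4]+c}t)$ used in Lemma~\ref{LEM:HLL} still holds. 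The problem reduces to bounding
\[
\sum_{j_4\geq 0}\frac{2^{j_4/2}}{2^{j_4}+2^{[\al k_4]}}\sum_{j_1,j_2,j_3 \geq [\al k_4]}\big\|\eta_{j_4}(\tau-n^2)\ind_{I_{k_4}}\big(f_{k_1,j_1}*\wt f_{k_2,j_2}*f_{k_3,j_3}\big)\big\|_{\l^2_n L^2_\tau}.
\]

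To control the inner norm I would dualize against a unit vector supported in $D_{k_4, j_4}$ and apply the $L^6$-based multilinear estimate \eqref{L6} of Lemma~\ref{LEM:L4L6}, yielding
\[
\big\|\eta_{j_4}\ind_{I_{k_4}}\big(f_{k_1,j_1}*\wt f_{k_2,j_2}*f_{k_3,j_3}\big)\big\|_{\l^2_n L^2_\tau} \les 2^{-j_1^*/2}\, 2^{\eps k_3^*}\, 2^{j_4/2}\prod_{i=1}^{3}2^{j_i/2}\|f_{k_i,j_i}\|_{\l^2_n L^2_\tau}.
\]
Since $k_3^* \sim k_1^*$ and $j_1^* \geq 2k_1^* - 5$ on the support, the factor $2^{-j_1^*/2}2^{\eps k_3^*}$ produces the desired gain $2^{-(1-\eps)k_1^*}$. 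I would then split the $j_4$-summation into two subcases: in Case~(I), $j_4\geq 2k_1^*-5$, so $2^{j_4/2}/(2^{j_4}+2^{[\al k_4]}) \sim 2^{-j_4/2}$ and geometric summation already yields $2^{-k_1^*}$; in Case~(II), $j_4 < 2k_1^* - 5$ so the phase forces $\max(j_1,j_2,j_3)\geq 2k_1^* - 5$, and $\sum_{j_4 < 2k_1^*} 2^{j_4}/(2^{j_4}+2^{[\al k_4]}) \les k_1^*$ contributes only a logarithmic factor absorbed into $2^{\eps k_1^*}$. In both cases, summing $2^{j_i/2}\|f_{k_i,j_i}\|_{\l^2_n L^2_\tau}$ over $j_i$ reconstructs $\|f_{k_i}\|_{X_{k_i}}$, and \eqref{Xk3} then yields $2^{-(1-\eps)k_1^*}\prod_{i=1}^{3}\|u_i\|_{F_{k_i}^\al}$.

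The most delicate step is Case~(II): the range of $j_4$-summation has length $\sim k_1^*$ with no geometric decay, so closing it requires the genuine modulation gain $2^{-j_1^*/2}$ from \eqref{L6} rather than any cancellation from the weight $(2^{j_4}+2^{[\al k_4]})^{-1}$. The symmetry of \eqref{L6} in the four modulation indices also transparently handles the potentially awkward situation in which $j_3$, the modulation of the low-frequency factor $u_3$, realizes $j_1^*$: the decay $2^{-j_3/2}$ cancels the $2^{j_3/2}$ required when summing $f_{k_3,j_3}$ to $\|f_{k_3}\|_{X_{k_3}}$, so no separate argument is needed.
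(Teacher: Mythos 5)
Your proof is correct and follows essentially the same route as the paper's: reduce via the time-localization and modulation decomposition of Lemma~\ref{LEM:HLL}, use the support condition \eqref{SC3} (under $n_1 \lrarrow n_3$) to force $j_1^* \geq 2k_1^* - 5$, and conclude by duality from \eqref{L6} with $k_3^* \sim k_1^*$. The only difference is bookkeeping in closing the $j_4$-sum (your explicit two-case split versus the paper's $\sup_{j_4} 2^{-(\frac12 -)j_4}$ device), which is immaterial.
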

\begin{proof}
We proceed as in Lemma \ref{LEM:HLL}.
In this case, we have $j_1^* \geq 2k_1^* - 5$
from the support condition \eqref{SC3}.
Then, \eqref{HHLH1} follows from \eqref{L6} in Lemma \ref{LEM:L4L6}.
\end{proof}

\begin{lemma}[high $\times$ high $\times$ high $\to$ high] \label{LEM:HHHH}
Let $\al \geq 0$.
If $k_4\geq 20$, $|k_i-k_4|\leq 5$, $i = 1, 2, 3$, 
then we have
\begin{align}
\|\P_{k_4}\N(u_{1}, u_{2}, u_{3})\|_{N_{k_4}^\alpha}
+ \|\P_{k_4}\RR & (u_{1}, u_{2}, u_{3})\|_{N_{k_4}^\alpha} \notag\\
& \les
2^{-\frac{1}{2}(\alpha -\eps)k_1^*}
\|u_{1}\|_{F_{k_1}^\alpha}\|u_{2}\|_{F_{k_2}^\alpha}\|u_{3}\|_{F_{k_3}^\alpha}
\label{HHHH1}
\end{align}

\noi
for any $\eps > 0$.
\end{lemma}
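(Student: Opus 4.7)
The approach will follow the template of Lemmas \ref{LEM:HLL} and \ref{LEM:HHLH}. The plan is to partition time by $\sum_m \g^3(2^{[\alpha k_4]+c}(t-t_{k_4})-m)\equiv 1$, apply the definition of $N^\alpha_{k_4}$ together with Lemma \ref{LEM:embed3}, and reduce the estimate to bounding
\[
\sup_{j_4\geq [\alpha k_4]}\sum_{j_1,j_2,j_3\geq [\alpha k_4]} 2^{-(\frac{1}{2}-\eps)j_4}\,\big\|\ind_{D_{k_4,\leq j_4}}\cdot (f_{k_1,j_1}\ast\wt f_{k_2,j_2}\ast f_{k_3,j_3})\big\|_{\ell^2_n L^2_\tau},
\]
exactly as in \eqref{HLL13}, where the $f_{k_i,j_i}$ are the modulation-localized Fourier transforms of $\g$-cut copies of $u_i$.

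Because all four dyadic blocks are comparable ($|k_i-k_4|\leq 5$), the phase function $\Phi(\bar n)=2(n-n_1)(n-n_3)$ carries no useful a priori lower bound, so the resonance-based refinements \eqref{L6}--\eqref{L63} are unavailable. I will therefore fall back on the plain $L^4$-Strichartz estimate \eqref{L4} of Lemma \ref{LEM:L4L6}, which by duality yields
\[
\big\|\ind_{D_{k_4,\leq j_4}}(f_{k_1,j_1}\ast\wt f_{k_2,j_2}\ast f_{k_3,j_3})\big\|_{\ell^2_n L^2_\tau}\lesssim 2^{3j_4/8}\prod_{i=1}^3 2^{3j_i/8}\|f_{k_i,j_i}\|_{\ell^2_n L^2_\tau}.
\]
Combined with the weight $2^{-(\frac{1}{2}-\eps)j_4}$, the $j_4$-supremum is attained at $j_4=[\alpha k_4]$ and contributes $2^{-(\frac{1}{8}-\eps)[\alpha k_4]}$. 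For each inner sum I split $2^{3j_i/8}=2^{-j_i/8}\cdot 2^{j_i/2}$; on the range $j_i\geq [\alpha k_4]$ one has $2^{-j_i/8}\leq 2^{-[\alpha k_4]/8}$, so the leftover $\sum_{j_i} 2^{j_i/2}\|f_{k_i,j_i}\|_{\ell^2_n L^2_\tau}=\|f_{k_i}\|_{X_{k_i}}$ is absorbed into $\|u_i\|_{F^\alpha_{k_i}}$ via \eqref{Xk3} (using that the support of $\g(2^{[\alpha k_4]+c}(\cdot-t_{k_4}))$ sits inside that of $\eta_0(2^{[\alpha k_i]}(\cdot-t_{k_4}))$ since $|k_i-k_4|\leq 5$). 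Multiplying the resulting four factors of $2^{-[\alpha k_4]/8}$ reproduces the advertised $2^{-\frac{1}{2}(\alpha-\eps)k_1^*}$.

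For the resonant piece $\RR$, the Fourier-side identity $\ft \RR(n,t)=\ft u_1(n,t)\cj{\ft u_2(n,t)}\ft u_3(n,t)$ shows that $|\ft \RR|$ is pointwise majorized by the corresponding values of $|f_{k_1}|\ast|\wt f_{k_2}|\ast|f_{k_3}|$, so the identical chain of reductions with the same $L^4$-Strichartz input closes without modification. The main obstacle is precisely the absence of any phase gain: unlike in the HLL and HHLH regimes, none of the support identities \eqref{SC1}--\eqref{SC3} forces a lower bound on $j_1^*$, so the entire $k_1^*$-decay must be wrung out of the four coupled constraints $j_i\geq [\alpha k_4]$ against the $2^{3j/8}$ loss per Strichartz factor. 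This is also why the resulting exponent is only $\frac{\alpha}{2}$ and the lemma is informative only for $\alpha>0$.
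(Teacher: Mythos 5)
Your proof is correct and takes essentially the same route as the paper: reduce via the time‑partition and Lemma~\ref{LEM:embed3} to the same quantity as in \eqref{HLL13}, observe there is no lower bound on $|\Phi(\bar n)|$ in the high$\times$high$\times$high$\to$high case, apply the $L^4$-Strichartz estimate \eqref{L4} by duality, and harvest the decay $2^{-\frac12(\alpha-\eps)k_1^*}$ from the four constraints $j_i\geq[\alpha k_4]$. The paper's proof is a two-sentence statement of exactly this reduction; your version usefully records the bookkeeping showing how the four factors of $2^{-[\alpha k_4]/8}$ combine, and the pointwise-domination observation that folds the resonant piece $\RR$ into the same argument.
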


\begin{proof}
In this case, we do not have any lower bound on the size of the phase function $\Phi(\bar n)$.
Proceeding as in Lemma \ref{LEM:HLL} with $j_i \geq [\al k_4]$, $i = 1, \dots, 4$,
 \eqref{HHHH1} follows from \eqref{L4} in Lemma \ref{LEM:L4L6}.
\end{proof}

\begin{lemma}[high $\times$ high $\times$ high $\to$ low]\label{LEM:HHHL}
Let $\al \geq 0$.
If $k_3\geq 20$, $|k_3-k_i|\leq 5$, $i = 1, 2$, 
and $k_4\leq k_1-30$, then we have
\begin{align}
\|\P_{k_4}\N(u_{1}, {u}_{2}, u_{3})\|_{N_{k_4}^\alpha}
& \les 
\min(M_1, M_2) 
\|u_{1}\|_{F_{k_1}^\alpha}\|u_2\|_{F_{k_2}^\alpha}\|u_3\|_{F_{k_3}^\alpha}
\label{HHHL1}
\end{align}

\noi
for any $\eps >0$,
where $M_1$ and $M_2$ are given by 
\[M_1 = 2^{-(1  - \al -\eps)k_1^* - \al k_4 }
\qquad \text{and}\qquad
M_2 = 2^{-(1 - \frac{3}{4}\al-\eps )k_1^* + (\frac{1}{2}-\al) k_4}.\]

\end{lemma}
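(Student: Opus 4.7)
The plan is to proceed in close analogy with the proofs of Lemmas \ref{LEM:HLL}, \ref{LEM:HHLH}, and \ref{LEM:HHHH}, but adapted to the high-high-high $\to$ low geometry. First, I would apply the partition of unity $\sum_m \g^3(\lambda t - m) \equiv 1$ at an appropriate time scale, together with Lemma \ref{LEM:embed3}, to reduce $\|\P_{k_4}\N(u_1, u_2, u_3)\|_{N_{k_4}^\alpha}$ to estimating
\[
\sum_{j_4 \geq 0}\ \sum_{j_1, j_2, j_3} \frac{2^{j_4/2}}{2^{j_4} + 2^{[\alpha k_4]}} \,\big\|\ind_{D_{k_4, j_4}} \cdot (f_{k_1, j_1} \ast \tilde f_{k_2, j_2} \ast f_{k_3, j_3})\big\|_{\l^2_n L^2_\tau},
\]
which, after duality, becomes a four-linear integral over $\T \times \R$.

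The crucial algebraic input is the phase-function lower bound: in this regime $|n| \sim 2^{k_4}\ll 2^{k_1^*} \sim |n_i|$ for $i = 1, 2, 3$, so $\Phi(\bar n) = 2(n-n_1)(n-n_3)$ satisfies $|\Phi(\bar n)| \sim 2^{2 k_1^*}$. Consequently the maximal modulation $j_1^* := \max(j_1, j_2, j_3, j_4)$ must satisfy $j_1^* \geq 2k_1^* - O(1)$, which provides a decisive $2^{-k_1^*}$ gain beyond the usual $2^{-k_1^*/2}$ from a bilinear phase.

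To obtain the bound $M_2$, I would apply the refined multilinear estimate of Lemma \ref{LEM:L62} with the roles $k_1 = k_2 = k_3 \sim k_1^*$ and $k_4$ small (matching its hypothesis $k_1, k_2, k_3 \geq k_1^* - 5 \geq k_4 + 5$). This yields $M = 2^{-j_1^*/2 - \alpha k_1^*/2 + k_4/2}$; combining with $j_1^* \gtrsim 2k_1^*$, the sum $\sum_{j_4} 2^{j_4/2}/(2^{j_4}+2^{[\alpha k_4]}) \lesssim 2^{-\alpha k_4/2}$, and the standard summation of the $X_{k_i}$-norms over remaining modulations, produces the $M_2$ bound after accounting for the time-scale conversion between input scale $2^{-[\alpha k_1^*]}$ and output scale $2^{-[\alpha k_4]}$. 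For the bound $M_1$, I would instead invoke Lemma \ref{LEM:L4L6}\,\eqref{L6}, which gives $2^{-j_1^*/2 + \eps k_3^*}\prod 2^{j_i/2}\|\F(u_i)\|_{\l^2_n L^2_\tau}$; substituting $j_1^* \geq 2k_1^*$ and $k_3^* \sim k_1^*$ and summing as before yields a bound of form $2^{-(1-\eps)k_1^* - \alpha k_4/\cdot} \cdot (\text{time-scale losses})$, which simplifies to $M_1$. Finally, taking the minimum of $M_1$ and $M_2$ gives the claim; the two bounds dominate in complementary regimes, namely $k_4 \gtrsim \tfrac{\alpha}{2} k_1^*$ versus $k_4 \ll \tfrac{\alpha}{2} k_1^*$.

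The main obstacle is the time-scale mismatch: since $k_i \gg k_4$ for $i=1,2,3$, the identity $\eta_0(2^{[\alpha k_i]}t)\cdot\g(2^{[\alpha k_4]+c}t) = \g(2^{[\alpha k_4]+c}t)$ used in Lemma \ref{LEM:HLL} fails for all three inputs. One must therefore replace this step by an application of Lemma \ref{LEM:a.o.} (almost orthogonality) or Lemma \ref{LEM:embed2} to pass between the natural input scale $2^{-[\alpha k_1^*]}$ and the output scale $2^{-[\alpha k_4]}$, and the bookkeeping of the resulting $2^{\alpha(k_1^* - k_4)/2}$-type losses per input---which must be absorbed into the $-\tfrac{\alpha}{2} k_1^*$ factor coming from Lemma \ref{LEM:L62} and the tail gain from $j_1^* \gtrsim 2k_1^*$---is the technically delicate part of the argument.
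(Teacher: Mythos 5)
Your overall architecture matches the paper's: the phase lower bound $|\Phi(\bar n)|\sim 2^{2k_1^*}$ from \eqref{SC3} forcing $j_1^*\geq 2k_1^*-O(1)$, the estimate \eqref{L6} of Lemma \ref{LEM:L4L6} for $M_1$, Lemma \ref{LEM:L62} for $M_2$, and the observation that the two bounds win in complementary regimes of $k_4$ versus $\frac{\al}{2}k_1^*$ are all exactly the ingredients used. The one place where your plan diverges from what actually works is the time-localization step, which you correctly flag as the delicate point but then misdiagnose. The paper does \emph{not} partition at the output scale $2^{-[\al k_4]}$ and then struggle to absorb the cutoff into the inputs; it partitions at the \emph{finest} scale $2^{-[\al k_1]}$, where the absorption identity $\eta_0(2^{[\al k_i]}t)\cdot\g(2^{[\al k_1]+c}t)=\g(2^{[\al k_1]+c}t)$ \emph{does} hold for all three high-frequency inputs $i=1,2,3$ (since $k_1\sim k_2\sim k_3$), and the only mismatch is with the output window of length $2^{-[\al k_4]}$, which is covered by $O(2^{[\al k_1]-[\al k_4]})$ such subintervals. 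Summing these by the triangle inequality costs a single factor $2^{\al(k_1-k_4)}=2^{\al k_1^*-\al k_4}$, and this is precisely the source of the $+\al k_1^*$ and $-\al k_4$ drifts visible in both $M_1$ and $M_2$. A secondary benefit of localizing at scale $2^{-[\al k_1]}$ is that, via \eqref{Xk2}, one may assume $j_1,j_2,j_3\geq[\al k_1]$, which is a hypothesis of Lemma \ref{LEM:L62} that your argument needs but does not arrange.

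Your proposed substitute — ``$2^{\al(k_1^*-k_4)/2}$-type losses \emph{per input}'' obtained from Lemma \ref{LEM:a.o.} or Lemma \ref{LEM:embed2} — is both the wrong mechanism and, if taken literally with three inputs, quantitatively worse: a total loss $2^{\frac32\al(k_1-k_4)}$ would replace $M_1$ by $2^{-(1-\frac32\al-\eps)k_1^*-\frac32\al k_4}$ and similarly degrade $M_2$, which would tighten the constraint \eqref{S1a} in Summary (iv) from $s\geq-\frac17+$ to roughly $s\geq -\frac19$ and put the target range $s\in(-\frac18,0)$ at risk in this case. Relatedly, the $-\al k_4$ in the final constants does not come from the sum $\sum_{j_4}2^{j_4/2}(2^{j_4}+2^{[\al k_4]})^{-1}$ as you suggest: that weight is paired against the dual factor $2^{j_4/2}\|f_4\|_{\l^2_nL^2_\tau}$ produced by the multilinear lemmas, leaving only a harmless $\sup_{j_4}$ after the $2^{-(\frac12-)j_4}$ trick of \eqref{HLL13}. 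So the fix is simple — localize at the fine scale and count subintervals — but as written your accounting would not reproduce $M_1$ and $M_2$.
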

\begin{proof}
Let $\g:\R\rightarrow [0,1]$ be
as in the proof of Lemma \ref{LEM:HLL}.
Then, there exist $c, C> 0$ such that
\[ \eta_0(2^{[\al k_4]} (t - t_{k_4}))
= \eta_0(2^{[\al k_4]} (t - t_{k_4}))
\sum_{|m|\leq C 2^{[\al k_1] - [\al k_4]}} \g^3(2^{[\al k_1]+c} (t - t_{k_4})- m)\]

\noi
and
$ \eta_0(2^{[\al k_i]}t) \cdot \g(2^{[\al k_1]+c}t)
= \g(2^{[\al k_1]+c}t)$ for $i = 1,2,  3$.
By Lemma \ref{LEM:embed3}, the left-hand side of \eqref{HHHL1} is estimated by
\begin{align*}
C\sup_{t_{k_4}\in \R}  \bigg\|(\tau-n ^2+ & i2^{[\alpha k_4]})^{-1}
 \ind_{I_{k_4}}(n)\sum_{|m|\leq C2^{[\alpha k_1]-[\alpha k_4]}}
F_{m, t_{k_4}}(n, \tau)\bigg\|_{X_{k_4}},
\end{align*}

\noi
where $F_{m, t_{k_4}}(n, \tau) = \ft v_{1, m}*  \ft{\cj v}_{2, m}
* \ft v_{3, m} (n, \tau)$ and
$ v_{i, m} = \g(  2^{{[\alpha k_1]+c}}(t-t_{k_4})-m)\cdot u_i$, $i = 1, 2, 3$.
Proceeding as in the proof of Lemma \ref{LEM:HLL}
with \eqref{Xk2},
it suffices to prove that
\begin{align*}
2^{\alpha (k_1-k_4)}\sum_{j_4\geq [\al k_4]}2^{-\frac{j_4}{2}}\|\ind_{\wt D_{k_4, j_4}}\cdot
(f_{k_1,j_1}*\wt{f}_{k_2,j_2}*f_{k_3,j_3})\|_{\l^2_n L^2_\tau}
\les M
\prod_{i = 1}^3 2^{\frac{j_i}{2}}\|f_{k_i,j_i}\|_{\l^2_n L^2_\tau}
\end{align*}

\noi
for any 
$f_{k_i,j_i}:\Z\times \R \to  \R_+$  
supported on $\wt D_{k_i, j_i}$
with $j_i \geq [\alpha k_1]$,   $i=1,2,3$, 
where
\[ \wt D_{k_i, j_i} = 
\begin{cases}
 D_{k_i,\leq j_i},
 & \text{when }j_i = [\alpha k_1],\\
 D_{k_i, j_i}, & 
\text{when }j_i > [\alpha k_1].
\end{cases}
\]

\noi
Here, we can assume that $j_i \geq [\al k_1]$, $i = 1, 2, 3$,
thanks to the time localization over an interval of size $\sim 2^{-[\al k_1] - c}$
and \eqref{Xk2}.
Hence, \eqref{HHHL1} 
with $M_1$
follows from
\eqref{L6} in Lemma \ref{LEM:L4L6}
with \eqref{SC3}: $j_1^* \geq 2k_1^* - 5$,
while
\eqref{HHHL1} 
with $M_2$
follows from Lemma \ref{LEM:L62}.
\end{proof}

\begin{lemma}[high $\times$ high $\times$ low $\to$ low] \label{LEM:HHLL}
Let $\al \in [0,1 ]$.
If $k_1\geq 20$, $|k_1-k_2|\leq 5$, and $k_3, k_4\leq k_1-10$, then we have
\begin{align}
\|\P_{k_4}\N(u_{1}, u _{2}, u_{3})\|_{N_{k_4}^\alpha}
\les \min(M_1, M_2)
\|u_{1}\|_{F_{k_1}^\alpha}\|u_{2}\|_{F_{k_2}^\al}\|u_3\|_{F_{k_3}^\alpha},\label{HHLL1}
\end{align}

\noi
where $M_j$, $j = 1, 2$, is given by 
\[ M_1 = 2^{(-\frac{1}{2}+ \alpha+\eps)k_1^* - \al k_4 -\beta_1}
\quad \text{with }
\be_1 = \begin{cases}
\frac{k_3^*}{2}, & \text{if }|k_3 -  k_4| \geq 2, \\
0, & \text{otherwise}, 
\end{cases}
\]

\noi
and

\[M_2 = \begin{cases}
2^{(-\frac{ 1}{2}+ \frac{1}{2} \al+\eps) k_1^* - \al k_4}, & 
\text{if }|k_3 - k_4 | \geq 2,\\
 2^{(-\frac{ 1}{2}+ \frac 12\al+\eps) k_1^*+ \frac{k_3^*}{4}- \al k_4} , &
 \text{otherwise}, 
\end{cases}\]

\noi
for any $\eps > 0$.

Moreover, 
when $j_3 = j_1^*$ and $k_3 \geq k_4 + 2$
 \eqref{HHLL1} holds with the constant given by 
\begin{equation}
M_3 = \min \big(
2^{-\frac{k_1^*}{2} + \frac{\al}{2}k_3+(\frac{1}{2}-\al+\eps) k_4},
2^{(-\frac{1}{2}+ \al)k_1^*-\frac{k_3}{2}+ (\frac{1}{2} - \al)k_4 -\frac{\min(\al k_1^*, k_3)}{2} }\big).
\label{HHLL3}
\end{equation}

\end{lemma}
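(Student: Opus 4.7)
The proof follows the template of Lemma \ref{LEM:HHHL}, where time localization is performed at the short scale $2^{-[\al k_1]}$ dictated by the high input frequencies, and the output time interval of length $2^{-[\al k_4]}$ is then covered by $\sim 2^{\al(k_1-k_4)}$ pieces. Concretely, I would use a partition of unity $\sum_{|m| \lesssim 2^{[\al k_1]-[\al k_4]}} \g^3(2^{[\al k_1]+c}(t-t_{k_4}) - m) \equiv 1$ on the support of $\eta_0(2^{[\al k_4]}(t-t_{k_4}))$, noting that by the analogue of \eqref{HLL1b} the sharper cutoff $\g(2^{[\al k_1]+c}\,\cdot\,)$ absorbs the cutoffs $\eta_0(2^{[\al k_i]}\,\cdot\,)$ applied to each input. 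Invoking Lemma \ref{LEM:embed3} and \eqref{Xk2}, the claim reduces to establishing, for each fixed $j_1, j_2, j_3 \geq [\al k_1]$,
\begin{align*}
2^{\al(k_1 - k_4)} \sum_{j_4 \geq 0} \frac{2^{j_4/2}}{2^{j_4} + 2^{[\al k_4]}} \big\|\ind_{D_{k_4, j_4}} \cdot (f_{k_1, j_1} * \wt f_{k_2, j_2} * f_{k_3, j_3})\big\|_{\l^2_n L^2_\tau} \lesssim M \prod_{i = 1}^3 2^{j_i/2} \|f_{k_i, j_i}\|_{\l^2_n L^2_\tau},
\end{align*}
which after dualization against $f_4$ with $\|f_4\|_{\l^2 L^2} = 1$ and $\supp f_4 \subset D_{k_4, j_4}$ becomes a bound on $\big|\int u_1 \cj u_2 u_3 \cj u_4\, dx\, dt\big|$.

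The three bounds $M_1$, $M_2$, $M_3$ come from three different multilinear inputs. For $M_2$ I would apply Lemma \ref{LEM:L63} directly: its hypotheses $k_3, k_4 \leq k_2^* - 10$ and $j_1, j_2, j_3 \geq [\al k_1^*]$ exactly fit the HHLL configuration, and the condition $2^{j_1^*} \gtrsim |\Phi(\bar n)|$ holds automatically from the modulation identity $\Phi(\bar n) = -(\tau_1 - n_1^2) + (\tau_2 - n_2^2) - (\tau_3 - n_3^2) + (\tau_4 - n_4^2)$ on the support of the convolution. Multiplying the resulting $M_{L63} = 2^{-(1+\al)k_1^*/2}$ (or $2^{-(1+\al)k_1^*/2 + k_3^*/4}$ when $|k_3 - k_4| \leq 1$) by the prefactor $2^{\al(k_1-k_4)}$ and summing $j_4$ with weight $2^{j_4/2}(2^{j_4}+2^{[\al k_4]})^{-1}$ yields precisely $M_2$.

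For $M_1$ I would instead exploit the factorization $\Phi(\bar n) = 2(n - n_1)(n - n_3)$ and use the $L^6$-Strichartz bound \eqref{L6} of Lemma \ref{LEM:L4L6}. Since $|n - n_1| \sim 2^{k_1}$ and, when $|k_3 - k_4| \geq 2$, $|n - n_3| \sim 2^{k_3^*}$, the modulation identity forces $j_1^* \gtrsim k_1 + k_3^*$, supplying an extra $2^{-k_3^*/2} = 2^{-\be_1}$ on top of the base Strichartz bound; combined with $2^{\al(k_1-k_4)}$ this delivers $M_1$. For $|k_3 - k_4| \leq 1$ one argues as in Case (ii) of the proof of Lemma \ref{LEM:L63}, decomposing $I_{k_3}$ and $I_{k_4}$ into blocks of length $2^{k_3^*/2}$ and absorbing the $2^{k_3^*/4}$ Cauchy-Schwarz loss into the exponent. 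Finally, for $M_3$ in the subcase $j_3 = j_1^*$, $k_3 \geq k_4 + 2$, the second entry of the $\min$ is the refined bound \eqref{L63a} of Lemma \ref{LEM:L63} applied verbatim, while the first entry follows from the H\"older chain $L^4_{x,t}\cdot L^4_{x,t}\cdot L^2_{x,t}\cdot L^\infty_{x,t}$ with \eqref{Stri2} on $u_1, u_2$, $u_3$ placed in $L^2_{x,t}$ (using $j_3 = j_1^*$), and Bernstein together with the modulation bound on the $k_4$-block for $u_4$.

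The main obstacle is the subcase $|k_3 - k_4| \leq 1$, in which the phase function provides no lower bound on $j_1^*$ beyond the modulation floor $[\al k_1^*]$. The secondary decomposition of $I_{k_3}, I_{k_4}$ into blocks of length $2^{k_3^*/2}$ and the careful bookkeeping of which $u_i$ carries the maximal modulation (so as to invoke the right refinement among Lemmas \ref{LEM:L4L6}--\ref{LEM:L63}) is where essentially all of the subtlety of the lemma lies; the exponents in $M_1, M_2, M_3$ correspond to competing ways of balancing the Strichartz gain against this Cauchy-Schwarz loss and the time-localization prefactor $2^{\al(k_1-k_4)}$.
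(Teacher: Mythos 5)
Your identification of the inputs for $M_1$ (Lemma~\ref{LEM:L4L6}, \eqref{L6}, with the support conditions) and $M_2$ (Lemma~\ref{LEM:L63}) matches the paper, as does the use of \eqref{L63a} for the second entry of $M_3$ with the standard $2^{-[\al k_1]}$ time-localization and prefactor $2^{\al(k_1-k_4)}$. However, your derivation of the \emph{first} entry of $M_3$ does not work and misses the key idea. You keep the $2^{-[\al k_1]}$ localization (so the full prefactor $2^{\al(k_1-k_4)}$) and dualize against $u_4$ via $L^4_{x,t}\cdot L^4_{x,t}\cdot L^2_{x,t}\cdot L^\infty_{x,t}$, with Bernstein and a modulation bound on $u_4$. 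This fails on two counts. First, $\|u_4\|_{L^\infty_{x,t}}\les 2^{k_4/2}2^{j_4/2}\|\F(u_4)\|_{\l^2_nL^2_\tau}$ produces a factor $2^{j_4/2}$ which exactly cancels the $N^\al_{k_4}$-weight $2^{-j_4/2}$, so the sum over $j_4\geq[\al k_4]$ diverges; you cannot simultaneously use a modulation-localized $u_4$ in $L^\infty_{x,t}$ and run the $j_4$-sum. Second, even formally, using $j_1,j_2\geq[\al k_1^*]$ and $j_3\geq k_1^*+k_3-5$ this chain yields $2^{(3\al/4-1/2)k_1^*-k_3/2+(1/2-\al)k_4}$, which exceeds $M_3$ precisely when $k_3\ll k_1^*$ --- the regime in Summary (v) where the first entry of $M_3$ is actually needed.

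The paper's proof of the first entry is genuinely different. It abandons the $2^{-[\al k_1]}$ scale and instead splits $v_3=\eta_0(2^{[\al k_4]}(t-t_{k_4}))u_3$ into $O(2^{\al(k_3-k_4)})$ pieces at the intermediate scale $2^{-[\al k_3]}$; by $L^2_t$-almost orthogonality (Lemma~\ref{LEM:a.o.}(a)) this costs only $2^{\al(k_3-k_4)/2}$, not $2^{\al(k_1-k_4)}$. It then estimates $\|\ind_{D_{k_4,\leq j_4}}\F(u_1\cj u_2 v_3)\|_{\l^2_nL^2_\tau}$ directly --- never dualizing against a modulation-localized $u_4$ --- via Cauchy--Schwarz/Hausdorff--Young on the $I_{k_4}$-block to extract $2^{k_4/2}$, with $u_1,u_2\in L^\infty_tL^2_x$ (Lemma~\ref{LEM:infty}) and $v_3\in L^2_tL^\infty_x$; since this bound is $j_4$-independent, the $j_4$-sum converges. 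A further high/low modulation split $v_3=v_{3,+}+v_{3,-}$ is needed because the new time cutoff can destroy the modulation floor $j_3\geq k_1^*+k_3-5$, with the low-modulation piece handled by reverting to Lemma~\ref{LEM:L63}; this step is also absent from your proposal. Finally, your closing remark that the main obstacle is the subcase $|k_3-k_4|\leq 1$ misplaces the difficulty: that case is already covered by $M_1,M_2$; the genuinely delicate case, as the paper states explicitly, is $j_3=j_1^*$ with $k_3\geq k_4+2$, which is what $M_3$ (and the change of time-localization scale) is for.
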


While the estimate \eqref {HHLL1}
 holds under permutation of indices $k_1$, $k_2$, and $k_3$, 
 we have a better estimate when $k_1, k_3 \geq k_2 + 10$
 in view of \eqref{SC2}.

\begin{proof}
Proceeding as in the proof of Lemma \ref{LEM:HHHL},
\eqref{HHLL1} with $M_1$ follows from \eqref{L6} in Lemma \ref{LEM:L4L6}
with the support condition \eqref{SC1} and \eqref{SC2}.
Note that we have $j_1^* \geq k_1^* + k_3^* - 5$
when  $|k_3 - k_4 |\geq  2$.
Similarly, \eqref{HHLL1} with $M_2$ follows from Lemma \ref{LEM:L63}.

The argument above suffices to prove the main trilinear estimate
\eqref{3lin1} for $s > -\frac{1}{8}$
(see Summary (v) below)
except for the case $j_3 = j_1^*$ and $k_3 \geq k_4 + 2$,
where the argument above is  good only for $s> -\frac{1}{10}$.
See Remark \ref{REM:L66} and Summary (v) below.
In order to go below $s = -\frac{1}{10}$, 
we need a different argument
when $j_3 = j_1^*$ and $k_3 \geq k_4 + 2$.

Note that we have $j_3 \geq k_1^* + k_3 - 5$ from \eqref{SC1}.
In this case, we do not perform time-localization
corresponding to the highest (spatial) frequency.
Instead, 
we need to
perform time-localization
over time intervals of size $\sim 2^{-[\al k_3]}$.
Write $v_3 = \eta_0(2^{[\alpha k_4]}(t-t_{k_4})) \cdot u_3$ as a sum of $O(2^{\al(k_3 - k_4)})$ many
terms localized
over time intervals of size $\sim 2^{-[\al k_3]}$.
Then, by H\"older and Hausdorff-Young's inequalities with Lemmata \ref{LEM:infty}
and \ref{LEM:a.o.} (a),
we have
\begin{align}
\text{LHS of } \eqref{HHLL1}
& \sim \sum_{j_4 \geq [\al k_4]} 2^{-\frac{j_4}{2}}
\big\| \ind_{D_{k_4, \leq j_4}} \cdot ( \ft u_1 * \F(\cj u_2) * \ft v_{3}) \big\|_{\l^2_n L^2_\tau} \notag \\
& \les 2^{(\frac{1}{2}-\frac{\al}{2}+\eps) k_4}  \| u_1\|_{L^\infty_t L^2_x} \| u_2\|_{L^\infty_t L^2_x}
\|v_{3}\|_{L^2_t L^\infty_x}\notag \\
& \les2^{-\frac{j_3}{2}}
2^{\frac{k_3}{2}} 2^{(\frac{1}{2}-\frac{1}{2}\al+\eps) k_4}
 \| u_1\|_{F^\al_{k_1}} \| u_2\|_{F^\al_{k_2}}\|v_{3}\|_{F^\al_{k_3}} \notag \\
& \les
2^{-\frac{k_1^*}{2} + \frac{\al}{2}k_3+(\frac{1}{2}-\al+\eps) k_4}
 \| u_1\|_{F^\al_{k_1}} \| u_2\|_{F^\al_{k_2}}\|u_{3}\|_{F^\al_{k_3}},
\label{HHLL15}
\end{align}

\noi
yielding the first bound in \eqref{HHLL3}.

Note that the time localization step
affects the modulation.
In particular, that the modulation of $v_3$ may be much smaller than $2^{j_1^*}$.
Hence, 
strictly speaking,
we first
need to decompose $v_3$ as $v_3 = v_{3, +} + v_{3, -}$,
where $v_{3, +}$ is the high modulation piece defined
by $v_{3, +} = \F^{-1} [\eta_{\geq k_1^* + k_3 - 10}(\tau - n^2) \ft v_3 ]$
and $v_{3, -}= v_3 - v_{3, +}  $.
Then, apply the argument  in \eqref{HHLL15} to $v_{3, +}$.
As for $v_{3, -}$, we apply
 the time localization $ \g(2^{[\al k_1]+c}(t-t_{k_4})-m)$,
 then modulation localization,
and apply Lemma \ref{LEM:L63}
as before.
Note that  $\ft w_{3, m} = \F[\g(2^{[\al k_1]}(t-t_{k_4})-m)v_{3, -}]$
is essentially supported on $D_{k_3, \leq k_1^* + k_3 - 5}$,
and thus we can assume $\max(j_1, j_2, j_4)
\geq k_1^* + k_3 - 5 > j_3$.
Indeed, when $j_3 > k_1^* + k_3 - 5$,
then it follows from the support consideration that
\begin{align*}
|\ft w_{3, m} (n, \tau)|
& \leq \int 2^{-[\al k_1]} |\ft \g(2^{-[\al k_1]}(\tau - \tau ') )||\ft v_{3, -}(n, \tau')| d\tau'\\
& \les 2^{-10 k_1^*} \int 2^{-[\al k_1]} |\phi(2^{-[\al k_1]}(\tau - \tau ') )||\ft v_{3, -}(n, \tau')| d\tau'
\end{align*}

\noi
for some $\phi \in \S$.
Then, we can simply apply Cauchy-Schwarz inequality to
conclude this case.

The estimate \eqref{HHLL15} provides
a good bound for $s > -\frac{1}{8}$ when $k_3$ is small in comparison to $k_1$,
more precisely, when $k_3 \leq \frac{2}{3} k_1$.
Otherwise, i.e.~if $k_3 > \frac{2}{3} k_1$, we use the time localization argument
with the second claim in Lemma \ref{LEM:L63}.
Then, we obtain
\begin{align*}
\text{LHS of } \eqref{HHLL1}
& \les
2^{(-\frac{1}{2}+ \al)k_1^*-\frac{k_3}{2}+ (\frac{1}{2} - \al)k_4 -\frac{\min(\al k_1^*, k_3)}{2} }
 \| u_1\|_{F^\al_{k_1}} \| u_2\|_{F^\al_{k_2}}\|u_{3}\|_{F^\al_{k_3}}.
\end{align*}

\noi
This completes the proof of Lemma \ref{LEM:HHLL}.
\end{proof}

\begin{remark}\rm

When $j_4 \geq [\al k_1] $, we can use Lemma \ref{LEM:a.o.} (b)
to improve
the constant in \eqref{HHLL1} to
$2^{-(\frac{1}{2}-\eps)k_1^* -\frac{\al}{2}k_4}$
when $|k_3 - k_4|\geq  2$, 
and
$2^{-(\frac{1}{2}-\eps)k_1^* +\frac{k_3^*}{4} -\frac{\al}{2}k_4}$
when  $|k_3 - k_4| \leq 1$.
These estimates allow us to prove \eqref{3lin1}
 for $s > -\frac{1}{6}$
 in the high modulation case: $j_4 \geq [\al k_1] $.
\end{remark}

\begin{lemma}[low $\times$ low $\times$ low $\to$ low]\label{LEM:LLLL}
If $k_1,k_2,k_3,k_4\leq 200$, then we have
\begin{align*}
\|\P_{k_4}\N(u_{1}, u_{2}, u_{3})\|_{N_{k_4}^\alpha}
+ \|\P_{k_4}\RR & (u_{1}, u_{2}, u_{3})\|_{N_{k_4}^\alpha} 
 \les \|u_{1}\|_{F_{k_1}^\alpha}\|u_{2}\|_{F_{k_2}^\al}\|u_3\|_{F_{k_3}^\alpha}.
\end{align*}
\end{lemma}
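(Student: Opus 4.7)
Since $k_1,k_2,k_3,k_4 \leq 200$, every quantity of the form $2^{k_i}$, $2^{[\al k_i]}$, or $2^{\eps k_i^*}$ is bounded by an absolute constant $C = C(\al,\eps)$, so the argument is essentially a routine bookkeeping of summations. My plan is to mirror the template used in Lemmata \ref{LEM:HLL} and \ref{LEM:HHHH}: after a smooth time localization on a unit interval, I will reduce the $N^\al_{k_4}$-norm of either $\P_{k_4}\N(u_1,u_2,u_3)$ or $\P_{k_4}\RR(u_1,u_2,u_3)$ to a sum over dyadic modulation parameters $j_1,j_2,j_3,j_4$ of a quadrilinear expression involving the convolution $f_{k_1,j_1}*\wt{f}_{k_2,j_2}*f_{k_3,j_3}$ tested against $\ind_{D_{k_4,\leq j_4}}$, exactly as in \eqref{HLL12}--\eqref{HLL13}. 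This reduction is identical for both $\N$ and $\RR$; the only difference is that for $\RR$ we use the diagonal structure $n_1=n_2=n_3=n$ from \eqref{NN2}, which if anything simplifies the enumeration.

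Next, I will apply the $L^4$-Strichartz estimate \eqref{L4} from Lemma \ref{LEM:L4L6} to the quadrilinear form. This yields the bound
\begin{align*}
\big\|\ind_{D_{k_4,\leq j_4}}\cdot(f_{k_1,j_1}*\wt f_{k_2,j_2}*f_{k_3,j_3})\big\|_{\ell^2_n L^2_\tau}
\les \prod_{i=1}^{4} 2^{\frac{3j_i}{8}}\|f_{k_i,j_i}\|_{\ell^2_n L^2_\tau},
\end{align*}
where $f_{k_4,j_4}=\ind_{D_{k_4,\leq j_4}}$ contributes via duality. Combining with the $2^{-j_4/2}$ weight coming from the $N^\al_{k_4}$-norm (recalling that $2^{[\al k_4]}=O(1)$) gives the summable factor $2^{3j_4/8 - j_4/2}=2^{-j_4/8}$, and pairing each $2^{3j_i/8}$ with the $2^{j_i/2}$ that defines the $X_{k_i}$-norm of $f_{k_i,j_i}$ leaves the summable factor $2^{-j_i/8}$ for $i=1,2,3$. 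Summing these geometric series in $j_1,j_2,j_3,j_4$ and invoking \eqref{Xk2} (or \eqref{Xk3}) to pass from the time-localized pieces back to the $F^\al_{k_i}$-norms closes the estimate with an absolute constant.

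For the resonant term $\RR(u_1,u_2,u_3)$, an even more direct route is available: since $\F(\RR)(n,t)=\ft u_1(n,t)\cj{\ft u_2(n,t)}\ft u_3(n,t)$ is supported on the finite set $I_{k_1}\cap I_{k_2}\cap I_{k_3}\cap I_{k_4}$, one can use $\|\ft u_i(\cdot,t)\|_{\ell^\infty_n}\leq \|u_i(t)\|_{L^2_x}$ for two of the three factors and $\|\cdot\|_{\ell^2_n}$ for the remaining one to obtain a pointwise-in-$t$ bound $\|\P_{k_4}\RR\|_{L^2_x}\les \prod_i \|u_i(t)\|_{L^2_x}$. Invoking Lemma \ref{LEM:infty} to control $L^\infty_t L^2_x$ by $F^\al_{k_i}$, then passing from $L^1_t L^2_x$ (on the unit interval coming from the time cutoff) to $N^\al_{k_4}$ via the trivial bound $(\tau-n^2+i2^{[\al k_4]})^{-1}\les 1$, yields the claim. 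The main—and only—conceptual issue is to verify that no step in either route hides a loss depending polynomially on $k_1,\dots,k_4$ beyond a uniform constant, which is immediate given the a priori bound $k_i \leq 200$.
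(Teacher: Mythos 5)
Your proposal is correct, and its skeleton coincides with the paper's: the paper also reduces to the time-localized, modulation-decomposed expression \eqref{HLL12}--\eqref{HLL13} and then disposes of the quadrilinear form by a single elementary estimate. The difference is in that last estimate: the paper invokes Young's inequality (convolving two factors in $\l^1_n L^1_\tau$, which is controlled by $X_{k}$ via \eqref{Xk1} and Cauchy--Schwarz in $n$ over the finitely many frequencies, and one factor in $\l^2_n L^2_\tau$), together with \eqref{Xk3}, so no Strichartz input is needed and no modulation weights $2^{3j_i/8}$ appear; you instead use the $L^4$-Strichartz bound \eqref{L4}, paying $2^{3j_i/8}$ per factor and recovering summability from the $2^{j_i/2}$ weights of $X_{k_i}$ and the $2^{-j_4/2}$ weight in \eqref{HLL13}. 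Both routes are legitimate here precisely because all frequency-dependent constants are $O(1)$. One point worth making explicit: the reduction \eqref{HLL12} and the duality pairing behind \eqref{L4} concern the full convolution, so they do not directly cover the diagonal sum defining $\RR$ (a sub-sum of a convolution is not controlled by the convolution without positivity); your separate second route for $\RR$ via $\|\ft u_i(t)\|_{\l^\infty_n}\le\|u_i(t)\|_{L^2_x}$ and Lemma \ref{LEM:infty} is therefore not optional but necessary, and it does close the argument.
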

\begin{proof}
This follows immediately from Young's inequality on \eqref{HLL13}
with \eqref{Xk3}.
\end{proof}

\noi
{\bf Summary:}
We summarize the regularity restrictions
from Lemmata \ref{LEM:HLL} - \ref{LEM:LLLL}.
In the following, we assume $s < 0$.

\begin{itemize}
\item[(i)] high $\times$ low $\times$ low $\to$ high:
\quad In view of Lemma \ref{LEM:HLL}, we need to have
\begin{align*}
 s k_4 - \tfrac{1}{2}(1-\eps)  k_1^* \leq s(k_1 + k_2 + k_3)\quad
\LLRA\quad   s (k_3^* + k_4^*) \geq - \tfrac{1}{2}(1-\eps) k_1^*.
\end{align*}

\noi
The latter holds for  $s\geq - \frac{1}{4}(1-\eps)$.

\medskip

\item[(ii)] high $\times$ high $\times$ low $\to$ high:
\quad In view of Lemma \ref{LEM:HHLH}, we need to have
\begin{align*}
 s k_4 - (1-\eps)  k_1^* \leq s(k_1 + k_2 + k_3)\quad
\LLRA\quad   s (k_1^* + k_4^*) \geq - (1-\eps) k_1^*.
\end{align*}

\noi
The latter holds for  $s\geq - \frac{1}{2}(1-\eps)$.

\medskip

\item[(iii)] high $\times$ high $\times$ high $\to$ high:
\quad In view of Lemma \ref{LEM:HHHH}, we need to have
$- \frac{1}{2}(\al - \eps)  \leq 2s$.
Hence, it suffices to choose
\begin{equation}
\al =   -4s + \eps
\label{alpha}
\end{equation}

\noi
for some sufficiently small $\eps = \eps(s) > 0$.

\medskip

\item[(iv)] high $\times$ high $\times$ high $\to$ low:
\quad
In this case, we use Lemma \ref{LEM:HHHL}.
In particular, from \eqref{HHHL1} with $M_1$,
we need to have
\begin{align}
-(1-\al -\eps) \leq 3s.
\label{S1a}
\end{align}

\noi
In view of \eqref{alpha},
this holds for $s \geq -\frac{1}{7}+\frac{2}{7}\eps$.
Next, we consider the case $s \leq -\frac{1}{7}$.
From \eqref{HHHL1} with $M_2$,
we need to have
\begin{align}
-(1-\tfrac{3}{4}\al -\eps)k_1^* + (s + \tfrac{1}{2} - \al) k_4^* \leq 3sk_1^*.
\label{S2}
\end{align}

\noi
In view of \eqref{alpha}, we have $s \geq -\frac{1}{6} + \frac{7}{24}\eps$
from the coefficients of $k_1^*$,
while $s\leq -\frac{1}{10} + \frac{\eps}{5}$ guarantees 
that the coefficient of $k_4^*$ is non-positive.
Hence, \eqref{S1a} or \eqref{S2} holds for $s \in (-\frac{1}{6}, 0)$.

\medskip

\item[(v)] high $\times$ high $\times$ low $\to$ low:
\quad
First, we consider $s> -\frac{1}{12}$.
In view of  \eqref{HHLL1} with $M_1$ of Lemma \ref{LEM:HHLL}, we need to have
\begin{align}
(-\tfrac{1}{2} + \al + \eps) k_1^* \leq 2sk_1^*
\qquad \text{and} \qquad
(s- \al) k_4 -\be \le s k_3.
\label{S2a}
\end{align}

\noi
From \eqref{alpha}, the first condition provides
$s \geq -\frac{1}{12} + \frac{1}{3}\eps$.
The second condition is trivially satisfied when $k_4 \geq k_3 - 5$.
When $k_3 \geq k_4 + 5$, it gives $s \geq -\frac{1}{2}$.

Next, we consider $s\leq -\frac{1}{12}$.
First, we consider the case  $ |k_3 - k_4| \leq 1$.
In view of  \eqref{HHLL1} with $M_2$ in  Lemma \ref{LEM:HHLL}, we need to have
\begin{align}
(-\tfrac{ 1}{2} + \tfrac{ 1}{2} \al+ \eps) k_1^* \le 2 s k_1^*
\quad \text{and}\quad
 (\tfrac{1}{4}- \al)k_3^*   \leq 0.
\label{S3}
\end{align}

\noi
From \eqref{alpha}, the first condition provides
$s \geq -\frac{1}{8} + \frac{3}{8} \eps$,
while the second condition provides $s\leq -\frac{1}{16} +\frac{\eps}{4}$.

Next, let us consider the case
when  $k_3 \geq k_4 +2$.
(The case $k_4 \geq k_3 +2$ is easier.
In this case, the condition yields $s \in (-\frac{1}{8}, 0)$.)
In this case,
from   \eqref{HHLL1} with $M_2$ in  Lemma \ref{LEM:HHLL}, we need to have
\begin{align}
s k_4+(-\tfrac{ 1}{2}+ \tfrac12 \al + \eps) k_1^*
- \al k_4 \leq 2s k_1^* + sk_3 .
\label{S4}
\end{align}

\noi
When $|n_3|\gg |n_4|$,
there is no gain of regularity on $n_3$ from $n_4$.
Namely, with \eqref{alpha}
we have $s\geq -\frac{1}{10} + \frac{3}{10}\eps$
by collecting the coefficients of $k_1^*$ and $k_3$.
The coefficients of $k_4$ give $s \leq \frac{1}{5}\eps$,
i.e.~no restriction by taking $\eps > 0$ sufficiently small.

Lastly, we consider $s\leq -\frac{1}{10}$.
Suppose that $k_3 \leq \frac{2}{3} k_1$.
Then, from \eqref{HHLL3},
we have
\begin{equation}
sk_4 -\tfrac{k_1^*}{2} + \tfrac{\al}{2}k_3+(\tfrac{1}{2}-\al+\eps) k_4\leq 2sk_1^* + sk_3.
\label{S5}
\end{equation}

\noi
By writing
$ -\frac{k_1^*}{2} \leq   -(\frac{1}{4}-\eps)k_1^* -\frac{3}{8}k_3
- \frac{3}{2}\eps k_4$,
the coefficients of $k_1^*$ and $k_3$ yield $s \geq  -\frac{1}{8}+ \frac{\eps}{2}$
and $s \geq  -\frac{1}{8}+ \frac \eps 6$, respectively, 
while the coefficients of $k_4$ yield $ s\leq -\frac{1}{10}+\frac{3}{10}\eps$.
When $k_3 > \frac{2}{3} k_1$,
the second term in \eqref{HHLL3} gives
\begin{equation}
sk_4 + (-\tfrac{1}{2}+ \al)k_1^*-\tfrac{k_3}{2}+ (\tfrac{1}{2} - \al)k_4 -\tfrac 12\min(\al k_1^*, k_3)
\leq 2sk_1^* + sk_3.
\label{S6}
\end{equation}

\noi
In view of  \eqref{S2}, we have 
$\min(\al k_1^*, k_3) = \al k_1^*$ 
for $s > -\frac 18$.
Thus, \eqref{S6} yields the condition $s \in (-\frac{1}{8}, -\frac{1}{40})$.

Hence, (v.a) \eqref{S2a}, or 
(v.b) \eqref{S3} and one of  \eqref{S4}, \eqref{S5} or \eqref{S6} holds for $s \in (-\frac{1}{8}, 0)$.

\medskip

\item[(vi)]
low $\times$ low $\times$ low $\to$ low:
There is no condition imposed in this case.
\end{itemize}

\medskip

We conclude this section by presenting
 the proof of Proposition \ref{PROP:3lin}.

\begin{proof}[Proof of Proposition \ref{PROP:3lin}]
First, we consider the first inequality \eqref{3lin1}.
From the definitions, we
have
\begin{align}
\|\N(u_1,u_2,u_3)\|_{N^{s,\alpha}(T)}^2
& =\sum_{k_4=0}^{\infty}2^{2sk_4}\|\P_{k_4}\N(u_1,u_2,u_3)\|_{N_{k_4}^\alpha(T)}^2.
\label{3lin3}
\end{align}

\noi
Applying a dyadic decomposition, we have
\begin{align}
\|\P_{k_4}\N(u_1,u_2,u_3)\|_{N_{k_4}^\alpha(T)}
\leq \sum_{k_1,k_2,k_3\in \Z_+}
\|\P_{k_4}\N(\P_{k_1}u_1,\P_{k_2}u_2,\P_{k_3}u_3)\|_{N_{k_4}^\alpha(T)}.
\label{3lin4}
\end{align}

\noi
Given  $k\in \Z_+$, let  $ u_{k_1}$ be an extension of $\P_{k_i}(u_i)$ such that
$\|u_{k_i}\|_{F_k^\alpha}\leq 2\|\P_{k_i} u_i\|_{F_k^\alpha(T)}$.
In view of   \eqref{Xk3}, we can assume that
$u_{k_i}$ is supported on $[-2T, 2T]$.
Then, we have
\begin{align}
\|\P_{k_4}\N(\P_{k_1}u_1,\P_{k_2}u_2,\P_{k_3}u_3)\|_{N_{k_4}^\alpha(T)}
\leq  \sum_{k_1,k_2,k_3\in \Z_+}
\|\P_{k_4}\N(u_{k_1},u_{k_2},u_{k_3})\|_{N_{k_4}^\alpha}.
\label{3lin5}
\end{align}

\noi
By Lemmata \ref{LEM:HLL} - \ref{LEM:LLLL} and Summary (i) - (vi)
along with Lemma \ref{LEM:timedecay},
we obtain
\begin{align}
\|\P_{k_4}\N(u_{k_1},u_{k_2},u_{k_3})\|_{N_{k_4}^\alpha}
\les T^\theta \prod_{i = 1}^3 2^{(s-\eps)k_i} \|u_{k_i}\|_{F_{k_i}^\al}
\label{3lin6}
\end{align}

\noi
for some small $\theta, \eps >0$.
Here, in order to obtain a small power of $T$, we actually applied
Lemmata \ref{LEM:HLL} - \ref{LEM:LLLL} with the right-hand sides
with $F^{b, \al}_{k_i}$ (see Lemma \ref{LEM:timedecay}) for some  $b = \frac{1}{2}-$
 at a slight loss of the regularity in the highest modulation.
Putting \eqref{3lin3} - \eqref{3lin6} together,
we obtain \eqref{3lin1}.
The second estimate \eqref{3lin2}
follows from a similar argument
with Lemmata \ref{LEM:HHHH} and \ref{LEM:LLLL}.
\end{proof}

\section{Energy estimates on smooth solutions}
\label{SEC:energy}

In this section, we establish an energy estimate for  (smooth) solutions
to the Wick ordered cubic NLS \eqref{NLS1}.
This argument is very close in spirit
to the $I$-method developed by Colliander-Keel-Staffilani-Takaoka-Tao
\cite{CKSTT3, CKSTT2, CKSTT1}.
Let $u \in C(\R; H^\infty(\T))$ be a smooth solution to \eqref{NLS1}.
Then,
by Fundamental Theorem of Calculus, we have
\begin{align}
\|u(t)\|_{H^s}^2 - \|u(0)\|_{H^s}^2
& = \|a(t)\|_{H^s}^2 - \|a(0)\|_{H^s}^2
= 2 \text{Re}\, \bigg( \int_0^t \sum_n \jb{n}^{2s} \dt a_n(t') \cj{a}_n(t') dt'\bigg) \notag \\
& =  2 \text{Re}\, i \bigg( \int_0^t \sum_n \jb{n}^{2s} \textsf{N} (a)_n   \cj{a}_n(t') dt'\bigg) \notag \\
& \hphantom{XXXX}- 2 \text{Re}\, i \bigg( \int_0^t \sum_n \jb{n}^{2s} \textsf{R} (a)_n \cj{a}_n(t') dt'\bigg) \notag \\
& =   2 \text{Re}\, i \bigg( \int_0^t
\sum_n \jb{n}^{2s}
\sum_{\substack{n = n_1 - n_2 + n_3  \\ n_2\ne n_1, n_3} }
e^{- i \Phi(\bar{n})t' }
a_{n_1} \cj{a}_{n_2}a_{n_3}\cj{a}_{n} (t') dt'\bigg) \notag \\
& \hphantom{XXXX} - 2 \text{Re}\, i \bigg( \int_0^t \sum_n \jb{n}^{2s} |a_n(t')|^4 dt'\bigg) \label{E1},
\end{align}

\noi
where $\textsf N(a)$ and $\textsf R(a)$ are as in \eqref{NLS2}
and $\Phi(\bar{n})$ is as in \eqref{Phi}.
Here, $a_n(t) = e^{-itn^2} \ft u_n(t)$
denotes (the Fourier coefficient of) the interaction representation of $u$
defined in Section \ref{SEC:notations}.
Clearly, the second term on the right-hand side of \eqref{E1} is 0.
Moreover,
letting $n_4 = n$ and
symmetrizing under the summation indices $n_1, \dots, n_4$,
we obtain
\begin{align}
\|u(t)\|_{H^s}^2 - \|u(0)\|_{H^s}^2
& =-  \frac{i}{2}  \int_0^t
\sum_{\substack{n_1 - n_2 + n_3-n_4 = 0 \\ n_2\ne n_1, n_3} }
\Psi_s(\bar{n})
e^{- i \Phi(\bar{n})t' }
a_{n_1} \cj{a}_{n_2}a_{n_3}\cj{a}_{n_4} (t') dt'\notag\\
& =: R_4(t),  \label{E2}
\end{align}

\noi
where $\Psi_s(\bar{n})$ is defined by
\begin{equation} \label{Psi}
\Psi_s(\bar{n}) = \jb{n_1}^{2s} - \jb{n_2}^{2s} + \jb{n_3}^{2s} - \jb{n_4}^{2s}
\end{equation}

\noi
and $\Phi(\bar{n})$ is as in \eqref{Phi} with $n$ replaced by $n_4$.

\begin{remark} \rm
It is this symmetrization process that fails when we try to establish an energy estimate
for a
difference of two solutions.
The symbol $\Psi_s(\bar{n})$  provides an extra decay
via Mean Value Theorem and Double Mean Value Theorem
(Lemmata 4.1 and 4.2 in \cite{CKSTT1}.)
See \eqref{DMT1}, \eqref{MVT1}, and \eqref{MVT2}.
This is crucial in estimating $R_4(t)$  in the {\it nearly resonant} case
(i.e. $|\Phi(\bar n)| \ll n_1^*$)
below $L^2(\T)$.
\end{remark}

With the $L^6$-Strichartz estimate \eqref{Stri3}
and its refinement (Lemma \ref{LEM:L64}),
we can estimate $R_4$ in \eqref{E2}
and obtain  an energy estimate for $s > -\frac{1}{10}$.
However, for $s \leq -\frac{1}{10}$,
we need to  add a ``correction term'' to \eqref{E2} as in the application of the $I$-method
\cite{CKSTT2, CKSTT1}.
In terms of the interaction representation $a_n(t)$, 
this process can be regarded as the Poincar\'e-Dulac normal form reduction
applied to the evolution equation satisfied by
$\|a(t)\|_{H^s}^2$.
In fact, integrating \eqref{E2} by parts, we have
\begin{align*}
\|u(t)\|_{H^s}^2 -  \|u(0)\|_{H^s}^2
&   =    \frac{1}{2}
\sum_{\substack{n_1 - n_2 + n_3-n_4 = 0 \\ n_2\ne n_1, n_3} }
\frac{\Psi_s(\bar{n})}{ \Phi(\bar{n})}
e^{- i \Phi(\bar{n})t' }
a_{n_1} \cj{a}_{n_2}a_{n_3}\cj{a}_{n_4} (t')  \bigg|_0^t \notag\\
& \hphantom{X}
-  \frac{1}{2}  \int_0^t
\sum_{\substack{n_1 - n_2 + n_3-n_4 = 0 \\ n_2\ne n_1, n_3} }
\frac{\Psi_s(\bar{n})}{\Phi(\bar{n})}
e^{- i \Phi(\bar{n})t' }
\dt (a_{n_1} \cj{a}_{n_2}a_{n_3}\cj{a}_{n_4} )(t') dt' \notag \\
& =: \Ld_4(t; u) - \Ld_4(0; u) + R_6(t; u), 
\end{align*}

\noi
where the correction term $\Ld_4(t;u)$ is given by
\begin{equation*}
\Ld_4(t) = \Ld_4(t;u)   =    \frac{1}{2}
\sum_{\substack{n_1 - n_2 + n_3-n_4 = 0 \\ n_2\ne n_1, n_3} }
\frac{\Psi_s(\bar{n})}{ \Phi(\bar{n})}
\ft{u}_{n_1} \cj{\ft{u}}_{n_2}\ft{u}_{n_3}\cj{\ft{u}}_{n_4} (t) .
\end{equation*}

\noi
For simplicity, we assume that the time derivative in $R_6(t)$
falls on the first factor.
The same comment applies to $R^M_6(t)$ defined in \eqref{EM4}.
Using the equation \eqref{NLS2}, we can then write $R_6(t):= R_6(t; u)$  as
\begin{align*}
R_6(t)
& = c \int_0^t
\sum_{\substack{n_1 - n_2 + n_3-n_4 = 0 \\ n_2\ne n_1, n_3} }
\frac{\Psi_s(\bar{n})}{\Phi(\bar{n})}
e^{- i \Phi(\bar{n})t' }  \notag \\
& \hphantom{XXXX} \times
\sum_{\substack{n_1 = n_5 - n_6 + n_7\\ n_6\ne n_5, n_7} }
e^{- i \wt{\Phi}(\bar{n})t' }
a_{n_5} \cj{a}_{n_6}a_{n_7}
 \cj{a}_{n_2}a_{n_3}\cj{a}_{n_4} (t') dt' \notag \\
&  \hphantom{XX}
+ c \int_0^t
\sum_{\substack{n_1 - n_2 + n_3-n_4 = 0 \\ n_2\ne n_1, n_3} }
\frac{\Psi_s(\bar{n})}{\Phi(\bar{n})}
e^{- i \Phi(\bar{n})t' }
|a_{n_1}|^2 a_{n_1}\cj{a}_{n_2}a_{n_3}\cj{a}_{n_4} (t') dt' \notag \\
& = c \int_0^t
\sum_{\substack{n_1 - n_2 + n_3-n_4 = 0 \\ n_2\ne n_1, n_3} }
\frac{\Psi_s(\bar{n})}{\Phi(\bar{n})}
\sum_{\substack{n_1 = n_5 - n_6 + n_7\\ n_6\ne n_5, n_7} }
\ft{u}_{n_5} \cj{\ft{u}}_{n_6}\ft{u}_{n_7}
 \cj{\ft{u}}_{n_2}\ft{u}_{n_3}\cj{\ft{u}}_{n_4} (t') dt' \notag \\
& \hphantom{XX}
+ c \int_0^t
\sum_{\substack{n_1 - n_2 + n_3-n_4 = 0 \\ n_2\ne n_1, n_3} }
\frac{\Psi_s(\bar{n})}{\Phi(\bar{n})}
|\ft{u}_{n_1}|^2 \ft{u}_{n_1}\cj{\ft{u}}_{n_2}\ft{u}_{n_3}\cj{\ft u}_{n_4} (t') dt', 
\end{align*}

\noi
where the phase function $\wt{\Phi}(\bar{n})$ is given by
\begin{align}\label{Phim}
\wt{\Phi}(\bar{n})& = \Phi(n_5, n_6, n_7, n_1)
= n_1^2 - n_5^2 + n_6^2- n_7^2 \notag \\
& = 2(n_6 - n_5) (n_6 - n_7)
= 2(n_1 - n_5) (n_1 - n_7).
\end{align}

The  boundary term $\Ld_4$ corresponds to the correction term in an application of  the $I$-method.
While $\Ld_4$ satisfies a good estimate in terms of the regularity,
it does not see the length of the time interval.
In order to gain a small power of $T>0$, we need to modify the argument above.

Let $R_4^M(t)$ be the part of $R_4(t)$, 
where all the frequencies $|n_j|\leq M$.
Namely, we have
\begin{align*}
R_4^M(t)
& = - \frac{i}{2}  \int_0^t
\sum_{\substack{n_1 - n_2 + n_3-n_4 = 0 \\ n_2\ne n_1, n_3\\|n_j|\leq M} }
\Psi_s(\bar{n})
e^{- i \Phi(\bar{n})t' }
a_{n_1} \cj{a}_{n_2}a_{n_3}\cj{a}_{n_4} (t') dt'\notag \\
& = - \frac{i}{2}   \int_0^t
\sum_{\substack{n_1 - n_2 + n_3-n_4 = 0 \\ n_2\ne n_1, n_3\\|n_j|\leq M} }
\Psi_s(\bar{n})
\ft u_{n_1} \cj{\ft u }_{n_2}\ft u _{n_3}\cj{\ft u }_{n_4} (t') dt'.
\end{align*}

\noi
Then, by applying a normal form reduction to  $R_4(t) - R_4^M(t)$, we have
\begin{align*}
\|u(t)\|_{H^s}^2 -  \|u(0)\|_{H^s}^2
&   =  R_4^M(t;u)
+ \Ld_4^M(t; u) - \Ld_4^M(0; u) + R_6^M(t; u), 
\end{align*}

\noi
where $\Ld_4^M(t) : = \Ld_4^M(t;u)$ and  $R_6^M(t) := R_6^M(t;u)$ are given by
\begin{equation*}
\Ld_4^M(t) =     \frac{1}{2}
\sum_{\substack{n_1 - n_2 + n_3-n_4 = 0 \\ n_2\ne n_1, n_3\\
\max |n_j| > M} }
\frac{\Psi_s(\bar{n})}{ \Phi(\bar{n})}
\ft{u}_{n_1} \cj{\ft{u}}_{n_2}\ft{u}_{n_3}\cj{\ft{u}}_{n_4} (t)
\end{equation*}

\noi
and
\begin{align}
R_6^M(t)
& = c \int_0^t
\sum_{\substack{n_1 - n_2 + n_3-n_4 = 0 \\ n_2\ne n_1, n_3\\\max |n_j| > M} }
\frac{\Psi_s(\bar{n})}{\Phi(\bar{n})}
\sum_{\substack{n_1 = n_5 - n_6 + n_7\\ n_6\ne n_5, n_7} }
\ft{u}_{n_5} \cj{\ft{u}}_{n_6}\ft{u}_{n_7}
 \cj{\ft{u}}_{n_2}\ft{u}_{n_3}\cj{\ft{u}}_{n_4} (t') dt' \notag \\
& \hphantom{X}
+ c \int_0^t
\sum_{\substack{n_1 - n_2 + n_3-n_4 = 0 \\ n_2\ne n_1, n_3\\\max |n_j| > M} }
\frac{\Psi_s(\bar{n})}{\Phi(\bar{n})}
|\ft{u}_{n_1}|^2 \ft{u}_{n_1}\cj{\ft{u}}_{n_2}\ft{u}_{n_3}\cj{\ft u}_{n_4} (t') dt'
=:\I(t)+\II(t). \label{EM4}
\end{align}

In the remaining part of this section, we establish  multilinear estimates on
$R_4^M$, $\Ld_4^M$,  and $R_6^M$.
In the following,  $u$ denotes a smooth solution to \eqref{NLS1}.
The main tool 
is the following refinement of the $L^6$-Strichartz estimate.

\begin{lemma} \label{LEM:L64}

Let  $f_i$ be supported in $D_{k_i,\leq j_i}$, $i = 1, 2, 3$.
Then, the following estimate holds:
\begin{align*}
\|\ind_{I_{k_4}}(n)\cdot f_1*\wt{f_2}*f_3\|_{\l^2_n L^2_\tau }
\les 
M
\prod_{i=1}^3 2^\frac{j_i}{2}\|f_i\|_{\l^2_n L^2_\tau},
\end{align*}

\noi
where 
 $\wt{f_2}(n_2, \tau_2) = f_2(- n_2, -\tau_2)$.
Here, 
$M$ is given as follows.

\smallskip

\noi
\textup{(a)} If $|k_1 - k_4|\geq 10$ and $\max(k_1, k_4) \geq k_1^* - 5$, 
then we have 
\begin{align}\label{L64a}
M = 
\min (2^\frac{k_1}{2}, 2^\frac{k_4}{2}) 
\min_{i = 2, 3}\big\{ (1+2^{j_i-k_1^*})^\frac{1}{2}2^\frac{-j_i}{2}\big\}.
\end{align}

\noi
\textup{(b)} If $k_2 \leq k_4 -10 \leq k_1^* - 20$
or  $|n_1 - n_3| \sim 2^{k_1^*}$, 
then we have 
\begin{align}\label{L64b}
M = 
2^\frac{k_2}{2}
\min_{i = 1, 3} \big\{(1+2^{j_i-k_1^*})^\frac{1}{2}2^\frac{-j_i}{2}\big\}. 
\end{align}

\end{lemma}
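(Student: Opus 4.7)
The plan is to adapt the multilinear counting strategy of Lemmas \ref{LEM:L62} and \ref{LEM:L63} to the present setting, the key novelty being that the ``output'' function (playing the role of $f_4$) carries no modulation restriction. By duality, it suffices to bound
\[
J := \int \sum_{\substack{n_1 - n_2 + n_3 - n_4 = 0 \\ n_4 \in I_{k_4}}} \int_{\tau_1 - \tau_2 + \tau_3 - \tau_4 = 0} f_1 f_2 f_3 \overline{f_4} \lesssim M \prod_{i=1}^3 2^{j_i/2} \|f_i\|_{\ell^2_n L^2_\tau}
\]
uniformly over test $f_4$ supported in $I_{k_4} \times \R$ with $\|f_4\|_{\ell^2_n L^2_\tau} \leq 1$. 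After setting $g_i(n, \tau) = f_i(n, \tau + n^2)$ and $\sigma_i = \tau_i - n_i^2$, the modulation constraint becomes $\sigma_1 - \sigma_2 + \sigma_3 - \sigma_4 = \Phi(\bar n)$, with $\Phi = 2(n_2 - n_1)(n_2 - n_3) = 2(n_4 - n_1)(n_4 - n_3)$ and $|\sigma_i| \lesssim 2^{j_i}$ for $i \leq 3$.

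For case (a), I symmetrize to $k_4 \sim k_1^*$, so that $|n_4 - n_1| \sim 2^{k_1^*}$, and focus on option $i = 3$ in $\min_{i=2,3}$. Parametrizing $(n_1, n_2, n_4)$ freely with $n_3 = n_4 - n_1 + n_2$ and integrating out $\sigma_3 = -\sigma_1 + \sigma_2 + \sigma_4 + \Phi$, the factorization $\Phi = 2(n_2 - n_1)(n_1 - n_4)$ yields $|\partial_{n_2} \sigma_3| = 2|n_1 - n_4| \sim 2^{k_1^*}$, so the set $\{n_2 : |\sigma_3| \leq 2^{j_3}\}$ has cardinality $\lesssim 1 + 2^{j_3 - k_1^*}$. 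A Cauchy--Schwarz in $n_2$ produces the factor $(1 + 2^{j_3 - k_1^*})^{1/2}$; the companion $2^{-j_3/2}$ follows from the unit-slope substitution $\sigma_4 \mapsto \sigma_3$, which recasts the pairing of $|g_3(n_3, \sigma_3)|^2$-type contributions against $g_4$ as an $L^2_{\sigma_4}$-pairing with $g_4$. The $\sigma_1$- and $\sigma_2$-integrations over sets of size $2^{j_1}, 2^{j_2}$ contribute the $2^{j_1/2} 2^{j_2/2}$ factors, and the factor $\min(2^{k_1/2}, 2^{k_4/2}) = 2^{k_1/2}$ arises from the elementary bound $\sum_{n_1 \in I_{k_1}} |g_1(n_1, \cdot)| \cdot G(n_1) \leq 2^{k_1/2} \|g_1\|_{\ell^2} \sup_{n_1} G$, exactly as in the ``$2^{k_4/2}$ from $\sum_{n_4 \in I_{k_4}}$'' step of the proof of Lemma \ref{LEM:L62}. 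The option $i = 2$ is handled symmetrically, integrating out $\sigma_2$ and counting $n_3$ instead.

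Case (b) follows the same template. Under either hypothesis one has $|n_1 - n_3| \sim 2^{k_1^*}$, and via the identity $n_1 - n_3 = (n_2 - n_3) - (n_2 - n_1)$ at least one of $|n_2 - n_1|, |n_2 - n_3|$ is $\sim 2^{k_1^*}$. In the parametrization $(n_1, n_2, n_3)$ free with $n_4 = n_1 - n_2 + n_3 \in I_{k_4}$, one computes $|\partial_{n_3} \Phi| = 2|n_2 - n_1|$ and $|\partial_{n_1} \Phi| = 2|n_2 - n_3|$. Accordingly, when $|n_2 - n_1| \sim 2^{k_1^*}$ I count $n_3$ (choosing option $i \in \{1, 3\}$ according to which of $\sigma_1, \sigma_3$ is integrated out), and when $|n_2 - n_3| \sim 2^{k_1^*}$ I count $n_1$; in either case the cardinality bound is $\lesssim 1 + 2^{j_i - k_1^*}$. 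The smallest frequency in this regime is $k_2$, so the $\sum_{n_2 \in I_{k_2}}$ trick yields the $2^{k_2/2}$ factor.

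The main technical obstacle, shared with Lemmas \ref{LEM:L62}--\ref{LEM:L63}, is sequencing the Cauchy--Schwarz steps so that the modulation-unrestricted $g_4$ is paired against a $\sup_{n_?}$-type expression rather than contributing a spurious modulation loss. This mirrors the role of the $\sup_{n_4}$ step in the proof of Lemma \ref{LEM:L62}, and relies crucially on the unit-slope linearity of $\sigma_4$ in the free $\sigma$-variable being integrated out, which justifies the change-of-variable used to produce the $2^{-j_i/2}$ factor.
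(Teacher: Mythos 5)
Your case (a) is correct and is essentially the paper's argument: by duality one reruns Cases (a) and (b) of the proof of Lemma \ref{LEM:L62}, with the Cauchy--Schwarz steps reordered so that the modulation-unrestricted duality function is the one absorbed through the unit-slope substitution. The derivative $|\partial_{n_2}\sigma_3| = 2|n_1 - n_4| \sim 2^{k_1^*}$ is the right one under the hypothesis $|k_1-k_4|\geq 10$, $\max(k_1,k_4)\geq k_1^*-5$, and the factors $\min(2^{k_1/2},2^{k_4/2})$ and $\min_{i=2,3}$ come out exactly as you describe.

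Case (b), however, contains a genuine error. The derivatives you compute, $|\partial_{n_3}\Phi| = 2|n_2-n_1|$ and $|\partial_{n_1}\Phi| = 2|n_2-n_3|$ (taken in the parametrization where $n_4 = n_1-n_2+n_3$ is the dependent frequency), are the rates of change of the \emph{dependent} modulation $\sigma_4 = \sigma_1-\sigma_2+\sigma_3-\Phi$. But $\sigma_4$ belongs to the duality test function and carries no support restriction, so the set of $n_3$ (or $n_1$) you propose to count is all of $I_{k_3}$ (or $I_{k_1}$) and the counting is vacuous; and even with a constraint, a window of size $2^{j_2}$ or $2^{j_4}$ could never produce the factors $(1+2^{j_i-k_1^*})^{1/2}2^{-j_i/2}$ with $i\in\{1,3\}$ that the lemma asserts. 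The correct move, which is what the paper does, is to make $\sigma_1$ (resp.\ $\sigma_3$) the dependent modulation: take $(n_2,n_3,n_4)$ free with $n_1 = n_2-n_3+n_4$ and count $n_3$ subject to $|h_1|\lesssim 2^{j_1}$, with $h_1$ as in \eqref{L63a2}. The chain rule through the dependent frequency then gives $|\partial_{n_3}h_1| = |2(n_2-2n_3+n_4)| = 2|n_1-n_3| \sim 2^{k_1^*}$ \emph{directly} from the hypothesis of (b); no case split on $|n_2-n_1|$ versus $|n_2-n_3|$ is needed, and your split would in any event deliver only one of the two options in $\min_{i=1,3}$ in each regime, whereas both must be available simultaneously for the stated minimum. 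The $i=3$ option follows by the symmetry $1\leftrightarrow 3$, and the $2^{k_2/2}$ from the Cauchy--Schwarz over $n_2$ as you say.
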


\begin{proof}
The first bound  \eqref{L64a} follows from duality 
and (a slight modification of) Cases (a) and (b) in the proof of Lemma \ref{LEM:L62}.
The second bound \eqref{L64b} follows from 
 \eqref{L63a1} and \eqref{L63a2} in the proof of Lemma \ref{LEM:L63}.
In either case, we have 
$|\dd_{n_3} h_1(n_3)| = |2(n_2 - 2n_3 + n_4)| 
= |2(n_1 - n_3)| \sim 2^{k_1^*}$,
where $h_1$ is as in \eqref{L63a2}.
Thus, we have
\begin{align*}
 \eqref{L63a1}
& \les (1+ 2^{j_1 - k_1^*})^\frac{1}{2}2^{-\frac{j_1}{2}}\int \sum_{n_4} |g_4(n_4, \tau_4)|
\sum_{n_2}
|g_2(n_2, \tau_2)| \bigg(\sum_{n_3}|g_3(n_3, \tau_3)|^2 \notag\\
& \hphantom{XXXXX}
\times
\big|g_1\big(n_2-n_3+n_4, h_1(n_2, n_3, n_4, \tau_2, \tau_3, \tau_4)\big)\big|^2\bigg)^\frac{1}{2}
d\tau_2d\tau_3d\tau_4\\
& \les (1+ 2^{j_1 - k_1^*})^\frac{1}{2}2^{-\frac{j_1}{2}}\cdot 2^\frac{k_2}{2}  \|g_4(n_4, \tau_4)\|_{\l^2_{n_4}L^2_{\tau_4}}
\int \|g_2(n_2, \tau_2)\|_{\l^2_{n_2}} \sup_{n_2}  \bigg(\sum_{n_3, n_4}|g_3(n_3, \tau_3)|^2 \notag\\
& \hphantom{XXXXX}
\times
\big\|g_1\big(n_2-n_3+n_4, h_1(n_2, n_3, n_4, \tau_2, \tau_3, \tau_4)\big)\big\|_{L^2_{\tau_4}}^2\bigg)^\frac{1}{2}
d\tau_2d\tau_3\\
& \les  
(1+ 2^{j_1 - k_1^*})^\frac{1}{2}2^{-\frac{j_1}{2}}\cdot 2^\frac{k_2}{2}
\|\F(u_4)\|_{\l^2_n L^2_\tau}
\prod_{i = 1}^3 2^\frac{ j_i}{2} \|\F(u_i)\|_{\l^2_n L^2_\tau}.
\end{align*}

\noi
Then, the second bound \eqref{L64b} follows from 
duality.
 \end{proof}

\begin{proposition} \label{LEM:R4M}
Let $s \in (-\frac 14, 0)$ and  $\al = -4s + \eps$ as in \eqref{alpha}.
Then, there exists $\theta > 0$ such that 
\begin{align}
|R_4^M(T; u)|
& \lesssim
T^\theta  M^{c(s)} \|u\|_{F^{s, \al}(T)}^4
\label{R4M1}
\end{align}

\noi
for $T\in (0, 1]$,
where $c(s)$ is defined by 
\begin{align}
 c(s) = \max(-\tfrac 12 - 5s+, 0).
\label{R4M1x}
\end{align}

\end{proposition}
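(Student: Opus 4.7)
The plan is to bound the quadrilinear form $R_4^M(T;u)$ directly by dyadic decomposition in frequency and the multilinear Strichartz estimates of Section~\ref{SEC:Strichartz} together with Lemma~\ref{LEM:L64}. Writing $u=\sum_{k\geq 0}\P_k u$ and inserting the frequency cutoff $|n_j|\leq M$, I split
\[
R_4^M(T;u)=\sum_{k_1,k_2,k_3,k_4\lesssim \log_2 M} R_4^M(T;\P_{k_1}u,\P_{k_2}u,\P_{k_3}u,\P_{k_4}u),
\]
and use the symmetries $n_1\leftrightarrow n_3$, $n_2\leftrightarrow n_4$ to reduce to configurations with $k_1\geq k_3$ and $k_2\geq k_4$. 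Plancherel then represents each summand as a space-time integral of the product of four Littlewood--Paley pieces weighted pointwise by the symbol $\Psi_s(\bar n)$. To produce the factor $T^\theta$ uniformly in the frequency data, I invoke Lemma~\ref{LEM:timedecay} on extensions of the $\P_{k_i}u$, trading a small amount of modulation regularity (replacing $F^\al_{k_i}$ by $F^{b,\al}_{k_i}$ with $b=\tfrac12-$) for a factor $T^{\frac{1}{2}-b-}$ at the highest modulation in each estimate.

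The main gain against the dyadic summation comes from the weight $\Psi_s$. By the Mean Value Theorem (Lemma~4.1 of~\cite{CKSTT1}), one has $|\Psi_s(\bar n)|\lesssim (n_1^*)^{2s-1}n_2^*$ whenever $n_1^*\gg n_2^*$, while in the nearly resonant regime where all $|n_j|\sim n_1^*$ and $|\Phi(\bar n)|\ll n_1^*$, the Double Mean Value Theorem (Lemma~4.2 of~\cite{CKSTT1}) upgrades this to $|\Psi_s(\bar n)|\lesssim (n_1^*)^{2s-2}n_3^*n_4^*$. I combine these pointwise bounds with Lemma~\ref{LEM:L4L6} in the ``flat'' configurations, and with Lemmata~\ref{LEM:L62},~\ref{LEM:L63}, and~\ref{LEM:L64} in the configurations whose geometry produces a nontrivial lower bound on $\Phi(\bar n)$. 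The case analysis then follows the pattern of Summary~(i)--(vi) of Section~\ref{SEC:trilinear}; in every off-diagonal configuration, where the two largest frequencies are of the ``$n_1\leftrightarrow n_4$'' type and at least one further frequency is strictly smaller, the MVT factor $(n_1^*)^{2s-1}n_2^*$ is enough to make the sum over all $k_j\leq \log_2 M$ converge and contribute only $T^\theta\|u\|_{F^{s,\al}(T)}^4$ with no $M$-loss, consistent with $c(s)=0$ in this range.

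The main obstacle is the fully ``high $\times$ high $\times$ high $\times$ high'' case, where $k_1\sim k_2\sim k_3\sim k_4=:k\lesssim \log_2 M$ and $\Psi_s$ yields only the crude bound $2^{2sk}$ in the non-resonant sub-case. With the choice $\al=-4s+\eps$ from~\eqref{alpha}, Lemma~\ref{LEM:L62} (applied in the quadrilinear form) produces a dyadic estimate of the form $T^\theta\, 2^{2sk}\cdot 2^{-\frac12\al k+\frac12 k}\prod_{i=1}^{4}\|\P_{k_i}u\|_{F^{s,\al}_{k_i}}\cdot 2^{-sk\cdot 4}$, which upon summing over $k\leq \log_2 M$ produces at most the factor $M^{-\frac12-5s+}$, i.e.\ precisely $M^{c(s)}$ when $s<-\tfrac{1}{10}$; for $s\geq -\tfrac{1}{10}$ the dyadic sum converges on its own and we recover $c(s)=0$. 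Finally, the nearly resonant sub-case $|\Phi(\bar n)|\ll n_1^*$ (for which Lemma~\ref{LEM:L62} degenerates) is handled by replacing MVT with DMVT, whose additional decay $(n_1^*)^{-2}n_3^*n_4^*$ is more than enough to offset the resonant geometry and close the same summation. Assembling the contributions from all cases yields~\eqref{R4M1}.
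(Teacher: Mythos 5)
Your overall strategy---dyadic decomposition, Mean-Value/Double Mean-Value bounds on $\Psi_s$, multilinear Strichartz, time localization---is the right framework and matches the paper's. But the case analysis that is supposed to deliver the constant $c(s)=\max(-\tfrac12-5s+,0)$ is wrong, and the error is not cosmetic. You claim the worst loss arises from the fully balanced ``high $\times$ high $\times$ high $\times$ high'' case, and that every off-diagonal configuration contributes with no $M$-loss. In the paper this is reversed: when all four frequencies are $\sim n_1^*$, either the nearly resonant DMVT bound $|\Psi_s|\lesssim(n_1^*)^{2s-2}|\Phi|$ (Case~(a)), or the lower bounds on $|\Phi|$ from~\eqref{SC2}--\eqref{SC3} (Case~(c)), give $c(s)=\max(-1-6s+,0)$ or $\max(-1-7s+,0)$, which are \emph{smaller} than $\max(-\tfrac12-5s+,0)$ throughout $(-\tfrac14,0)$. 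The quantity $\max(-\tfrac12-5s+,0)$ comes precisely from the off-diagonal geometry you dismiss: Subcase~(b.ii), namely $|n_1|\sim|n_2|\sim n_1^*\gg n_3^*\gtrsim|n_4-n_3|$, where the MVT bound $|\Psi_s|\lesssim(n_4^*)^{2s}$ and the phase lower bound $|\Phi|\gtrsim n_1^*n_3^*$ combine with Lemma~\ref{LEM:L64}(a) (and $\alpha=-4s+$) to produce the factor $M^{-\frac12-5s+}$. A proof along your lines would stop short of the claimed value of $c(s)$.

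There are also arithmetic and applicability issues in your balanced-case sketch. Lemma~\ref{LEM:L62} requires $k_1,k_2,k_3\geq k_1^*-5\geq k_4+5$, so it cannot be invoked in ``quadrilinear form'' when $k_1\sim\cdots\sim k_4$; the paper uses the $L^4$-estimate \eqref{L4} there instead. Moreover your stated dyadic factor $2^{2sk}\cdot 2^{-\frac12\alpha k+\frac12 k}$, after cancelling the four copies of $2^{-sk}$ against the $F^{s,\alpha}$-weights, has exponent $2s-\tfrac{\alpha}{2}+\tfrac12-4s=\tfrac12-\tfrac{\varepsilon}{2}$ with $\alpha=-4s+\varepsilon$, which upon summing up to $\log_2 M$ yields $M^{\frac12-}$, not $M^{-\frac12-5s+}$; the claimed constant is not recovered by this computation. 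Finally, a smaller point: you propose producing $T^\theta$ uniformly via Lemma~\ref{LEM:timedecay}, whereas the paper obtains it for the bulk of the terms from the count $\lesssim T\,2^{[\alpha K]}$ of time subintervals (Part~1), reserving Lemmata~\ref{LEM:sup} and~\ref{LEM:timedecay} only for the $O(1)$ boundary subintervals (Part~2). Your variant could plausibly be made to work, but trading modulation regularity globally risks degrading the tight multilinear bounds on which the $M^{c(s)}$ loss hinges, and this would need to be checked case by case rather than asserted.
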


\begin{remark}\label{REM:ene}\rm
It follows from \eqref{R4M1x}
that we have $c(s) = 0$ for  $s > -\frac{1}{10}$.
Namely,   for $s > -\frac{1}{10}$, 
we do not need to add
a correction term. 
In this case, 
\eqref{E2} and
Proposition \ref{LEM:R4M} (with $M = \infty$) yield
a good energy estimate:
\[
\big|\|u\|_{E^s(T)}^2 - \|u(0)\|_{H^s}^2\big|
\leq |R_4(T)|
 \les  T^\theta\|u\|_{F^{s, \al}(T)}^4,\]

\noi
allowing us to prove 
 the existence result (Theorem \ref{THM:1})
simply by
Proposition \ref{LEM:R4M}
along with the argument in Section \ref{SEC:existence}.
This 
is basically the periodic analogue of the result in \cite{CCT3}.

When $s \leq -\frac{1}{10}$, we have 
$c(s) > 0$ (see \eqref{R4M4} below)
and hence we need to add the correction term $\Ld_4^M$.
\end{remark}

\begin{proof}
Apply a dyadic decomposition on
the spatial frequencies
$|n_i| \sim 2^{k_i}$, $i = 1, \dots, 4$.
By symmetry, assume that 
$ |n_1|\sim n_1^*  : = \max (|n_1|,|n_2|,|n_3|,|n_4|) \leq M.$
By assuming that each factor $u_i$ has its Fourier support on $I_{k_i}\times \R$,
it suffices to prove
\begin{equation*}
|R_4^M(T)|\les T^\theta M^{c(s)}  \prod_{i=1}^4 2^{(s-)k_i}\|\P_{k_i}u\|_{F_{k_i}^\alpha(T)}.
\end{equation*}

\noi
Here, a slight extra decay is needed to sum over dyadic blocks.
Let  $\wt u_i$ be an extension of $u_i$ such that 
$\|\wt u_i\|_{F_{k_i}^\alpha}\leq 2 \|\P_{k_i}u\|_{F_{k_i}^\alpha(T)}$.
For
notational simplicity, we denote $\wt u_i$ by $u_i$,
$i = 1, \dots,  4$, in the following. 
Letting $\g:\R \to [0, 1]$ be a smooth cutoff function supported on $[-1, 1]$ such that
$\sum_{m\in \Z} \g^4(t - m) \equiv 1$ for all  $t \in \R$, 
we have
\begin{align*}
R_4^M(T)=\int_{\R} \ind_{[0,T]}(t)
& \sum_{|m|\leq  T2^{[\alpha K]}}\g^4(2^{[\alpha K ]}t - m)
\notag \\
& 
\times
\sum_{\substack{n_1 - n_2 + n_3-n_4 = 0 \\
n_2\ne n_1, n_3\\ |n_j| \leq M} }
\Psi_s(\bar{n})
\ft{u}_1({n_1})
 \cj{\ft{u}_2}({n_2})\ft{u}_3({n_3})\cj{\ft{u}_4}({n_4}) (t) dt .
\end{align*}

\noi
Here, $K = k_1^* + c$.
With  $ f_{i, m}(t)=
\g(2^{[\al  K]}t - m)u_i(t)$, 
let 
$ f_{i, j_i, m} = \F^{-1}\big[\eta_{j_i}(\tau - n^2) \ft f_{i, m}\big]$,
$ i = 1, \dots,  4$.
Then, 
 it suffices to prove 
\begin{align}\label{R43}
 \bigg|\int_{\R}    \ind_{[0,T]}(t)
&  \sum_{j_1, \dots, j_4}\sum_{|m|\leq T2^{[\alpha K]}}
 \sum_{\substack{n_1 - n_2 + n_3-n_4 = 0 \\
n_2\ne n_1, n_3\\ |n_j| \leq M} }
  \Psi_s(\bar{n})
 \ft{f}_{1, j_1, m}({n_1}) 
 \cj{\ft{f}_{2, j_2, m}}({n_2}) \notag \\
 & \times \ft{f}_{3, j_3, m}({n_3})\cj{\ft{f}_{4, j_4, m}}({n_4}) (t) dt \bigg|
  \les T^\theta M^{c(s)}  \prod_{i=1}^4 2^{(s-)k_i}\|u_i\|_{F_{k_i}^\alpha}.
\end{align}

\noi
In view of \eqref{Xk2}, we assume that 
$j_i \geq \al K$,  $i = 1, \dots, 4$.
In the following,  we prove \eqref{R43}
for fixed $j_i$, $i = 1, \dots, 4$.
For simplicity,  
we denote $f_{i, j_i, m}$  by $f_{i, j_i}$.
Lastly, note that 
from \eqref{Phi} with $\tau_1-\tau_2+\tau_3-\tau_4 = 0$, we have
\[\s_1^* := \max (|\s_1|, |\s_2|, |\s_3|, |\s_4|) \gtrsim |\Phi(\bar{n})|,\]

\noi
where $\s_j = \tau_j - n_j^2$.

Define the subsets $\mathcal{A}$ and $\mathcal{B}$ of 
$\big\{m\in \Z: |m| \leq T 2^{[\al K]}\big\}$ by 
\begin{align*}
  \mathcal{A} & = \big\{ m \in \Z: 
\ind_{[0,T]}(t)\g(2^{[\alpha K]}t - m) =  \g(2^{[\alpha K]}t - m) \big\}, \\
  \mathcal{B} & = \big\{ m \in \Z: 
\ind_{[0,T]}(t)\g(2^{[\alpha K]}t - m) \ne  \g(2^{[\alpha K]}t - m)
\text{ and } \ind_{[0,T]}(t)\g(2^{[\alpha K]}t - m) \not \equiv 0 \big\}.
\end{align*}

\medskip
\noi
 {\bf Part 1:}
First, we consider the terms
with $m \in \mathcal{A}$.
In this case, we can drop the sharp cut-off $\ind_{[0,T]}(t)$ on the left-hand side
of \eqref{R43}.
We prove \eqref{R43} with $\theta = 1$ in this case.

\medskip
\noi
$\bullet$ {\bf Case (a):}  $|n_4 - n_1| , |n_4 - n_3| \ll n_1^*$.
(This case includes the nearly resonant case:
$|\Phi(\bar{n})| \ll n_1^*$.)
\\
\indent
Since $n_1 - n_2 + n_3 - n_4 = 0$, it follows that
$|n_1|\sim |n_2|\sim |n_3|\sim |n_4|\sim n_1^*$ in this case.
Then, by Double Mean Value Theorem \cite[Lemma 4.2]{CKSTT1}, we have
\begin{equation} \label{DMT1}
|\Psi_s(\bar{n})|\lesssim (n_1^*)^{2s-2}|(n_4 -n_1)(n_4 - n_3)|
\sim (n_1^*)^{2s-2} |\Phi(\bar n)|.
\end{equation}

\noi
By crudely estimating $|\Phi(\bar n)| \les (n_1^*)^2$, 
it follows from \eqref{DMT1} with \eqref{alpha} that
\begin{align}
\sum_{|m|\leq T2^{[\alpha K]}}
(\s_1^*)^{-\frac{1}{2}}|\Psi_s(\bar n)|
\les T (n_1^*)^{-1-2s+}
\leq T M^{c(s)} (n_1^*)^{4s-},
\label{R4M1a}
\end{align}

\noi
where
\begin{equation}
c(s) = \max(-1 -6s + , 0).
\label{R4M2}
\end{equation}

\noi
Then, by Lemma \ref{LEM:L4L6} with \eqref{R4M1a}, we obtain 
\begin{align*}
|R_4^M(T)|
& \les
T M^{c(s)} \prod_{i = 1}^4 2^{(s-) k_i} \| u_i\|_{F^\al_{k_i}}.
\end{align*}

\medskip
\noi
$\bullet$  {\bf Case (b):} $|n_4 - n_1| \sim n_1^* \gg |n_4 - n_3| $.
\\
\indent
In this case, we have $|n_2|\sim|n_1|\sim n_1^*$. 
Then, by Mean Value Theorem, we have
\begin{equation} \label{MVT1}
|\jb{n_1}^{2s} - \jb{n_2}^{2s}| \lesssim (n_1^*)^{2s-1} |n_1 - n_2|
= (n_1^*)^{2s-1} |n_4 - n_3|.
\end{equation}

\noi
$\circ$ Subcase (b.i):
$|n_4 - n_3| \ll n_3^*$.
\\
\indent
In this case,  we  have $|n_3|\sim|n_4|$. Then, 
by Mean Value Theorem, we have
\begin{equation} \label{MVT2}
|\jb{n_3}^{2s} - \jb{n_4}^{2s}| \lesssim (n_3^*)^{2s-1} |n_4 - n_3|.
\end{equation}

\noi
From \eqref{MVT1} and \eqref{MVT2}, we have
\begin{align}
|\Psi_s(\bar{n})|\lesssim  (n_3^*)^{2s-1}|n_4 - n_3|.
\label{MVT2a}
\end{align}

\noi
Then, with \eqref{alpha} and \eqref{MVT2a}, we have
\begin{align}
\sum_{|m|\leq T2^{[\alpha K]}}
(\s_1^*)^{-\frac{1}{2}}|\Psi_s(\bar n)|
& \les T 
(n_1^*)^{-\frac{1}{2} - 4s+} (n_3^*)^{2s-1}|n_4 - n_3|^\frac{1}{2}
 \les T (n_1^*)^{-\frac{1}{2}-4s+} (n_3^*)^{2s- \frac{1}{2}} \notag \\
& \les T  
 (n_1^*)^{-\frac{1}{2} - 6s+} (n_3^*)^{- \frac{1}{2}+} 
\bigg(\prod_{i = 1}^4 2^{(s-) k_i}\bigg).
\label{R4M2a}
\end{align}

\noi
If $n_3^* \sim n_1^*$, 
then we have
$\eqref{R4M2a} 
\les T M^{c(s)} (n_1^*)^{4s-}$,
where $c(s)$ is as in \eqref{R4M2}.
Then,  the rest follows as in Case (a).

Now, suppose $n_3^* \ll n_1^*$.
We only consider the case $\s_1 = \s_1^*$.
(A similar argument holds for other cases.)
By Lemma \ref{LEM:L64} (a) and the time localization of size $\sim 2^{k_1^*}$, we have 
\begin{align}
 \sum_{j_2, \dots, j_4}
 \|\cj{f_{2, j_2}}f_{3, j_3}\cj{f_{4, j_4}}\|_{L^2_{x, t}}
\les (n_1^*)^{-\frac{\al}{2}} (n_3^*)^\frac{1}{2}
 \prod_{i = 2}^4  \| u_i\|_{F^\al_{k_i}}.
\label{R4M2b}
\end{align}

\noi
Here,  we used the fact $\al = -4s + \eps \leq 1$
for $s > -\frac 14$.
From \eqref{R4M2a} and \eqref{R4M2b} we obtain 
\begin{align*}
|R_4^M(T)|
& \les
\sum_{|m|\leq T2^{[\alpha K]}}
 \sum_{j_1, \dots, j_4}
(\s_1^*)^{-\frac{1}{2}}|\Psi_s(\bar n)|
\big\| \F^{-1} (\s_1^\frac{1}{2} \ft f_{1, j_1})\big\|_{L^2_{x, t}} 
\|\cj{f_{2, j_2}}f_{3, j_3}\cj{f_{4, j_4}}\|_{L^2_{x, t}}\\
& \lesssim
T M^{c(s)} \prod_{i = 1}^4 2^{(s-) k_i} \| u_i\|_{F^\al_{k_i}}, 
\end{align*}

\noi
where
\begin{equation*} 
c(s) = \max(-\tfrac{1}{2} -4s +, 0).
\end{equation*}

\medskip
\noi
$\circ$ Subcase (b.ii):
$|n_4 - n_3| \sim n_3^*$.
\\
\indent
In this case, we have  $|\Phi(\bar{n})| \gtrsim n_1^*n_3^*$.
Then, with \eqref{alpha}, we have 
\begin{align*}
\sum_{|m|\leq T2^{[\alpha K]}}
(\s_1^*)^{-\frac{1}{2}}|\Psi_s(\bar n)|
& \les T 
(n_1^*)^{-\frac{1}{2} - 4s+} (n_3^*)^{-\frac{1}{2}}
(n_4^*)^{2s}\notag \\
& \les T  
 (n_1^*)^{-\frac{1}{2} - 6s+} (n_3^*)^{- \frac{1}{2}-s} (n_4^*)^s
\bigg(\prod_{i = 1}^4 2^{(s-) k_i}\bigg).
\end{align*}

\noi
Note that we have $n_3^* \ll n_1^*$ in this case.
Then, 
the rest follows from Lemma \ref{LEM:L64} (a) as in Subcase (b.i), 
where $c(s)$ is given by 
\begin{equation} 
c(s) = \max(-\tfrac{1}{2} -5s +, 0).
\label{R4M4}
\end{equation}

\medskip
\noi
$\bullet$  {\bf Case (c):} $|n_4 - n_1|,   |n_4 - n_3|\sim n_1^*  $.
\\
\indent
In this case, we have $|\Phi(\bar{n})| \sim (n_1^*)^2$.
Then, with \eqref{alpha}, we have
\begin{align*}
\sum_{|m|\leq T2^{[\alpha K]}}
(\s_1^*)^{-\frac{1}{2}}|\Psi_s(\bar n)|
& \les T 
(n_1^*)^{-1 - 4s+} (n_4^*)^{2s}
 \les T  
 (n_1^*)^{-1 - 7s+} (n_4^*)^s
\bigg(\prod_{i = 1}^4 2^{(s-) k_i}\bigg).
\end{align*}

\noi
Then, the rest follows  from Lemma \ref{LEM:L4L6} as in Case (a)
with  $c(s)$ given by 
\begin{equation*} 
c(s) = \max(-1-7s+, 0).
\end{equation*}

\medskip
\noi
 {\bf Part 2:}
Next, we consider the terms
with $m \in \mathcal{B}$.
In this case, we need to handle the sharp cutoff $\ind_{[0, T]}$.
The modification is systematic and
thus we only discuss Case (a)
when $\s_1 = \s_1^*$.
The main point is that 
we do not need to sum over $m$
since there are only $O(1)$ many values of $m$ in $\mathcal{B}$.
In particular,  we gain  $(n_1^*)^{-\al}$ as compared to Cases (a), (b), and (c).

From \eqref{DMT1}, we have 
\begin{align*}
(\s_1^*)^{-\frac{1}{2}+2\theta+} |\Psi(\bar n)| \les (n_1^*)^{-1-2s}
\leq  M^{c(s)} (n_1^*)^{4s-}
\end{align*}

\noi
for $\theta <  -s$,
where $c(s)$ is as in \eqref{R4M2}.  Compare this with \eqref{R4M1a}.
Proceeding as in Case (a), we obtain 
\begin{align*}
|R_4^M(T)|
& \les
 M^{c(s)} (n_1^*)^{4s-}
\Big( \sum_{j_1} 2^{-\theta  j_1}\Big)
\sup_{j_1} 2^{(\frac{1}{2} - \theta -)j_1}
\big\|\F(\ind_{[0, T]} f_{1, j_1})\big\|_{\l^2_n L^2_\tau}
 \sum_{j_2, \dots, j_4}\prod_{i = 2}^4 \|f_{i, j_i}\|_{L^6_{x, t}}\\
& \lesssim
T^\theta
 M^{c(s)} \prod_{i = 1}^4 2^{(s-) k_i} \| u_i\|_{F^\al_{k_i}},
\end{align*}

\noi
where 
we used Lemmata \ref{LEM:sup} and \ref{LEM:timedecay}
in the last step.
 In all the other cases,
 we can save a small power of $\s_1^*$
 thanks to the gain of $(n_1^*)^{-\al}$,
Since
 the modifications are similar, we omit details.
This completes the proof of Proposition \ref{LEM:R4M}.
\end{proof}

Next, we estimate the correction term $\Ld_4^M$.

\begin{proposition} \label{LEM:LdM}
Let $s \in (-\frac{1}{2},  0)$ and  $\al > 0$. 
Then, there exists $d(s) > 0$ such that
\begin{equation} \label{LdM}
|\Ld_4^M(t; u)| \lesssim M^{-d(s)} \|u\|_{F^{s, \al}(T)}^4
\end{equation}

\noi
for $t\in [0, T]\subset [0, 1]$.
\end{proposition}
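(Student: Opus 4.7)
The plan is to first reduce \eqref{LdM} to a fixed-time multilinear estimate. Since $\Ld_4^M(t;u)$ depends on $u$ only through its value at time $t$, and Lemma~\ref{LEM:embed1} gives $\|u(t)\|_{H^s(\T)} \les \|u\|_{F^{s,\al}(T)}$ for $t \in [-T, T]$, it suffices to prove the pointwise quadrilinear bound
\[
|\Ld_4^M(t;u)| \les M^{-d(s)} \|u(t)\|_{H^s}^4.
\]
Accordingly, $\Ld_4^M$ reduces to a time-independent four-linear form on $H^s(\T)$ with symbol $\Psi_s(\bar n)/\Phi(\bar n)$, supported where $\max_i |n_i| > M$.

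Dyadically decompose $|n_i|\sim 2^{k_i}$ and set $K := \max_i k_i > \log_2 M$. The symbol analysis follows the case split in the proof of Proposition~\ref{LEM:R4M}, based on the support conditions \eqref{SC1}--\eqref{SC3}. In the nearly resonant Case~(a), where $|n_4-n_1|, |n_4-n_3| \ll n_1^*$ and all $|n_j|\sim n_1^*$, Double Mean Value Theorem (as in \eqref{DMT1}) produces the crucial cancellation
\[
\bigg|\frac{\Psi_s(\bar n)}{\Phi(\bar n)}\bigg| \les (n_1^*)^{2s-2}.
\]
In Case~(b), with $|n_4-n_1|\sim n_1^* \gg |n_4-n_3|$, Mean Value Theorem (as in \eqref{MVT1}--\eqref{MVT2}) gives $|\Psi_s/\Phi| \les (n_1^*)^{-1}(n_3^*)^{2s-1}$. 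In Case~(c) with both differences $\sim n_1^*$, $|\Phi|\sim (n_1^*)^2$ immediately yields $|\Psi_s/\Phi| \les (n_1^*)^{2s-2}$.

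For the quadrilinear sum at fixed $t$, I will apply H\"older's inequality to $\int_\T \prod_i \P_{k_i} u(x, t)\, dx$ together with the Bernstein embedding $\|\P_k f\|_{L^p(\T)} \les 2^{k(\frac{1}{2}-\frac{1}{p})} \|\P_k f\|_{L^2(\T)}$. The efficient choice places the two lowest-frequency factors in $L^\infty_x$ (at Bernstein cost $2^{k_j^*/2}$ for $j = 3, 4$) and the two highest in $L^2_x$. With $b_k := 2^{ks} \|\P_k u\|_{L^2_x}$, combining this with the symbol bound shows that the contribution from each dyadic configuration is at most $2^{-K(2s+1)}\prod_i b_{k_i}$. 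Using $s > -\frac{1}{2}$ (so that $2s+1 > 0$) together with $K > \log_2 M$ yields $\sup_{K > \log_2 M} 2^{-K(2s+1)} = M^{-(2s+1)}$; Cauchy--Schwarz in the remaining dyadic indices, together with $\sum_k b_k^2 \les \|u(t)\|_{H^s}^2$, converts the $\prod_i b_{k_i}$ factor into $\|u(t)\|_{H^s}^4$, giving \eqref{LdM} with $d(s) = 2s + 1$.

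The main obstacle is Case~(a): using merely $|\Phi(\bar n)| \ges 1$ would produce a singularity too strong to absorb, and only the cancellation in $\Psi_s(\bar n) = \jb{n_1}^{2s}-\jb{n_2}^{2s}+\jb{n_3}^{2s}-\jb{n_4}^{2s}$ revealed by Double Mean Value Theorem---which extracts a factor of $|\Phi(\bar n)|$ itself---defeats the $1/\Phi$ singularity and produces the decay in $M$. This symmetrization mechanism is precisely what fails for a difference of two solutions, as already remarked before \eqref{DMT1}, explaining why only existence (and not uniqueness) is obtained in Theorem~\ref{THM:1}.
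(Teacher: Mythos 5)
Your argument is essentially the paper's proof: reduce to a fixed-time quadrilinear estimate via Lemma~\ref{LEM:embed1}, apply the same Mean Value / Double Mean Value symbol bounds \eqref{DMT1}, \eqref{MVT1}--\eqref{MVT2} in the same case decomposition (a), (b), (c), then close with H\"older. The only cosmetic difference is that you finish with Bernstein's inequality on dyadic blocks and track the constraint $2^K\gtrsim M$ directly in the sum, where the paper passes through Sobolev embeddings between $W^{a,p}(\T)$ and $H^{b}(\T)$ and then harvests $M^{-d(s)}$ from the regularity gap; both routes yield the same $d(s)=2s+1>0$ for $s>-\tfrac12$.
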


\begin{proof}
After applying a dyadic decomposition on
the spatial frequencies,  assume that
$ |n_1|\sim n_1^*  > M.$
In view of Lemma \ref{LEM:embed1}
and the fact that $\max(|n_j|) > M$,
it suffices to prove 
\begin{equation}
|\Ld_4^M(t; u)| \lesssim \|u(t)\|_{H^{s-}}^4
\label{LdM0}
\end{equation}

\noi
for $s \in (-\frac{1}{2}, 0)$.

\medskip
\noi
$\bullet$  {\bf Case (a):}  $|n_4 - n_1| , |n_4 - n_3| \ll n_1^*$.
\\
\indent
From \eqref{DMT1}, we have
\begin{equation}
\bigg|\frac{\Psi_s(\bar{n})}{\Phi(\bar{n})}\bigg|
\lesssim (n_1^*)^{2s-2} \les \prod_{i = 1}^4 \jb{n_i}^{\frac{s}{2} - \frac{1}{2}}.
\label{LdM1}
\end{equation}

\noi
Then, by H\"older (in $x$) and Sobolev inequalities,  we have
\begin{align*}
|\Ld_4^M(t;u)|
& \les  \| u(t)\|_{W^{\frac{s}{2} - \frac{1}{2}, 4}}^4
\lesssim \|u(t)\|_{H^{\frac{s}{2}-\frac{1}{4}}}^4.
\end{align*}

\noi Hence, \eqref{LdM0} follows once we note that
 $\frac{s}{2} - \frac{1}{4}  < s$ for $s > -\frac{1}{2}$.

\medskip
\noi
$\bullet$  {\bf Case (b):} $|n_4 - n_1| \sim n_1^* \gg |n_4 - n_3| $.

\smallskip

\noi
$\circ$ Subcase (b.i):
We first  consider the case $|n_4 - n_3| \ll n_3^*$.
From \eqref{MVT2a},  we have
\begin{equation}
\bigg|\frac{\Psi_s(\bar{n})}{\Phi(\bar{n})}\bigg|
\lesssim (n_1^*)^{-1}(n_3^*)^{2s-1}
\lesssim (n_1^*)^{-1}(n_3^*)^{s-\frac{1}{2}}
(n_4^*)^{s-\frac{1}{2}}.
\label{LdM2}
\end{equation}

\noi
Then, by H\"older (in $x$) and Sobolev inequalities 
(note that $\frac{4}{1-2s} \geq 2$ for $s \geq - \frac 12$),  we have
\begin{align*}
|\Ld_4(t;u)|
& \lesssim
\| u \|^2_{W^{-\frac 12, \frac{4}{1-2s}}}
\| u \|^2_{W^{s-\frac 12, \frac{4}{1+2s}}}
 \lesssim \|u(t)\|_{H^{\frac{s}{2}-\frac{1}{4}}}^4.
\end{align*}

\noi Hence, \eqref{LdM0} once we note that
 $\frac{s}{2} - \frac{1}{4}  < s$ for $s > -\frac{1}{2}$.

\medskip

\noi
$\circ$ Subcase (b.ii):
When $|n_4 - n_3| \sim n_3^*$,
 we have $|\Phi(\bar{n})| \gtrsim n_1^* n_3^*$.
Thus, we have
\begin{equation}
\bigg|\frac{\Psi_s(\bar{n})}{\Phi(\bar{n})}\bigg|
\lesssim (n_1^*)^{-1}(n_3^*)^{-1}(n_4^*)^{2s}
\lesssim (n_1^*)^{-1}(n_3^*)^{s-\frac{1}{2}}
(n_4^*)^{s-\frac{1}{2}}.
\label{LdM2a}
\end{equation}

\noi
The rest follows as in Subcase (b.i).

\medskip
\noi
$\bullet$  {\bf Case (c):} $|n_4 - n_1|,   |n_4 - n_3|\sim n_1^*  $.
\\
\indent
In this case, we have $|\Phi(\bar{n})| \sim (n_1^*)^2$
and thus
\begin{equation}
\bigg|\frac{\Psi_s(\bar{n})}{\Phi(\bar{n})}\bigg|
\lesssim (n_1^*)^{-2}(n_4^*)^{2s}.
\label{LdM3}
\end{equation}

\noi
Suppose $n_4^* = |n_4| $.
Then, by H\"older (in $x$) and Sobolev inequalities,  
we have
\begin{align*}
|\Ld_4(t;u)|
& \les \| u \|_{W^{-\frac 23, \frac 6{1-2s}+}}^3\| u \|_{W^{2s, \frac{2}{1+2s}-}}
\les \|u \|^3_{H^{ \frac s3 -\frac 13+}} \| u\|_{H^{s-}}.
\end{align*}

\noi 
Hence, \eqref{LdM0} once we note that
 $\frac{s}{3} - \frac{1}{3}  < s$ for $s > -\frac{1}{2}$.
\end{proof}

Finally, we estimate $R_6^M$ in \eqref{EM4}.

\begin{proposition}\label{LEM:R6M}
Let $s \in (-\frac{1}{8},  0)$ and  $\al  = -4s + \eps$ as in
\eqref{alpha}. 
Then, there exists $\theta > 0$ such that 
\begin{equation} \label{R6M1}
|R_6^M(T; u)| \lesssim T^\theta \|u\|_{F^{s, \al}(T)}^6
\end{equation}

\noi for $T\in (0, 1]$.

\end{proposition}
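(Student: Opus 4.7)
The proof follows the blueprint of the $R_4^M$ estimate (Proposition \ref{LEM:R4M}), now exploiting the extra decay $\Phi(\bar n)^{-1}$ present in the kernel of $R_6^M$. I will split $R_6^M = \I + \II$ as in \eqref{EM4} and treat the two pieces separately.

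I would first treat the ``resonant'' piece $\II(T)$, which is genuinely a quartic form in $u$ carrying the pointwise factor $|\ft u_{n_1}|^2$. Using Plancherel and Lemma \ref{LEM:embed1},
\[
|\ft u_{n_1}(t')|^2 \le \jb{n_1}^{-2s}\|u(t')\|_{H^s}^2 \les \jb{n_1}^{-2s}\|u\|_{F^{s,\alpha}(T)}^2
\]
uniformly on $[0,T]$. Combining this with the case-by-case pointwise bounds for $\Psi_s(\bar n)/\Phi(\bar n)$ derived in the proof of Proposition \ref{LEM:LdM}---namely $(n_1^*)^{2s-2}$ in Case (a), $(n_1^*)^{-1}(n_3^*)^{s-1/2}(n_4^*)^{s-1/2}$ in Case (b), and $(n_1^*)^{-2}(n_4^*)^{2s}$ in Case (c), together with the fact that $|n_1|\sim n_1^*$ in Cases (b) and (c) and in the non-trivial regime of Case (a)---shows that the effective four-linear kernel $\jb{n_1}^{-2s}\Psi_s(\bar n)/\Phi(\bar n)$ decays strictly faster than the kernel $\Psi_s(\bar n)$ appearing in $R_4^M$. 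Consequently, rerunning the dyadic decomposition and the multilinear Strichartz analysis of Proposition \ref{LEM:R4M} with this modified weight produces the bound $T^\theta\|u\|_{F^{s,\alpha}(T)}^6$ with \emph{no} factor $M^{c(s)}$: the $\Phi^{-1}$ gain overcompensates the $\jb{n_1}^{-2s}$ loss throughout $s\in(-\tfrac18,0)$.

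For the sextilinear piece $\I(T)$, the convolution constraint $n_1=n_5-n_6+n_7$ with $n_6\neq n_5,n_7$ reconstructs exactly the non-resonant cubic nonlinearity $\N(u)$ from \eqref{NN1}, so
\[
\I(T) = c\int_0^T\sum_{\substack{n_1-n_2+n_3-n_4=0\\ n_2\neq n_1,n_3\\ \max|n_j|>M}} \frac{\Psi_s(\bar n)}{\Phi(\bar n)}\,\F(\N(u))(n_1,t')\,\cj{\ft u}(n_2,t')\,\ft u(n_3,t')\,\cj{\ft u}(n_4,t')\,dt'.
\]
This is a quartic form in $(\N(u),u,u,u)$ with the same kernel $\Psi_s/\Phi$ already analysed in Proposition \ref{LEM:LdM}. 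I would perform dyadic localization on $|n_1|,\ldots,|n_4|$ (and, when needed, on the internal frequencies $n_5,n_6,n_7$ inside $\N(u)$), carry out the case analysis (a)--(c) of Proposition \ref{LEM:LdM}, and in each case distribute the weight across the four factors to reduce matters to finitely many sextilinear integrals of the form
\[
\Big|\int_0^T\!\!\int_\T (\jb{\dx}^{a_1}\N(u))\cdot (\jb{\dx}^{a_2}u)\cdot(\jb{\dx}^{a_3}u)\cdot(\jb{\dx}^{a_4}u)\,dx\,dt'\Big|,
\]
with exponents $a_1,\ldots,a_4$ dictated by the weight bound. Each such integral is controlled by H\"older in $(x,t)$ together with the multilinear Strichartz bounds of Lemmata \ref{LEM:L4L6}--\ref{LEM:L64} applied to $\N(u)$ and to the other three factors; the trilinear estimate of Proposition \ref{PROP:3lin} (or a direct $L^6$-Strichartz bound on $\N(u)$) then converts the $\N(u)$-factor into a final power of $\|u\|_{F^{s,\alpha}(T)}$. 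As in Proposition \ref{LEM:R4M}, the small positive power $T^\theta$ is extracted via Lemma \ref{LEM:timedecay}, at the cost of an arbitrarily small loss in the highest modulation.

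The main obstacle is, as throughout Section \ref{SEC:trilinear}, to balance the derivative losses produced by the weight $\Psi_s/\Phi$ against the negative regularity $s$ without leaving any room for the frequency cutoff $M$. This is precisely where the division by $\Phi$ is decisive: in the worst case (Case (a)) the weight $|\Psi_s/\Phi|\les(n_1^*)^{2s-2}$ is a full factor $(n_1^*)^{-2}$---morally the gain of two derivatives---smaller than the worst-case bound for $\Psi_s$ alone used in Proposition \ref{LEM:R4M}. It is exactly this two-derivative gain that removes the $M^{c(s)}$ factor of \eqref{R4M1} and allows the sextilinear estimate \eqref{R6M1} to close uniformly in $M$ throughout $s\in(-\tfrac18,0)$.
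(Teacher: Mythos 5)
Your treatment of the resonant piece $\II(T)$ is essentially the paper's: bound $|\ft u_{n_1}|^2\le\jb{n_1}^{-2s}\|u\|_{C_TH^s}^2$ and check that the kernel bounds from Proposition \ref{LEM:LdM} absorb the extra $\jb{n_1}^{-2s}$ (this closes for $s>-\tfrac16$, hence on $(-\tfrac18,0)$). The gap is in the sextilinear piece $\I(T)$. Treating $\N(u)$ as a single black-box factor and invoking Proposition \ref{PROP:3lin} or a direct $L^6$-Strichartz bound on it cannot close the estimate down to $s=-\tfrac18$. The norm $\|\N(u)\|_{N^{s,\al}(T)}$ carries the dual weight $(\tau-n^2+i2^{[\al k]})^{-1}$ and is designed for the Duhamel term; it controls neither $\N(u)$ in any $L^p_{x,t}$ or $X_k$-type space that could be inserted into a quadrilinear pairing against $\cj u\,u\,\cj u$ with weight $\Psi_s/\Phi$, nor $\N(u)(t)$ pointwise in time in $H^s$ (a product of three $H^s$ functions with $s<0$ is not in $H^s$ pointwise in time, so the Sobolev--H\"older scheme of Proposition \ref{LEM:LdM} cannot be rerun with $\N(u)$ in one slot). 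The alternative you offer, a crude $L^6\times L^6\times L^6$ bound on $f_5\cj{f_6}f_7$, is exactly what the paper uses only in the easiest subcase ($m_1^*\les n_1^*$ and $n_1^*\sim n_3^*$), and even there it already costs the restriction $s>-\tfrac15$.

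The ingredients your sketch omits, and which occupy most of the paper's proof, are: (i) the second phase function $\wt\Phi(\bar{n})=2(n_1-n_5)(n_1-n_7)$ and the dichotomy $|\wt\Phi(\bar{n})|\les|\Phi(\bar{n})|$ versus $|\wt\Phi(\bar{n})|\gg|\Phi(\bar{n})|$, which via the identities $\s_1-\s_5+\s_6-\s_7=-\wt\Phi(\bar{n})$ and $\s_1-\s_2+\s_3-\s_4=\Phi(\bar{n})$ forces one of $\s_2,\dots,\s_7$ to be at least $|\wt\Phi(\bar{n})|$ in the second regime; (ii) the consequent need to regroup the six factors across the two generations, e.g.\ pairing $\cj{f_2}f_5\cj{f_4}$ with $f_3\cj{f_6}f_7$ when $|\s_5|$ is dominant, which is impossible if $\N(u)$ is kept opaque; (iii) the choice of the time-localization scale $2^{-[\al K]}$ with $2^K\sim\max(m_1^*,n_1^*)$, together with the almost-orthogonality cost $(m_1^*)^{\al/2}(n_1^*)^{-\al/2}$ when the inner frequencies satisfy $m_1^*\gg n_1^*$ but one must localize at the coarser scale; and (iv) the refined estimates of Lemma \ref{LEM:L64} applied to specific groupings, which supply the gains $(m_1^*)^{-\al/2}$, $(m_3^*)^{1/2}$, etc.\ needed to generate negative powers of the \emph{inner} frequencies, for which the weight $\Psi_s/\Phi$ provides no decay at all. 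Without these, the exponent count closes only for roughly $s>-\tfrac16$, not on the full range $(-\tfrac18,0)$.
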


\begin{proof}

First, we estimate the contribution from $\II(T)$. Since $n_1 - n_2
+ n_3 - n_4 = 0$, we have $|n_j| \ges |n_1|$ for some $j \in \{2, 3,
4\}$, say $|n_2|\ges |n_1|$. 
Then,  \eqref{R6M1} follows (with $\theta = 1$)
 from Proposition \ref{LEM:LdM} and Lemma \ref{LEM:embed1}
by noting the following.
In Cases (a) and (b) of Proposition \ref{LEM:LdM}, we have
\begin{equation*}
\jb{n_1}^{\frac{s}{2} - \frac{1}{4}+}\jb{n_2}^{\frac{s}{2} -
\frac{1}{4}+} \les \jb{n_1}^{3s}\jb{n_2}^{s}
\end{equation*}

\noi as long as  $s > - \frac{1}{6}$.
In Case (c), by writing 
$(n_1^*)^{-2} = (n_1^*)^{-2 - 2s}(n_1^*)^{2s}$, 
it suffices to note that 
$- \frac{s}{3} - \frac 13 + < s$
for $ s > -\frac{1}{4}$.

Next, we consider  the contribution from $\I(T)$.  Apply a dyadic
decomposition on the spatial frequencies $|n_i|\sim 2^{k_i}$, $ i = 1, \dots,  7$. 
For convenience, let $n_1^*\geq n_2^*\geq n_3^*\geq
n_4^*$, $m_1^*\geq m_2^*\geq m_3^*$ be the 
decreasing rearrangements of
$\{|n_1|, |n_2|, |n_3|, |n_4|\}$, and $\{ |n_5|, |n_6|, |n_7|\}$, 
respectively. 
Then, 
by assuming that each factor $u_i$ has its Fourier support on $I_{k_i}\times \R$,
it suffices to prove
\begin{equation*}
|\I(T)|\les T^\theta \prod_{i=2}^7 2^{(s-)k_i}\|\P_{k_i}u\|_{F_{k_i}^\alpha(T)}
\end{equation*}

\noi
for $s>-\frac{1}{8}$.
Here, a slight extra decay is needed to sum over dyadic blocks.
Let  $\wt u_i$ be an extension of $u_i$ such that 
$\|\wt u_i\|_{F_{k_i}^\alpha}\leq 2 \|\P_{k_i}u\|_{F_{k_i}^\alpha(T)}$.
For
notational simplicity, we denote $\wt u_i$ by $u_i$,
$i = 2, \dots,  7$, in the following. 
Letting $\g:\R \to [0, 1]$ be a smooth cutoff function supported on $[-1, 1]$ such that
$\sum_m\g^6(t - m) \equiv 1$ for all  $t \in \R$, 
we have
\begin{align*}
\I(T)=\int_{\R} \ind_{[0,T]}(t)
& \sum_{|m|\leq T2^{[\alpha K]}}\g^6(2^{[\alpha K]}t - m)\sum_{\substack{n_1 - n_2 + n_3-n_4 = 0 \\
n_2\ne n_1, n_3\\\max (|n_j| )> M} }\frac{\Psi_s(\bar{n})}{\Phi(\bar{n})}\ind_{I_{k_1}}(n_1)
\notag \\
& 
\times
\sum_{\substack{n_1 = n_5 - n_6 + n_7\\ n_6\ne n_5, n_7} }
\ft{u}_5({n_5}) \cj{\ft{u}_6}({n_6})\ft{u}_7({n_7})
 \cj{\ft{u}_2}({n_2})\ft{u}_3({n_3})\cj{\ft{u}_4}({n_4}) (t) dt .
\end{align*}

\noi
Here, we choose $K$ to be a positive integer such that $2^K\sim \max(m_1^*,n_1^*)$
unless otherwise stated.
In the following, we only consider the terms
with 
$\ind_{[0,T]}(t)\g(2^{[\alpha K]}t - m) =  \g(2^{[\alpha K]}t - m)$
and thus 
we drop the sharp cut-off $\ind_{[0,T]}(t)$ on the left-hand side.
Note that there are $O(1)$ many values of  $m$ such that
$\ind_{[0,T]}(t)\g(2^{[\alpha K]}t - m)\ne \g(2^{[\alpha K]}t - m)$
and they are easier to handle.
 See the proof of Proposition \ref{LEM:R4M}.

With  $ f_i=
\g(2^{[\al  K]}t - m)u_i,$\footnote{Strictly speaking, 
we need to use $f_{i, m}$ instead of $f_i$.
However, 
all the estimate below are uniform in $m$.
Thus, for notational simplicity, we drop the subscript $m$.} 
$2\leq i\leq 7$, 
let 
$ f_{i, j_i} = \F^{-1}\big[\eta_{j_i}(\tau - n^2) \ft f_i\big]$.
Then, 
 it suffices to prove 
\begin{align}
 \bigg|\int_{\R}  
 \sum_{j_2, \dots, j_7}
 \sum_{|m|\leq T2^{[\alpha K]}}
 & \sum_{\substack{n_1 - n_2 + n_3-n_4 = 0 \\
n_2\ne n_1, n_3\\\max |n_j| > M} }
 \frac{\Psi_s(\bar{n})}{\Phi(\bar{n})}\ind_{I_{k_1}}(n_1)
\sum_{\substack{n_1 = n_5 - n_6 + n_7\\ n_6\ne n_5, n_7} }
\ft{f}_{5, j_5}({n_5}) \cj{\ft{f}_{6, j_6}}({n_6})\ft{f}_{7, j_7}({n_7})\notag \\
&  \times \cj{\ft{f}_{2, j_2}}({n_2})\ft{f}_{3, j_3}({n_3})\cj{\ft{f}_{4, j_4}}({n_4}) (t) dt \bigg|
 \les T^\theta \prod_{i=2}^7 2^{(s-)k_i}\|u_i\|_{F_{k_i}^\alpha}
\label{R63}
\end{align}

\noi
for $ s > -\frac{1}{8}$.
In view of \eqref{Xk2}, we assume that 
$j_i \geq \al K$,  $i = 2, \dots, 7$.
In the following,  we prove \eqref{R63}
for fixed $j_i$, $i = 2, \dots, 7$.
Since the summations over $j_i$ do not play any significant role in the following, 
we drop the summations over $j_i$
and 
we denote $f_{i, j_i}$  by $f_i$
for simplicity.

\medskip
Now, we proceed to prove \eqref{R63}. 
Gathering  \eqref{LdM1}, \eqref{LdM2}, \eqref{LdM2a}, and \eqref{LdM3}
in the proof of Proposition \ref{LEM:LdM}, 
 we have the bound
\begin{equation}
\Theta(\bar n): = \bigg|\frac{\Psi_s(\bar{n})}{\Phi(\bar{n})}\bigg|\les \frac{(n_4^*)^{2s}}{{n_3^*}{n_1^*}},
\label{R64}
\end{equation}

\noi
which suffices for many cases in the following.
Note that, however, each of  \eqref{LdM1}, \eqref{LdM2}, 
and \eqref{LdM3} is better than \eqref{R64}.
Let $\wt \Phi(\bar n)=n_1^2-n_5^2+n_6^2-n_7^2
= 2(n_1 - n_5) (n_1 - n_7)$
as in \eqref{Phim}. 
We prove
\eqref{R63} by performing case-by-case analysis.
By symmetry, we assume $|n_5|\geq |n_7|$ in the following.

\medskip

\noi
$\bullet $ {\bf Case (a):} $m_1^* \les n_1^*$.
\\
\indent
We first consider the case $n_1^* \sim n_3^*$.
From \eqref{R64}, we have 
\begin{align}
\sum_{|m| \leq T2^{[\al K]}}\Theta(\bar n)
\les T(n_1^*)^{-2 - 4s+} (n_4^*)^{2s}
\leq T (n_1^*)^{6s - }
\label{R4a1}
\end{align}

\noi
for $s > -\frac{1}{5}$.
Then, 
\eqref{R63} follows from the  $L^6$-Strichartz estimate \eqref{Stri3}
with \eqref{R4a1}:
\begin{align*}
|\I(T)|
& \les 
\sum_{|m| \leq T2^{[\al K]}}\Theta(\bar n)
\|f_{5} \cj{f_6}f_7\|_{L^2_{x, t}}\|\cj{f_2} f_3 \cj{f_4}\|_{L^2_{x, t}}\\
& \les 
\sum_{|m| \leq T2^{[\al K]}}\Theta(\bar n)
\prod_{i = 2}^7 \|f_i\|_{L^6_{x, t}}\\
& \les 
T \prod_{i=2}^7
2^{(s-)k_i}
\|u_i\|_{F_{k_i}^\alpha}
\end{align*}

\noi
for $s > -\frac{1}{5}$.
In the following, we assume   $n_1^* \gg n_3^*$.

\medskip

\noi
$\circ$
Subcase (a.i): $|n_1|\gg n_3^*$. 
\\
\indent
In this case, we have $|n_1| \sim n_1^*$.
In particular, we have $\min (|n_2|, |n_3|, |n_4|) = n_4^*$.
Then, 
from \eqref{R64}, we have
\begin{align}
\sum_{|m| \leq T2^{[\al K]}}\Theta(\bar n)
\cdot (n_1^*)^{-\frac{\al}{2}} (n_3^*)^\frac{1}{2}
&  \les T(n_1^*)^{-1-2s+} (n_3^*)^{-\frac{1}{2}}
(n_4^*)^{2s} \notag\\
&   \les T (n_1^*)^{4s -  }
(n_3^*)^{s }(n_4^*)^{s} 
\les T \prod_{i = 2}^7 2^{(s-)k_i}, 
\label{R4a2}
\end{align}

\noi
where the penultimate inequality holds for $s > -\frac{1}{6}$.
Noting that $\min(|n_2|, |n_4|)\les n_3^* \ll |n_1|$, 
it follows from 
the $L^6$-Strichartz estimate \eqref{Stri3}, 
Lemma \ref{LEM:L64} (a), and \eqref{R4a2} that 
\begin{align*}
|\I(T)|
& \les 
\sum_{|m| \leq T2^{[\al K]}}\Theta(\bar n)
\|f_{5} \cj{f_6}f_7\|_{L^2_{x, t}}\|\cj{f_2} f_3 \cj{f_4}\|_{L^2_{x, t}}\\
& \les 
\sum_{|m| \leq T2^{[\al K]}}\Theta(\bar n)
\cdot (n_1^*)^{-\frac{\al}{2}} (n_3^*)^\frac{1}{2}
\bigg(\prod_{i = 5}^7 \|f_i\|_{L^6_{x, t}}\bigg)
\bigg( \prod_{i=2}^4
\|u_i\|_{F_{k_i}^\alpha}\bigg)\\
& \les 
T \prod_{i=2}^7
2^{(s-)k_i}
\|u_i\|_{F_{k_i}^\alpha}
\end{align*}

\noi
for $s > -\frac{1}{6}$.

\medskip

\noi
$\circ$
Subcase (a.ii): $|n_1|\les n_3^* \ll n_1^*$. 

\smallskip

\underline{Subsubcase (a.ii.1):}  $m_1^* \sim m_2^* \gg |n_1|$.
\quad 
First suppose that  $m_1^* \ges (n_1^*)^\al$.

\begin{enumerate}
\item[(i)] Suppose $|n_1|\sim  n_4^*$.
Then, 
from \eqref{R64},
we have 
\begin{align*}
\sum_{|m| \leq T2^{[\al K]}}\Theta(\bar n)
\cdot (n_1^*)^{-\al}n_4^*
& \les T(n_1^*)^{-1} (n_3^*)^{-1} (n_4^*)^{1+ 2s}
   \les T (n_1^*)^{5s -  }(n_3^*)^{s }
\les T 
\prod_{i = 2}^7 2^{(s-)k_i}
\end{align*}

\noi
for $s > -\frac{1}{5}$.

\smallskip

\item[(ii)] Suppose $|n_1| \sim n_3^* \gg n_4^*$.
Then, 
from \eqref{R64},
we have 
\begin{align*}
\sum_{|m| \leq T2^{[\al K]}}\Theta(\bar n)
\cdot (n_1^*)^{-\al}n_3^*
& \les T(n_1^*)^{-1} (n_4^*)^{2s}
   \les T (n_1^*)^{5s -  }(n_4^*)^{s }
\les T 
\prod_{i = 2}^7 2^{(s-)k_i}
\end{align*}

\noi
for $s > -\frac{1}{5}$.

\end{enumerate}

\noi
Noting that $\max(|n_2|, |n_4|) \sim n_1^* \gg n_3^* \ges |n_1|$, 
we apply Lemma \ref{LEM:L64} (a) on 
 $f_5 \cj{f_6} f_7$
and 
$\cj{f_2}  f_3 \cj{f_4}$.
This yields
\eqref{R63} for $s > -\frac{1}{5}$.

Next, suppose that  $m_1^* \ll (n_1^*)^\al$. 
In this case, 
by applying  Lemma \ref{LEM:L64} (a) on 
 $f_5 \cj{f_6} f_7$, 
we only gain the factor of $(m_1^*)^{-\frac{1}{2}}$
instead of $(n_1^*)^{-\frac{\al}{2}}$.
Nonetheless, 
\eqref{R63} for $s> -\frac 16$ follows from  applying Lemma \ref{LEM:L64} (a) on 
 $f_5 \cj{f_6} f_7$
and 
$\cj{f_2}  f_3 \cj{f_4}$, 
since 
\begin{align*}
\sum_{|m| \leq T2^{[\al K]}}\Theta(\bar n)
\cdot (n_1^*)^{-\frac{\al}{2}}(m_1^*)^{-\frac 12}n_3^*
& \les T(n_1^*)^{-1-2s+} (n_4^*)^{2s}(m_1^*)^{-\frac 12}
\\
 &  \les T (n_1^*)^{3s -  }(m_1^*)^{3s }
\les T 
\prod_{i = 2}^7 2^{(s-)k_i}
\end{align*}

\noi
for $s > -\frac 16$.

\medskip

\underline{Subsubcase (a.ii.2):}  $m_1^*\sim m_2^* \les |n_1| \les n_3^*$.
\quad From \eqref{R64}, we have
\[\sum_{|m| \leq T2^{[\al K]}}\Theta(\bar n)
\les T(n_1^*)^{-1 - 4s+} (n_3^*)^{-1} (n_4^*)^{2s}
\leq T (n_1^*)^{2s -  }(n_3^*)^{4s }
\les T 
\prod_{i = 2}^7 2^{(s-)k_i}
\]

\noi
for $s > -\frac{1}{6}$.
Then, 
\eqref{R63} follows from 
the $L^6$-Strichartz estimate \eqref{Stri3}.

\medskip

\noi
$\bullet $ {\bf Case (b):} 
$m_1^* \gg n_1^*$, \ 
$|\wt \Phi(\bar n)|\les |\Phi(\bar n)| $.
\\
\indent
In the following, 
we choose $K$ such that  $2^K  \sim m_1^*$,
unless otherwise stated.

\smallskip

\noi
$\circ$
Subcase (b.i): $|n_1|\les n_3^*$. 

\smallskip

\underline{Subsubcase (b.i.1):}
$n_1^*\sim n_3^*$.
\quad
If $m_3^* \gg |n_1|$, 
then we have
\begin{align}
m_1^* m_3^* \les |\wt \Phi(\bar n)|
\les | \Phi(\bar n)| \les (n_1^*)^2.
\label{R4b1}
\end{align}

\noi
This in particular implies that 
$|n_1| = n_4^* \ll n_1^*$.
Otherwise, i.e.~if $|n_1| \sim n_3^* \sim n_1^*$, then
it follows from \eqref{R4b1} that 
\begin{align*}
m_1^* n_1^* \ll m_1^* m_3^*  \les (n_1^*)^2,
\end{align*}

\noi
which is a contradiction to the assumption $m_1^* \gg n_1^*$.

From \eqref{R4b1}, 
we have 
$m_1^*m_2^*m_3^* 
\les (n_1^*)^4$
and  $m_1^* \les (n_1^*)^2$.
Then, from \eqref{R64}  with $n_1^*\sim n_3^*$,  we have
\begin{align*}
\sum_{|m| \leq T2^{[\al K]}}\Theta(\bar n)
\cdot (m_1^*)^{-\frac{\al}{2}} (n_4^*)^\frac{1}{2}
& \les T(n_1^*)^{-\frac{3}{2}+2s} (m_1^*)^{-2s+}
\les T (n_1^*)^{3s}(m_1^*m_2^*m_3^*)^s
(n_1^*)^{-\frac{3}{2}-9s+}\\
& \les T 
\prod_{i = 2}^7 2^{(s-)k_i}
\end{align*}

\noi
for $s > -\frac{1}{6}$.
Noting that $\max(|n_5|, |n_7|)\sim m_1^*$, 
we apply  Lemma \ref{LEM:L64} (a) on $f_5\cj{f_6}f_7$.
Then, with the $L^6$-Strichartz estimate \eqref{Stri3}, we have
\begin{align*}
|\I(T)|
& \les 
\sum_{|m| \leq T2^{[\al K]}}\Theta(\bar n)
\|f_5\cj{f_6}f_7\|_{L^2_{x, t}}\|\cj{f_2} f_3 \cj{f_4}\|_{L^2_{x, t}}\\
& \les 
\sum_{|m| \leq T2^{[\al K]}}\Theta(\bar n)
\cdot (m_1^*)^{-\frac{\al}{2}} (n_4^*)^\frac{1}{2}
\bigg( \prod_{i=5}^7
\|u_i\|_{F_{k_i}^\alpha}\bigg)
\bigg(\prod_{i = 2}^4 \|f_i\|_{L^6_{x, t}}\bigg)\\
& \les 
T \prod_{i=2}^7
2^{(s-)k_i}
\|u_i\|_{F_{k_i}^\alpha}
\end{align*}

\noi
for $s > -\frac{1}{6}$.

Next, suppose that $m_3^* \les |n_1|$.
Then,  
we have
$m_1^*  \les |\wt \Phi(\bar n)|
\les | \Phi(\bar n)| \les (n_1^*)^2.$
In particular, 
we have $m_1^*m_2^*m_3^* \les (n_1^*)^4 |n_1|$.
In this case,
we choose $K$ such that  $2^K  \sim n_1^*$,
although $n_1^* \ll m_1^*$.
We  estimate 
$\|f_5\cj{f_6}f_7\|_{L^2_{x, t}}$
by further dividing 
the interval of length $\sim 2^{-[\al K]}\sim (n_1^*)^{-\al}$ 
into $O( (m_1^*)^{\al}(n_1^*)^{-\al})$-many subintervals 
of length $ \sim (m_1^*)^{-\al}$.
By the almost orthogonality of the contributions from these subintervals under the $L^2_t$-norm, 
this adds an extra factor of 
 $(m_1^*)^\frac{\al}{2}(n_1^*)^{-\frac{\al}{2}}$.
\smallskip

\begin{itemize}
\item[(i)] Suppose that $|n_1|  \sim  n_4^*$. Then,
from \eqref{R64},  we have
\begin{align*}
 (m_1^*)^\frac{\al}{2}(n_1^*)^{-\frac{\al}{2}}
& \sum_{|m| \leq T2^{[\al K]}}\Theta(\bar n)
\cdot (m_1^*)^{-\frac{\al}{2}} (n_4^*)^\frac{1}{2}
 \les T
  (n_1^*)^{-2-2s+ }(n_4^*)^{\frac{1}{2}+2s}\\
&  \les T(n_1^*)^{-\frac{3}{2} + }
  \les T(n_1^*)^{-\frac{3}{2} - 8s + }
\prod_{i = 2}^7 2^{(s-)k_i}
 \les T
\prod_{i = 2}^7 2^{(s-)k_i}
\end{align*}

\noi
for $s > -\frac 3{16}$.

\smallskip

\item[(ii)] Suppose that $|n_1| \gg n_4^*$.
Then, 
from \eqref{R64},  we have
\begin{align*}
 (m_1^*)^\frac{\al}{2}(n_1^*)^{-\frac{\al}{2}}
& \sum_{|m| \leq T2^{[\al K]}}\Theta(\bar n)
\cdot (m_1^*)^{-\frac{\al}{2}} (n_3^*)^\frac{1}{2}
 \les T(n_1^*)^{-\frac{3}{2}-2s+}(n_4^*)^{2s}\\
& \les T (n_1^*)^{-\frac 32 - 9 s+} (n_4^*)^{s}
\prod_{i = 2}^7 2^{(s-)k_i}
\les T 
\prod_{i = 2}^7 2^{(s-)k_i}
\end{align*}

\noi
for $ s > -\frac{1}{6}$.

\end{itemize}

\noi
In both cases, we obtain \eqref{R63} 
by applying 
 Lemma \ref{LEM:L64} (a) on $f_5\cj{f_6}f_7$
and  the $L^6$-Strichartz estimate \eqref{Stri3}
on $\cj{f_2} f_3 \cj{f_4}$.

\medskip

\underline{Subsubcase (b.i.2):}
Next, we consider the case $n_1^*\gg n_3^*$.
Note that we have $m_1^*\les (n_1^*)^2$.
Moreover, we have either $m_3^* \sim |n_1|$
or $m_1^*m_3^* \les  (n_1^*)^2$.
In either case, we have
\[m_1^* m_2^* m_3^* \les (n_1^*)^4 |n_1|.\]

In this case,
we choose $K$ such that  $2^K  \sim n_1^*$,
although $n_1^* \ll m_1^*$.
As in Subsubcase (b.i.1), we lose
 an extra factor of 
 $(m_1^*)^\frac{\al}{2}(n_1^*)^{-\frac{\al}{2}}$
 by 
estimating the contribution of 
$\|f_5\cj{f_6}f_7\|_{L^2_{x, t}}$
over  subintervals 
of length $ \sim (m_1^*)^{-\al}$.

\smallskip

\begin{itemize}
\item[(i)] $|n_4 - n_1| \sim n_1^* \gg |n_4 - n_3| = |n_1 - n_2|$.
\quad
Since $|n_1| \les n_3^*\ll n_1^*$, it follows that 
$|n_2| \les n_3^*\ll n_1^*$.
From \eqref{R64}, we have
\begin{align*}
 (m_1^*)^\frac{\al}{2}(n_1^*)^{-\frac{\al}{2}}
& \sum_{|m| \leq T2^{[\al K]}}\Theta(\bar n)
 \cdot (m_1^*)^{-\frac{\al}{2}}(n_1^*)^{-\frac{\al}{2}} (n_3^*)^\frac{1}{2}(n_4^*)^\frac{1}{2}
\les T(n_1^*)^{-1-2s+ }(n_3^*)^{-\frac{1}{2}}
(n_4^*)^{\frac{1}{2}+2s}\\
& \les T(n_1^*)^{-1 - 6s+} 
(n_3^*)^{-\frac12-s} (n_4^*)^{\frac12+s}
\prod_{i = 2}^7 2^{(s-)k_i}
\les T
\prod_{i = 2}^7 2^{(s-)k_i} 
\end{align*}

\noi
for $ s > -\frac{1}{6}$.
Then,  \eqref{R63} 
follows from 
applying 
 Lemma \ref{LEM:L64} (a) on $f_5\cj{f_6}f_7$
and  $\cj{f_2} f_3 \cj{f_4}$.

\smallskip
\item[(ii)] $|n_4 - n_1| ,  |n_4 - n_3| \sim n_1^*$.
\quad
From \eqref{LdM3}, we have
\begin{align*}
 (m_1^*)^\frac{\al}{2}(n_1^*)^{-\frac{\al}{2}}
& \sum_{|m| \leq T2^{[\al K]}}\Theta(\bar n)
  \cdot (m_1^*)^{-\frac{\al}{2}}(n_1^*)^{-\frac{\al}{2}} 
    (n_3^*)^\frac{1}{2}(n_4^*)^\frac{1}{2}
 \les T(n_1^*)^{-2 -2s+} 
(n_3^*)^\frac{1}{2}(n_4^*)^{\frac{1}{2}+2s}\\
& \les T(n_1^*)^{-2 - 6s} 
(n_3^*)^{\frac{1}{2}-s}(n_4^*)^{\frac{1}{2}+s}
\prod_{i = 2}^7 2^{(s-)k_i}
\les T
\prod_{i = 2}^7 2^{(s-)k_i} 
\end{align*}

\noi
for $ s > -\frac{1}{6}$.
Then,  \eqref{R63} 
follows from 
applying 
 Lemma \ref{LEM:L64} (a) on $f_5\cj{f_6}f_7$
and  
 Lemma \ref{LEM:L64} (b) on 
$\cj{f_2} f_3 \cj{f_4}$.

\end{itemize}

\medskip

\noi
 $\circ$   Subcase (b.ii): $|n_1|\sim n_1^* \gg n_3^*$.

Note that  we have $|n_1 - n_7|  \ll |n_1|$.
Otherwise, i.e.~if $|n_1 - n_7|  \ges |n_1|$,
then we have $|\wt \Phi(\bar n)|\ges m_1^* n_1^* \gg (n_1^*)^2 \ges |\Phi(\bar n)|$,
which is a contradiction.
Hence, we have $|n_1 - n_7|  \ll |n_1|$.
In particular, $m_1^* \sim m_2^* \gg m_3^* \sim n_1^*$.

\smallskip

\underline{Subsubcase (b.ii.1):}
$|n_4 - n_1| \sim n_1^* \gg |n_4 - n_3| \sim n_3^*$.
\quad 
From \eqref{Psi} and \eqref{R64},  we have 
\begin{align*}
\Theta(\bar n) \les
|\wt \Phi(\bar n)|^{-\g} (n_1^*n_3^*)^{-(1-\g)}(n_4^*)^{2s}
\end{align*}

\noi
for any $\g \in [0, 1]$.
With $|\wt \Phi(\bar n)| \sim m_1^* |n_1 - n_7| \les |\Phi(\bar n)|$, we have 
\begin{align}
\Theta(\bar n) \cdot (m_1^*)^{\frac{\al}{4}} (n_4^*)^\frac{1}{2}
& \les
|\wt\Phi(\bar n)|^{-\frac{3}{4}-s} 
(m_1^*)^{-s+}
(n_1^*n_3^*)^{-\frac{1}{4}+s}(n_4^*)^{\frac{1}{2} + 2s}\notag \\
& \leq  |n_5 - n_6|^{-\frac{3}{4}-s} 
(m_1^*)^{-\frac{3}{4}-2s+} (n_1^*)^{2s}(n_3^*)^{s}(n_4^*)^{s}\notag \\
& \leq  |n_5 - n_6|^{-\frac{3}{4}-s} 
(m_1^*)^{2s-}(n_1^*)^{2s}(n_3^*)^{s}(n_4^*)^{s}\notag\\
&  \les  |n_5 - n_6|^{-\frac{3}{4}-s} \prod_{i = 2}^7 2^{(s-)k_i}
\label{R6a1}
\end{align}

\noi
for $s > -\frac{3}{16}$.

Fix $n_6 \in \Z$.
Then, by Cauchy-Schwarz inequality, we have
\begin{align*}
 \sum_{n_5\ne n_6}
\frac{g(n_5) }{|n_5 - n_6|^\be}
\les \|g\|_{\l^2_n} 
\end{align*}

\noi
for $\beta > \frac{1}{2}$.
Combining this with 
the proof, in particular \eqref{L62b},  of Case (a) of Lemma \ref{LEM:L62}
under $\{n_5, n_6, n_7, n_1\} \leftrightarrow \{n_1, n_2, n_3, n_4\}$, 
we obtain
\begin{align}
\bigg\| \sum_{\substack{n_5, n_6\\n_6\ne n_5, n_7}} 
\frac{\ft f_5(n_5, t)  \cj{\ft f_6} (n_6, t) \ft f_7(n_1 - n_5+n_6, t)}
{|n_5 - n_6|^{\be}}\bigg\|_{L^2_t \l^2_{n_1}}
\les (m_1^*)^{-\frac{\al}{2}}
 \prod_{i=5}^7
\|u_i\|_{F_{k_i}^\alpha}.
\label{R6a2}
\end{align}

\noi
Also, by $L^4_{x, t}, L^4_{x, t}, L^\infty_{x, t}$-H\"older inequality, we have  
\begin{align}
\|\cj{f_2} f_3 \cj{f_4}\|_{L^2_{x, t}}
& \les (m_1^*)^{-\frac{\al}{4}} (n_4^*)^\frac{1}{2}
 \prod_{i=2}^7
\|u_i\|_{F_{k_i}^\alpha}.
\label{R6a2a}
\end{align}

\noi
Hence, the desired estimate \eqref{R63} follows
from \eqref{R6a1}, \eqref{R6a2} (with $\be = \frac 34 + s$), and \eqref{R6a2a}.

\medskip

\underline{Subsubcase (b.ii.2):}
$|n_4 - n_1| \sim n_1^* \gg n_3^* \gg |n_4 - n_3|$.
\quad 
From \eqref{MVT1} and \eqref{MVT2},  we have 
\begin{align*}
\Theta(\bar n) \les
|\wt \Phi(\bar n)|^{-\g} (n_1^*)^{-(1-\g)}(n_3^*)^{2s-1}|n_4 - n_3|^\g
\ll
|\wt \Phi(\bar n)|^{-\g} (n_1^*)^{-(1-\g)}(n_3^*)^{2s-1+\g}
\end{align*}

\noi
for any $\g \in [0, 1]$.
Then, with $\g = \frac 9{14}$,  we have 
\begin{align*}
\Theta(\bar n) \cdot  (n_3^*)^\frac{1}{2}
& \les  |n_5 - n_6|^{-\frac{9}{14}} 
(m_1^*)^{-\frac{9}{14}}  (n_1^*)^{-\frac{5}{14}} (n_3^*)^{\frac{1}{7} + 2s}\notag \\
& \leq  |n_5 - n_6|^{-\frac{9}{14}} 
(m_1^*)^{3s-}(n_1^*)^{s}(n_3^*)^{2s}\notag\\
&  \les  |n_5 - n_6|^{-\frac{9}{14}}  \prod_{i = 2}^7 2^{(s-)k_i}
\end{align*}

\noi
for $s > -\frac{3}{14}$.
Then, 
the rest 
follows from 
\eqref{R6a2} and  
 Lemma \ref{LEM:L64} (a) on 
$\cj{f_2} f_3 \cj{f_4}$.

\medskip

In the following, we 
consider the remaining case: 
\[m_1^* \gg n_1^*
\qquad\text{and}\qquad
 |\wt \Phi(\bar n)| \gg |\Phi(\bar n)|.\]

\noi
With $\s_j = \tau_j - n_j^2$, we have
\begin{align*}
& \s_1 - \s_5 + \s_6 - \s_7 = -\wt \Phi(\bar n),
\qquad 
 \s_1 - \s_2 + \s_3 - \s_4 =  \Phi(\bar n).
\end{align*}

\noi
Hence,  we conclude that 
$\max(|\s_2|,|\s_3|,|\s_4|,|\s_5|,|\s_6|,|\s_7|)\ges |\tilde \Phi(\bar n)|$.

\medskip

\noi
$\bullet $ {\bf Case (c):}
$\max(|\s_2|, |\s_3|, |\s_4|)
\ges |\wt \Phi(\bar n)|$.

\medskip

\noi
$\circ$ 
Subcase (c.i):  $ |n_7| \sim m_3^* \gg |n_1|$.
\\
\indent
We have $|\wt \Phi(\bar n) | \ges m_1^* m_3^*$. 
In this case, we set $2^K \sim n_1^*$, although $n_1^* \ll m_1^*$.
By duality (in $x$) and Cauchy-Schwarz inequality, we have 
\[\|\P_{k_1}(f_5\cj{f_6}f_7)\|_{L^\infty_tL^2_{x}}
\les 
2^\frac{k_1}{2}
(m_3^*)^\frac{1}{2} \prod_{i = 5}^7 \|f_i\|_{L^\infty_tL^2_x}.\]

\noi
Then, by Lemma \ref{LEM:infty}, we have 
\begin{align}
|\I(T)|  & \les 
\sum_{|m|\leq T 2^{[\al K]}}\Theta(\bar n )
2^\frac{k_1}{2}
(m_3^*)^\frac{1}{2} \bigg(\prod_{i = 5}^7 \|f_i\|_{L^\infty_tL^2_x}\bigg)
\|\cj{f_2}f_3\cj{f_4}\|_{L^1_t L^2_x}
\notag \\
& \les 
T (n_1^*)^{-1 + \al}(n_3^*)^{-1}(n_4^*)^{2s}
2^\frac{k_1}{2}
(m_3^*)^\frac{1}{2} 
\bigg(\prod_{i = 5}^7 \|f_i\|_{F^{\al}_{k_i}}\bigg)
\|\cj{f_2}f_3\cj{f_4}\|_{L^1_t L^2_x}.
\label{R6c1}
\end{align}

\noi
By H\"older inequality,  we 
place the factor in $\cj{f_2}f_3\cj{f_4}$ with the highest modulation in $L^2_t$
and others in $L^2_t$ and $L^\infty_t$,
while we place the factors with two low spatial frequencies in $L^\infty_x$
and the other in $L^2_x$.
For example, if $k_2 \geq k_3 \geq k_4$
and $j_2 \geq j_3 \geq j_4$, 
we have 
\begin{align}
\|\cj{f_2} f_3 \cj{f_4}\|_{L^1_t L^2_{x}}
& \leq 
\| f_2\|_{L^2_{x, t}} 
\| f_3\|_{L^2_t L^\infty_{x}} 
\| f_4\|_{L^\infty_{x, t}}\notag \\ 
& \les 
(m_1^*m_3^*)^{-\frac{1}{2}} 
(n_1^*)^{-\frac{\al}{2}} 2^\frac{k_3}{2}2^\frac{k_4}{2}
\prod_{i = 2}^4 \|u_i\|_{F^{\al}_{k_i}}.
\label{R6c2}
\end{align}

\noi
Here, we used the fact that 
$2^{j_2} \ges m_1^*m_3^*$
and $2^{j_3} \ges (n_1^*)^{\al}$.
By collecting the weights from \eqref{R6c1} and \eqref{R6c2}, 
 we have 
\begin{align}
(m_1^*)^{-\frac{1}{2}} 
(n_1^*)^{-1-2s+} 
& (n_3^*)^{-1} (n_4^*)^{2s}
2^\frac{k_1+k_3+k_4}{2}\notag \\
&  \les
(m_1^*)^{-\frac{1}{2}} 
(n_1^*)^{-\frac{1}{2}-2s+} 
(n_3^*)^{2s} 
 \les
(m_1^*)^{3s-}
(n_1^*)^{2s}(n_3)^{s},
\label{R6c2a}
\end{align}
	
\noi
where
the last inequality holds as long as $s > -\frac{1}{8}$.
Hence, \eqref{R63} holds for $s > -\frac{1}{8}$ in this case.
Note that the same proof holds
when $|n_5| \sim |n_7| \sim m_1^*$,
since we have $|\wt \Phi(\bar n)| \sim (m_1^*)^2$ in this case.

\medskip

\noi
$\circ$ 
Subcase (c.ii):  $|n_7| \sim m_3^* \ll |n_1|$.
\\
\indent
In this case, we have $|\wt \Phi(\bar n) | \ges m_1^* |n_1| 
\gg  m_1^* m_3^*$.
Proceeding as in  Subcase (c.i), 
we obtain  \eqref{R63}  for $s > -\frac{1}{8}$.

\medskip

\noi
$\circ$ 
Subcase (c.iii):  $|n_7| \sim |n_1|$.
\\
\indent
In this case, we first estimate the contribution
from $|n_7-n_1|\sim 2^L$
for each $L$
and sum over $L$ in the end.
Note that $L \leq k_1+10$.
When $|n_7-n_1|\sim 2^L$, we have 
\begin{align}
|\wt \Phi(\bar n) | \sim 2^L m_1^*.
\label{R6c3a} 
\end{align}

\noi
Moreover, 
we have $|n_5 - n_6| \sim 2^L$.

As in the proof of Lemma \ref{LEM:L63}, 
 write 
 $I_{k_i} = \bigcup_{\l_i} J_{i\l_i}$, $i = 5, 6$, 
where $|J_{i\l_i}| \sim 2^L$.
Then, if $n_5 \in J_{5\l_5}$ for some $\l_5$, 
there are $O(1)$ many possible values for $\l_6 = \l_6(\l_5)$
such that  $n_6\in J_{6\l_6}$.
Then, by writing 
\[\sum_{n_5} \sum_{n_6} = \sum_{\l_5}\sum_{\l_6 = \l_6(\l_5)} \sum_{n_5 \in J_{5\l_5} } 
\sum_{n_6 \in J_{6\l_6}},\]

\noi
we only lose $2^L$ by applying Cauchy-Schwarz inequality
in $n_5$ and $n_6$.
Then, by Young's inequality, we have 
\begin{align}
\|f_5 \cj{f_6}f_7\|_{L^\infty_t L^2_x}
& \les
\sum_{\l_5} \sum_{\l_6 = \l_6(\l_5)}
\|\ft{f}_{5,\l_5}\|_{L^\infty_t \l^1_{n}}
\| \ft{ f}_{6,\l_6(\l_5)}\|_{L^\infty_t \l^1_{n}}
\|  \ft{f}_7\|_{L^\infty_t \l^2_{n}}\notag\\
& \les 2^L \prod_{i = 5}^7 \|f_i\|_{L^\infty_t L^2_x}.
\label{R6c3}
\end{align}

\noi
In the last step, 
after Cauchy-Schwarz inequality in $n_5$ and $n_6$, 
we also applied Cauchy-Schwarz inequality in $\l_5$
to sum  the contribution from $\ft {f}_{5, \l_5}(n, t):= \ind_{J_{5\l_5}}(n) \cdot \ft{f}_5(n, t)   $ and 
$ \ft{f}_{6, \l_6(\l_5)}$ over $\l_5$.

As before, we set $2^K \sim n_1^*$.
Without loss of generality, assume $k_2 \geq k_3 \geq k_4$.
Then, by proceeding as in Subcase (c.i) with \eqref{R6c3a} and \eqref{R6c3}, 
we have 
\begin{align*}
|\I(T)|  & \les 
\sum_{L= 0}^{k_1+10}T 
\Theta(\bar n )|\wt \Phi(\bar n)|^{-\frac{1}{2}}
(n_1^*)^{-2s+}
2^L 2^\frac{k_3+k_4}{2}
\prod_{i = 2}^7 \|u_i\|_{F^{\al}_{k_i}}\\
 & \les 
T 
(m_1^*)^{-\frac{1}{2}}
(n_1^*)^{-1-2s+}
(n_3^*)^{-1}
(n_4^*)^{2s}
2^\frac{k_1+ k_3+k_4}{2}
\prod_{i = 2}^7 \|u_i\|_{F^{\al}_{k_i}}.
\end{align*}

\noi
Hence, by \eqref{R6c2a}, we obtain \eqref{R63} for $s > -\frac{1}{8}$.

\medskip

\noi
$\bullet $ {\bf Case (d):}
$\max( |\s_5|, |\s_6|, |\s_7|)
\ges |\wt \Phi(\bar n)|$.
\\
\indent
In this case, 
we set $K$ by $2^K \sim m_1^*$.
Recall that we assume $|n_5| \geq |n_7|$.
In particular, we have $|n_5|\sim \max(|n_6|, |n_7|) \sim m_1^*$.

\medskip

\noi
$\circ$ 
Subcase (d.i):  $|n_7| \sim m_3^* \ll |n_1|$.
\\
\indent
In this case, we have $|\wt \Phi(\bar n)| \ges 2^{k_1} m_1^* $.
Write $I_{k_i^*} = \bigcup_{\l_i}J_{i\l_i}$
with $|J_{i\l_i}|\sim n_3^*$, $i = 1, 2$.
Then, 
with $n_1 - n_2 + n_3 - n_4 = 0$, 
write the terms corresponding to $n_1^*$ and $n_2^*$
as sums of their restrictions 
on $J_{i\l_i} $ as in Subcase (c.iii).

\begin{itemize}
\item[(i)]
$|n_1| \sim n_4^*$.\quad Proceeding as in \eqref{R6c3}, we have
\begin{align}
\|\P_{k_1}(\cj{f_2} f_3 \cj{f_4})\|_{L^\infty_{x, t}}
\les (n_4^*)^\frac{1}{2}
\|\cj{f_2} f_3 \cj{f_4}\|_{L^\infty_{t}L^2_x}
\les 
 n_3^* (n_4^*)^\frac{1}{2}
\prod_{i = 2}^4 \|f_i\|_{L^\infty_tL^2_x}.
\label{R6d0a}
\end{align}

\smallskip
\item[(ii)]
$|n_1| \gg n_4^*$.\quad 
By Young's inequality followed by Cauchy-Schwarz inequality, we have
\begin{align}
\|\cj{f_2} f_3 \cj{f_4}\|_{L^\infty_{x, t}}
\les 
\big\|\F_x(\cj{f_2} f_3 \cj{f_4})\big\|_{L^\infty_{t} \l^1_n}
\les 
 n_3^* (n_4^*)^\frac{1}{2}
\prod_{i = 2}^4 \|f_i\|_{L^\infty_tL^2_x}.
\label{R6d0}
\end{align}

\end{itemize}

\noi
Now, 
we can basically proceed as in Subcase (c.i),
after switching the indices $\{5, 6, 7\} \leftrightarrow \{2, 3, 4\}$.
With  \eqref{R6d0a},  \eqref{R6d0}, \eqref{R64},  and Lemma \ref{LEM:infty}, we have 
\begin{align*}
|\I(T)|  & \les 
\sum_{|m|\leq T 2^{[\al K]}}\Theta(\bar n )
 n_3^*( n_4^*)^\frac{1}{2}
\bigg(\prod_{i = 2}^4 \|f_i\|_{L^\infty_tL^2_x}\bigg)
\|f_5 \cj{f_6}f_7\|_{ L^1_{x, t}}
\notag \\
& \les 
\sum_{|m|\leq T 2^{[\al K]}}\Theta(\bar n )
n_3^* (n_4^*)^\frac{1}{2}
|\wt \Phi(\bar n)|^{-\frac{1}{2}}(m_1^*)^{-\frac{\al}{2}}(m_3^*)^\frac{1}{2}
\prod_{i = 2}^7 \|f_i\|_{F^{\al}_{k_i}}\notag \\
& \les T
(n_1^*)^{-\frac{1}{2}+2s}
(m_1^*)^{-\frac{1}{2}-2s + }
\prod_{i = 2}^7 \|u_i\|_{F^{\al}_{k_i}}
 \les T
\prod_{i = 2}^7 2^{(s-)k_i}\|u_i\|_{F^{\al}_{k_i}},
\end{align*}

\noi
where the last inequality holds for $s > -\frac{1}{8}$.

\medskip

\noi
$\circ$ 
Subcase (d.ii):  $|n_7| \sim   |n_1|$.
\\
\indent
In this case, we first estimate the contribution
from $|n_7-n_1|\sim 2^L$
for each $L \leq k_1 + 10$
as in Subcase (c.iii).
When 
$|n_1| \sim n_4^*$, it follows from 
 Young's inequality that 
\begin{align}
\|\P_{k_1}(\cj{f_2} f_3 \cj{f_4})\|_{L^\infty_{t}L^2_x}
\les (n_4^*)^\frac{1}{2}
\|\F_x(\cj{f_2} f_3 \cj{f_4})\|_{L^\infty_{t}\l_n^\infty}
\les 
( n_3^*)^\frac{1}{2} (n_4^*)^\frac{1}{2}
\prod_{i = 2}^4 \|f_i\|_{L^\infty_tL^2_x}.
\label{R6d0b}
\end{align}

\noi
When 
$|n_1| \sim n_3^*$, a similar computation yields \eqref{R6d0b}, 
while \eqref{R6d0b} trivially follows
when $|n_1| \sim n_1^*$.

With \eqref{R6c3} (where $L^\infty_t$ on the left-hand side is replaced
by $L^1_t$) and \eqref{R6d0b}, we have
\begin{align*}
|\I(T)|  & \les 
\sum_{L= 0}^{k_1+10}T 
\Theta(\bar n )|\wt \Phi(\bar n)|^{-\frac{1}{2}}
(m_1^*)^{-2s+}
2^L 
(n_3^*)^{\frac{1}{2}} 
(n_4^*)^{\frac{1}{2}} 
\prod_{i = 2}^7 \|u_i\|_{F^{\al}_{k_i}}\\
 & \les 
T 
(m_1^*)^{-\frac{1}{2}-4s+}
(n_1^*)^{-\frac{1}{2}-3s}
(n_3^*)^{-\frac{1}{2}-s} 
(n_4^*)^{\frac{1}{2}+2s} 
\prod_{i = 2}^7 2^{(s-)k_i}\|u_i\|_{F^{\al}_{k_i}}.
\end{align*}

\noi
Hence, \eqref{R63} holds for $s > -\frac{1}{8}$.

\medskip

\noi
$\circ$ 
Subcase (d.iii):  $|n_7| \gg  |n_1|$.
\\
\indent
In this case, we have $|\wt \Phi(\bar n)| \ges  m_1^*  m_3^* $.
We first assume that $m_3^* \les (n_1^*)^3$.
Then, 
a slight modification of Subcase (d.i) yields
\begin{align*}
|\I(T)|  & \les 
\sum_{|m|\leq T 2^{[\al K]}}\Theta(\bar n )
 n_3^*( n_4^*)^\frac{1}{2}
\bigg(\prod_{i = 2}^4 \|f_i\|_{L^\infty_tL^2_x}\bigg)
\|f_5 \cj{f_6}f_7\|_{ L^1_{x, t}}
\notag \\
& \les 
\sum_{|m|\leq T 2^{[\al K]}}\Theta(\bar n )
n_3^* (n_4^*)^\frac{1}{2}
|\wt \Phi(\bar n)|^{-\frac{1}{2}}(m_1^*)^{-\frac{\al}{2}}(m_3^*)^\frac{1}{2}
\prod_{i = 2}^7 \|f_i\|_{F^{\al}_{k_i}}\notag \\
& \les T
(n_1^*)^{-\frac{1}{2}+2s}
(m_1^*)^{-\frac{1}{2}- 2s +  }
\prod_{i = 2}^7 \|u_i\|_{F^{\al}_{k_i}}
 \les T
\prod_{i = 2}^7 2^{(s-)k_i}\|u_i\|_{F^{\al}_{k_i}},
\end{align*}

\noi
where the last inequality holds for $s > -\frac{1}{8}$.

Next, we consider the case $m_3^* \gg (n_1^*)^3$.
Suppose $|\s_5| \ges  |\wt \Phi(\bar n)| $.
By Lemma \ref{LEM:infty}, we have
\begin{align}
\|\cj{f_2} f_5 \cj{f_4}\|_{L^2_{x, t}}
\leq \|f_5\|_{L^2_{x, t}}\|f_2\|_{L^\infty_{x, t}}\|f_4\|_{L^\infty_{x, t}}
\les
|\wt \Phi(\bar n)|^{-\frac{1}{2}}
 2^\frac{k_2+k_4}{2}\prod_{i = 2, 4, 5} \|f_i\|_{F^{\al}_{k_i}}.
\label{R6d2}
\end{align}

\noi
By Lemma \ref{LEM:L64} (a), we have 
\begin{align}
\|f_3 \cj{f_6}f_7\|_{L^2_{x, t}}
\les 2^\frac{k_3}{2} 
(m_1^*)^{-\frac{\al}{2}}
\prod_{i = 3, 6, 7} \|f_i\|_{F^{\al}_{k_i}}.
\label{R6d3}
\end{align}

\noi
Indeed, \eqref{R6d3} follows from 
Lemma \ref{LEM:L64} (a)
under $\{n_1, n_2, n_3, n_4\}
\leftrightarrow \{n_3, n_6, n_7, n\},$
where $n$ is the frequency of the duality variable given by $n = n_3 - n_6 + n_7$.
Here, we used the fact that 
 $|n|\sim m_1^* \gg |n_3|$,  since
$|n-n_3| = |n_7-n_6|
= |n_5-n_1|
 \sim m_1^*$.
Then, with \eqref{R64}, we have
\begin{align}
\sum_{|m|\leq T 2^{[\al K]}}\Theta(\bar n )
& |\wt \Phi(\bar n)|^{-\frac{1}{2}}
 2^\frac{k_2+k_3+k_4}{2}
(m_1^*)^{-\frac{\al}{2}}\notag \\
&  \les T (m_1^*)^{-\frac{1}{2}-2s+} (m_3^*)^{-\frac{1}{2}}
(n_3^*)^{-\frac{1}{2}}(n_4^*)^{2s}
 \les T
\prod_{i = 2}^7 2^{(s-)k_i}
\label{R6d4}
\end{align}

\noi
for $s > -\frac{1}{8}$.
From \eqref{R6d2}, \eqref{R6d3}, and \eqref{R6d4},  
we obtain \eqref{R63} for $s > -\frac{1}{8}$.

If $|\s_7| \ges  |\wt \Phi(\bar n)| $
and $|n_7| \sim m_1^*$, 
then the argument follows by switching the indices $5 \leftrightarrow 7$.
Namely, 
we apply
Lemma \ref{LEM:L64} (a)
on  $f_3\cj{f_6}f_5$
under $\{n_1, n_2, n_3, n_4\}
\leftrightarrow \{n_3, n_6, n_5, n\},$
where $n$ is the frequency of the duality variable given by $n = n_3 - n_6 + n_5$.
If  $|\s_7| \ges  |\wt \Phi(\bar n)| $
and $|n_7| \sim m_3^* \ll m_1^*$, 
then 
Lemma \ref{LEM:L64} (a)
on  $f_3\cj{f_6}f_5$
only yields a gain of $\max\big((m_1^*)^{-\frac{\al}{2}}, (m_3^*)^{-\frac{1}{2}}\big)$,
since 
$|n-n_3| = |n_5-n_6|
= |n_7-n_1|
 \sim m_3^*$.
In this case, 
before proceeding as in \eqref{R6d2} with $5\leftrightarrow 7$,
we group
$\cj{f_{2}} f_{7} \cj{f_{4}}= \cj{f_{2, m}} f_{7, m} \cj{f_{4, m}}$, 
which are localized on time intervals of length $\sim (m_1^*)^{-\al}$, 
into components 
localized on time intervals of length $\sim (m_3^*)^{-\al} $.
Then, by Cauchy-Schwarz inequality, 
we have 
\begin{align}
|\I(T)|
& \les 
\Theta(\bar n)
\Big(\sum_{|m| \leq T2^{[\al K]}}
\|\cj{f_{2, m}} f_{7, m} \cj{f_{4, m}}\|_{L^2_{x, t}}^2\Big)^\frac{1}{2}
\Big(\sum_{|m| \leq T2^{[\al K]}} \|
f_{3, m}\cj{f_{6, m}}f_{5, m}\|_{L^2_{x, t}}^2\Big)^\frac{1}{2} \notag \\
& \les 
T \Theta(\bar n)
(m_1^*m_3^*)^\frac{\al}{2}
\Big(\sup_{\wt m} \|\cj{f_{2, \wt m}} f_{7, \wt m} \cj{f_{4, \wt m}}
\|_{L^2_{x, t}}\Big)
\Big(\sup_m \|f_{3, m}\cj{f_{6,m }}f_{5, m}\|_{L^2_{x, t}}\Big).
\label{R6d5}
\end{align}

\noi
With \eqref{R64}, we have
\begin{align}
T \Theta(\bar n)
(m_1^*m_3^*)^\frac{\al}{2}
& |\wt \Phi(\bar n)|^{-\frac{1}{2}}
 2^\frac{k_2+k_3+k_4}{2}
(m_3^*)^{-\frac{1}{2}}\notag \\
&  \les T (m_1^*)^{-\frac{1}{2}-2s+} (m_3^*)^{-1-2s+}
(n_3^*)^{-\frac{1}{2}}(n_4^*)^{2s}
 \les T
\prod_{i = 2}^7 2^{(s-)k_i}
\label{R6d6}
\end{align}

\noi
for $s > -\frac{1}{8}$.
Hence, 
from  \eqref{R6d2}  (with $5 \leftrightarrow 7$), 
Lemma \ref{LEM:L64} (a)
on  $f_3\cj{f_6}f_5$, 
\eqref{R6d5},   
and \eqref{R6d6}, 
we obtain \eqref{R63} for $s > -\frac{1}{8}$.

Lastly, suppose that  $|\s_6| \ges  |\wt \Phi(\bar n)| $.
When $|n_5 - n_7|\sim m_1^*$, 
we group $f_j$'s into
$\cj{f_2} f_3\cj{f_6}$
and 
$f_5 \cj{f_4} f_7$.
Then, 
we use (a slight modification of) \eqref{R6d2}
and \eqref{R6d3}
for $\cj{f_2} f_3\cj{f_6}$
and 
 $f_5 \cj{f_4} f_7$, respectively.
In particular, 
the analogue of \eqref{R6d3}
for  $f_5 \cj{f_4} f_7$
follows from 
Lemma \ref{LEM:L64} (b)
under $\{n_1, n_2, n_3, n_4\}
\leftrightarrow \{n_7, n_4, n_5, n\},$
where $n  = n_5 - n_4 + n_7$.
Hence, \eqref{R63} holds for $s > -\frac{1}{8}$.

When $|n_5 - n_7|\ll m_1^*$, 
it follows from $|n_5|\sim m_1^*$
and $|n_5 - n_6 + n_7| = |n_1| \ll m_1^*$
that 
$|n_5| \sim |n_6| \sim |n_7| \sim m_1^*$.
In particular, we have 
 $|\wt \Phi(\bar n)| \ges  (m_1^* )^2$.
Then, 
a slight modification of Subcase (d.i) with  $L^4_{x, t}, L^2_{x, t}, L^4_{x, t}$-H\"older's inequality  on $f_5 \cj{f_6}f_7$ yields
\begin{align*}
|\I(T)|  & \les 
\sum_{|m|\leq T 2^{[\al K]}}\Theta(\bar n )
 n_3^*( n_4^*)^\frac{1}{2}
\bigg(\prod_{i = 2}^4 \|f_i\|_{L^\infty_tL^2_x}\bigg)
\|f_5 \cj{f_6}f_7\|_{ L^1_{x, t}}
\notag \\
& \les 
\sum_{|m|\leq T 2^{[\al K]}}\Theta(\bar n )
n_3^* (n_4^*)^\frac{1}{2}
|\wt \Phi(\bar n)|^{-\frac{1}{2}}(m_1^*)^{-\frac{\al}{4}}
\prod_{i = 2}^7 \|f_i\|_{F^{\al}_{k_i}}\notag \\
& \les T
(m_1^*)^{-1- 3s +  }
(n_1^*)^{-\frac{1}{2}+2s}
\prod_{i = 2}^7 \|u_i\|_{F^{\al}_{k_i}}
 \les T
\prod_{i = 2}^7 2^{(s-)k_i}\|u_i\|_{F^{\al}_{k_i}},
\end{align*}

\noi
where the last inequality holds for $s > -\frac{1}{6}$.

This completes the proof of Proposition \ref{LEM:R6M}.
\end{proof}

\begin{remark}\label{REM:smoothing}
\rm 
We point out that these energy estimates
have a certain smoothing property.
From the proofs of 
Propositions \ref{LEM:R4M}, \ref{LEM:LdM}, 
and \ref{LEM:R6M}, 
it is easy to see that 
the energy estimates 
\eqref{R4M1}, \eqref{LdM}, and \eqref{R6M1}
hold even if we replace
the $F^{s, \al}(T)$-norms on the right-hand sides 
by $F^{s-\dl, \al}(T)$-norms
for some small $\dl > 0$.
Namely, we can take the regularity on the right-hand sides 
 to be slightly less than that on the left-hand sides.
\end{remark}


\section{Existence of  solutions to the Wick ordered cubic NLS}
\label{SEC:existence}
In this section, we present the proof of Theorem \ref{THM:1}.
First, we establish an a priori bound of smooth solutions
to the Wick ordered cubic NLS \eqref{NLS1}.
Fix $s \in (-\frac{1}{8}, 0)$
and $\al = -4s +$.
Let $u\in C(\R; H^\infty(\T))$ be a smooth global solution to \eqref{NLS1}
with initial condition $u_0 \in H^\infty(\T)$.
Then, 
by Propositions  \ref{PROP:linear},  \ref{PROP:3lin}, 
\ref{LEM:R4M}, \ref{LEM:LdM}, 
and \ref{LEM:R6M}, we have\footnote{When $s>-\frac1{10}$, we do not need the last two terms in \eqref{K3}.
Moreover, we can choose $c(s) = 0 $ in this case.  See Remark \ref{REM:ene}.}
\begin{align}
&  \|u\|_{F^{s, \al}(T)}   
\les \|u\|_{E^s(T)} + \|\mathfrak N(u)\|_{N^{s, \al}(T)}, 
\label{K1}\\
 & \|\mathfrak N(u)\|_{N^{s, \al}(T)} 
  \les  T^\theta \|u\|_{F^{s, \al}(T)}^3,
 \label{K2}\\
&  \|u\|_{E^s(T)}^2   \leq \|u_0\|_{H^s}^2 +  
C\Big( T^\theta M^{c(s)} \|u\|_{F^{s, \al}(T)}^4
+ M^{-d(s)} \|u\|_{F^{s, \al}(T)}^4
+  T^\theta\|u\|_{F^{s, \al}(T)}^6\Big),
\label{K3}
\end{align}

\noi
for $T\in (0, 1]$, $M \in \mathbb{N}$, $c(s) \geq 0$, $d(s)>0$, and $\theta>0$, 
where $\mathfrak{N}(u) = \N(u) + \RR(u)$.
In the following, $T$ and $M$ will be chosen in terms of $\|u_0\|_{H^s}$.

First, we establish an a priori bound on the smooth solution $u$
in terms of the $H^s$-norm of the initial condition $u_0$.
Let $X(T) = \|u\|_{E^s(T)} + \|\mathfrak N(u) \|_{N^{s, \al}(T)}$.
Then, we have the following lemma.

\begin{lemma}\label{LEM:Kconti}
Let $u\in C([-1, 1] ; H^\infty(\T))$.
Then, 
$X(T)$ is non-decreasing and continuous in $T \in [0, 1]$.
Moreover, we have
\[ \lim_{T\to 0} X(T) = \|u(0)\|_{H^s}.\]
\end{lemma}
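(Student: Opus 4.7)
The plan is to treat the two pieces of $X(T)$, namely $\|u\|_{E^s(T)}$ and $\|\mathfrak N(u)\|_{N^{s,\al}(T)}$, separately, establishing monotonicity, the $T\to 0$ limit, and continuity for each.

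Monotonicity will be immediate from the definitions. The supremum defining $\|u\|_{E^s(T)}$ is taken over an expanding set as $T$ grows, and the set of admissible extensions in the infimum defining $\|\mathfrak N(u)\|_{N^{s,\al}(T)}$ contracts as $T$ grows; both observations yield the non-decreasing property, and hence $X(T)$ is monotone.

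For the limit as $T\to 0$, I will exploit that $u\in C([-1,1]; H^\infty(\T))$ implies both $u$ and $\mathfrak N(u)$ are uniformly bounded in every Sobolev norm. The energy term converges to $\|u(0)\|_{H^s}$ by continuity of $u(\cdot)$ in $H^s$, combined with a dyadic argument that is uniform in $T$ thanks to the $H^\infty$ bound. For the nonlinear term, I will take the explicit extension $\tilde v(x, t) = \chi(t/T)\mathfrak N(u)(x, t)$ with a fixed smooth bump $\chi \equiv 1$ on $[-1, 1]$ and estimate its $N^{s,\al}$-norm using the time-decay mechanism of Lemma \ref{LEM:timedecay} on a window of length $\sim T$ together with the uniform $H^N$-bounds on $\mathfrak N(u)$, producing $\|\mathfrak N(u)\|_{N^{s,\al}(T)} \les T^\theta$ for some $\theta > 0$.

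The main obstacle will be continuity. Since $X$ is monotone non-decreasing, it suffices to exclude left and right jumps. Continuity of the energy piece should follow by expressing $\|u\|_{E^s(T)}^2$ as a dyadic sum whose summands $\sup_{t_k\in[-T,T]} 2^{2sk}\|\P_k u(t_k)\|_{L^2}^2$ are each continuous in $T$ (sup of a continuous function on a compact interval), with uniform convergence of the dyadic sum ensured by $u \in C(H^\infty)$. For the nonlinear part, given $\epsilon > 0$ and a near-optimal extension $\tilde v$ with $\|\tilde v\|_{N^{s,\al}} \leq \|\mathfrak N(u)\|_{N^{s,\al}(T)} + \epsilon$, I will construct an admissible extension $\tilde v'$ at a nearby parameter $T'$ by gluing $\tilde v$ to the smooth function $\mathfrak N(u)$ through a cutoff supported on the transition region $\{|t|\in[\min(T,T'),\max(T,T')]\}$. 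The difference $\tilde v' - \tilde v$ is then supported on a time interval of length $\sim |T-T'|$ and involves only the smooth, uniformly bounded function $\mathfrak N(u)$, so its $N^{s,\al}$-norm should be $O(|T-T'|^\theta)$ via Lemmata \ref{LEM:Sk} and \ref{LEM:timedecay}. The technical subtlety here is that the $X_k$-norm is $\ell^1$-based in modulation, so I expect to invoke the almost-orthogonality Lemma \ref{LEM:a.o.} to control the gluing cleanly without incurring logarithmic losses.
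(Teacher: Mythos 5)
Your treatment of monotonicity, the $T\to 0$ limit, and the continuity of the energy piece $\|u\|_{E^s(T)}$ essentially matches the paper's. The gap is in your proposed proof of continuity for the nonlinear piece. You plan to build an extension $\wt v'$ at a nearby time $T'>T$ by gluing the near-optimal extension $\wt v$ (at $T$) to $\mathfrak N(u)$ via a cutoff on the transition region, and then assert that the correction $\wt v'-\wt v$ ``involves only the smooth, uniformly bounded function $\mathfrak N(u)$.'' That assertion is false: for $\wt v'$ to agree with $\mathfrak N(u)$ on all of $[-T',T']$, any such modification must overwrite $\wt v$ on $[T,T']\cup[-T',-T]$, so the correction necessarily carries a $-\wt v$ contribution on the transition region. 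There $\wt v$ need not equal $\mathfrak N(u)$, and you have no pointwise (nor $L^\infty_t L^2_x$) control of a near-optimal extension outside $[-T,T]$ --- only its global $N^{s,\al}$-norm is known, and that norm does not localize well under multiplication by a narrow time cutoff because the weights $(\tau-n^2+i2^{[\al k]})^{-1}$ are temporally nonlocal. Consequently Lemmata \ref{LEM:Sk} and \ref{LEM:timedecay} cannot produce the $O(|T'-T|^\theta)$ bound you are after, and Lemma \ref{LEM:a.o.} does not rescue the estimate either.

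The paper circumvents this obstruction by rescaling rather than gluing. After reducing to finitely many dyadic blocks by monotone convergence, it introduces the dilation $D_r(u)(x,t)=u(x,r^{-1}t)$ and compares $\|D_{r}(u)\|_{N^{s,\al}(rT_0)}$ with $\|u\|_{N^{s,\al}(T_0)}$ by transporting extensions through $D_r$ and $D_{1/r}$. This sidesteps the pointwise-control problem entirely, since $D_r$ maps extensions at one time bijectively to extensions at the rescaled time; the whole cost is the perturbative Fourier-side estimate $\|D_{1/r}(u_r)\|_{N^{s,\al}}\le\psi(r)\|u_r\|_{N^{s,\al}}$ with $\psi(r)\to 1$ as $r\to 1$, established by tracking how dilation perturbs the modulation weights $\eta_j(\tau-n^2)$, the resolvent factors $(\tau-n^2+i2^{[\al k]})^{-1}$, and the short-time cutoffs $\eta_0(2^{[\al k]}(t-t_k))$ (using that the frequencies are bounded by $2^K$ after the truncation). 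You would need to replace your gluing step with an argument of this nature or find another way to compare infima over extension classes at nearby times without pointwise control of the extensions.
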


\begin{proof}
From the definition, $X(T)$ is non-decreasing in $T$.
Let $I_T = [-T, T]$.
Then, given $ 0 \leq  T_1 < T_2 \leq1$, we have 
\begin{align*}
\|u\|_{E^s(T_2)}^2- \|u\|_{E^s(T_1)}^2
& = \sum_{k \geq 1} 2^{2sk} \bigg( \sup_{t_k \in I_{T_2}}  \|\P_ku(t_k)\|_{L^2}^2
- \sup_{\wt t_k \in I_{T_1}}  \|\P_ku(\wt t_k)\|_{L^2}^2\bigg)\\
\intertext{By letting $\mathcal{A} = \big\{ k\geq1 : 
\sup_{t_k \in I_{T_2}}  \|\P_ku(t_k) \|_{L^2}^2
= \sup_{t_k \in I_{T_2}\setminus I_{T_1}}  \|\P_ku(t_k)\|_{L^2}^2\big\}$, 
we have}
& \leq  \sum_{k \in \mathcal{A}} 
2^{2sk} \bigg( \sup_{t_k \in I_{T_2}\setminus I_{T_1}} \|\P_ku(t_k)\|_{L^2}^2
-  \|\P_ku(T_1)\|_{L^2}^2\bigg)\\
& \leq \sup_{t_* \in I_{T_2}\setminus I_{T_1}}
\sum_{k \in \mathcal{A}}  2^{2(s+\eps)k} \bigg(  \|\P_ku(t_*)\|_{L^2}^2
-  \|\P_ku(T_1)\|_{L^2}^2\bigg)\\
& \leq 2
\| u\|_{C([-1, 1]; H^{s+\eps})}
\sup_{t_* \in I_{T_2}\setminus I_{T_1}}
\| u (t_*) - u(T_1) \|_{ H^{s+\eps}} \longrightarrow 0, 
\end{align*}

\noi
as $T_2 - T_1 \to 0$.
This shows the continuity of $\|u\|_{E^s(T)}$.
In particular, we have
\begin{align*}
\lim_{T\to 0} \|u\|_{E^s(T)} = \|u(0)\|_{H^s}.
\end{align*}

Let $Y(T) = \|u\|_{N^{s, \al}(T)}$.
Then, with Lemma \ref{LEM:embed3}, it is easy to see that 
\begin{align*}
Y(T) \les T^\frac{1}{2} \|u\|_{C([-T, T]: H^s)} \longrightarrow 0, 
\end{align*}

\noi
as $T \to 0$.
Hence, it remains to prove the  continuity of $Y(T)$.
Let  $T_0 \in (0, 1]$.
First, by the monotonicity in $T$ 
of the $N^{s, \al}(T)$-norm and $\|u\|_{N^{s, \al}((1+\dl)T_0)} <\infty$
(with some small $\dl > 0$\footnote{Set $\dl = 0$ when $T_0 = 1$.}
 such that $(1+\dl) T_0 \leq 1$),
it follows from Monotone Convergence Theorem
that given $\eps > 0$, there exists $K \in \mathbb{N}$ such that 
$\|\P_{>K}u\|_{N^{s, \al}(T)} < \eps$
for all $T < (1+ \dl) T_0$.
Then, it suffices to show that there exists $\dl_0 = \dl_0(T_0, \eps)>0$
 such that 
\begin{align}
\big|\|\P_{\leq K}u\|_{N^{s, \al}(T_0)} - 
\|\P_{\leq K}u\|_{N^{s, \al}(rT_0)}\big| < \eps
\label{CT1}
\end{align}
	
\noi
for all $r \in \R$ with $|r-1| < \dl_0$.
In the following, we restrict our attention
to $\P_{\leq K}u$ for this fixed $K = K(T_0, \eps)$.
For simplicity, we denote $\P_{\leq K}u$ by $u$,
i.e.~we assume that the spatial frequencies are supported on $\{|n| < 2^K\}$
in the following.

As in \cite{IKT}, we introduce the dilation operator $D_r$ given by
 $D_r(u)(x, t) = u(x, r^{-1}t)$
 for $r$ close to $1$.
Then, with $r = T/T_0$, by the triangle inequality
and Lemma \ref{LEM:embed3},  we have
\begin{align*}
\big|\|u\|_{N^{s, \al}(T)}
- \|D_{r} (u)\|_{N^{s, \al}(T)}\big|
& \leq \|u - D_r (u)\|_{N^{s, \al}(T)} \notag \\
& \les (T)^\frac{1}{2}\|u - D_r (u)\|_{L^\infty_t ([-T, T]; H^s)}
\to 0
\end{align*}

\noi
as $T \to T_0$, i.e. as $r \to 1$, since 
$u \in C([-1, 1]; H^\infty(\T))$.
Then, it remains to show that
\begin{equation}
\lim_{r\to 1}  \|D_{r} (u)\|_{N^{s, \al}(rT_0)} = \|u\|_{N^{s, \al}(T_0)}.
\label{CT2}
\end{equation}

\noi
Note that \eqref{CT2} follows once we show
\begin{align}
&   \|u\|_{N^{s, \al}(T_0)} \leq \liminf_{r\to 1}  \|D_{r} (u)\|_{N^{s, \al}(rT_0)}
 \label{CT3}
 \\
 \intertext{and} & \limsup_{r\to 1}  \|D_{r} (u)\|_{N^{s, \al}(rT_0)} \leq  \|u\|_{N^{s, \al}(T_0)}.
 \label{CT4}
\end{align}

We first prove \eqref{CT3} in the following.
Given small $\wt \eps > 0$, 
let $u_r$ be an extension of $D_r(u)$ outside $[-rT_0, rT_0]$\footnote{Strictly speaking,
we construct extensions 
$u_r^{k}$ for $\P_k D_r(u)$, $k \in \Z_+\cap [0, K]$
and then construct $u_r$ by setting $\P_k(u_r):= u_r^k$.
By construction, we have $\P_{\leq K}(u_r) = u_r$.}
such that 
\begin{equation*}
\|u_r\|_{N^{s, \al}} \leq  \|D_{r} (u)\|_{N^{s, \al}(rT_0)} + \wt \eps.
\end{equation*}

\noi
Then, $D_{\frac{1}{r}}(u_r)$ is an extension of  $u$
outside $[-T_0, T_0]$ and by definition,  we have
\begin{equation*}
\|u\|_{N^{s, \al}(T_0)}
\leq \|D_{\frac{1}{r}}(u_r)\|_{N^{s, \al}} .
\end{equation*}

\noi
Hence, it suffices to prove
\begin{align}
\|D_{\frac{1}{r}}(u_r)\|_{N^{s, \al}} 
\leq \psi(r) \|u_r\|_{N^{s, \al}}
\label{CT7}
\end{align}
	
\noi
for some continuous function $\psi(r)$ defined in a neighborhood of $r = 1$
such that $\lim_{r \to 1} \psi(r) = 1$.

\medskip

Fix $k \in \Z_+\cap [0, K]$.
With $\eta_0^r(t) = \eta_0(r^{-1}t)$, we have
\[ \eta_0(2^{[\alpha k]}(t -t_k)) \cdot 
\P_k D_\frac{1}{r}(u_r)
= \P_k  D_\frac{1}{r}\big( \eta_0^r(2^{[\alpha k]}(t -r t_k)) \cdot 
u_r\big).
\]

\noi
Hence, we have
\[\F[ \eta_0(2^{[\alpha k]}(t-t_k)) \cdot 
\P_k D_\frac{1}{r}(u_r) ](n, \tau)
=r^{-1}
\ind_{I_k} (n) \,  \F[ \eta_0^r (2^{[\alpha k]}(t-rt_k)) \cdot 
u_r ](n, r^{-1} \tau). \]

\noi
Then, 
we have
\begin{align}
\|\P_k & D_{\frac{1}{r}}(u_r)\|_{N^{\al}_k} 
 = \sup_{t_k\in\R} 
\big\|(\tau-n^2+i2^{[\alpha k]})^{-1}
\F[ \eta_0(2^{[\alpha k]}(t-t_k)) \cdot 
\P_k D_\frac{1}{r}(u_r) ]\big\|_{X_k}\notag\\
& = r^{-\frac{1}{2}} \sup_{t_k\in\R}
\sum_{j = 0}^\infty 2^\frac{j}{2}
\bigg\| \frac{ \eta_j(r \tau - n^2)}{|r\tau - n^2 + i 2^{[\al k]}|}
\ind_{I_k} (n) \,  \F[ \eta_0^r (2^{[\alpha k]}(t-t_k)) \cdot 
u_r ]
\bigg\|_{\l^2_n L^2_\tau}.
\label{CT8}
\end{align}

We first examine the integrand in \eqref{CT8}.
If $|\tau| \gg n^2$, we have
$|\tau| \sim |\tau - n^2| \sim |r\tau - n^2|  \sim 2^j $
for $r$ sufficiently close to $1$ on the support of $\eta_j(r \tau - n^2)$.
Otherwise, we have $|\tau| \les n^2 \sim 2^{2k}$.
Thus, we have
\begin{align}
\bigg|\frac{ 1}{(r\tau - n^2)^2 +  2^{2[\al k]}}
- \frac{ 1}{(\tau - n^2)^2 +  2^{2[\al k]}}\bigg| 
& = 
\bigg|\frac{ (1-r) \tau \big((1+r)\tau -2n^2\big)}{((r\tau - n^2)^2 +  2^{2[\al k]})((\tau - n^2)^2 +  2^{2[\al k]})} \bigg|
\notag \\
& \les |r-1| 2^{4k}
\frac{1}{(\tau - n^2)^2 +  2^{2[\al k]}}.
\label{CT9}
\end{align}

\noi
On the other hand, by Mean Value Theorem, we have
\begin{align}
\big|\eta_j(r \tau - n^2)
- \eta_j( \tau - n^2)\big|
& \les |r-1| \sup_{s\in J_r}2^{-j}|\tau|
|\eta'(2^{-j}(s \tau - n^2))| \notag \\
& \les \begin{cases}
|r-1| \sum_{|j' - j| \leq 1} \eta_{j'}( \tau - n^2), & \text{if } j \geq 2k + 5,\\
|r-1| 2^{-j} 2^{2k} \sum_{j' \leq 2k + 10} \eta_{j'}( \tau - n^2), & \text{otherwise}, 
\end{cases}
\label{CT10}
\end{align}

\noi
for $r$ sufficiently close to $1$,
where $J_r = [1, r]$ if $r > 1$ and $= [r, 1]$ if $r < 1$.

From \eqref{CT8}, \eqref{CT9}, and \eqref{CT10}, we have
\begin{align}
\|\P_k  D_{\frac{1}{r}}(u_r)\|_{N^{\al}_k} 
& \leq \varphi_0(r)  \sup_{t_k\in\R}
\sum_{j = 0}^\infty 2^\frac{j}{2}
\bigg\| \frac{ \eta_j( \tau - n^2)}{|\tau - n^2 + i 2^{[\al k]}|}
\ind_{I_k} (n) \,  \F[ \eta_0^r (2^{[\alpha k]}(t-t_k)) \cdot 
u_r ] \bigg\|_{\l^2_n L^2_\tau}\notag \\
& \hphantom{X}
 + \I_k + \II_k + \III_k,
\label{CT11}
\end{align}

\noi
where 
$\varphi_0(r) = r^{-\frac{1}{2}}(1+ c |r-1|) \to 1$
as $r\to 1$, 
and 
$\I_k$, $\II_k$, $\III_k$ are  given by 
\begin{align*}
\I_k
& \sim  \varphi_1(r) 2^{2k}\sup_{t_k\in\R}
\sum_{j = 0}^\infty 2^\frac{j}{2}
\bigg\| \frac{ \eta_j( \tau - n^2)}{|\tau - n^2 + i 2^{[\al k]}|}
\ind_{I_k} (n) \,  \F[ \eta_0^r (2^{[\alpha k]}(t-t_k)) \cdot 
u_r ] \bigg\|_{\l^2_n L^2_\tau}, \\
\II_k
& \sim \varphi_1(r)  2^{2k}\sup_{t_k\in\R}
\sum_{j = 0}^{2k+10} 
\bigg\| \frac{ \eta_j( \tau - n^2)}{|\tau - n^2 + i 2^{[\al k]}|}
\ind_{I_k} (n) \,  \F[ \eta_0^r (2^{[\alpha k]}(t-t_k)) \cdot 
u_r ] \bigg\|_{\l^2_n L^2_\tau},\\
\III_k
& \sim \varphi_2(r)  2^{4k}\sup_{t_k\in\R}
\sum_{j = 0}^{2k+10} 
\bigg\| \frac{ \eta_j( \tau - n^2)}{|\tau - n^2 + i 2^{[\al k]}|}
\ind_{I_k} (n) \,  \F[ \eta_0^r (2^{[\alpha k]}(t-t_k)) \cdot 
u_r ] \bigg\|_{\l^2_n L^2_\tau}.
\end{align*}

\noi
Here, $\varphi_1(r)$ and $\varphi_2(r)$ 
are given by 
\[\varphi_1(r) = r^{-\frac{1}{2}} |r-1|^\frac{1}{2}
\qquad
\text{and} \qquad 
\varphi_2(r) = r^{-\frac{1}{2}} |r-1|^\frac{3}{2}\]

\noi
such that $\lim_{r\to 1} \varphi_i(r) = 0$ for $i = 1, 2$.

It remains to deal with $\eta_0^r$ in the integrand.
Let $\g: \R \to [0, 1]$ be a smooth cutoff function supported on $[-1, 1]$
such that $\g(t) \equiv 1$ on $[-\frac{1}{4}, \frac{1}{4}]$
and
$\sum_{m\in \Z} \g(t - m) \equiv 1$.
Note that we have $\g(t) = \g(t) \cdot \eta_0( t) $.

Fix $t_k \in \R$ in the following.
By Fundamental Theorem of Calculus, we have
\begin{align}
\eta_0^r(2^{[\al k]}(t - t_k)) - \eta_0(2^{[\al k]}(t - t_k))
= \int_1^{r^{-1}} \zeta_s (2^{[\al k ]} (t - t_k)) ds,
\label{CT12}
\end{align}

\noi
where $\zeta_s(t) = s^{-1} \zeta(st)$
with $\zeta(t)= 
 t \eta_0'(t) \in \mathcal{S}(\R_t)$.
Note that 
the left-hand side of \eqref{CT12} is supported
on 
$\{t: t - t_k = O(2^{-[\al k]+2})\}$
for $r$ close to 1.
Then, denoting the right-hand side of \eqref{CT12} by $F(t-t_k)$,
we have
\begin{align}
 F(t- t_k) 
 & = F(t- t_k) \sum_{|m|\leq C} \g (2^{[\al k]}(t-t_k) - m) \notag \\
 & = F(t- t_k) \sum_{|m|\leq C} \g (2^{[\al k]}(t-t_k) - m)
\cdot  \eta_0 (2^{[\al k]}(t-t_k) - m).
\label{CT13}
 \end{align}

\noi
Then,  by Minkowski's integral inequality and Lemma \ref{LEM:embed3}\footnote{
The implicit constant in Lemma \ref{LEM:embed3} is independent
of $s \in [1, r^{-1}]$ (or $[r^{-1}, 1]$) for $r$ close to 1.}
twice 
(for $\zeta$ after Minkowski's integral inequality and for $\g$),
it follows from \eqref{CT11}, \eqref{CT12}, and \eqref{CT13} that 
\begin{align}
\|\P_k  D_{\frac{1}{r}}(u_r)\|_{N^{\al}_k} 
& \leq \wt{\varphi}_0(r) 
\|\P_ku_r\|_{N^{\al}_k} 
 + \wt \varphi_1 (r) 2^{4k} \|\P_ku_r\|_{N^{\al}_k} \notag \\
& \leq \wt{\varphi}_0(r) 
\|\P_ku_r\|_{N^{\al}_k} 
 + \wt \varphi_1 (r) 2^{4K} \|\P_ku_r\|_{N^{\al}_k}, 
\label{CT14}
\end{align}

\noi
where 
$\wt \varphi_0$ and $\wt \varphi_1$ are continuous functions defined 
on a neighborhood of 1 such that 
\[\lim_{r\to 1} \wt \varphi_0(r) = 1
\quad \text{and} \quad \lim_{r\to 1} \wt \varphi_1(r) = 0.\]

\noi
Here, we used our assumption
that  $\P_{\leq K} u_r = u_r$.
Finally, by summing over $k \in \Z_+ \cap [0, K]$, we obtain \eqref{CT7}.

\medskip

Next, we prove \eqref{CT4}.
Given $\wt \eps > 0$, 
let $\wt u$ be an extension of $u$ outside $[-T_0, T_0]$
such that 
\[ \|\wt u \|_{N^{s, \al}} \leq \|u\|_{N^{s, \al}(T_0)} + \wt \eps.\]

\noi
Note that we have $\P_{\leq K} \wt u = \wt u$ as before.
Then, 
$D_r(\wt u)$ is an extension of $D_r(u)$ outside $[-rT_0, rT_0]$
and thus we have
\[ \|D_r(u)\|_{N^{s, \al}(rT_0)}
\leq \|D_r(\wt u) \|_{N^{s, \al}}.\]

\noi
Hence, it suffices to prove
\begin{align}
\limsup_{r\to 1}\|D_r(\wt u) \|_{N^{s, \al}} \leq \|\wt u \|_{N^{s, \al}}.
\label{CT15}
\end{align}

\noi
Noting that \eqref{CT14} holds for functions independent of $r$ (in place of $u_r$), 
we see that \eqref{CT15} follows from \eqref{CT14}.
This completes the proof of Lemma \ref{LEM:Kconti}.
\end{proof}

From \eqref{K1}, \eqref{K2}, and \eqref{K3}, we have
\begin{align*}
X(T)^2 \le C_1 \|u_0\|_{H^s}^2 + 
C_2\Big( T^\theta M^{c(s)} X(T)^4
+ M^{- d(s)} X(T)^4 + T^\theta X(T)^6\Big)
\end{align*}

\noi
for $T \in (0, 1]$.
With $R = C_1^\frac{1}{2}\|u_0\|_{H^s}$, 
first choose  $M = M(R)$ sufficiently large such that 
\begin{align} \label{K3a}
C_2 M^{-d(s)} (2R)^2 < \tfrac{1}{4}.
\end{align}

\noi
Then, choose $T_0 = T_0(R)\leq1$ sufficiently small such that 
\begin{align}
 C_2T_0^\theta \big(M^{c(s)}(2R)^2 + (2R)^4\big) < \tfrac{1}{4}.
\label{K3b}
 \end{align}

\noi
In view of Lemma \ref{LEM:Kconti}, 
a continuity argument  yields
$X(T) \leq 2R$ for $T \leq T_0$.
Hence, from \eqref{K1}, we obtain 
\begin{equation}
\|u\|_{F^{s, \al}(T)}
\les \|u_0\|_{H^s}
\label{K4a}
\end{equation}

\noi
for $T \leq T_0(\|u_0\|_{H^s})$.

\medskip

We are now ready to prove  existence of solutions to \eqref{NLS1}
for $u_0 \in H^s(\T)$, $s > -\frac{1}{8}$.
Given
 $u_0 \in H^s(\T)$, $s > -\frac{1}{8}$,
 let $u_{0, n} = \P_{\leq n} u_0$ for $n \in \mathbb{N}$.
Then, we have $u_{0, n} \in H^\infty(\T)$
and $u_{0, n} \to u_0$ in $H^s(\T)$.
Moreover, by the classical well-posedness theory,
there exist  smooth global solutions $u_n \in C(\R; H^\infty(\T))$ to \eqref{NLS1} with  $u_n|_{t = 0}= u_{0, n}$.
Then, we have the following compactness lemma
for these solutions $\{u_n\}_{n \in \mathbb{N}}$.

\begin{lemma} \label{LEM:Kcpt}
Given $u_0 \in H^s(\T)$ for some $s > -\frac{1}{8}$,
let $u_n$ be the smooth solutions to \eqref{NLS1}
with  $u_n|_{t = 0}= u_{0, n}$ as above.
Then,
the set $\{ u_n\}_{n\in \mathbb{N}}$ is precompact in $C_TH^s:= C([-T, T]; H^s(\T))$
for $T \leq T_0$.
Here, $T_0 = T_0(\|u_0\|_{H^s})>0$ is given by \eqref{K3b}.
\end{lemma}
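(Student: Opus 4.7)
The plan is to combine the uniform a priori bound \eqref{K4a} with an Ascoli--Arzel\`a-type compactness argument, then upgrade the resulting weak compactness to convergence in $C_TH^s$ by establishing uniform tightness in frequency of the family $\{u_n\}$.

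Since $u_{0, n} = \P_{\leq n}u_0$ satisfies $\|u_{0, n}\|_{H^s}\leq\|u_0\|_{H^s}$, the a priori bound \eqref{K4a} yields
\begin{equation*}
\sup_{n\in\mathbb{N}}\|u_n\|_{F^{s, \alpha}(T)} \lesssim \|u_0\|_{H^s}
\end{equation*}
for $T\leq T_0$, and Lemma \ref{LEM:embed1} then gives $\sup_n\|u_n\|_{L^\infty_TH^s}<\infty$. To obtain equicontinuity in $t$, I use the equation \eqref{NLS1}, which expresses $\dt u_n = -i\dx^2u_n\pm i\mathfrak{N}(u_n)$. The linear part is uniformly bounded in $L^\infty_TH^{s-2}$, while the cubic nonlinearity is uniformly bounded in $L^\infty_TH^{-\sigma}$ for some large $\sigma>0$ by the trilinear estimate \eqref{3lin1}--\eqref{3lin2} together with Lemma \ref{LEM:embed1}. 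Consequently $t\mapsto u_n(t)$ is uniformly Lipschitz into $H^{-\sigma'}$ for some $\sigma'>0$. Combining the uniform bound in $L^\infty_TH^s$ with the compact embedding $H^s\embeds H^{s-\eps}$ ($\eps>0$) and the equicontinuity via Ascoli--Arzel\`a (equivalently Aubin--Lions), I extract a subsequence, still denoted $\{u_n\}$, converging in $C([-T,T];H^{s-\eps}(\T))$ for every $\eps>0$.

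The remaining step is to upgrade this convergence to $C_TH^s$. Since convergence in the weaker norm together with uniform frequency tightness of the family implies convergence in the stronger norm, it suffices to establish
\begin{equation}\label{cpt-tight}
\lim_{N\to\infty}\sup_n\sup_{t\in[-T, T]}\|\P_{>N}u_n(t)\|_{H^s}=0.
\end{equation}
The initial data satisfy the analogous tightness, because $u_{0, n}\to u_0$ strongly in $H^s(\T)$. To propagate tightness to the solutions, I would revisit the energy identity \eqref{E2} and its normal-form refinement with the outer index $n_4$ (i.e.~the free index arising from dualising $\jb{n}^{2s}a_n\cj a_n$) restricted to $|n_4|>N$. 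This restriction forces at least one additional factor in each resulting multilinear form to be supported on frequencies $\gtrsim N/4$. Running the case analysis of Propositions \ref{LEM:R4M}, \ref{LEM:LdM}, and \ref{LEM:R6M} under this additional constraint, and exploiting the smoothing noted in Remark \ref{REM:smoothing}, yields an estimate of the shape
\begin{equation*}
\sup_{t\in[-T, T]}\|\P_{>N}u_n(t)\|_{H^s}^2\leq\|\P_{>N}u_{0, n}\|_{H^s}^2 + C\, N^{-\delta}\bigl(\|u_n\|_{F^{s, \alpha}(T)}^4+\|u_n\|_{F^{s, \alpha}(T)}^6\bigr)
\end{equation*}
for some $\delta>0$. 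Combined with the uniform $F^{s, \alpha}(T)$-bound and the frequency tightness of the initial data, this gives \eqref{cpt-tight}.

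The main obstacle is the third step: one must verify that the delicate cancellations (symmetrization and the normal-form reduction introducing the boundary term $\Lambda_4^M$ and higher-order remainder $R_6^M$) survive the additional frequency projection $\P_{>N}$, and that each resulting multilinear piece can be estimated with a genuine $N^{-\delta}$ gain rather than only a bound uniform in $N$. This is primarily a careful bookkeeping exercise on top of the analysis in Section \ref{SEC:energy}: one identifies, in each subcase, the factor carrying the highest frequency, exploits that it exceeds $N/4$, and converts a small excess of regularity (allowed by Remark \ref{REM:smoothing} since $s\in(-\frac18, 0)$) into the decaying factor $N^{-\delta}$.
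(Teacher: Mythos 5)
Your proof rests on the same two essential inputs as the paper's: the uniform bound $\sup_n\|u_n\|_{F^{s,\al}(T)}\les\|u_0\|_{H^s}$ from \eqref{K4a}, and the uniform high-frequency tail estimate obtained by running the energy estimates of Section \ref{SEC:energy} on $\P_{>N}u_n$ and cashing in the smoothing of Remark \ref{REM:smoothing} (one factor on the right-hand side necessarily carries frequency $\ges N$ and may be measured in $F^{s-\dl,\al}(T)$, which produces the decay $N^{-2\dl}$). This is exactly the paper's estimate \eqref{K5b}. The paper then applies Ascoli--Arzel\`a directly to the low-frequency blocks $\P_{\leq N_0}u_n$ in $C_TH^s$ and concludes with a finite-cover argument, whereas you first extract a subsequence converging in $C_TH^{s-\eps}$ and then upgrade via tightness; these reorganizations are equivalent, and the analytic content is the same.

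The one step that does not hold up as written is the equicontinuity claim that $\mathfrak N(u_n)$ is uniformly bounded in $L^\infty_TH^{-\sigma}$ ``by the trilinear estimate together with Lemma \ref{LEM:embed1}.'' Neither cited result gives this: \eqref{3lin1}--\eqref{3lin2} control $\mathfrak N(u_n)$ only in $N^{s,\al}(T)$, an $X^{s,b-1}$-type norm that does not embed into $L^\infty_tH^{-\sigma}_x$, and no fixed-time product estimate $H^s\times H^s\times H^s\to H^{-\sigma}$ is available for $s<0$: taking $\ft u(n)=\jb{n}^{-s-\frac12-\eps}$ with $0<\eps<-s$ gives $u\in H^s(\T)$ for which the convolution sum defining $\N(u)(n)$ diverges termwise. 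So $\dt u_n$ cannot be bounded uniformly in any fixed-time Sobolev norm from $\sup_n\|u_n\|_{C_TH^s}<\infty$ alone. The repair is the route the paper takes: prove the tail estimate first, restrict to $\P_{\leq N_0}u_n$, and derive equicontinuity of this low-frequency piece in $H^s$ from the Duhamel formula, estimating the inhomogeneous term through the linear estimate \eqref{linear2} and Proposition \ref{PROP:3lin}; this yields an increment bound of the form $\dl N_0^2\|u_n\|_{C_TH^s}+\dl^\theta\|u_0\|_{H^s}^3$, after which your argument goes through.
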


\begin{proof}
It follows from Lemma \ref{LEM:embed1} and \eqref{K4a}
that $\{ u_n\}_{n\in\mathbb{N}}$
is uniformly bounded in $C([-T, T]; H^s(\T))$.
Given  $\eps>0$, we claim that there exists $ N_0 \in \mathbb{N}$ such that
\begin{equation}
\|(I-\P_{\leq N_0})u_n\|_{C_TH^s}<\eps
\label{K5a}
\end{equation}

\noi
for all $n \in \mathbb{N}$.
In order to prove \eqref{K5a},
we need to use the smoothing effect of the energy estimates.
Namely,  there exists small $\dl > 0$
such that 
the estimates \eqref{R4M1}, \eqref{LdM}, and \eqref{R6M1}
hold even  with less regularity  of $r = s- \dl< s$ on the right-hand sides.
See Remark \ref{REM:smoothing}.
Let $M$ and $T< T_0$ be as in \eqref{K3a} and \eqref{K3b}.
Then, from (the proofs of) Propositions \ref{LEM:R4M}, \ref{LEM:LdM}, 
and \ref{LEM:R6M}
with \eqref{K4a}, we have
\begin{align*}
\big|\| & \P_{>N}  u_n\|^2_{E^s(T)}  -\|\P_{>N}u_{0, n}\|_{H^s}^2\big|\notag \\
&  \les
(T^\theta M^{c(s)}
+ M^{-d(s)})
\|\P_{>c N}u_n\|_{F^{r,\al}(T)}^2\|u_n\|_{F^{r,\al}(T)}^2
+T^\theta \|\P_{>cN}u_n\|_{F^{r,\al}(T)}^2\|u_n\|_{F^{r,\al}(T)}^4
\notag \\
& 
\les C(\|u_0\|_{H^s}) N^{-2\dl}
\longrightarrow 0
\end{align*}

\noi
as $N\to \infty$, uniformly in $n\in \mathbb N$. 
Hence, we have
\begin{align}
\| \P_{>N} u_n\|_{C_T H^s}^2
& \leq \|\P_{>N} u_n \|_{E^s(T)}^2 
\les \| \P_{>N} u_0\|_{ H^s}^2
+ C(\|u_0\|_{H^s}) N^{-2\dl}
\longrightarrow 0
\label{K5b}
\end{align}

\noi
as $N\to \infty$, uniformly in $n \in \mathbb{N}$.
This proves \eqref{K5a}.

Fix $\eps >0$. By \eqref{K5a}, there exists $N_0>0$ such that
$\|\P_{> N_0}u_n\|_{C_TH^s}<\frac{\eps}{3}$ for all $n\in \mathbb N$.
Clearly, 
$\{\P_{\leq {N_0}}u_n(t)\}_{n \in \mathbb{N}}$
is precompact in $H^s(\T)$
for each $t$.
Moreover, by Lemma \ref{LEM:embed1}, \eqref{linear2}, 
Proposition \ref{PROP:3lin}, and \eqref{K4a}, there exists $\theta > 0$ such that 
\begin{align*}
\|\P_{\leq {N_0}} u_n(t+\dl) - 
& \P_{\leq {N_0}} u_n(t)\|_{H^s}\\
&  \leq  \|(S(\dl) - 1)\P_{\leq {N_0}} u_n(t)\|_{H^s}  +
\bigg\|\int_t^{t+\dl} S(t + \dl - t') \P_{\leq {N_0}} \mathfrak N( u_n ) (t') dt'\bigg\|_{H^s}\\
& \les \dl N_0^2  \|u_n \|_{C_T H^s} + \dl^\theta  \|u_0 \|_{H^s}^3
\leq C(N_0, \|u_0\|_{H^s}) \dl^\theta
\end{align*}

\noi
for all $n\in \mathbb N$.
Namely,
$\{\P_{\leq {N_0}}u_n(t)\}_{n \in \mathbb{N}}$
is equicontinuous with values in $H^s(\T)$.
By Ascoli-Arzel\`a compactness theorem,
$\{\P_{\leq {N_0}}u_n\}_{n \in \mathbb{N}}$  is precompact in $C([-T, T]; H^s(\T))$.
Hence, there exists a finite cover by balls
of radius $\frac{\eps}{3}$
(in $C_TH^s$)
centered at $\{\P_{\leq {N_0}}u_{n_k}\}_{k = 1}^K$.
Then, the balls of radius $\eps$ (in $C_TH^s$)
centered at $\{u_{n_k}\}_{k = 1}^K$
cover $\{u_{n}\}_{n\in \mathbb N}$.
\end{proof}

In view of Lemma \ref{LEM:Kcpt},
we can extract a subsequence,
which we still denote by
 $\{u_n\}_{n \in \mathbb{N}}$,  converging to some $u$ in $C([-T, T]; H^s(\T))$.
On the one hand, in view of the uniform tail estimate \eqref{K5b},
this subsequence  $\{u_n\}_{n \in \mathbb{N}}$ also converges in $E^s(T)$.
On the other hand, by possibly making $T$ smaller,
\eqref{K1} and \eqref{K2} with \eqref{K4a}
yield 
\begin{align*}
\|u_n-u_m\|_{F^{s,\alpha}(T)}\les \|u_n-u_m\|_{E^s(T)}.
\end{align*}

\noi
Hence, $\{u_n\}$ converges to $u$ in $F^{s,\alpha}(T)$.
Finally, by applying Proposition \ref{PROP:3lin}
to
\begin{align*}
\Nf(u_n) - \Nf(u)
& = \Nf(u_n , u_n, u_n) - \Nf(u, u, u) \\
& = \Nf(u_n - u, u_n, u_n)
+ \Nf( u, u_n - u, u_n)
+ \Nf( u, u , u_n - u) ,
\end{align*}

\noi
we see that $\{\Nf(u_n)\}$ converges to $\Nf(u)$ in $N^{s, \al}(T)$.
Hence, the limit $u$ satisfies \eqref{NLS1} as a distribution.
This completes the proof of Theorem \ref{THM:1}.

\section{Non-existence of  weak solutions to the cubic NLS below $L^2$}
\label{SEC:non-existence}

In this section, we present the proof of Theorem \ref{THM:2}.
We prove this by contradiction.
Fix $s \in (-\frac{1}{8}, 0)$
and $u_0 \in H^s(\T)\setminus L^2(\T)$.
Suppose that there exist $T>0$
and a solution $u \in C([-T, T];H^s(\T))$ to  the standard cubic NLS \eqref{NLS0}
such that 
\begin{itemize}
\item[\textup{(i)}] $u|_{t = 0} = u_0$

\smallskip

\item[\textup{(ii)}] There exist smooth global solutions $\{u_n\}_{n\in \mathbb{N}}$ to \eqref{NLS0} such that 
$u_n \to u$ in $ C([-T, T]:H^s(\T))$ as $n \to \infty$. 
\end{itemize}

\noi
The main idea is to use the a priori bound
to the solutions to the Wick ordered NLS \eqref{NLS1}
from Section \ref{SEC:existence}
and 
exploit the fast oscillation introduced in 
the transformation \eqref{K6c} below.

By setting 
\begin{align}
v_n(t) = e^{- 2 i t \fint_\T |u_{0, n}|^2 dx} u_n(t), 
\label{K6c}
\end{align}

\noi
we see that  $v_n$ is a solution to  the Wick ordered NLS \eqref{NLS1}.
Moreover, 
we have 
 $v_n|_{t = 0}=  u_{n}(0) \to u_0$ in $H^s(\T)$.
By the a priori estimate \eqref{K4a},
and a slight modification of the proof of Lemma \ref{LEM:Kcpt}, 
there exists a subsequence $\{v_{n_k}\}$ converging to  
some $v$ in $C([-T, T]:H^s)$, 
where $T = T(\|u_0\|_{H^s})$.

Indeed, 
by setting $R = \|u_0\|_{H^s} + 1$, 
we obtain the a priori estimate \eqref{K4a}:
\begin{align*}
\| v_n\|_{F^{s, \al}(T)} \les R
\end{align*}

\noi
for $n \geq N_1$.
As for the proof of Lemma \ref{LEM:Kcpt}, we 
need to modify \eqref{K5a}.
In particular, we claim that,  
given  $\eps>0$,  there exist $ N_0, \cj{N} \in \mathbb{N}$ such that
\begin{equation}
\|(I-\P_{\leq N_0})v_n\|_{C_TH^s}<\eps
\label{K6a}
\end{equation}

\noi
for all $n \geq \cj N$.
Indeed, we have 
the following
 in place of \eqref{K5b};
given $\eps > 0$, 
there exist $N_0, \cj N \in \mathbb{N}$
such that 
\begin{align}
\| \P_{>N} v_n\|_{C_T H^s}^2
& 
\les \| \P_{>N} u_0\|_{ H^s}^2
+ \big(\| \P_{>N} u_{n}(0)\|_{ H^s}^2 - \| \P_{>N} u_0\|_{ H^s}^2\big)
+ C(\|u_0\|_{H^s}) N^{-2\dl}\notag \\
& < \eps, 
\label{K6b}
\end{align}

\noi
for $N \geq N_0$ and $n \geq \cj N$.
Then, 
\eqref{K6a} follows
from \eqref{K6b}.
Then, we can repeat the second half of the proof of Lemma \ref{LEM:Kcpt}
on $\{v_n\}_{n \geq \cj N}$
and find 
a finite $\eps$-cover 
$\{v_{n_k}: n_k \geq \cj N \}_{k = 1}^K$
of $\{v_n\}_{n \geq \cj N}$.
Finally, 
$\{v_n \}_{n = 1}^{\cj N - 1} \cup \{v_{n_k}: n_k \geq \cj N \}_{k = 1}^K$
forms a finite $\eps$-cover of 
$\{v_n\}_{n \in \mathbb{N}}$.
Therefore, $\{v_n \}$ is precompact in $C([-T, T];H^s(\T))$
and 
there exists a subsequence $\{v_{n_k}\}$ 
converging to  
some $v$ in $C([-T, T];H^s(\T))$.
In particular, note that 
\begin{align}
v(0) = \lim_{k\to \infty} v_{n_k}(0)
= \lim_{n\to \infty} u_{n}(0) = u_0 \qquad \text{in }H^s(\T).
\label{K6e}
\end{align}

Now, we are ready to prove a contradiction.
The main idea is to exploit
the faster and faster oscillation in \eqref{K6c}
as $n \to \infty$.
Let   $\phi \in \mathcal D(\T\times [-T, T])$
be a test function. 
Then, we have 
$\jb{u_n(\cdot, t), \phi(\cdot, t)}_{L^2_x}
\to F(t): = \jb{u(\cdot, t), \phi(\cdot, t)}_{L^2_x}$.
Moreover, we have $F(t) \in L^1(\R)$ 
since it is continuous and supported on $[-T, T]$.
Hence, by Riemann-Lebesgue Lemma, we have 
\begin{align*}
\bigg|\iint v_n \phi \, dx dt\bigg|
& = \bigg|\int e^{- 2 i t \fint_\T |u_{0, n}|^2 dx} 
\jb{u_n(\cdot, t), \phi(\cdot, t)}_{L^2_x} dt \bigg|\\
& \leq  \bigg|\int e^{- 2 i t \fint_\T |u_{0, n}|^2 dx} 
\jb{u(\cdot, t), \phi(\cdot, t)}_{L^2_x} dt \bigg|\\
& \hphantom{XXX}
 + \int | \jb{u (\cdot, t)- u_n(\cdot, t), \phi(\cdot, t)}_{L^2_x}| dt 
 \longrightarrow 0,
\end{align*}

\noi
as $n \to \infty$.
Note that the second term tends to 0 in view 
of the assumption (ii): 
$ \| u - u_n\|_{C_T H^s}\to 0$.

On the one hand, $ v_n$ converges
to $0$ in  $\mathcal{D}'(\T\times [-T, T])$.
On the other hand, 
there exists a subsequence $\{v_{n_k}\}$
converging to $v$ in $C([-T, T]; H^s(\T))$.
Hence, we conclude that $v\equiv 0$.
In particular, from \eqref{K6e}, we have
$u_0 = v(0) = 0$.
This is clearly a contradiction to $u_0 \in H^s(\T) \setminus L^2(\T)$.
This completes the proof of Theorem \ref{THM:2}.

\end{document}